\newcommand{\Acirc}{\accentset{\circ}{A}}
\newtheorem{prop}{Proposition}[section]
\newtheorem{thm}[prop]{Theorem}
\newtheorem{lem}[prop]{Lemma}
\newtheorem{coro}[prop]{Corollary}
\newtheorem{rema}[prop]{Remark}
\newtheorem{defi}[prop]{Definition}
\newtheorem{conj}[prop]{Conjecture}
\title[The Penrose inequality for asymptotically flat half-spaces]{The Riemannian Penrose inequality for  asymptotically flat manifolds with non-compact boundary}
\author{Thomas Koerber}
\address{ Albert-Ludwigs-Universit\"at Freiburg,
	Mathematisches Institut,
	Ernst-Zermelo-Straße 1
	D-79104 Freiburg, Germany}
\email{thomas.koerber@math.uni-freiburg.de}
\begin{document}
	\begin{abstract}
In this article, we prove the Riemannian Penrose inequality for asymptotically flat manifolds with non-compact boundary whose asymptotic region is modelled on a half-space. Such spaces were initially considered by Almaraz, Barbosa and de Lima in 2014. In order to prove the inequality, we develop a new approximation scheme for the weak free boundary inverse mean curvature flow, introduced by Marquardt in 2012, and establish the monotonicity of a free boundary version of the Hawking mass. Our result also implies a non-optimal Penrose inequality for asymptotically flat support surfaces in $\mathbb{R}^3$ and thus sheds some light on a conjecture made by Huisken.
	\end{abstract}
	\maketitle
	\section{Introduction}
Let $(M,g)$ be a complete, non-compact Riemannian three-manifold without boundary. We say that $(M,g)$ is asymptotically flat if there exists a compact set $\Omega\subset M$ such that every component $\hat M$ of $M-\Omega$ is diffeomorphic to $\mathbb{R}^3-B^3_1(0)$, if the metric expressed in terms of this chart satisfies  
\begin{align}
|g_{ij}(x)-{g_e}_{ij}|+|x||\partial_l g_{ij}(x)| \leq c|x|^{-1} \label{decay one}
\end{align}
and if the following lower bound for the Ricci-curvature holds
\begin{align}
\operatorname{Rc}\geq -c|x|^{-2}g. \label{decay two}
\end{align}
Here, $c>0$ is a positive constant, $x$ the position vector field of $\mathbb{R}^3$ and $g_e$ the Euclidean background metric. We  call $\hat M$ an end of $M$. Asymptotically flat manifolds play an important part in general relativity as they arise as  time-symmetric initial data sets for solutions of the Einstein field equations, see for instance \cite{chrusciel2010mathematical}. If the scalar curvature is integrable in $M$, it is well-known that each end $\hat M$ possesses a global geometric invariant called the ADM-mass and defined by
$$
m_{ADM}:=\lim_{r\to\infty} \frac{1}{16\pi}\int_{\mathbb{S}^3_r(0)}(\partial_jg_{ij}-\partial_ig_{jj})\frac{x_i}{|x|}\text{d}vol_e,
$$
see \cite{arnowitt1961coordinate,bartnik1986mass}. The physically natural condition of non-negative energy density translates to $(M,g)$ having non-negative scalar curvature $\operatorname{Sc}$, a condition known as the dominant energy condition. A classical rigidity result, proven by Schoen and Yau using minimal surface techniques and known as the positive mass theorem, see \cite{schoen1979proof}, states that the ADM-mass of each end is non-negative provided the dominant energy condition $\operatorname{Sc}\geq 0$ holds and that equality holds if and only if $(M,g)$ is isometric to the flat Euclidean space. For one, this result can be seen as a rigidity result in the theory of manifolds with non-negative scalar curvature. On the other hand, it  establishes a connection between the global mass of a space-like slice and its local geometry. Using a beautiful argument based on  classical results in gravitational physics, Penrose conjectured that in certain cases a stronger, quantitative version of the positive mass theorem should hold, see \cite{penrose1973naked,penrose1982some}. This conjecture which is now known as the Riemannian Penrose inequality was subsequently proven by Huisken and Ilmanen in a seminal paper, see \cite{huisken2001inverse}. In order to state the result, we define a non-compact subset  $M'\subset M$ to be an exterior region if $M'$ is non-compact and connected, $\partial M'$ consists of compact minimal surfaces and if $M'$ contains no other minimal surfaces. 
\begin{thm}
	Let $(M,g)$ be a complete asymptotically flat Riemannian three-manifold with non-negative scalar curvature and $M'\subset M$ be an exterior region with ADM-mass $m_{ADM}$. Then 
	\begin{align}
	m_{ADM}\geq \sqrt{\frac{|\Sigma|}{16\pi}}, \label{nb RPI}
	\end{align}
	where $ \Sigma\subset \partial M'$ is any connected component of $\partial M$. Equality holds if and only if $(M',g)$ is one half of the spatial Schwarzschild manifold.
	\label{nb RPI thm}
\end{thm}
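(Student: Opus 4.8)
The approach I would take is the inverse mean curvature flow argument of Huisken and Ilmanen. For a closed surface $\Sigma\subset M$ bounding a region, recall the Hawking mass
\[
m_H(\Sigma)=\sqrt{\frac{|\Sigma|}{16\pi}}\,\Big(1-\frac{1}{16\pi}\int_\Sigma H^2\,\mathrm{d}\mu\Big),
\]
where $H$ denotes the mean curvature. Two observations drive the proof. First, if $\Sigma$ is a minimal surface then $m_H(\Sigma)=\sqrt{|\Sigma|/16\pi}$, so the right-hand side of \eqref{nb RPI} is exactly the Hawking mass of the horizon. Second, evaluating $m_H$ along large, almost round coordinate spheres in the end and invoking the decay \eqref{decay one} gives $\limsup_{r\to\infty}m_H\le m_{ADM}$. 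Hence it suffices to join a component $\Sigma$ of $\partial M'$ to infinity inside $M'$ by a family of surfaces along which $m_H$ is non-decreasing.

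The natural candidate is the inverse mean curvature flow $\partial_t x=H^{-1}\nu$, which exists smoothly for a short time whenever $H>0$. Along it one has $\frac{\mathrm d}{\mathrm dt}|\Sigma_t|=|\Sigma_t|$, hence $|\Sigma_t|=|\Sigma|\,e^{t}$, and a computation going back to Geroch, combining the evolution of $H$, the traced Gauss equation, the Gauss--Bonnet theorem and $\operatorname{Sc}\ge 0$, yields
\[
\frac{\mathrm d}{\mathrm dt}\,m_H(\Sigma_t)\ \ge\ \frac{|\Sigma_t|^{1/2}}{(16\pi)^{3/2}}\,\Big(8\pi-4\pi\chi(\Sigma_t)+\int_{\Sigma_t}\big(2\,|\nabla H|^2H^{-2}+\tfrac12|\Acirc|^2+\operatorname{Sc}\big)\,\mathrm d\mu\Big)\ \ge\ 0,
\]
provided the flow surfaces $\Sigma_t$ remain connected, so that $\chi(\Sigma_t)\le 2$. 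The absence of closed minimal surfaces in $M'$ other than the components of $\partial M'$ also guarantees that $\Sigma$ is outward minimizing in $M'$ (its strictly outward minimizing hull is a closed minimal surface enclosing $\Sigma$, hence a union of components of $\partial M'$ of area at most $|\Sigma|$, hence $\Sigma$ itself), so the flow can be started at $N_0=\Sigma$ with $m_H(N_0)=\sqrt{|\Sigma|/16\pi}$.

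The main obstacle is that smooth inverse mean curvature flow generically fails to exist for all time: $H$ may vanish, and the surfaces may change topology or fatten. Following Huisken and Ilmanen, I would replace it by the weak level-set flow, described by a proper function $u\ge 0$ on $M'$ with $u=0$ on $\Sigma$ and $\Sigma_t=\partial\{u<t\}$, solving $\operatorname{div}(\nabla u/|\nabla u|)=|\nabla u|$ weakly; this is produced by elliptic regularization together with the variational comparison principle for the functional $v\mapsto\int(|\nabla v|+v\,|\nabla u|)$. One then shows that a proper weak solution exists, that the sublevel sets $\{u<t\}$ are outward minimizing so the surfaces jump exactly to their outward minimizing hulls, that $t\mapsto|\Sigma_t|$ is continuous with $|\Sigma_t|=|\Sigma|\,e^{t}$ (area being preserved across jumps), and that $\Sigma_t$ stays connected and eventually exhausts the end. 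The crux is that Geroch monotonicity survives the passage to the weak flow: on the smooth parts it is the displayed inequality, while at a jump time the new surface $\Sigma_t^+$ has the same area as $\Sigma_t$ and satisfies $\int_{\Sigma_t^+}H^2\le\int_{\Sigma_t}H^2$, since the part of $\Sigma_t^+$ not lying on $\Sigma_t$ is minimal, so $m_H$ does not drop; the delicate technical point is a weak-convergence and lower-semicontinuity estimate controlling $\int H^2$ as the regularization parameter tends to zero. Granting this, $\sqrt{|\Sigma|/16\pi}=m_H(N_0)\le\limsup_{t\to\infty}m_H(\Sigma_t)\le m_{ADM}$, which is \eqref{nb RPI}.

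For the rigidity statement, suppose equality holds. Then $m_H(\Sigma_t)$ is constant, so the monotonicity formula forces $\operatorname{Sc}\equiv 0$ on the flow region, $\nabla H\equiv 0$ and $\Acirc\equiv 0$ on each $\Sigma_t$, every $\Sigma_t$ a round sphere (from $\chi(\Sigma_t)=2$ together with total umbilicity), and no jumps occur, so the $\Sigma_t$ smoothly foliate $M'$. Writing $g$ in Gaussian-type coordinates adapted to this foliation, with lapse $H^{-1}$, and using that the flow started at a minimal sphere, one identifies $(M',g)$ with $\{|x|\ge m_{ADM}/2\}$ inside $\big(\mathbb{R}^3\setminus\{0\},(1+\tfrac{m_{ADM}}{2|x|})^4 g_e\big)$, that is, one half of the spatial Schwarzschild manifold; conversely this manifold saturates \eqref{nb RPI} by direct computation.
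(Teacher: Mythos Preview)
Your proposal is correct and follows exactly the approach the paper attributes to Huisken and Ilmanen: this theorem is not proved in the present paper but is quoted from \cite{huisken2001inverse}, and the paper's accompanying discussion (weak inverse mean curvature flow, Geroch monotonicity of the Hawking mass via Gauss--Bonnet with $\chi\le 2$, elliptic regularization, and the asymptotic comparison $m_H\to m_{ADM}$) matches your outline essentially line for line, including the rigidity argument via constancy of $m_H$ forcing $\operatorname{Sc}\equiv 0$, $\Acirc\equiv 0$, $\nabla H\equiv 0$ and hence the Schwarzschild geometry.
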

In order to prove the theorem, Huisken and Ilmanen used the so-called weak inverse mean curvature flow starting at $\Sigma$ to sweep out the exterior region. They showed that the so-called Hawking mass is monotonous along this flow, initially equal to the right-hand side of (\ref{nb RPI}) and asymptotic to $m_{ADM}$. We will discuss the proof in more detail later on. We also remark that a stronger version, where $\Sigma$ is replaced by $\partial M'$ in $(\ref{nb RPI})$, was later on shown by Bray in \cite{bray2001proof}. \\
Naturally, one may ask if quantities such as the ADM-mass can also be defined for complete Riemannian three-manifolds $(M,g)$ with a non-compact boundary $\partial M$ and if appropriate versions of the positive mass theorem and the Penrose inequality remain true provided suitable conditions hold. To this end, we say that $(M,g)$ is an asymptotically flat half-space, possibly with multiple ends, if there exists a compact set $\Omega\subset M$ such that any component $\hat M$ of $M\setminus \Omega$ is diffeomorphic to $\mathbb{R}^3_+\setminus B_1^3(0)$ and if the diffeomorphism can be chosen in a way such that (\ref{decay one}) and (\ref{decay two}) are satisfied. Here, $\mathbb{R}^3_+$ denotes the upper half-space $\{x\in\mathbb{R}^3|x_3\geq0\}$. In order to proceed, let us fix some terminology. Given a set $U\subset M$, we call $\tilde \partial U:=\overline{\partial U\setminus \partial M}$ the interior boundary of $U$ and $\hat\partial U=\partial U\cap \partial M$ the exterior boundary of $U$. Moreover, we say that a compact and connected hypersurface $\Sigma$ is a free boundary surface if $\partial \Sigma\neq \emptyset$, $\Sigma\cap \partial M=\partial \Sigma$ and if $\Sigma$ meets $\partial M$ orthogonally. Contrary, we say that $\Sigma$ is closed if it is compact and connected, has no boundary and does not touch $\partial M$.  As before, a non-compact, connected subset $M'\subset M$ is called an exterior region if $\tilde \partial M'$ consists of closed and free boundary minimal surfaces  and if $M'$ contains no other closed or free boundary minimal surfaces.   In \cite{almaraz2014positive}, Almaraz, Barbosa and de Lima studied such asymptotically flat half-spaces, calling them asymptotically flat manifolds with non-compact boundary, and discovered a mass type quantity, again called the ADM-mass, which can be assigned to each end $\hat M$ and is given by
$$
\tilde m_{ADM}:=\lim_{r\to\infty} \frac{1}{16\pi}\bigg(\int_{\mathbb{S}^3_r(0)\cap \mathbb{R}^3_+}(\partial_jg_{ij}-\partial_ig_{jj})\frac{x_i}{|x|}\text{d}vol_e+\int_{\partial D^2_r(0)\times\{0\}} g_{i3}\frac{x_i}{|x|}\text{d}vol_e\bigg ).
$$
They showed that $\tilde m_{ADM}$ is a well-defined geometric invariant if the scalar curvature $\operatorname{Sc}$ and the mean curvature $H^{\partial M}$ of $\partial M$ are integrable. Moreover, they proved the positive mass type rigidity statement  $\tilde m_{ADM}\geq 0$ provided the dominant energy condition $\operatorname{Sc},H^{\partial M}\geq 0$ holds with equality if and only if $(M,g)$ is the flat Euclidean half-space. For a more precise statement, we refer to Section \ref{asymptotically flat half spaces}. At this point, it is natural to expect that a suitable version of the Penrose inequality holds for asymptotically flat half-spaces, too. In \cite{barbosa2018positive}, Barbosa and Meira verified the Riemannian Penrose inequality for asymptotically flat half-spaces if the exterior region $M'$ can be written as a certain graph over $\mathbb{R}^3_+$. On the other hand, in \cite{marquardt2017weak}, Marquardt studied the so-called weak free boundary inverse mean curvature flow supported on certain convex graphs and discovered a monotonous quantity in case the flow remains smooth. In this article, we extend the theory of the weak free boundary inverse mean curvature flow and show the following Penrose type inequality for asymptotically flat half-spaces. 
\begin{thm}
	Let $(M,g)$ be an asymptotically flat half-space satisfying (\ref{asymptotic behavior 0}), (\ref{asymptotic behavior 1})  as well as the dominant energy condition $\operatorname{Sc}, H^{\partial M}\geq 0$. Let $M'\subset M$ be an exterior region with ADM-mass $\tilde m_{ADM}$ and suppose that $\Sigma$ is a connected free boundary component of $\tilde \partial M'$. Then there holds 
	$$
	\tilde m_{ADM}\geq\sqrt{\frac{|\Sigma|}{{32\pi}}}
	$$ 
	with equality if and only if $(M',g)$ is one-half of the spatial Schwarzschild half-space. \label{main thm RPI}
\end{thm}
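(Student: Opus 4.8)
The plan is to carry the weak inverse mean curvature flow proof of Huisken and Ilmanen \cite{huisken2001inverse} over to the free boundary setting, building on and extending the weak free boundary inverse mean curvature flow of Marquardt \cite{marquardt2017weak}. To a free boundary surface $\Sigma$ one attaches the \emph{free boundary Hawking mass}
$$
m_H(\Sigma):=\sqrt{\frac{|\Sigma|}{32\pi}}\left(1-\frac{1}{8\pi}\int_\Sigma H^2\,\mathrm{d}\mu\right),
$$
which reduces to $\sqrt{|\Sigma|/32\pi}$ whenever $\Sigma$ is minimal and which is normalised so that the coordinate half-spheres $\mathbb{S}^3_r(0)\cap\mathbb{R}^3_+$ in the flat half-space have vanishing mass. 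The proof then proceeds in three steps. First, run the weak free boundary inverse mean curvature flow $t\mapsto\Sigma_t$ starting at $\Sigma$; since $\Sigma$ is a free boundary component of $\tilde\partial M'$ and $M'$ contains no further closed or free boundary minimal surfaces, $\Sigma$ is weakly outward minimising in the free boundary sense, so the flow genuinely starts at $\Sigma$ and $m_H(\Sigma_0)=\sqrt{|\Sigma|/32\pi}$ because $H\equiv0$ there. Second, show that $t\mapsto m_H(\Sigma_t)$ is non-decreasing. Third, show that $\limsup_{t\to\infty}m_H(\Sigma_t)\le\tilde m_{ADM}$, using the asymptotic hypotheses (\ref{asymptotic behavior 0}) and (\ref{asymptotic behavior 1}). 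Concatenating these gives $\sqrt{|\Sigma|/32\pi}=m_H(\Sigma_0)\le\limsup_{t\to\infty}m_H(\Sigma_t)\le\tilde m_{ADM}$.

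For the monotonicity, consider first a \emph{smooth} free boundary flow with normal speed $1/H$. Then $\frac{\mathrm d}{\mathrm dt}\,\mathrm d\mu=\mathrm d\mu$, so $|\Sigma_t|=|\Sigma_0|\,e^{t}$, and $\partial_t H=-\Delta(1/H)-\big(|A|^2+\operatorname{Rc}(\nu,\nu)\big)/H$. Differentiating $\int_{\Sigma_t}H^2$, integrating by parts along $\Sigma_t$, and substituting the Gauss equation to convert $|A|^2$ and $\operatorname{Rc}(\nu,\nu)$ into $\operatorname{Sc}$, $|\Acirc|^2$ and the Gauss curvature $K$ produces, exactly as in the closed case, the nonnegative interior terms $2\int_{\Sigma_t}|\nabla H|^2 H^{-2}$, $\int_{\Sigma_t}|\Acirc|^2$ and $\int_{\Sigma_t}\operatorname{Sc}$, together with a Gauss--Bonnet contribution $-4\pi\chi(\Sigma_t)$. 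The genuinely new feature is a boundary integral over $\partial\Sigma_t$ with two sources: the integration by parts, where preservation of the orthogonality $\Sigma_t\perp\partial M$ forces the Robin-type identity $\partial_\eta(1/H)=A^{\partial M}(\nu,\nu)/H$ on $\partial\Sigma_t$ (with $\eta$ the outward conormal); and the Gauss--Bonnet boundary term, where orthogonality makes the geodesic curvature of $\partial\Sigma_t$ in $\Sigma_t$ equal $H^{\partial M}-A^{\partial M}(\nu,\nu)$. The two $A^{\partial M}(\nu,\nu)$-terms cancel, and the remaining boundary term is a favourable multiple of $\int_{\partial\Sigma_t}H^{\partial M}\ge0$. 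Since $\Sigma_t$ is connected with $\partial\Sigma_t\neq\emptyset$ one has $\chi(\Sigma_t)\le1$, so with $Q(t):=8\pi-\int_{\Sigma_t}H^2$ one obtains $Q'(t)\ge-\tfrac12 Q(t)$, whence $e^{t/2}Q(t)$ --- and therefore $m_H(\Sigma_t)$, a fixed positive multiple of $e^{t/2}Q(t)$ --- is non-decreasing once $\operatorname{Sc},H^{\partial M}\ge0$. This must then be upgraded to the weak flow: across a jump the surface passes to the free boundary minimising hull, on whose new part $H=0$, and since the weak formulation preserves area the jump argument of \cite{huisken2001inverse} carries over once the boundary contributions are tracked; the connectedness of the relevant flow surface (hence $\chi\le1$) and the treatment of any closed components by the ordinary Hawking monotonicity also follow as in \cite{huisken2001inverse}.

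The main obstacle, and the bulk of the work, is to make sense of the weak free boundary flow on a \emph{general} asymptotically flat half-space and to prove enough regularity for the above. Following Huisken--Ilmanen one seeks $\Sigma_t=\partial\{u<t\}$ for a weak solution $u$ of $\operatorname{div}(\nabla u/|\nabla u|)=|\nabla u|$ with $u=0$ on $\Sigma$, now also subject to the conormal condition $\langle\nabla u,\nu_{\partial M}\rangle=0$ along $\partial M$, which geometrically forces the level sets to meet $\partial M$ orthogonally. Marquardt's existence result \cite{marquardt2017weak} applies only when the supporting hypersurface is a convex graph, which supplies strong barriers; instead I would develop a new approximation scheme, solving the elliptic regularisations $\operatorname{div}\big(\nabla u^{\varepsilon}/\sqrt{|\nabla u^{\varepsilon}|^2+\varepsilon^2}\big)=\sqrt{|\nabla u^{\varepsilon}|^2+\varepsilon^2}$ on a compact exhaustion of $M'$ with the conormal condition on $\partial M$ and suitable Dirichlet data on $\Sigma$ and on the outer boundary, proving uniform $C^{0,1}$ and local $C^{1,\alpha}$ bounds up to $\partial M$ as well as up to $\Sigma$, and then passing to the limits $\varepsilon\to0$ and exhaustion $\to M'$. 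One must in addition set up the free boundary minimising hull --- minimising perimeter relative to $\partial M$, which acts as a cost-free obstacle --- in order to describe the jumps.

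For the asymptotic step, the hypotheses (\ref{asymptotic behavior 0}) and (\ref{asymptotic behavior 1}) force the flow surfaces, after rescaling, to become close to coordinate half-spheres centred at the origin of each end, and expanding $m_H$ in the asymptotic chart --- including the additional boundary integral in the definition of $\tilde m_{ADM}$ --- yields $\limsup_{t\to\infty}m_H(\Sigma_t)\le\tilde m_{ADM}$, as in the closed case. Finally, for the rigidity statement, equality propagates back through the monotonicity, forcing every inequality above to be saturated for almost every $t$: $\nabla H\equiv0$ and $\Acirc\equiv0$ on each $\Sigma_t$, $\operatorname{Sc}\equiv0$ on $M'\setminus\Sigma$, $H^{\partial M}\equiv0$ along the exterior boundary $\hat\partial M'$ swept out by the curves $\partial\Sigma_t$, and $\chi(\Sigma_t)=1$, so that each $\Sigma_t$ is a totally umbilic constant-mean-curvature disk meeting the totally geodesic $\partial M$ orthogonally. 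Such a foliation together with $\operatorname{Sc}\equiv0$ identifies $(M',g)$ with one half of a spatial Schwarzschild manifold in which $\Sigma$ is the hemispherical horizon, i.e.\ with one-half of the spatial Schwarzschild half-space. Apart from the existence and regularity of the weak flow, everything here is a careful adaptation of the closed-boundary argument.
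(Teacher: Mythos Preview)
Your overall architecture is correct and matches the paper: start the weak free boundary inverse mean curvature flow at $\Sigma$, prove monotonicity of the free boundary Hawking mass, and compare the limit to $\tilde m_{ADM}$. Your smooth monotonicity computation is also right, including the cancellation of the $A^{\partial M}(\nu,\nu)$ contributions.

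The genuine gap is in the existence step. You correctly observe that Marquardt's result only covers convex graphs and that a new approximation scheme is needed, but the scheme you then write down,
\[
\operatorname{div}\!\bigg(\frac{\nabla u^{\varepsilon}}{\sqrt{|\nabla u^{\varepsilon}|^2+\varepsilon^2}}\bigg)=\sqrt{|\nabla u^{\varepsilon}|^2+\varepsilon^2},
\]
is precisely the original Huisken--Ilmanen regularisation, and it is exactly this equation that fails in the free boundary setting. The obstruction is the source term $\varepsilon^2$ on the right: to run the continuity method one needs subsolutions compatible with the Neumann condition $\partial_\mu u^\varepsilon=0$ on $\partial M$, and with the right-hand side bounded below by $\varepsilon>0$ there is no way to build such barriers without the strong convexity hypotheses Marquardt already assumed. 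The paper's actual innovation is to drop this source term and use instead
\[
\operatorname{div}\!\bigg(\frac{\nabla u_{\epsilon,\gamma}}{\sqrt{\epsilon^2+|\nabla u_{\epsilon,\gamma}|^2}}\bigg)=|\nabla u_{\epsilon,\gamma}|^{\gamma},
\]
with $\gamma>1$ for regularity and then $\gamma\to 1$. Now $\rho=\tfrac14\log|x-x_0|$ is (up to shifts) a subsolution respecting the Neumann condition, which yields properness and the crucial gradient estimate.

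This change has a second consequence you did not anticipate: the approximate level sets no longer evolve by translating IMCF in $M\times\mathbb{R}$ but by $\dot x=\nu/\sqrt{\epsilon^2+H^{2/\gamma}}$, under which neither the area nor $\int H^2$ behaves well. The paper therefore does \emph{not} obtain weak monotonicity by your proposed route (smooth Geroch plus jump control); instead it introduces an approximate Willmore energy $\int\psi_\epsilon(H)$ with $\psi_\epsilon'(s)=2\sqrt{\epsilon^2+s^2}$, proves a growth inequality for that quantity along the regularised flow, and only after passing $\gamma\to1$ and $\epsilon\to0$ recovers the growth formula for $\int H^2$, which combined with the exponential area growth of the \emph{limit} gives monotonicity of $m_H$. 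Your jump argument is not how the paper proceeds, and without the modified scheme there is no approximate flow to compute with in the first place.
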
 
Of course, it would be interesting to know if the inequality also holds for a closed component or even the full interior boundary $\tilde \partial M'$. Unfortunately, we will see in Section \ref{fb imcf general section} that there does not seem to be a reasonable monotonous quantity along the weak free boundary inverse mean curvature starting  at a closed surface. \\ Before we describe the proof of Theorem \ref{main thm RPI}, we discuss another application of our result. In his PhD thesis \cite{volkmann2015free}, Volkmann studied so-called asymptotically flat support surfaces $S\subset \mathbb{R}^3$ and defined an exterior mass $m_{ext}$ associated to each end of $S$. Assuming the dominant energy type condition $H^S\geq 0$ he then proceeded to prove non-negativity of $m_{ext}$ with equality if and only if $S$ is a flat plane, see Section \ref{asymptotically flat half spaces} for more details. It was subsequently conjectured by Huisken that an appropriate version of the Penrose inequality holds for such surfaces $S$, too. To this end, we say that $S'\subset S$ is an exterior surface if $S'$ is non-compact and connected, if there is a free boundary minimal surface $\Sigma\subset \mathbb{R}^3$ with respect to $S$ such that $\partial \Sigma=\partial S'$ and if there are no other free boundary minimal surfaces with respect to $S'$. The following conjecture is taken from \cite{volkmann2015free}.
\begin{conj}
	Let $S$ be an asymptotically flat support surface with $H^S\geq 0$ and $S'\subset S$ be an exterior surface with free boundary minimal surface $\Sigma$ and exterior mass $m_{ext}$. If $\Sigma$ is connected, there holds
	\begin{align}
	m_{ext}\geq\sqrt{\frac{|\Sigma|}{{\pi}}} 
	\label{HC PR}
	\end{align}
	with equality if and only if $S'$ a half-catenoid and $\Sigma$  the free boundary disc  contained in the symmetry plane of the catenoid. 
	\label{Huisken conjecture}
\end{conj}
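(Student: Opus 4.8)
The plan is to deduce Conjecture \ref{Huisken conjecture} from Theorem \ref{main thm RPI} by a reduction, and then to indicate which further ingredient one would still have to supply in order to reach the \emph{sharp} constant and the rigidity. Given the asymptotically flat support surface $S$ and the exterior surface $S'\subset S$ with free boundary minimal surface $\Sigma$, let $U\subset\mathbb{R}^3$ be the closed region bounded by $S$ that contains $\Sigma$, so $\partial U=S$ and $\Sigma\cap\partial U=\partial\Sigma$. I would first verify that $(U,g_e)$ is an asymptotically flat half-space in the sense of this paper: the Euclidean metric is flat, hence $\operatorname{Sc}\equiv 0$; the boundary $\partial U=S$ has mean curvature $H^{\partial U}=H^S\geq 0$; and writing the end of $S$ as a graph $x_3=f(x_1,x_2)$ over the asymptotic plane, the diffeomorphism $(x_1,x_2,x_3)\mapsto(x_1,x_2,x_3-f(x_1,x_2))$ straightens $S$ to $\{x_3=0\}$ while, by the asymptotic decay of $S$, turning $g_e$ into a metric satisfying \eqref{decay one} and \eqref{decay two} (and the refined asymptotic conditions required in Theorem \ref{main thm RPI}, which are explicit here because the chart is explicit). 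In these coordinates $\Sigma$ is a connected free boundary minimal surface of $(U,g_e)$, and the region $M'\subset U$ bounded by $\Sigma$ and the corresponding portion $S'$ of $S$ is an exterior region with $\tilde\partial M'=\Sigma$: indeed $\mathbb{R}^3$ contains no closed minimal surfaces, and the absence of further free boundary minimal surfaces is precisely the hypothesis that $S'$ is an exterior surface. Finally, matching the two limit definitions through the straightening chart (a boundary-term computation, using that $g_e$ is flat) yields $\tilde m_{ADM}(M')=m_{ext}$.

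Granting this identification, Theorem \ref{main thm RPI} applied to $(M',g_e)$ immediately gives $m_{ext}=\tilde m_{ADM}(M')\geq\sqrt{|\Sigma|/(32\pi)}$, and since $(M',g_e)$ is flat and therefore not a piece of the Schwarzschild half-space, this inequality is in fact strict. This already establishes a Penrose type inequality for asymptotically flat support surfaces, but with a constant smaller than the conjectured one by the factor $(\sqrt{32})^{-1}=(4\sqrt 2)^{-1}$; for instance the half-catenoid with neck radius $a$ has $|\Sigma|=\pi a^2$ and $m_{ext}=a$, so the conjectured bound $\sqrt{|\Sigma|/\pi}=a$ is saturated there, while the reduction only yields $a/(4\sqrt 2)$.

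To obtain the sharp inequality \eqref{HC PR} together with its rigidity I would abandon the detour through the Schwarzschild half-space and instead run the weak free boundary inverse mean curvature flow developed in this paper \emph{directly} from $\Sigma$ inside $\mathbb{R}^3$, supported on the asymptotically flat surface $S'$. Since $\Sigma$ is minimal the flow jumps immediately onto the strictly outward minimizing hull of $\Sigma$ and then sweeps out the exterior region of $S'$; one must show that this weak flow exists for all times and exhausts $M'$, which is a genuine extension of Marquardt's construction because $S'$ need not be convex. Along the flow I would monitor a free boundary Hawking-type functional $\mathcal{M}(\Sigma_t)$ \emph{renormalized so that its value on a flat free boundary disc of radius $a$ equals $a$ and so that its degenerate profile is the half-plane (for $m_{ext}=0$) and the half-catenoid (for equality), rather than the Schwarzschild half-space}; concretely this forces an extra boundary term built from the geodesic curvature of $\partial\Sigma_t$ in $S'$, and one then has to re-derive monotonicity of $\mathcal{M}$ from the first variation under the right-angle free boundary condition, Gauss--Bonnet applied to the capped-off surface $\Sigma_t\cup S_t$, and the signs $\operatorname{Sc}_{\mathbb{R}^3}=0$ and $H^{S'}\geq 0$. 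Evaluating $\mathcal{M}(\Sigma_t)$ as $t\to\infty$ should reproduce $m_{ext}$ — here the asymptotic graph structure of $S'$ and the new normalization must conspire — and the rigidity follows by chasing the equality case: it propagates umbilicity to every $\Sigma_t$ and forces $H^{S'}\equiv 0$ together with $\partial\Sigma_t$ being a geodesic of $S'$ meeting the totally geodesic disc orthogonally; a short ODE analysis then identifies $S'$ with the half-catenoid and $\Sigma$ with its equatorial disc.

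I expect the main obstacle to be exactly the identification and monotonicity of this correctly normalized functional $\mathcal{M}$: the ordinary free boundary Hawking mass used for Theorem \ref{main thm RPI} is monotone but has the wrong ground state, so the factor $4\sqrt 2$ must be recovered from a genuinely different monotone quantity, and checking that the added boundary term does not destroy monotonicity — especially across the jump times of the weak flow, where $\partial\Sigma_t$ can develop corners on $S'$ — looks delicate. A secondary difficulty is the long-time existence and properness of the weak free boundary flow on a non-convex, merely asymptotically flat support surface, for which the barrier and compactness arguments of this paper would have to be pushed considerably further.
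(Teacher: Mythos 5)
You were asked to prove Conjecture \ref{Huisken conjecture}, but this statement is left \emph{open} by the paper: the paper only proves the non-sharp Corollary \ref{volkmann coro}, with constant $\sqrt{|\Sigma|/(2\pi)}$, by realizing the region bounded by $S$ as an asymptotically flat half-space and applying Theorem \ref{main thm RPI}; it even remarks that the free boundary inverse mean curvature flow does not seem capable of reaching the sharp constant in (\ref{HC PR}). The first half of your proposal follows this same reduction, but it contains two concrete errors. First, the shear chart $(x_1,x_2,x_3)\mapsto(x_1,x_2,x_3-f(x_1,x_2))$ does not verify the decay (\ref{asymptotic behavior 0}): the metric deviation it produces is of size $|\nabla_e f|=\mathcal{O}(\hat r^{-1})$ only in the horizontal radius $\hat r$, not in $|x|$, so at points with $|x_3|\gg\hat r$ it is much larger than $c|x|^{-1}$; this is precisely why Lemma \ref{mass vs extrinsic mass} builds a carefully adapted spherical chart rather than a graph-straightening one. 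Second, the mass identification $\tilde m_{ADM}(M')=m_{ext}$ is false: the correct relation, proved in Lemma \ref{mass vs extrinsic mass}, is $m_{ext}=4\,m_{ADM}$ (both the hemisphere integral and the boundary circle integral in (\ref{ADM mass}) contribute $2\pi r\rho'$, against the normalization $1/(16\pi)$). With the correct factor your reduction gives $m_{ext}\geq 4\sqrt{|\Sigma|/(32\pi)}=\sqrt{|\Sigma|/(2\pi)}$, which is exactly Corollary \ref{volkmann coro} and is consistent with your own catenoid test (the reduction yields $a/\sqrt2$, not $a/(4\sqrt2)$), but it still misses (\ref{HC PR}) by a factor $\sqrt2$.

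The second half of your proposal, in which the sharp constant and rigidity are to come from a renormalized free boundary Hawking-type functional $\mathcal{M}$ monotone along a weak flow run directly on the support surface, is a research program rather than a proof: the existence of such a functional, its monotonicity (in particular across jump times of the weak flow), the identification of its limit with $m_{ext}$, and long-time existence of the weak flow on a non-convex, merely asymptotically flat support surface are all unproven, and you acknowledge this. So as written your argument establishes (after correcting the mass factor and the chart construction) only the non-sharp inequality already contained in the paper, and the conjectured sharp inequality with its catenoid rigidity remains unproved.
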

It is natural to expect the catenoid to occur in the rigidity case as it is a minimal surface which exhibits a symmetry similar to the one of the spatial Schwarzschild space. Moreover, using a divergence structure of the exterior mass, Volkmann was able to verify this conjecture for certain graphical surfaces. Using the weak free boundary inverse mean curvature flow, we can show a  weaker inequality which we do not expect to be optimal. Namely, we will show that an exterior surface $S$  can be realised as the exterior boundary of an exterior region of an asymptotically flat half-space $M$ such that $m_{ext}=4m_{ADM}$, see Lemma  \ref{mass vs extrinsic mass}. Combining this with Theorem \ref{main thm RPI} yields the following.
\begin{coro}
	Let $S$ be an asymptotically flat support surface with $H^S\geq 0$ and $S'\subset S$ an exterior surface with free boundary minimal surface $\Sigma$ and exterior mass $m_{ext}$. If $\Sigma$ is connected, there holds
	$$
	m_{ext}\geq	\sqrt{\frac{|\Sigma|}{{2\pi}}}.
	$$
	\label{volkmann coro}
\end{coro}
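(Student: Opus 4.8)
The idea is that Corollary~\ref{volkmann coro} should follow by combining Theorem~\ref{main thm RPI} with the geometric reduction encoded in Lemma~\ref{mass vs extrinsic mass}, so the plan splits into establishing that reduction and then running the arithmetic. Concretely, given an asymptotically flat support surface $S$ with $H^S\geq 0$, I would take $(M,g)$ to be one of the two closed components of $\mathbb{R}^3$ bounded by $S$, equipped with the flat Euclidean metric; then $\partial M$ is isometric to $S$, the dominant energy condition holds because $\operatorname{Sc}_g\equiv 0$ and $H^{\partial M}=H^S\geq 0$, and an exterior surface $S'\subset S$ with free boundary minimal surface $\Sigma\subset\mathbb{R}^3$ corresponds to an exterior region $M'\subset M$ whose interior boundary $\tilde\partial M'$ consists precisely of $\Sigma$, now a connected free boundary minimal surface in $M$. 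One has to check that $(M,g)$ is an asymptotically flat half-space in the sense used in Theorem~\ref{main thm RPI}, i.e.\ that the asymptotics of $S$ force the decay hypotheses (\ref{decay one})--(\ref{decay two}), (\ref{asymptotic behavior 0}) and (\ref{asymptotic behavior 1}) on $g$, and that $M'$ is a genuine exterior region --- no closed or free boundary minimal surfaces other than $\Sigma$ --- which translates into the defining property of $S'$ together with the nonexistence of closed minimal surfaces in a flat half-space.

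The substantive step, and the one I expect to be the main obstacle, is the mass identity $m_{ext}=4\,\tilde m_{ADM}$ asserted in Lemma~\ref{mass vs extrinsic mass}: here $m_{ext}$ is Volkmann's exterior mass, defined through a flux integral intrinsic to $S$, while $\tilde m_{ADM}$ is the Almaraz--Barbosa--de~Lima mass of the end of $M$, defined through the two surface integrals in the formula for $\tilde m_{ADM}$ recalled above. Matching these requires identifying the coordinate chart near the end of $S$ with a half-space chart near the corresponding end of $M$, expanding the flat metric of $\mathbb{R}^3$ restricted to $M$ in these coordinates, and carefully comparing the bulk contribution $\int_{\mathbb{S}^3_r\cap\mathbb{R}^3_+}(\partial_jg_{ij}-\partial_ig_{jj})\tfrac{x_i}{|x|}$ and the boundary term $\int_{\partial D^2_r\times\{0\}}g_{i3}\tfrac{x_i}{|x|}$ with the single intrinsic flux of $S$; the factor $4$ should emerge from this computation, and I would verify it first in the model case of the half-catenoid in order to fix conventions and signs.

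Granting Lemma~\ref{mass vs extrinsic mass}, the conclusion is immediate: since $\Sigma$ has the same area whether viewed in $\mathbb{R}^3$ or in $M$, Theorem~\ref{main thm RPI} applied to $(M,g)$, $M'$ and $\Sigma$ gives $\tilde m_{ADM}\geq\sqrt{|\Sigma|/(32\pi)}$, whence
$$
m_{ext}=4\,\tilde m_{ADM}\geq 4\sqrt{\frac{|\Sigma|}{32\pi}}=\sqrt{\frac{16\,|\Sigma|}{32\pi}}=\sqrt{\frac{|\Sigma|}{2\pi}}.
$$
This is the claimed inequality. It is presumably not optimal: the equality case of Theorem~\ref{main thm RPI} is the spatial Schwarzschild half-space rather than the half-catenoid predicted in Conjecture~\ref{Huisken conjecture}, and indeed the constant obtained here falls short of the conjectured one in (\ref{HC PR}) by a factor of $\sqrt{2}$, which I would not expect a flow-based argument of this type to remove.
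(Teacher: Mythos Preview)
Your proposal is correct and follows exactly the paper's approach: the paper's proof of Corollary~\ref{volkmann coro} is the single line ``This follows from Theorem~\ref{main thm RPI} and Lemma~\ref{mass vs extrinsic mass},'' and your write-up simply unpacks that line, including the arithmetic $4\sqrt{|\Sigma|/(32\pi)}=\sqrt{|\Sigma|/(2\pi)}$ and the observation that the substantive content lies in the mass identity $m_{ext}=4\,\tilde m_{ADM}$ established separately in Lemma~\ref{mass vs extrinsic mass}.
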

The previous corollary suggests that the free boundary inverse mean curvature flow cannot be used to prove (\ref{HC PR}). In fact, the inverse mean  curvature flow with free boundary seems to behave in a rather erratic way in the absence of ambient curvature as we will see in Section \ref{fb imcf general section}. Proving or disproving (\ref{HC PR}) thus remains an open problem. 
\\
We now describe the proof of Theorem \ref{main thm RPI}. To this end, we first provide a quick summary of the weak inverse mean curvature flow and its application to the Riemannian Penrose inequality for asymptotically flat manifolds without boundary. Let $(M,g)$ be a complete Riemannian manifold without boundary. We say that a smooth family of surfaces $x:\Sigma_t\hookrightarrow M$ evolves by the inverse mean curvature flow if the following evolution equation holds
\begin{align}
\frac{dx}{dt}=\frac{\nu}{H},
\label{IMCF smooth evolution}
\end{align}
where $H$ and $\nu$ are the mean curvature and the outward normal of $\Sigma_t$, respectively. The inverse mean curvature flow was originally considered by Geroch in \cite{geroch1973energy}, see also \cite{jang1977positive}, who discovered that the quasi-local Hawking mass given by 
$$
m_H(\Sigma):=\frac{|\Sigma|^{\frac12}}{(16\pi)^{\frac32}}\bigg(16\pi-\int_\Sigma H^2\text{d}vol\bigg)
$$
is non-decreasing along the flow of a smooth and connected surface $\Sigma_t$ in a manifold with non-negative scalar curvature. Of course, the Hawking mass is equal to the right-hand side of (\ref{nb RPI}) if $\Sigma$ is a minimal surface. Another remarkable property of the inverse mean curvature flow is the exponential growth of the area and the fact that the surfaces $\Sigma_t$ become more and more round while sweeping out the ambient space. In fact, one can show that $m_H(\Sigma_t)\to m_{ADM}$ if the flow exists for all times which suggests the obvious strategy to evolve $\Sigma$ in Theorem \ref{nb RPI thm} by the inverse mean curvature flow in order to prove (\ref{nb RPI}). Although some global existence results for the inverse mean curvature flow were subsequently proven, see \cite{huisken1988expansion,urbas1990expansion,gerhardt1990flow}, it is well-known that the flow generally develops singularities and cannot be continued past the singular time. In order to overcome these difficulties, Huisken and Ilmanen introduced the concept of a weak solution where the leaves of the flow are given by the level sets of a function $u\in C_{loc}^{0,1}(M)$ satisfying a certain variational principle. If the flow is smooth, $u$ satisfies the following quasi-linear degenerate elliptic partial differential equation
\begin{align}
\overline{\operatorname{div}}\bigg(\frac{\overline{\nabla} u}{|\overline{\nabla} u|}\bigg)=|\overline{\nabla} u|. \label{IMCF level set equation}
\end{align}
Here, the bar indicates that the respective geometric quantity is taken with respect to the ambient metric $g$. In order to construct a weak solution, Huisken and Ilmanen used a so-called elliptic regularization scheme, proved the existence of a smooth solution of 
\begin{align}
\overline{\operatorname{div}}\bigg(\frac{\overline{\nabla} u_{\epsilon}}{\sqrt{|\overline{\nabla }u_{\epsilon}|^2+\epsilon^2}}\bigg)=\sqrt{|\overline{\nabla }u_{\epsilon}|^2+\epsilon^2} \label{IMCF level set equation epsilon}
\end{align}
with $\epsilon>0$ and obtained the weak solution $u$ in the limit. Equation (\ref{IMCF level set equation epsilon}) has a natural interpretation as the level set formulation of a translating solution of the smooth inverse mean curvature flow
on a cylinder $M\times\mathbb{R}$ for which the evolution of the Hawking mass can be computed explicitly. Huisken and Ilmanen were able to obtain Geroch's monotonicity in the limit and subsequently established (\ref{nb RPI}). We also remark that there have been many other beautiful applications of the inverse mean curvature flow, see for instance \cite{perez2011nearly,lambert2017geometric,wei2018minkowski}. \\ In order to prove the Riemannian Penrose inequality for asymptotically flat half-spaces $(M,g)$, that is, Theorem \ref{main thm RPI}, it is natural to adapt the strategy in \cite{huisken2001inverse} by considering the free boundary inverse mean curvature flow. This flow is given by a smooth family of  free boundary surfaces $\Sigma_t$ which again evolve by (\ref{IMCF smooth evolution}) and it also enjoys many desirable properties such as the exponential area growth. Moreover, a calculation due to Marquardt, see \cite{marquardt2017weak}, shows that the modified Hawking mass given by
$$
\tilde m_H(\Sigma):=\frac{(2|\Sigma|)^{\frac12}}{(16\pi)^{\frac32}}\bigg(8\pi-\int_\Sigma H^2\text{d}vol\bigg)
$$
is non-decreasing along a smooth flow of a connected free boundary surface  provided the dominant energy condition $\operatorname{Sc}\geq 0$ and $H^{\partial M}\geq 0$ holds. Unfortunately, the free boundary condition seems to lead to even more singularities. In \cite{lambert2016inverse}, Lambert and Scheuer give an example of a rotationally symmetric, strictly convex free boundary surface supported on a compact ellipsoid which develops a finite-time singularity without approaching a minimal surface. In Section \ref{fb imcf general section}, we provide an example of a rotationally symmetric, strictly convex free boundary disc supported on the non-compact catenoid which also develops a finite-time singularity. Furthermore, in \cite{marquardt2017weak} Marquardt developed the theory of the weak free boundary inverse mean curvature flow but was only able to establish the existence of weak solutions for subsets of $\mathbb{R}^n$ whose boundary is a convex graph over $\mathbb{R}^{n-1}$ where  $n\geq 3$. This difficulty is caused by the presence of the source term on the right-hand side of (\ref{IMCF level set equation epsilon}) which makes it almost impossible to construct suitable subsolutions respecting the boundary condition without making very specific geometric assumptions. In order to overcome these difficulties, we drop the inconvenient source term and consider the modified equation  
\begin{align}
\overline{\operatorname{div}}\bigg(\frac{|\overline{\nabla} u_{\epsilon}|}{\sqrt{|\overline{\nabla }u_{\epsilon}|^2+\epsilon^2}}\bigg)=|\overline{\nabla }u_{\epsilon}|. \label{IMCF level set equation epsilon modified}
\end{align}
We are then able to establish the existence of a weak solution in a very general setting and we can even allow the initial surface to be closed.
The price which we have to pay for this analytic convenience is a slightly less regular solution and less geometric interpretability. In fact, we will see that the modified approximate solution can now be interpreted as a translating solution of the degenerate parabolic equation
\begin{align}
\frac{dx}{dt}=\frac{\nu}{\sqrt{\epsilon^2+H^2}}.
\end{align}
and neither the modified Hawking mass $\tilde m_H$ nor the area functional evolve in a desirable way under this flow. However, we discover a useful approximate growth inequality for the approximate Willmore energy
$$
\frac14\int_\Sigma(H\sqrt{\epsilon^2+H^2}+\epsilon^2\log(\sqrt{\epsilon^2+H^2}+H)-\epsilon^2\log(\epsilon)) \text{d}vol.
$$
The convergence to the weak solution is strong enough such that we can pass this approximate inequality to the limit and, combining it with the exponential area growth of the weak solution, obtain the monotonicity of the modified Hawking mass $\tilde m_H$ for the weak solution. Following the ideas of Huisken and Ilmanen, we are then able to use a weak blowdown argument to establish that $\tilde m_H(\Sigma_t)\to m_{ADM}$ as $t\to\infty$ which completes the proof of Theorem \ref{main thm RPI}. \\
The rest of this article is organized as follows. In Section \ref{asymptotically flat half spaces}, we recall some facts about asymptotically flat half-spaces and asymptotically flat support surfaces in dimension three and show that the region bounded by an asymptotically flat support surface is an asymptotically flat half-space. We also study the topology of exterior regions. In Section \ref{fb imcf general section}, we introduce the  free boundary inverse mean curvature flow, give an example of a finite-time singularity and explain the definitions of the weak free boundary inverse mean curvature flow given in \cite{marquardt2017weak,huisken2001inverse}. We also recall some useful properties of the flow. In Section \ref{new approximation scheme}, we prove a-priori estimates for solutions of (\ref{IMCF level set equation epsilon modified}) and pass these solutions to the limit to obtain a weak solution. In Section \ref{geroch monotonicity section}, we prove a growth inequality for the approximate Willmore energy in the smooth setting and obtain the monotonicity of the modified Hawking mass in the limit using ideas from \cite{huisken2001inverse}. Finally, in Section \ref{asymptotic behavior section}, we study the asymptotic behaviour of the flow and prove that its leaves become close to a large hemisphere in $C^{1,\alpha}$. In particular, we will see that the modified Hawking mass approaches the ADM-mass. We then proceed to prove the main results and the rigidity statement. \\
\textbf{Acknowledgements.} The author would like to thank his advisor Guofang Wang for suggesting the problem and for many helpful discussions. He also would like to thank Julian Scheuer and Ben Lambert for their interest in this work.

\section{Asymptotically flat half-spaces and asymptotically flat support surfaces}
\label{asymptotically flat half spaces}
In this section, we give a precise definition of asymptotically flat half-spaces and asymptotically flat support surfaces. We also recall the definitions for the ADM-mass, the extrinsic mass and the modified Hawking mass. We then proceed to show that every asymptotically flat support surface can be identified with an asymptotically flat half-space and we also prove that an exterior region is diffeomorphic to a half-space with finitely many solid free boundary discs and finitely many solid closed spheres removed. For the rest of this article, let us recall that given a Riemannian manifold $(M,g)$ with boundary $\partial M$ and a subset $U\subset M$, we call $\tilde \partial U:=\overline{\partial U\setminus \partial M }$ the interior boundary of $U$ and $\hat \partial U:=\partial U\cap \partial M$ the exterior boundary of $U$. Given a connected and compact hypersurface $\Sigma\subset M$, we say that $\Sigma$ is a free boundary surface (with respect to $\partial M$) if $\partial \Sigma\neq \emptyset$ and if $\Sigma$ meets $\partial M$ along its boundary at a contact angle of $\pi/2$. If $\partial \Sigma=\emptyset$ and $\Sigma\cap \partial M=\emptyset$, we say that $\Sigma$ is closed. 

\subsection{Asymptotically flat half-spaces}

Let $(M,g)$ be a three-dimensional, complete Riemannian manifold with non-compact boundary $\partial M$. We say that $(M,g)$ is an asymptotically flat half-space if there exists a compact set $\Omega\subset M$ such that every component $\hat M$ of $M-\Omega$, which we call an end of $M$, is diffeomorphic to $\mathbb{R}_+^3-B^3_1(0)$ and if the metric expressed in terms of this chart satisfies\footnote{Asymptotically flat half-spaces can also be defined with different decay rates, where for instance the right-hand side of (\ref{asymptotic behavior 0}) is replaced by $c|x|^{-\beta}$ for some $1/2<\beta\leq 1$, see \cite{almaraz2014positive}. However, we will only consider the case $\beta=1$.}  
\begin{align}
|g_{ij}(x)-{g_e}_{ij}|+|x||\partial_l g_{ij}(x)| \leq c|x|^{-1} \label{asymptotic behavior 0}
\end{align}
as well as 
\begin{align}
\operatorname{Rc}\geq -c|x|^{-2}. \label{asymptotic behavior 1}
\end{align}
Let $\operatorname{Sc}$ be the scalar curvature of $(M,g)$ and $H^{\partial M}$ be the mean curvature of $\partial M$. If $\operatorname{Sc}\in L^1(M)$ and $ H^{\partial M}\in L^1(\partial M)$, then each end $\hat M$ possesses a global non-negative invariant called the ADM-mass and defined by
\begin{align}
m_{ADM}:=\lim_{r\to\infty} \frac{1}{16\pi}\bigg(\int_{\mathbb{S}^3_r(0)\cap \mathbb{R}^3_+}(\partial_jg_{ij}-\partial_ig_{jj})\frac{x_i}{|x|}\text{d}vol_e+\int_{\partial D^2_r(0)\times\{0\}} g_{i3}\frac{x_i}{|x|}\text{d}vol_e\bigg ).
\label{ADM mass}
\end{align}
Here, $x$ denotes the position vector field of $\mathbb{R}^3$ and the subscript $e$ indicates that the geometric quantity is computed with respect to the Euclidean background metric $g_e$. Contrary to the introduction, we will not indicate  quantities related to an ambient manifold with non-empty  boundary by a tilde. If the dominant energy condition $\operatorname{Sc}, H^{\partial M}\geq 0$ holds, then $m_{ADM}\geq 0$ and if equality holds for one end then $(M,g)$ is isometric to the flat half-space $\mathbb{R}^3_+$. We refer to \cite{almaraz2014positive} for more details on asymptotically flat half-spaces and a derivation of the dominant energy condition from the Gibbons-Hawking-York action. An alternative proof of the fact $m_{ADM}\geq 0$ is also given in \cite{chai2018positive} using free boundary minimal surface techniques. \\
 A special family of asymptotically flat half-spaces is given by the Schwarzschild half-spaces of mass $m_{ADM}>0$. They are defined by $(M_S,g_S):=(\mathbb{R}^3_+,\phi^4 g_e)$ where $\phi(x)=1+(|x|m_{ADM})^{-1}$. $(M_S,g_S)$ has two ends and one may check that the scalar curvature  vanishes, that the ADM-mass of each end is in fact $m_{ADM}$ and that $(M_S,g_S)$ exhibits a $\mathbb{Z}_2-$symmetry given by a spherical inversion with respect to the free boundary minimal disk $\mathbb{S}^2_{m_{ADM}}(0)\cap \mathbb{R}^3_+$. Using a calibration argument, it can also be checked that this free boundary minimal disc is area minimizing and in fact the only free boundary minimal disc in $(M_S,g_S)$. 
 \\We say that $M'\subset M$ is an exterior region if $M'$ is non-compact and connected, $\tilde \partial M'$ consists of closed and free boundary minimal surfaces and if there are no other closed or free boundary minimal surfaces in $M'$. $\{x\in M_S| |x|\geq m_{ADM}\}$ is therefore an example of an exterior region. Finally, given a hypersurface $\Sigma\subset M$, we denote the modified (free boundary) Hawking mass by
\begin{align}
m_H(\Sigma):=\frac{|\Sigma|^{\frac12}}{2(8\pi)^{\frac32}}\bigg(8\pi-\int_\Sigma H^2\text{d}vol\bigg) \label{modified hm}
\end{align}
and the (closed) Hawking mass by
$$
\hat m_H(\Sigma):=\frac{|\Sigma|^{\frac12}}{(16\pi)^{\frac32}}\bigg(16\pi-\int_\Sigma H^2\text{d}vol\bigg).
$$
We emphasize that this notation differs from the one used in the introduction as we will mostly be concerned with the modified Hawking mass in the rest of this article. Finally, we agree that $\partial M$ is oriented by the outward unit normal $\mu$. If $ \Sigma$ is a free boundary surface with respect to $M$, then its outward co-normal coincides with $\mu$ and we will use the notation $\mu$ in both contexts.
\subsection{Asymptotically flat support surfaces}
The second class of spaces which we will consider was introduced by Volkmann in \cite{volkmann2015free}. He defined so-called asymptotically flat support surfaces of $\mathbb{R}^3$ as follows. Let $S\subset\mathbb{R}^3$ be a complete, connected, two-sided, non-compact smooth surface oriented by the normal $\mu$. We say that 
$S$ is an asymptotically flat support surface if there exists a compact set $\Omega\subset\mathbb{R}^3$ such that every component $\hat S$ of $S-\Omega$, called an end of $S$, can, after a rotation, be written as the graph of a function $\psi\in C^2(\mathbb{R}^2\setminus D_{R_0}^2(0))$  satisfying\footnote{We note that Volkmann additionally requires the condition $|\psi(x)|\leq c|x|^{-\beta}$ for some $0<\beta<1$ which is however, up to translations, implied by (\ref{graph asymptotic behavior}).}
\begin{align}
|x||\nabla_e \psi(x)|+|x|^{2}|\nabla_e^2\psi(x)|\leq c \label{graph asymptotic behavior}
\end{align} 
for some constant $c$ and some radius $R_0>0$. As before, each end can be assigned an invariant number called the exterior mass and defined by
$$
m_{ext}:=\lim_{r\to\infty}\frac{1}{2\pi}\int_{\partial D^2_r(0)} \frac{x_i}{|x|}\frac{\partial \psi}{\partial x^i}\text{d}{vol}_e.
$$
If $H^S\in L^1(S)$, where $H^S$ denotes the mean curvature of $S$, then it can be shown that the exterior mass is well-defined. Moreover, Volkmann showed that the dominant energy condition $H^S\geq 0$ implies that $m_{ext}\geq 0$ with equality if and only if $\Sigma$ is a flat plane.  This result is very reminiscent of the positive mass theorems for asymptotically flat manifolds and half-spaces. Moreover, Volkmann proved many interesting properties of asymptotically flat support surfaces such as a rigidity statement for the existence of non-compact, properly embedded free boundary minimal surfaces with respect to $S$ which is very similar to a result obtained by Carlotto in the context of asymptotically flat manifolds, see \cite{carlotto2016rigidity}. \\ A special family of asymptotically flat support surfaces is given by the catenoids with mass $m_{ext}>0$ defined to be $S_C:=\{(x_1,x_2,\pm\operatorname{arcosh}(\sqrt{m_{ext}^{-2}(x_1^2+x^2)})) | x_1^2+x_2^2\geq m^2_{ext}\}.$ Such a catenoid is rotationally symmetric, has two planar ends, exhibits a $\mathbb{Z}_2$ symmetry given by the reflection across the plane $\{x_3=0\}$ and the exterior mass of each end is given by $m_{ext}$. Moreover, it bounds the free boundary minimal disc $D^2_{m_{ext}}(0)\times\{0\}$. These properties are very reminiscent of the spatial Schwarzschild manifold with a geodesic plane through the origin removed and provide more justification for the special part which the catenoid plays in Huisken's Conjecture \ref{Huisken conjecture}. As before, we say that a connected, non-compact subset $S'\subset S$ is an exterior surface if there is a free boundary minimal surface $\Sigma\subset \mathbb{R}^3$ with respect to $S$ such that $\partial \Sigma=\partial S'$ and if there are no other free boundary minimal surfaces with respect to $S'$. An example of an exterior surface is therefore the intersection of the catenoid $S_C$ with the upper half-space. \\
Let us recall that the normal of $S$ is given by $\mu$ and we call this direction outward. Since $S$ is orientable, it follows that $\mathbb{R}^3\setminus S$ consists of two components and we denote the one $\mu$ is pointing out of by $M^S$. As had been mentioned earlier, we are able to show that the manifold $(M^S,g_e)$ is an asymptotically flat half-space and that the exterior mass is equal to the ADM-mass up to a multiplicative factor.  
\begin{lem}
	Let $S$ be an asymptotically flat support surface. Then $(M^S,g_e)$ is an asymptotically flat half-space. The extrinsic mass $m_{ext}$ is well-defined if and only if $m_{ADM}$ is well-defined and there holds $m_{ext}= 4m_{ADM}$. Moreover, if $S'$ is an exterior surface, then there is an exterior region $M'$ such that $S'=\hat\partial M'$. 
	\label{mass vs extrinsic mass}
\end{lem}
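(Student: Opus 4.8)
The plan is to verify each of the three claims in turn, working directly with the graphical structure of the ends of $S$.

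\textbf{Step 1: $(M^S, g_e)$ is an asymptotically flat half-space.} Fix an end $\hat S$ of $S$, which after a rotation is the graph of $\psi \in C^2(\mathbb{R}^2 \setminus D^2_{R_0}(0))$ satisfying (\ref{graph asymptotic behavior}). The corresponding component of $M^S \setminus \Omega$ is the region lying on the $\mu$-side of this graph, i.e. (after fixing the sign of $\mu$) the set $\{(x_1, x_2, x_3) : x_3 \geq \psi(x_1, x_2)\}$ with $(x_1,x_2)$ large. I would write down the explicit diffeomorphism $\Phi(x_1, x_2, x_3) := (x_1, x_2, x_3 - \psi(x_1, x_2))$ onto a region of the form $\mathbb{R}^3_+ \setminus B^3_1(0)$ (after a further translation to absorb the bounded term $\psi$, using the footnote's remark that $|\psi| \leq c|x|^{-\beta}$ follows from (\ref{graph asymptotic behavior})). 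The pulled-back metric $\Phi_* g_e$ then has components that differ from $\delta_{ij}$ by terms built from $\nabla_e \psi$ and its products, and from $\nabla_e^2 \psi$ for the first derivatives of the metric; the bounds (\ref{graph asymptotic behavior}) give precisely $|g_{ij} - {g_e}_{ij}| + |x||\partial_l g_{ij}| \leq c|x|^{-1}$, which is (\ref{asymptotic behavior 0}). The Ricci bound (\ref{asymptotic behavior 1}) is immediate since $(M^S, g_e)$ is flat, so $\operatorname{Rc} = 0$. This step is essentially a routine chart computation.

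\textbf{Step 2: the mass identity $m_{ext} = 4 m_{ADM}$.} With the chart from Step 1 in hand, I would compute the two boundary integrals defining $m_{ADM}$ in (\ref{ADM mass}) directly from $g_{ij} = {g_e}_{ij} + O(|\nabla_e\psi|^2) + \ldots$ restricted to the end. Since the interior metric is flat, the bulk integrand $\partial_j g_{ij} - \partial_i g_{jj}$ is quadratically small in $\nabla_e\psi$, so the sphere integral over $\mathbb{S}^3_r(0) \cap \mathbb{R}^3_+$ contributes only a term that vanishes as $r \to \infty$ under (\ref{graph asymptotic behavior}) — one has to check the $O(|x|^{-2})$ decay of the quadratic terms is genuinely integrable-to-zero against the $O(r^2)$ area, which it is, borderline, and this is where one must be careful with the exact power $\beta = 1$. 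The genuinely nonzero contribution comes from the boundary term $\int_{\partial D^2_r(0) \times \{0\}} g_{i3} \frac{x_i}{|x|} \mathrm{d}vol_e$; tracing through $\Phi$, the mixed component $g_{i3}$ for $i = 1,2$ is, to leading order, $-\partial_i \psi$, and so this integral converges to $-\frac{1}{16\pi}\int_{\partial D^2_r} \partial_i\psi \frac{x_i}{|x|}$ — up to a sign convention this is $\frac{1}{16\pi} \cdot 2\pi \cdot (\text{the limit defining } m_{ext})$, giving $m_{ADM} = \frac{1}{4} m_{ext}$. The equivalence of well-definedness follows because $H^S \in L^1(S)$ is equivalent to $H^{\partial M^S} \in L^1(\partial M^S)$ (they are the same surface and the same mean curvature up to sign) and $\operatorname{Sc} \equiv 0 \in L^1(M^S)$ automatically, so Volkmann's and Almaraz--Barbosa--de Lima's integrability hypotheses coincide.

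\textbf{Step 3: exterior surfaces give exterior regions.} Suppose $S' \subset S$ is an exterior surface with free boundary minimal surface $\Sigma \subset \mathbb{R}^3$ (with respect to $S$), $\partial \Sigma = \partial S'$. I would set $M'$ to be the component of $M^S \setminus \Sigma$ which contains the ends of $M^S$ corresponding to the ends of $S'$ — here one uses that $\Sigma$, being compact with $\partial \Sigma = \partial S' \subset S = \partial M^S$, together with $S'$ bounds a region, and $M'$ is the non-compact piece. Then by construction $\hat\partial M' = \partial M' \cap \partial M^S = S'$ and $\tilde\partial M' = \overline{\partial M' \setminus \partial M^S} = \Sigma$, which is a free boundary minimal surface in $(M^S, g_e)$ precisely because it is a free boundary minimal surface in $\mathbb{R}^3$ with respect to $S$. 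It remains to check $M'$ contains no other closed or free boundary minimal surfaces: a closed minimal surface in $M' \subset \mathbb{R}^3$ is impossible by the maximum principle (no compact minimal surfaces in Euclidean space without boundary), and any free boundary minimal surface in $M'$ with respect to $\partial M' \supset S'$ would be a free boundary minimal surface with respect to $S'$, contradicting the defining property of the exterior surface $S'$.

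\textbf{Main obstacle.} The one place requiring genuine care rather than bookkeeping is the mass computation in Step 2 at the borderline rate $\beta = 1$: one must confirm that the quadratic-in-$\nabla_e\psi$ corrections to $g_{ij}$ decay fast enough that the $\mathbb{S}^3_r$ sphere integral tends to zero, and that the surviving boundary term is exactly $\frac14 m_{ext}$ with the correct sign relative to the orientation conventions (the normal $\mu$ of $S$ versus the outward normal $\mu$ of $M^S$, and the position-vector normalizations in the two mass definitions). Everything else is a direct consequence of the chart $\Phi$ and elementary topology.
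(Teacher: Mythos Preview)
Your Step 1 contains a genuine gap that undermines the whole approach. In the chart $\Phi(x_1,x_2,x_3)=(x_1,x_2,x_3-\psi(x_1,x_2))$ the pulled-back metric has $g_{i3}=\partial_i\psi(y_1,y_2)$ for $i=1,2$, and these components depend only on the horizontal variables. Thus along any ray with $|y'|$ fixed and $y_3\to\infty$ you have $|g_{i3}(y)-\delta_{i3}|=|\partial_i\psi(y')|$, which is a fixed positive number (of order $|y'|^{-1}$) rather than $O(|y|^{-1})$. The decay condition (\ref{asymptotic behavior 0}) therefore fails in the vertical direction, and your chart does \emph{not} exhibit $(M^S,g_e)$ as an asymptotically flat half-space. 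This is exactly the difficulty the paper flags in its first sentence of the proof (``a chart which becomes asymptotically Euclidean in every direction''), and it is why the paper builds the considerably more elaborate spherical chart with moving centres $\rho(r)e_3$: that construction forces the radial coordinate $r$ to be comparable to the full $|y|$, so the estimates from (\ref{graph asymptotic behavior}) translate into decay in all directions.

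Even setting aside the chart problem, your Step 2 contains an error: the claim that $\partial_j g_{ij}-\partial_i g_{jj}$ is quadratically small in $\nabla_e\psi$ is false for $i=3$, where one finds $\partial_j g_{3j}-\partial_3 g_{jj}=\partial_{11}\psi+\partial_{22}\psi=\Delta_e\psi$, which is linear in second derivatives. Integrated over the hemisphere against $y_3/|y|$ this gives (via Fubini and the divergence theorem on the disc) $\int_{\partial D_r^2}\partial_r\psi\to 2\pi m_{ext}$, not zero. So in your chart the hemisphere integral and the boundary-circle integral each contribute $2\pi m_{ext}$, matching the paper's finding that both integrals contribute $2\pi r\rho'$. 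Your stated conclusion $m_{ADM}=\tfrac14 m_{ext}$ does not follow from ``boundary term only'': that would give $\tfrac18 m_{ext}$. Step 3 is fine.
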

\begin{proof}
	The main difficulty is to construct a chart which becomes asymptotically Euclidean in every direction and allows us to easily compute the ADM-mass. 
	We may pick $R_0>0$ such that for every component $\hat S$ of $S\setminus B^3_{R_0}(0)$ there exists a function $\psi$ defined on $\mathbb{R}^2\setminus D^2_{R_0}(0)$ such that after a rotation $\hat S$  is given by the graph of $\psi$. Moreover, we may assume that the outward normal $\mu$ satisfies $\mu\cdot e_3<0$, where $\{e_1,e_2,e_3\}$ denote the canonical basis vectors of $\mathbb{R}^3$. Let $r$ be the radial function of $\mathbb{R}^3$ and $\hat r$ the radial function of $\mathbb{R}^2$.  By assumption, there holds
	\begin{align}
	\hat r|\nabla \psi|+\hat r^2|\nabla^2 \psi|^2\leq c \label{asymptotic behavior}
	\end{align}
	for some constant $c>0$. In order to define a chart at infinity, we use a foliation by spherical regions where the velocity of the respective barycentres is determined by the exterior mass. More precisely, let  
	\begin{align}
	\rho(\hat r):=\frac{1}{2\pi\hat r}\int_{\partial D^2_{\hat r}(0)}\psi\text{d}vol_e.
	\end{align}
	and notice that \begin{align}|\rho'(\hat r)|=\bigg|\frac{1}{2\pi\hat r}\int_{\partial D^2_{\hat r}(0)} \partial_{\hat r} \psi\text{d}vol_e\bigg|\leq c\tilde r^{-1}. \label{rho der est}\end{align}
	Then, we consider the sphere $\mathbb{S}^2_{r}(\rho(r)e_3)$, where $r\geq R_0$, and denote the polar angle by $\varphi$ and the azimuthal angle by $\theta$. It follows from (\ref{asymptotic behavior}) that, after increasing $R_0$ if necessary, given $\varphi$ and $r$, there exists precisely one $\theta=\tilde\zeta(r,\varphi)$ such that $\mathbb{S}^2_r(\rho(r)e_3)$ meets $S$ at the azimuthal angle $\tilde\zeta$. Let $\zeta=2\pi^{-1}\tilde\zeta$ and define the spherical chart at infinity via
	\begin{align}
	\Phi(r,\theta,\varphi):=r(\sin\varphi\sin(\zeta(r,\varphi)\theta),\cos\varphi\sin(\zeta(r,\varphi)\theta),\cos(\zeta(r,\varphi))\theta))+\rho(r)e_3.
	\end{align}
	Let $\mathbb{S}_r:=\mathbb{S}^2_r(\rho(r)e_3)$ and define $\nu_r=(\sin\phi\sin(\zeta\theta),\cos\phi\sin(\zeta\theta),\cos(\zeta\theta))$ to be the outward normal of $\mathbb{S}_r$. It is easy to see that $\Phi$ maps $\mathbb{R}^3_+\setminus B^3_{R_0}(0)\cong(R_0,\infty)\times[0,\pi/2]\times[0,2\pi)/\sim$ onto $M^S\setminus \overline{B^3_{R_0}(\rho(R_0) e_3)}$. Here, $\sim$ denotes  the equivalence relation given by $(\theta,\varphi)\sim(\theta',\varphi')$ if and only if $\theta=\theta'=0$.
	On the other hand, there holds
	\begin{align*}
	\partial_r \Phi=\nu_r+r\partial_r\zeta\theta (\sin\varphi\cos(\zeta\theta),\cos\varphi\cos(\zeta\theta),-\sin(\zeta\theta))+\rho'(r)e_3
	\end{align*} 
	and consequently
	$
	\partial_r\Phi\cdot \nu_r =1+\rho'(r)e_3\geq 1-cr^{-1}>0,
	$
	provided $R_0$ is chosen sufficiently large. It follows that the map $\Phi$ is also injective. We will show below that the metric induced by $\Phi$ is positive definite which then implies that $\Phi$ is in fact a diffeomorphism. Before computing the induced metric, we continue to study the function $\zeta$. Given $r$ and $\varphi$, there exists precisely one point $p(r,\varphi)$ such that the polar angle of $p(r,\varphi)$ with respect to $\mathbb{S}_r$ is also $\varphi$ and $p(r,\varphi)\in\mathbb{S}_r\cap\operatorname{graph}(\psi)$. This allows for the definition of the function $r'(r,\varphi):=\hat r(\operatorname{pr}_{\mathbb{R}^2}(p(r,\varphi)))$. Clearly, $\tilde\zeta(r,\varphi)=\arccos((\psi(r',\phi)-\rho(r))r^{-1})$. After translating the graph, we may assume that $\rho(R_0)=0$. It then follows from (\ref{rho der est}) that $|\rho(r)|\leq c\log (r)$. On the other hand, it follows from (\ref{asymptotic behavior}) that $|\psi(\hat r,\varphi)-\rho(\hat r)|\leq c$ and consequently $-c(\log(\hat r)+1) \leq \psi(\hat r,\varphi ) \leq c(\log(\hat r)+1)$. Clearly, there holds $r'\leq r$. On the other hand, if $r'\leq r/2$, then it follows from the triangle inequality that
	$$
	|p(r,\varphi)-\rho(r)e_3|\leq c+r'< r,
	$$
	provided $R_0$ is sufficiently large. This is of course a contradiction and we deduce $r/2\leq r'\leq r$. Thus,
	$$
	|\psi(r',\varphi)-\rho(r)|\leq |\psi(r',\varphi)-\rho(r')|+|\rho(r)-\rho(r')|\leq c+c\log 2\leq c.
	$$
	As $\tilde\zeta(r,\varphi)=\arccos((\psi(r',\varphi)-\rho(r))r^{-1})$ we then infer from Taylor's theorem that \begin{align}|\tilde\zeta-\frac{\pi}{2}|\leq\frac{c}{r}.\end{align} 
	It then follows that
	\begin{align}
	r'=r\sin(\zeta(r,\varphi)\frac{\pi}{2})\geq r\sqrt{1-\cos^2(\tilde\zeta(r,\varphi))}\geq r\sqrt{1-cr^{-2}}\geq r-cr^{-1}.                                
	\end{align}
	Differentiating the previous inequality it is then easy to see that
	\begin{align}
	\partial_r r'=1+\mathcal{O}(r^{-2}), \qquad \partial_{\varphi} r'=\mathcal{O}(r^{-2}), \qquad |\nabla_e^2 r'|=\mathcal{O}(r^{-3})
	. \label{cai est1}	
	\end{align}
	Using (\ref{asymptotic behavior}) once again, it now follows that
	\begin{align}
	|\psi(r,\phi)-\psi(r',\phi)|\leq cr^{-2}. \label{cai est2}
	\end{align}
	In order to facilitate the following computations, we introduce the map $\tilde \Phi$ defined by $\tilde \Phi(r,\theta,\varphi):=\Psi(\zeta\theta,\varphi)-\Psi(\theta,\varphi)$ where $\Psi(\theta,\phi)=(\sin\phi\sin\theta,\cos\phi\sin\theta,\cos\theta)$ is the standard parametrization of the unit sphere. Then,
	\begin{align}
	\Phi=\operatorname{id}+\rho(r)e_3+r\tilde \Phi.
	\end{align} 
	Moreover, for any multi index $\hat l$ we define $\tilde\Phi_{\hat l}(r,\theta,\phi):=\partial_{\hat l}\Psi\circ(\zeta\theta,\varphi)-\partial_{\hat l}\Psi\circ(\theta,\varphi)$. 
	From (\ref{cai est1}) and (\ref{cai est2}) we deduce that
	\begin{align}
	\zeta(r,\theta)=1-\frac{2}{\pi}\frac{\psi(r,\varphi)-\rho(r)}{r}+\mathcal{O}(r^{-2})=:1+\upsilon(r,\varphi)+\mathcal{O}(r^{-2}).
	\label{cai taylor 1}
	\end{align}
	We notice that \begin{align}\int_{\partial D^2_r(0)}\upsilon(r,\varphi)\text{d}vol_e=\int_{\partial D^2_r(0)} \partial_{l_1}\upsilon(r,\varphi)\text{d}vol_e=\int_{\partial D^2_r(0)} \partial_{l_1}\partial_{l_2}\upsilon(r,\phi)\text{d}vol_e=0 \label{cai zero mean} \end{align} for any $r$ and $l_1,l_2\in\{r,\varphi\}$. Moreover, It  follows from (\ref{cai taylor 1}) and Taylor's theorem that 
	\begin{align}
	\tilde \Phi=\partial_\theta\Psi\upsilon\theta+\mathcal{O}(r^{-2}), \qquad \partial_\theta \tilde\Phi=\partial_\theta\partial_\theta\Psi\upsilon\theta+\mathcal{O}(r^{-2}), \qquad \partial_i\varphi \tilde\Phi_\varphi=\partial_i\varphi \partial_\theta\partial_\varphi \Psi\upsilon\theta+\mathcal{O}(r^{-2}), 
	\label{tilde phi taylor}
	\end{align}
	with obvious generalizations to higher derivatives. Here, $\{\partial_i\}$ denotes the standard coordinate frame of $\mathbb{R}^3$. Again for ease of notation, we define $\tilde \Psi(r,\theta,\varphi):=\Psi(\zeta\theta,\phi)$ and for any multi-index $\hat l$ we define  $\tilde \Psi_{\hat l}:=(\partial_{\hat l} \Psi)\circ(\zeta\theta,\varphi)$. These terms can be expanded using (\ref{cai taylor 1}), too. We proceed to compute using (\ref{cai taylor 1}) as well as (\ref{tilde phi taylor})
	\begin{align}
	\partial_i \Phi=&e_i+\partial_i r \rho' e_3+\partial_i r \tilde \Phi+r\partial_i\varphi \tilde \Phi_\varphi+r\partial_i\theta \tilde \Phi_\theta +r \partial_i\theta(\zeta-1)\tilde \Psi_\theta+r\theta \partial_i\zeta \tilde \Phi_\theta \notag \\
	=&e_i+\partial_i r\rho'e_3+\partial_ir\upsilon\theta \partial_\theta \Psi +r\partial_i\varphi\upsilon\theta \partial_\theta\partial_{\varphi}\Psi+r\partial_i\theta\theta\upsilon \partial_\theta\partial_\theta \Psi+r\partial_i\theta\upsilon \partial_\theta \Psi \notag \\& +\mathcal{O}(r^{-2}) \label{cai first der}
	\\=&e_i+\mathcal{O}(r^{-1}).
	\end{align}
	In a similar fashion, we obtain
	\begin{align}
	\partial_{ij}\Phi=&\partial_i r\partial_jr\rho'' e_3+\partial_i\partial_jr\rho'e_3+\partial_i\partial_jr\tilde\Phi+(\partial_ir\partial_j\varphi+\partial_jr\partial_i\varphi)\tilde \Phi_\varphi+(\partial_ir\partial_j\theta+\partial_jr\partial_i\theta)\tilde\Phi_\theta\notag\\&+(\partial_ir\partial_j\theta+\partial_jr\partial_i\theta)(\zeta-1)\tilde \Psi_\theta
	+(\partial_ir\partial_j\zeta+\partial_jr\partial_i\zeta)\theta \tilde \Psi_\theta+r\partial_i\partial_j\varphi \tilde\Phi_\varphi+r\partial_i\varphi\partial_j\varphi \tilde \Phi_{\varphi\varphi}\notag\\&+r(\partial_i\varphi\partial_j\theta+\partial_j\varphi\partial_i\theta)\tilde\Phi_{\varphi\theta}+r(\zeta-1)(\partial_j\varphi\partial_i\theta+\partial_j\varphi\partial_i\theta)\tilde \Psi_{\varphi\theta}
	+r(\partial_i\varphi\partial_j\zeta+\partial_j\varphi\partial_i\zeta)\theta\tilde \Psi_{\varphi\theta}\notag\\&+r\partial_i\partial_j\theta \tilde\Phi_\theta\notag+r\partial_i\theta\partial_j\theta\tilde\Phi_{\theta\theta}+r(\zeta-1)\partial_i\theta\partial_j\theta\tilde \Psi_{\theta\theta}+\theta(\partial_i\theta\partial_j\zeta+\partial_j\theta\partial_i\zeta)\tilde \Psi_{\theta\theta}\\&+r\partial_i\partial_j\theta(\zeta-1)\tilde \Psi_\theta\notag+r(\partial_i\theta\partial_j\zeta+\partial_j\theta\partial_i\zeta)\tilde \Psi_\theta+r\partial_i\theta(\zeta-1)\tilde \Psi_{\theta\theta}(\theta\partial_j\zeta+\partial_j\theta\zeta)\\&+r\theta\partial_i\partial_j\zeta\tilde \Phi_\theta\notag	+r\theta \partial_i\zeta \tilde \Psi_{\theta\theta}(\partial_j\theta\zeta+\theta\partial_j\zeta)
	\notag	\\=&:\partial_ir\partial_jr\rho'' e_3+\partial_i\partial_jr\rho'e_3+Q_{ij}(\theta,\varphi,r)\upsilon+Q^{l_1}_{ij}(\theta,\varphi,r)\partial_{l_1}\upsilon+Q^{l_1,l_2}_{ij}(\theta,\varphi,r)\partial_{l_1}\partial_{l_2}\upsilon \label{cai second der}
	\\=&\mathcal{O}(r^{-2}). 
	\end{align}
	Here, $Q_{ij},Q^{l_1}_{ij},Q^{l_1,l_2}_{ij}$ are collections of terms of order $\mathcal{O}(r^{-2})$ which are symmetric in $\varphi$. As before, $l_1, l_2$ are elements of  $\{r,\varphi\}$.  In the last inequality, we used Taylor's theorem again and the fact  $|\partial_i\upsilon|+|\partial_{i}\partial_j\upsilon|r\leq cr^{-2}$ which one readily verifies.  In this particular chart, there holds  ${g}_{ij}=g_e(\partial_i\Phi,\partial_j\Phi)$ and it follows that $(M^S,g_e)$ is an asymptotically flat half-space. We proceed to compute the mass. From (\ref{cai first der}) and (\ref{cai zero mean})   it follows that for $i\in\{1,2\}$ there holds ${g}_{i3}=\partial_ir\rho'+Q_i+\mathcal{O}(r^{-2})$ where $Q_i$ is a quantity satisfying 
	$$
	\int_{\partial D^2_r(0)\times\{0\}} x_iQ_i\text{d}vol_e=0.
	$$
	Thus, we obtain
	\begin{align}
	\int_{\partial D^2_r(0)\times\{0\}} {g}_{i3}\frac{x_i}{|x|}\text{d}vol_e=2\pi r \rho'+\mathcal{O}(r^{-1}).
	\label{cai boundary}
	\end{align}
	On the other hand, we may deduce from (\ref{cai first der}), (\ref{cai second der}), (\ref{cai zero mean}) and Fubini's theorem that
	\begin{align*}
	(\partial_j{g}_{ij}-\partial_i{g}_{jj})\partial_ir&=\partial_ir\partial_i\Phi\cdot\overline{\Delta}_e\Phi-\partial_ir\partial_{ij}\Phi\cdot\partial_j\Phi\\&=\partial_r\cdot (\partial_ir\partial_ir\rho''e_3+\partial_i\partial_ir\rho'e_3)-\partial_ir(\partial_ir\partial_3r\rho''+\partial_i\partial_3r\rho')+Q+\mathcal{O}(r^{-3})\\&=\frac{2\partial_3r}{r}\rho'+Q+\mathcal{O}(r^{-3}),
	\end{align*}
	where $Q$ is a collection of terms such that $\int_{\mathbb{S}^2_r(0)\cap \mathbb{R}^3_+(0)}Q\text{d}vol_e=0$. Thus, it follows that
	\begin{align}
	\int_{\mathbb{S}^3_r(0)\cap\mathbb{R}^3_+(0)}(\partial_jg_{ij}-\partial_ig_{jj})\frac{x_i}{|x|}\text{d}vol_e= 2\pi r\rho'+\mathcal{O}(r^{-1}).
	\end{align}
	Combining this with (\ref{cai boundary}) we have
	$$
	m_{ADM}=\frac{4\pi}{16\pi}\lim_{r\to\infty}r\rho'(r)=\frac{1}{4}\lim_{r\to\infty}\frac{1}{2\pi}\int_{\partial D_r(0)} \frac{x_i}{|x|}\partial_i \psi\text{d}vol_e= \frac14m_{ext}
	$$
	as claimed. The remaining assertions can be verified easily.
\end{proof}
\begin{rema}
	The extension of $S$ to an asymptotically flat half-space satisfying the dominant energy condition is of course not unique. In light of Conjecture \ref{Huisken conjecture} it would be interesting to know if a judicious injection of ambient curvature can produce an asymptotically flat half-space such that $m_{ADM}\leq 2^{-5/2} m_{ext}$.
\end{rema}
\subsection{The topology of an exterior region}

In this subsection, we study the topology of an exterior region. It will be essential for the monotonicity calculation in Section \ref{geroch monotonicity section} that the weak free boundary inverse mean curvature starting at a connected free boundary surface remains connected and does not detach from $\partial M$. In order to show this, we now establish the fact that an exterior region is simply connected and has a connected boundary. To this end, we follow the argument in Section 4 of \cite{huisken2001inverse}. Let $\Omega_1$ be the closure of the union of all smooth, immersed free boundary and closed minimal surfaces. $\Omega_1$ is a compact set since the region near each infinity is foliated by strictly mean convex hemispheres meeting $\partial M$ at an acute angle. The \textit{trapped region} $\Omega$ is then defined to be the union of $\Omega_1$ and all compact components of $M\setminus \Omega_1$ and we note that $\Omega$ is a compact set, too. We then define $M'$ to be the metric completion of a component of $M\setminus \Omega$\footnote{As in \cite{huisken2001inverse}, we take the metric completion rather than the closure as $\Omega_1$ might contain non-separating minimal surfaces.}. 
\begin{lem}Let $(M,g)$ be an asymptotically flat half-space of non-negative scalar curvature and $\partial M$ be mean convex and connected. Then $M'$ is an exterior region and $\tilde \partial M'$ consists of finitely many free boundary minimal discs and closed minimal spheres.  $M'$ is simply connected, does not contain any other immersed minimal surfaces (free boundary or closed) and has the topology of a half-space with finitely many solid balls removed. Finally, the closed components of $\tilde \partial M'$ minimize area in their homology class while the free boundary components minimize area in the homotopy class of their boundary curve with respect to $\hat \partial M'$.
	\label{topological structure}
\end{lem}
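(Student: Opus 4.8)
The plan is to adapt the argument of Huisken and Ilmanen \cite[Section~4]{huisken2001inverse} to the free boundary setting; the new ingredients are a free boundary version of the Schoen--Yau stability estimate for minimal surfaces and the free boundary regularity theory for outermost minimal surfaces. First, since every end is foliated by strictly mean convex hemispheres meeting $\partial M$ at an acute angle, the trapped region $\Omega$ is compact, so $M'$ is non-compact with compact interior boundary $\tilde\partial M'$. The first task is to show that $\tilde\partial M'$ is a smooth, embedded minimal surface whose components touching $\partial M$ do so orthogonally: this is the assertion that the boundary of the trapped region is outermost and hence regular, and it follows from the minimizing hull construction of \cite{huisken2001inverse} combined with the interior and free boundary strong maximum principles and the regularity theory for (free boundary) minimal hypersurfaces. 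Each component $\Sigma$ of $\tilde\partial M'$ separates $\Omega$ from $M'$; since $M'\subset\mathbb{R}^3_+$ is orientable, $\Sigma$ is two-sided, and being outermost it is stable, for an outward deformation cannot strictly decrease area, as otherwise it would sweep out a minimal surface in $\operatorname{int}(M')\subset M\setminus\Omega$, contradicting the definition of $\Omega$.

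Next I would classify the components using second variation. For a closed component $\Sigma$, the test function $1$ together with the Gauss equation, $\operatorname{Sc}\geq 0$ and Gauss--Bonnet gives $2\pi\chi(\Sigma)=\int_\Sigma K_\Sigma\geq \tfrac12\int_\Sigma(|A|^2+\operatorname{Sc})\geq 0$, so $\chi(\Sigma)\geq 0$; the borderline case of a flat, totally geodesic torus is ruled out exactly as in \cite{huisken2001inverse}, so $\Sigma$ is a sphere. For a free boundary component $\Sigma$, the normal $\nu$ is tangent to $\partial M$ along $\partial\Sigma$, so $\{\tau,\nu\}$ is an orthonormal frame of $T\partial M$ there; the geodesic curvature of $\partial\Sigma$ in $\Sigma$ equals the second fundamental form of $\partial M$ in the direction $\tau$, while the boundary integrand in the free boundary second variation is the second fundamental form of $\partial M$ in the direction $\nu$, so these two boundary contributions sum to $H^{\partial M}\geq 0$. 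Feeding the test function $1$ into the free boundary stability inequality and combining with the Gauss equation, $\operatorname{Sc}\geq 0$, $H^{\partial M}\geq 0$ and the free boundary Gauss--Bonnet formula then forces $2\pi\chi(\Sigma)\geq \tfrac12\int_\Sigma(|A|^2+\operatorname{Sc})+\int_{\partial\Sigma}H^{\partial M}\geq 0$, hence $\chi(\Sigma)=1$ and $\Sigma$ is a free boundary disc, the annular borderline being excluded as before. These computations are by now standard in the free boundary positive mass theorem, cf.\ \cite{chai2018positive}.

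For the remaining statements I proceed as in \cite[Section~4]{huisken2001inverse}. By definition every immersed closed or free boundary minimal surface in $M$ is contained in $\Omega_1\subset\Omega$, hence disjoint from $\operatorname{int}(M')$; together with the first two paragraphs this shows $M'$ is an exterior region in the sense of Section~\ref{asymptotically flat half spaces}. Finiteness of the number of components of $\tilde\partial M'$ follows from the compactness of $\Omega$ and the uniform area and curvature estimates for outermost stable minimal surfaces. The area minimization statements follow from the strictly outward minimizing property of the trapped region: minimizing area in the homology class of a closed component, respectively in the homotopy class of $\partial\Sigma$ relative to $\hat\partial M'$ for a free boundary component, produces a minimal surface which by the previous step is supported on $\tilde\partial M'$, and a cut-and-paste comparison identifies it with $\Sigma$. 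Finally, for the global topology one doubles $M'$ across $\hat\partial M'=\partial M\cap M'$, obtaining a topologically smooth asymptotically flat manifold without boundary $DM'$ in which the free boundary discs become closed minimal spheres; since $M'$, and hence the symmetric part of $DM'$, contains no (free boundary) minimal surfaces, an incompressible surface in $DM'$ could be deformed to a minimal one, a contradiction, so $DM'$ is diffeomorphic to $\mathbb{R}^3$ with finitely many open balls removed, and quotienting by the reflection shows that $M'$ is $\mathbb{R}^3_+$ with finitely many disjoint solid half-balls centred on $\partial M$ and finitely many solid balls removed; in particular $M'$ is simply connected.

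The main obstacle is the regularity statement in the first paragraph: that the outermost free boundary minimal surface is smooth, embedded and meets $\partial M$ orthogonally, the free boundary analogue of the regularity of the trapped region boundary in \cite{huisken2001inverse}, which there relies on delicate properties of minimizing hulls. Once this is established, the Gauss--Bonnet classification and the doubling argument are routine adaptations of the boundaryless case.
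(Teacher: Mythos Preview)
Your classification of the components via the stability inequality with test function $1$ is correct and is essentially what the paper does as well: for free boundary components it cites Ros \cite{ros2008stability}, whose Proposition~2.2 is precisely your computation, while for closed components the paper instead first proves simple connectedness and then invokes the loop theorem, but your direct Schoen--Yau argument is an equally valid route. Likewise your area-minimization argument for the last sentence is fine.

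The genuine gap is in the doubling step. When you double $M'$ across $\hat\partial M'$, the resulting metric on $DM'$ is only Lipschitz along the fold, because $\hat\partial M'$ is merely mean convex, not totally geodesic. Your sentence ``an incompressible surface in $DM'$ could be deformed to a minimal one'' invokes Meeks--Simon--Yau type existence results, which require a smooth Riemannian metric; and if you instead smooth the metric near the fold, you lose the information that there are no minimal surfaces there, so the contradiction evaporates. The paper avoids this entirely: it proves simple connectedness directly in $M'$ by the universal cover argument (if $\pi_1(M')\neq 0$ the cover has several ends, which one separates by a closed or free boundary minimal surface whose projection lands in $M'$, contradicting the absence of minimal surfaces there), and then reads off the half-space-minus-balls topology from \cite{meeks1982embedded} as in \cite{huisken2001inverse}.

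A secondary point: your stability claim for the outermost components (``an outward deformation cannot strictly decrease area'') only yields one-sided stability, which is not quite the two-sided inequality you use with the constant test function. The paper sidesteps this by first replacing the generators of $\Omega$ by area minimizers shielding them from infinity; these lie in a family $\mathcal{E}$ of \emph{stable} embedded surfaces of bounded area by construction, and a compactness argument for $\mathcal{E}$ then gives both the smoothness of $\tilde\partial\Omega$ and the finiteness of its components in one stroke. This also resolves what you flag as the ``main obstacle'': regularity of the outermost surface comes for free once one works with minimizers in $\mathcal{E}$ rather than with the raw boundary of the trapped region.
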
	
\begin{proof}
	The proof is very similar to \cite{huisken2001inverse} and we only sketch the details. First, we fix an end $\hat M$ and consider a large boundary circle $\partial D^2_R(0)\times\{0\}$ in the asymptotic region of $\hat M$. If the circle is contractible, then it follows that the corresponding component of $\partial M$ has the topology of $\mathbb{R}^2$. If not, then, using the fact that $\partial M$ is mean convex, we can minimize area in the boundary homotopy class of    $\partial D^2_R(0)\times\{0\}$ and it follows from \cite{meeks1980topology,meeks1982existence} that there is an embedded free boundary minimal disc $\Sigma_0$ such that $\partial \Sigma_0$ lies in the homotopy class of $\partial D^2_R(0)\times\{0\}$. In the former case we take $\Sigma_0=\emptyset$ and denote the annulus (or the disc) bounded by $ \partial \Sigma_0 $ and $\partial D^2_R(0)\times\{0\}$ by $S$. 	
	In either case, if the component of $M-\Sigma_0$  corresponding to $\hat M$ has more than one end, then we can separate these ends by a closed minimal surface which is obtained by minimizing area in the homology class of the sphere $(\mathbb{S}^2_R(0)\cap \mathbb{R}^3_+)\cup S\cup \Sigma_0$. We observe that this sphere also acts as a barrier.\\ Next, as in \cite{huisken2001inverse}, we show that the set $\Omega$ can be generated by a smaller family of minimal surfaces. Namely, we let $c_0$ be the area of a strictly mean convex hemisphere in the asymptotic region of $\hat M$ that meets $\partial M$ at an acute angle and define $\mathcal{E}$ to be the family of all smooth, stable and embedded minimal surfaces, closed or free boundary, with area less or equal than $c_0$. Let $\Sigma_1$ be one of the surfaces in the definition of $\Omega_1$ and suppose that $\Sigma_1$ is not an element of $\mathcal{E}$. We then minimize area amongst all surfaces that shield $\Sigma_1$ from infinity in the chosen end. Clearly, the minimizer $\tilde \Sigma$ has area less or equal than $c_0$ and using a cut and paste comparison argument, see \cite{meeks1982classical}, we can show that $\tilde \Sigma$ cannot touch possible self-intersections of $\Sigma_1$. The Allard-type regularity result \cite{gruter1986allard,sternberg1991c1} then implies that $\tilde \Sigma$ is of class $C^{1,\alpha}$ up to the boundary and consequently a smooth minimal surface which is either closed or free boundary with respect to $\partial M$, see also \cite{gruter1987optimal}. Moreover, it follows that $\tilde \Sigma$ is stable and embedded. In any case, the strong maximum principle implies that $\tilde \Sigma$ and $\Sigma_1$ do not touch and consequently $\Sigma_1\cap \tilde \partial \Omega=\emptyset$. Hence, we may replace $\Sigma_1$ by $\tilde \Sigma\in\mathcal{E}$ in order to generate $\Omega$. \\Let $\Sigma_1,\Sigma_2\in \mathcal{E}$ be surfaces that meet $\tilde \partial \Omega$ and suppose that $\Sigma_1\cap \Sigma_2\neq\emptyset$. If $\Sigma_2$ is not a free boundary minimal surface it follows from the strict maximum principle that $\Sigma_1$ and $\Sigma_2$ must intersect transversally. If both are free boundary minimal surfaces, then they either intersect transversally, too, or meet at a boundary point. In any case, $\Sigma_1\cup \Sigma_2$ is not area minimizing and it follows from the strict maximum principle that the  minimizing hull $\tilde \Sigma$ of $\Sigma_1\cup \Sigma_2$, which is a smooth closed or free boundary minimal surface, is distinct from both $\Sigma_1,\Sigma_2$. It thus follows that neither $\Sigma_1$ nor $\Sigma_2$ intersect $\tilde{\partial} \Omega$, a contradiction. Even more, there is a constant $\epsilon>0$ such that $\operatorname{dist}_g(\Sigma_1,\Sigma_2)>\epsilon$ for any $\Sigma_1,\Sigma_2\in\mathcal{E}$ which meet $\tilde{\partial} \Omega$. For otherwise, there exist minimal surfaces $ \Sigma_3,\Sigma_i\in\mathcal{E}$ meeting $\tilde \partial \Omega$ such that $\operatorname{dist}( \Sigma_3,\Sigma_i)\to 0$. According to \cite{schoen1975curvature} for closed minimal surfaces and \cite{guang2016curvature,li2016min}   for embedded free boundary minimal surfaces, the set $\mathcal{E}$ is compact with respect to the $C^2-$topology. After passing to a subsequence we may assume that $\Sigma_i\to\tilde \Sigma$. One possibility is that $\tilde \Sigma$ touches $\Sigma_3$ at an interior point and it follows that $\Sigma_3=\tilde \Sigma$. But since $\Sigma_3$ and $\Sigma_i$ do not touch  this implies that either $\Sigma_3$ or $\Sigma_i$ do not touch $\tilde \partial \Omega$ for $i$ large, a contradiction. The other possibility is that $\tilde \Sigma$ touches $\Sigma_3$ at a boundary point. But then we can find, as before, another free boundary minimal surface $\hat \Sigma$ such that $\partial \hat \Sigma$ is homotopic to $\partial \tilde \Sigma \cup \partial \Sigma_3$ and the strong maximum principle implies that $ \Sigma_3, \tilde \Sigma$ and $\hat \Sigma$ do not touch. It follows that neither $\Sigma$ nor $\Sigma_i$ touch $\tilde \partial \Omega$ for $i$ large, which is again a contradiction. Hence, it follows that $\tilde \partial \Omega$ consists of finitely many minimal surfaces which are either closed or free boundary and that the non-compact components of $M- \Omega $ are free of other minimal surfaces. \\A straightforward adaptation of Proposition 2.2 in \cite{ros2008stability} to manifolds of non-negative scalar curvature implies that every free boundary minimal surface touching $\tilde\partial\Omega$ must be a topological disc. Since the end $\hat M$ is modelled on a half-space, it follows that the boundary of the corresponding component $M'$ of $M-\Omega$ has the topology of $\mathbb{R}^2$. Next, $M'$ must be simply connected because otherwise the universal cover $\pi:\check{M}\to M'$ of $M'$ has at least two ends which can be separated by either a free boundary minimal surface or a closed minimal surface $\Sigma_4$. According to the maximum principle,  $\Sigma_4$ does not intersect $\pi^{-1}(\tilde \partial M')$ and consequently $\pi(\Sigma_4)$ is an immersed minimal surface in $M'$ (closed or free boundary). This is a contradiction to the fact that $M'$ is free of minimal surfaces. It then follows by the loop theorem, see \cite{meeks1980topology,galloway1993topology}, that every closed component of $\tilde \partial \Omega$ must be a sphere. We can then argue as in \cite{huisken2001inverse,meeks1982embedded} to show that $M'$ has the claimed topology. Finally, in order to show that the components of $\tilde \partial M'$ are area minimizing, we minimize area in their respective homology or boundary homotopy class and since $M'$ is free of other minimal surfaces, the minimizer must be a subset of $\tilde \partial M'$.  
\end{proof}
\begin{rema}
	Contrary to the situation for asymptotically flat manifolds without boundary, see Lemma 4.1 in \cite{huisken2001inverse}, a curvature hypothesis is clearly needed for the statement of the previous lemma to be true. For instance,  one may take the the catenoid $S_C$ with mass one and consider the space $M:=((M^{S_C}\cap\mathbb{R}^3_+)\cup( {B^3_1(0)}\cap\mathbb{R}^3_-))\setminus C$, where $C$ is a thin, rotationally symmetric tentacle contained in the cylinder $D^2_{1/2}(0)\times \mathbb{R}$ such that $C\setminus\{x_3\geq -1/2\}={D}_{1/4}^2(0)\times[-1/2,0)\cup B^3_{1/4}(0)$. $\partial M$ is not mean convex and the interior boundary of the exterior region $M\cap \mathbb{R}^3_+$ is given by a flat annulus.
\end{rema}
\section{The free boundary inverse mean curvature flow}
\label{fb imcf general section}
In this section, we discuss the free boundary inverse mean curvature flow. Both in the weak and strong setting, it was introduced by Marquardt in \cite{marquardt2012inverse} based on ideas of Huisken, Ilmanen, Gerhard and Urbas, see \cite{huisken2001inverse,urbas1990expansion,gerhardt1990flow}. Many of the following concepts will make sense in a more general setting than the one provided by three-dimensional asymptotically flat half-spaces and for now, we will solely assume that $(M,g)$ is an $n-$dimensional, complete and connected Riemannian manifold with non-empty boundary $\partial M$. 
\subsection{The smooth case and examples of finite-time singularities}
Let $\Sigma\subset M$ be a possibly disconnected hypersurface such that each component is either closed or a free boundary surface and suppose that $\Sigma$ is oriented by the outward normal $\nu$ with corresponding mean curvature $H$. We say that $\Sigma_t$ flows by the free boundary inverse mean curvature flow with initial data $\Sigma$ if there is  a number $T>0$ and a  smooth family of embeddings $x:\Sigma_t \hookrightarrow M $, $0\leq t\leq T$, such that $\Sigma_0=\Sigma$, each component of $\Sigma_t$ is either closed or a free boundary surface and the following evolution equation holds 
\begin{align}
\frac{dx}{dt}=\frac{\nu}{H}. \label{imcf evolution equation}
\end{align}	
If $\Sigma$ is strictly mean convex, this flow is parabolic from which one may deduce short-time existence, see \cite{marquardt2012inverse}. The flow has the property that the area evolves exponentially, that is, $|\Sigma_t|=e^t|\Sigma|$. Moreover, it was observed by Marquardt in \cite{marquardt2017weak} that the modified Hawking mass (\ref{modified hm})
is non-decreasing along the flow provided $(M,g)$ satisfies the dominant energy condition $\operatorname{Sc}, H^{\partial M}\geq 0$ and  $\Sigma_t$ is a connected free boundary surface\footnote{At the end of this section, we will see that no reasonable monotonicity can be expected for a flow starting at a closed surface.}. As discussed in the introduction, this flow is therefore a useful tool to prove Theorem \ref{main thm RPI}.
\\ There are some known examples where the flow starting at a connected free boundary surface remains smooth for all times, see \cite{lambert2016inverse} and \cite{marquardt2013inverse}. However, singularities seem to develop in many other cases. For instance, in \cite{lambert2016inverse} Lambert and Scheuer give an example where $\partial M$ is a rotationally symmetric ellipsoid and a strictly convex, rotationally symmetric free boundary surface develops a finite-time singularity. We now also present an example of a finite-time singularity when $\partial M$ is non-compact.\\ Let $(M^{S_C},g_e)$ be the Euclidean domain bounded by the catenoid $S_C$ with exterior mass $m_{ext}=1$  and let $\tilde \Sigma$ be a large sphere which meets $S_C$ at an acute angle. The flat disc $D^2_1(0)$ is a free boundary surface with respect to $S_C$ and we can slightly move this disc upwards to obtain a strictly convex, rotationally symmetric free boundary surface $\Sigma$. Moreover, we may arrange that $\sup_{\Sigma}H^\Sigma<H^{\tilde \Sigma}=:H_0$. Now, suppose that the free boundary inverse mean curvature flow starting at $\Sigma$ and denoted by $\Sigma_t$ does not develop a finite-time singularity. It is easy to see that $\Sigma_t$ stays rotationally symmetric and graphical with respect to its projection onto $\mathbb{R}^2\times\{0\}$. Since the area of $\Sigma_t$ grows exponentially, it follows that there exists a first time $T>0$ when $\Sigma_T$ touches $\tilde \Sigma$. As $\tilde \Sigma$ meets $S_C$ at an acute angle, this must happen at an interior point $p\in M^{S_C}-S_C$. It follows that $H^{\Sigma_T}(p)\geq H^{\tilde \Sigma}(p)=H_0$. On the other hand, along the free boundary inverse mean curvature flow, the mean curvature satisfies the following partial differential evolution equation
\begin{equation}
\begin{aligned}
\partial_t H&=\frac{\Delta H}{H^2}-2\frac{|\nabla H|^2}{H^3}-\frac{|A|^2}{H^3}- \frac{\operatorname{Rc(\nu,\nu)}}{H} \text{ in } \Sigma_t, \\
\partial_\mu H&=HA^{\partial M}(\nu,\nu) \text{ on } \partial \Sigma_t,
\end{aligned}
\end{equation}
where $\mu$ is the outward co-normal of $\partial \Sigma_t$ and thus the outward normal of $\partial M$. By rotational symmetry and since $\partial M$ is a catenoid, it follows that $A^{\partial M}(\nu,\nu)<0$. Since $\operatorname{Rc}=0$ in our example, it follows from the maximum principle that $\operatorname{sup}_{\Sigma_t} H^{\Sigma_t}$ must be decreasing, a contradiction. \\This example can also be modified such that $\partial M$ is strictly convex. What is more, a flow starting from a closed surface seems to necessarily develop a finite-time singularity as the exponential area growth eventually forces the evolving surface to touch the boundary after which the flow cannot be continued in a classical sense. In order to overcome these singularities, we have to use a weak notion of the inverse mean curvature flow. 
\subsection{The weak formulation}
We now describe the weak formulation of the free boundary inverse mean curvature flow which was developed in \cite{marquardt2017weak,huisken2001inverse}. The weak inverse mean curvature flow is a level set flow. This compels the flow to be unidirectional and allows connected components to merge, fatten instantaneously and to change topology while preserving many of the useful properties of the smooth flow such as the exponential area growth and the monotonicity of the modified Hawking mass discussed in the previous subsection. \\ To begin with, let us assume that $\Sigma_t$ is a smooth flow and that there exists a smooth function $u:M\to\mathbb{R}$ such that $\Sigma_t=\{p\in M|u(p)=t\}$. Then the outward normal and mean curvature of $\Sigma_t$ are given by $\nu=\overline{\nabla}u/|\overline{\nabla}u|$ and   $H=\overline{\operatorname{div}}(\overline{\nabla}u/|\overline{\nabla}u|)$, respectively. On the other hand, there holds $u(x)=t$ if $x$ is the embedding of $\Sigma_t$ into $M$. Differentiating in time and using the flow equation (\ref{imcf evolution equation}) we thus find
\begin{align}
\overline{\operatorname{div}}\bigg(\frac{\overline{\nabla}u}{|\overline{\nabla}u|}\bigg)=|\overline{\nabla}u|.  \label{level set equation}
\end{align}
Moreover, if $\partial \Sigma_t$ is non-empty, then $\Sigma_t$ meets $\partial M$ orthogonally along its boundary and it follows that the outward normal $\mu$ of $\partial M$ is orthogonal to $\overline{\nabla }u$, that is,
\begin{align}
\partial_\mu u=0 \text{ on } \partial M. \label{level set bc}
\end{align}
Equation (\ref{level set equation}) is singular if $\overline{\nabla} u=0$ and degenerate in direction $\overline{\nabla} u$ otherwise. In fact, we have seen that solutions to the free boundary inverse mean curvature flow cannot remain smooth in general. In order to still make  sense of the level set formulation, Huisken and Ilmanen observed that (\ref{level set equation}) is the Euler-Lagrange equation of the following functional
\begin{align}
J_u(v):=\int_{\Omega} (|\overline{\nabla} v|+v|\overline{\nabla }u|)\text{d}vol.
\end{align}
Equation (\ref{level set bc}) is the natural boundary condition for a minimizer of this functional and we are thus lead to the following definition.
\begin{defi}
	Let	$u\in C^{0,1}_{loc}(M)$ and $U\subset M$ be open. We say that $u$ is a weak solution of the free boundary inverse mean curvature flow in $U$ if for any $v\in C_{loc}^{0,1}(M)$ such that $\{v\neq u\}\subset \subset U$ there holds 
	$$
	\int_{\{v\neq u\}} (|\overline{\nabla} u|+u|\overline{\nabla} u|)\text{d}vol\leq  \int_{\{v\neq u\}} (|\overline{\nabla} v|+v|\overline{\nabla} u|)\text{d}vol.
	$$
	Moreover, given an open set $E_0\subset U$, we say that $u$ is a weak solution of the free boundary inverse mean curvature flow with initial data $E_0$ if $\{u< 0\}={E_0}$ and if $u$ is a weak solution in $U-\overline {E_0}$.
	\label{weak solution}
\end{defi}
Evidently, one may obtain a trivial solution by setting $u=0$ outside of $E_0$ and one can also check that if $u$ is a solution, so is $u_{t}=\min\{t,u\}$ for every $t>0$. In order to exclude these somewhat unreasonable solutions, we will therefore require the sublevel sets $\{u<t\}$ to be precompact. If $(M,g)$ is an exterior region of an asymptotically flat half-space, this is equivalent to requiring $u$ to be proper in the sense that $u(p)\to\infty$ as $p\to\infty$. We will see in the next subsection that solutions with precompact sublevel sets enjoy various desirable properties. 
\\ For the rest of this article, we make the following definitions
$$
E_t:=\{u<t\},\qquad E_t^+= \operatorname{int}\{u\leq t\},\qquad \Sigma_t=\tilde \partial E_t, \qquad \Sigma_t^+:=\tilde \partial E_t^+.
$$
Whenever, $\Sigma_t\neq\Sigma_t^+$, the flow jumps over a positive volume and since $\{E_t|t\in[0,\infty)\}$ is a nested family, this can only happen for countably many $t\in Z$, where $Z$ denotes the set of these jump times. The sets $\Sigma_t$ allow for a more classical interpretation of the weak flow as we shall now see.
\subsection{Properties of weak solutions} 
In this subsection, we recall some concepts from geometric measure theory and collect properties of weak solutions which we will need in the sequel. All of the results have been, up to some minor modifications,  proven in   \cite{huisken2001inverse,marquardt2012inverse,marquardt2017weak} and the reader is referred to these references for an excellent exposition to this subject including details and heuristics. The regularity theory for weak solutions is based on three ingredients. First, one may check that the sublevel sets $E_t$ minimize the functional
\begin{align}
F\mapsto |\tilde \partial ^*F\cap \Omega|-\int_{\Omega\cap F} |\overline{\nabla} u|\text{d}{vol}
\label{variational principle}
\end{align}
amongst all finite perimeter sets $F$ such that $F\Delta E_t\subset\subset U\setminus \overline{E}_0$. Here, $\Omega$ is any compact subset of $U$ containing $ F \Delta E_t$. Second, the variational principle for $u$ implies that both $\Sigma_t$ and $\Sigma^+_t$ possess the generalized mean curvature $H=|\overline{\nabla} u|$ and are weakly orthogonal to $\partial M$ provided $\Sigma_t\cap \partial M\neq\emptyset$. We will explain these terms below. Third, as we will show in the next section, $|\overline{\nabla }u|$ is uniformly bounded in $M$. This implies in particular that the generalized mean curvature is in $L^\infty$. \\ We say that a set $\tilde \Sigma$ of locally finite $(n-1)-$dimensional Haussdorff measure which is a $C^1-$surface outside of a zero set with respect to the $(n-1)-$dimensional Haussdorf has  generalized mean curvature $H$ in $L^p$ and is weakly orthogonal to $\partial M$ if $H\in L^p(\tilde \Sigma)$ and 
\begin{align}
\int_{\tilde \Sigma} (\operatorname{div}X-H g(X,\nu))\text{d}vol=0
\end{align}
for any smooth vector field $X$ in $M$ that is tangential to $\partial M$ along $\tilde \Sigma\cap \partial M$ almost everywhere. Here, $\nu$ is a normal vector orienting $\tilde \Sigma$.  We note that if $\tilde \Sigma$ is a smooth closed or free boundary surface then the first variational formula for the area implies
\begin{align}
\int_{\tilde \Sigma} (\operatorname{div}X-H g(X,\nu))\text{d}vol=\int_{\partial \tilde \Sigma} g(X,\mu)\text{d}vol
\end{align}
for any vector field $X$ where $H$ is the ordinary mean curvature and $\mu$ the outward co-normal of $\tilde \Sigma$. The right-hand side is clearly zero if $X$ is  tangential to $\partial M$ along $\partial \tilde \Sigma$. \begin{rema}
	If $\Sigma_i$ is a sequence of closed or free boundary surfaces of class $C^1$ with generalized mean curvature $H^{\Sigma_i}$ such that $\limsup_{i\to\infty}|H^{\Sigma_i}|_{L^{\infty}(\Sigma_i)}<c$ and $\Sigma_i\to\tilde \Sigma$ in $C^1$, then it follows from the Riesz representation theorem that $\tilde \Sigma$ has generalized mean curvature $H$ in $L^\infty$ and is weakly orthogonal to $\partial M$. Moreover, $H^{\Sigma_i}\rightharpoonup H$ in $L^\infty$, $|H|_{L^\infty(\Sigma)}\leq \liminf_{i\to\infty}|H^{\Sigma_i}|_{L^{\infty}(\Sigma_i)}$
	and
	$$
	\int_{\tilde \Sigma} \zeta H^2\text{d}vol \leq \liminf_{i\to\infty} \int_{\Sigma_i} \zeta ({H^{\Sigma_i}})^2 \text{d} vol
	$$
	for every non-negative $\zeta\in C^0_c(M)$.
	\label{weak mean curvature rema}
\end{rema}We will also need the concept of a minimizing hull. We say that $\hat E$ is a minimizing hull in $U$ if for any $ F\subset U$ containing $\hat E$ such that $F\setminus \hat E\subset\subset U$ and any compact set $\Omega$ which contains $F\setminus \hat E$ there holds
$$
|\Omega \cap \tilde \partial^* \hat E|\leq |\Omega \cap \tilde \partial^*F|.
$$
Here, $\partial^*$ denotes the reduced boundary.  $\hat E$ is called a strictly minimizing hull if equality holds precisely if $F=\hat E$ almost everywhere. It can be shown that any measurable set $E$ is contained in a unique, open, strictly minimizing hull minimizing the interior perimeter amongst all locally finite perimeter sets  containing $E$. We will call this set the strictly minimizing hull of $E$ and denote it by $E'$. If $\tilde \partial E$ has generalized mean curvature $H^{\tilde\partial E}$, then one can show that $\tilde \partial E'$ has generalized mean curvature $H^{\tilde \partial E'}$ and
\begin{align}
H^{\tilde \partial E'}=H^{\tilde \partial E}\geq 0 \text{ on } \tilde\partial E\cap \tilde \partial E' \text{ and } H^{\tilde \partial E'}=0 \text{ on  } \partial E'\setminus\partial E. \label{mc minimizing hull}
\end{align} The next lemma summarizes the properties of weak solutions relevant to this article.
\begin{lem}
	Let $U\subset M$ be an open subset of an at most seven dimensional manifold $M$ with boundary, $E_0\subset U$ be an open minimizing hull and  $\Sigma=\tilde \partial E_0$. Assume that $u\in C_{loc}^{0,1}(U)$ is a weak solution with initial data $E_0$ and that all sublevel sets $E_t$ are precompact. Let $\alpha\in(0,1/2)$. The following holds.
	\begin{itemize}
		\item \textbf{Uniqueness.} The weak solution is unique amongst all solutions with precompact level sets.
		\item\textbf{Regularity.} Let $t>0$. The components of both $\Sigma_t$ and $\Sigma_t^+$ are closed or free boundary surfaces of class $C^{1,\alpha}$ with generalized mean curvature and outward unit normal given by $H=|\overline{\nabla}u|$ and $\nu=\overline{\nabla}u/|\overline{\nabla} u|$ almost everywhere, respectively. 
		\item\textbf{Estimates.} The $C^{1,\alpha}-$estimates of $\Sigma_t,\Sigma_t^+$ depend on $\alpha$, $|\overline{\nabla} u|_{L^\infty(M)}$, the $C^1-$data of $g$, the $C^2-$data of $\partial M$, the distance to $\partial U$ and the distance to $\Sigma$. If $\Sigma$ is of class $C^{1,\alpha}$, then the last dependency can be replaced by a dependency on the $C^{1,\alpha}-$estimates of $\Sigma$. 
		\item\textbf{Convergence.} Given $t>0$, there holds $\Sigma_{t'}\to \Sigma_t$ in $C^{1,\alpha}$ as $t'\nearrow t$ and $\Sigma_{t'}\to \Sigma^+_t$ in $C^{1,\alpha}$ as $t'\searrow t$. If $\Sigma$ is of class $C^{1,\alpha}$, one may also choose $t=0$. 
		\item\textbf{Minimizing Hull Property.} Let $t\geq 0$. $E_t$ is a minimizing hull in $U$ and $E^+_t$ is a strictly minimizing hull in $U$. Moreover, $E_t'=E_t^+$ and $|\Sigma_t|=|\Sigma_t^+|$. 
		\item\textbf{Exponential Area Growth.} There holds $|\Sigma_t|=e^t|\Sigma|$. 
		\item\textbf{Smooth start.} If $\Sigma$ is smooth and strictly mean convex, then there is a small constant $\epsilon>0$ such that $\Sigma_t$ is given by the leave of the smooth free boundary inverse mean curvature flow starting at $\Sigma$ at time $t$ for all $0<t<\epsilon$.
	\end{itemize}
	\label{lemma properties of weak solutions}
\end{lem}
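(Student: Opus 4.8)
The plan is to follow the scheme of Huisken--Ilmanen \cite{huisken2001inverse} and Marquardt \cite{marquardt2012inverse,marquardt2017weak}, and to indicate the modifications needed to handle a general non-compact $C^2$ boundary $\partial M$. Two inputs will carry the whole argument: the variational characterisation of the sublevel sets -- each $E_t$ minimizes the functional (\ref{variational principle}) among finite perimeter sets coinciding with $E_t$ outside a compact subset of $U\setminus\overline{E}_0$ -- and the uniform bound $|\overline{\nabla}u|_{L^\infty(M)}\le C$ with $C$ depending only on the stated data, which will be established in Section \ref{new approximation scheme}.

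\textbf{Regularity and estimates.} Since $|\overline{\nabla}u|$ is bounded, (\ref{variational principle}) shows that $E_t$ and likewise $E_t^+$ are $\Lambda$-minimizers of the interior perimeter relative to $\partial M$, with $\Lambda$ controlled by $|\overline{\nabla}u|_{L^\infty(M)}$ and the geometry of $M$. In the interior I would invoke the regularity theory for almost minimal boundaries (De Giorgi, Tamanini; see the account in \cite{huisken2001inverse}), giving that the components of $\Sigma_t$ and $\Sigma_t^+$ are $C^{1,\alpha}$ away from $\partial M$ with empty singular set since $\dim M\le 7$. Along $\partial M$ I would reduce to an interior problem by doubling $M$ across its boundary, or more directly apply the free boundary Allard-type regularity of Gr\"uter--Jost and the optimal boundary regularity of Gr\"uter \cite{gruter1986allard,sternberg1991c1,gruter1987optimal}; using the $C^2$ bounds on $\partial M$ this yields $C^{1,\alpha}$ regularity up to $\partial M$ and shows the components are closed or free boundary surfaces. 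That $H=|\overline{\nabla}u|$ and $\nu=\overline{\nabla}u/|\overline{\nabla}u|$ almost everywhere, with $\Sigma_t,\Sigma_t^+$ weakly orthogonal to $\partial M$, would then follow by computing the first variation of (\ref{variational principle}): the bulk term is responsible for the generalized mean curvature $|\overline{\nabla}u|$ and admitting competitors tangential to $\partial M$ is responsible for the natural boundary condition (\ref{level set bc}), i.e. weak orthogonality. The quantitative $C^{1,\alpha}$ estimates are exactly the ones furnished by these regularity theorems, hence depend only on $\alpha$, $|\overline{\nabla}u|_{L^\infty(M)}$, the $C^1$-data of $g$, the $C^2$-data of $\partial M$, $\operatorname{dist}(\cdot,\partial U)$ and $\operatorname{dist}(\cdot,\Sigma)$ -- the last entering only through the constraint that competitors avoid $\overline{E}_0$, which may be replaced by the $C^{1,\alpha}$-data of $\Sigma$ once $\Sigma$ is itself $C^{1,\alpha}$ and can be used as a barrier.

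\textbf{Convergence and the minimizing hull property.} From the monotonicity $E_{t'}\nearrow E_t$ as $t'\nearrow t$ and $E_{t'}\searrow E_t^+$ as $t'\searrow t$, together with the uniform $C^{1,\alpha}$ estimates and the Arzel\`a--Ascoli theorem, every sequence $t'\to t$ admits a subsequence along which $\Sigma_{t'}$ converges in $C^{1,\alpha}$; I would identify the limit with $\Sigma_t$ (resp. $\Sigma_t^+$) by passing the variational principle to the limit, and uniqueness of the limit then upgrades this to convergence of the whole family, with $t=0$ allowed when $\Sigma\in C^{1,\alpha}$. The minimizing hull property is immediate from (\ref{variational principle}): enlarging $E_t$ to $F\supset E_t$ only decreases the bulk term $-\int_{\Omega\cap F}|\overline{\nabla}u|$, so $|\Omega\cap\tilde\partial^*E_t|\le|\Omega\cap\tilde\partial^*F|$; the strict version for $E_t^+$, the identity $E_t'=E_t^+$ and $|\Sigma_t|=|\Sigma_t^+|$ then go through as in \cite{huisken2001inverse}, the only change being that perimeters are measured relative to $\partial M$ and all competitors are taken tangential to it.

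\textbf{Exponential area growth, smooth start, uniqueness.} For the identity $|\Sigma_t|=e^t|\Sigma|$ I would follow \cite{huisken2001inverse}: using $H=|\overline{\nabla}u|$, the co-area formula and the minimization property one shows that $t\mapsto|\tilde\partial^*E_t|$ is absolutely continuous with $\frac{d}{dt}|\tilde\partial^*E_t|=|\tilde\partial^*E_t|$ for a.e. $t$, and integrates; continuity at jump times is guaranteed by $|\Sigma_t|=|\Sigma_t^+|$, and the boundary contributions vanish because the vector fields involved are tangential to $\partial M$, so the free boundary case is no different. For the smooth start, short-time existence of the smooth free boundary inverse mean curvature flow from a strictly mean convex $\Sigma$ \cite{marquardt2012inverse} yields a smooth solution whose arrival-time function is a weak solution with initial data $E_0$ and precompact sublevel sets; the uniqueness statement then forces it to coincide with $u$ for small $t$. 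Uniqueness among solutions with precompact level sets is the comparison argument of \cite{huisken2001inverse}: two such solutions are ordered by means of the minimizing hull property and an analysis of jump times, every step of which respects the boundary since the cut-and-paste competitors can be chosen tangential to $\partial M$. \emph{The main obstacle} will be the boundary regularity: one has to verify that the interior almost-minimizing estimates extend up to the non-compact, a priori non-convex $C^2$ hypersurface $\partial M$ with constants depending only on the listed data. The doubling construction produces only a Lipschitz metric across $\partial M$ (smooth if $\partial M$ is totally geodesic), so the free boundary regularity theory of \cite{gruter1986allard,sternberg1991c1,gruter1987optimal} must be applied with care, and the dependence of all constants on the $C^2$-geometry of $\partial M$ and on $\operatorname{dist}(\cdot,\partial U)$ has to be tracked explicitly.
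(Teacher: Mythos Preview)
Your proposal is correct and takes essentially the same approach as the paper, which largely defers to \cite{huisken2001inverse,marquardt2017weak,marquardt2012inverse} together with the interior regularity of \cite{massari1974esistenza} and the boundary regularity of \cite{gruter1986allard,gruter1987optimal}. The only addition the paper makes beyond your outline is to observe that Marquardt's boundary regularity (Lemma 5.3 in \cite{marquardt2017weak}) is stated only for almost every $t>0$, and to patch this to every $t>0$ by an Arzel\`a--Ascoli limit along a sequence $t_i\searrow t$ for which regularity holds, using the exponential area growth to identify the $C^{1,\alpha}$ limit with $\Sigma_t^+$ and Remark~\ref{weak mean curvature rema} to recover the weak mean curvature and orthogonality in the limit.
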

\begin{proof}
	These results were proven in   \cite[Section 1 and 2]{huisken2001inverse}, \cite[Section 4 and 5]{marquardt2017weak} and \cite[Section 4.4]{marquardt2012inverse}. The interior regularity is based on the results proven in \cite{massari1974esistenza} whereas the regularity up to the boundary $\partial M$ relies on \cite{gruter1986allard,gruter1987optimal}. Marquardt only proves the regularity statement for almost every $t>0$, see Lemma 5.3. We may however argue as follows to extend the result to all $t>0$. In order to prove the statement for $\Sigma_t^+$ for instance, we pick a sequence $t_i\searrow t$ such that the regularity statement holds for every $t_i$. It follows from the Arzela-Ascoli theorem that $\Sigma_{t_i}\to \tilde \Sigma$ where $\tilde \Sigma\subset \Sigma^+_t$ is a closed or free boundary surface of class $C^{1,\alpha}$ for every $\alpha<1/2$. On the other hand, the exponential area growth implies that $|\tilde \Sigma|=|\Sigma_t^+|$ and it follows from the variational principle (\ref{variational principle}) and \cite{massari1974esistenza} that $\Sigma_t^+\setminus \partial M$ is contained in a $C^1-$hypersurface, see Lemma 5.1 in \cite{marquardt2017weak}. It follows that $\tilde \Sigma\setminus \partial M=\Sigma_t^+\setminus \partial M$ and $|\Sigma_t^+\setminus \tilde \Sigma|=0$. On the other hand, it follows from Remark \ref{weak mean curvature rema} that $\tilde \Sigma$ has generalized mean curvature in $L^\infty$ and is weakly orthogonal to $\partial M$ and the same consequently holds for $\Sigma_t^+$. We may then argue as in the proof of Lemma 5.3 of \cite{marquardt2017weak}.
\end{proof}
Heuristically, the flow can thus be described as follows. If $\Sigma_0$ is smooth, then the flow evolves smoothly until $\Sigma_t$ ceases to be a minimizing hull. At this point, the flow jumps over a positive volume and $\Sigma_t$ becomes $\Sigma_t^+$ from where the flow continues to evolve. We also remark that the assumption of $E_0$ being a minimizing hull is not restrictive as otherwise, $E_0$ jumps to $E_0'$ right at the start of the flow.\\ For later use, we  record the following compactness theorem, c.f. Lemma 4.4 in \cite{marquardt2017weak}.
\begin{lem}Let $(M_i,g_i)$ be a sequence of Riemannian manifolds such that $M_i$ are open sets satisfying $M_i\subset M$ as well as $M_i\to M$ and suppose that $g_i\to g$ in $C^1_{loc}$. Moreover, suppose that $u_i\in C_{loc}^{0,1}(M_i)$ is a sequence of weak solutions in $M_i$ such that $u_i\to u$ locally uniformly and that $|\overline{\nabla} u_i|$ is locally uniformly bounded. Then $u$ is a weak solution in $M$. Moreover, for almost every $t$, there holds $\Sigma^i_t\to \Sigma_t$ locally in $C^{1,\alpha}$ for any $0<\alpha<1/2$.
	\label{compactness lemma}
\end{lem}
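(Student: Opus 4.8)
The plan is to follow the proof of \cite[Lemma~4.4]{marquardt2017weak}, which itself adapts the compactness arguments of \cite[Section~2]{huisken2001inverse}, and to check that the few places where the background metric enters survive the replacement of a fixed metric by the converging sequence $g_i$. First I would record the soft consequences of the hypotheses: since $u_i\to u$ locally uniformly with $|\overline{\nabla} u_i|$ locally uniformly bounded, $u\in C^{0,1}_{loc}(M)$ with the same local Lipschitz bounds, $\overline{\nabla} u_i\rightharpoonup\overline{\nabla} u$ weakly-$*$ in $L^\infty_{loc}$, and, because $g_i\to g$ in $C^1_{loc}$, the induced norms and volume elements converge uniformly on compact sets; in particular, for each fixed non-negative weight $\phi$, the weighted total-variation functional $w\mapsto\int_K\phi\,|\overline{\nabla}_{g_i} w|\,\text{d}vol_{g_i}$ is lower semicontinuous along the sequence. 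To verify that $u$ satisfies Definition~\ref{weak solution} in $M$, I fix a competitor $v\in C^{0,1}_{loc}(M)$ with $K:=\{v\neq u\}\subset\subset M$. As in \cite{huisken2001inverse}, replacing $v$ by $\min(v,u)$ and $\max(v,u)$ separately reduces matters to the one-sided cases $v\leq u$ and $v\geq u$; and since $s\mapsto\int_K\big(|\overline{\nabla}(u+s(v-u))|+(u+s(v-u))|\overline{\nabla} u|\big)\,\text{d}vol$ is convex in $s$, it suffices to treat competitors with $\|v-u\|_{C^0}$ as small as we like.

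The key point is the construction of admissible competitors $v_i$ for the $u_i$. I would fix a precompact open set $A$ with $K\subset\subset A\subset\subset M$, so that $A\subset\subset M_i$ for $i$ large, set $\sigma_i:=\|u_i-u\|_{C^0(\overline A)}\to 0$, and, in the case $v\leq u$, define $v_i:=\min(v+\sigma_i,u_i)$ on $A$ and $v_i:=u_i$ on $M_i\setminus A$ (symmetrically, with $\max(v-\sigma_i,u_i)$, when $v\geq u$). Then $v_i\in C^{0,1}_{loc}(M_i)$, one has $v_i=u_i$ outside $K$ (hence $\{v_i\neq u_i\}\subset\subset M_i$ and $v_i$ is an admissible competitor), and $v_i\to v$ uniformly on $\overline A$. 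Feeding $v_i$ into the defining inequality for $u_i$ — the two integrands coincide on $\{v_i=u_i\}\supseteq M_i\setminus K$ — and rearranging gives
$$
\int_K|\overline{\nabla}_{g_i} u_i|\,\text{d}vol_{g_i}+\int_K(u_i-v_i)\,|\overline{\nabla}_{g_i} u_i|\,\text{d}vol_{g_i}\ \leq\ \int_K|\overline{\nabla}_{g_i} v_i|\,\text{d}vol_{g_i}.
$$
On $K$ one has $u_i-v_i=\max(u_i-v-\sigma_i,0)\to u-v\geq 0$ uniformly, while $|\overline{\nabla} v_i|$ equals $|\overline{\nabla} v|$ off the set $\{v+\sigma_i\geq u_i\}\cap K$ and equals $|\overline{\nabla} u_i|$ on it, this set having measure tending to $0$; hence the right-hand side converges to $\int_K|\overline{\nabla}_g v|\,\text{d}vol_g$. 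Taking the $\liminf$ on the left using the lower semicontinuity above, and then removing the smallness normalisation through the convexity, yields $\int_K(|\overline{\nabla} u|+u|\overline{\nabla} u|)\,\text{d}vol\leq\int_K(|\overline{\nabla} v|+v|\overline{\nabla} u|)\,\text{d}vol$, so $u$ is a weak solution in $M$.

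For the second assertion I would discard the countable set of exceptional parameters: those $t$ lying in the jump set $Z$ of $u$, in the jump set of some $u_i$, or with $|\{u=t\}|>0$. For every remaining $t>0$ and each precompact $W\subset\subset M$, uniform convergence gives $E^i_t=\{u_i<t\}\to E_t=\{u<t\}$ in $L^1(W)$. By the regularity and estimates parts of Lemma~\ref{lemma properties of weak solutions} — which are local statements whose constants depend only on $\alpha$, the $C^1$-data of $g_i$, the $C^2$-data of $\partial M$ and the $L^\infty$-bound on $|\overline{\nabla} u_i|$, all controlled uniformly on $W$ for $i$ large — the pieces $\Sigma^i_t\cap W$ are closed or free boundary surfaces of class $C^{1,\alpha'}$ with estimates uniform in $i$, for some $\alpha'\in(\alpha,1/2)$. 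Arzel\`a--Ascoli then produces a subsequential $C^{1,\alpha}$-limit $\tilde\Sigma$, and since $E^i_t\to E_t$ in $L^1$ while the $\Sigma^i_t$ minimise the $L^\infty$-weighted perimeter functional (\ref{variational principle}) and therefore satisfy uniform two-sided density bounds, $\tilde\Sigma$ must coincide in $W$ with $\partial E_t$, which for our choice of $t$ is exactly the $C^{1,\alpha}$-regular hypersurface $\Sigma_t$. As every subsequential limit equals $\Sigma_t$, the full sequence converges, $\Sigma^i_t\to\Sigma_t$ locally in $C^{1,\alpha}$.

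The step I expect to be the main obstacle is passing the variational inequality to the limit in the first part. Because $u_i$ converges merely uniformly, $|\overline{\nabla} u_i|$ converges only weakly-$*$ and $|\cdot|$ is nonlinear, so a naive comparison — for instance interpolating between $v$ and $u_i$ by a cut-off $v_i=\chi v+(1-\chi)u_i$ — leaves an uncontrolled bulk term $\int(1-\chi)|\overline{\nabla} u_i|$. The one-sided $\min/\max$ construction is tailored precisely so that $|\overline{\nabla} v_i|$ equals $|\overline{\nabla} u_i|$ on the bulk region $\{v_i=u_i\}$, which therefore drops out of the inequality, and equals $|\overline{\nabla} v|$ on a region shrinking to $K$, after which only lower semicontinuity and the uniform gradient bound remain to be used; the sign of $u_i-v_i$ in the case $v\geq u$ is what forces the additional convexity reduction.
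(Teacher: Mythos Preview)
Your proposal is correct and follows essentially the approach the paper cites, namely \cite[Lemma~4.4]{marquardt2017weak} and the compactness arguments of \cite[Section~2]{huisken2001inverse}; the paper itself records Lemma~\ref{compactness lemma} without proof, deferring to these references. Your one-sided $\min/\max$ competitor construction together with the convexity reduction to $\|v-u\|_{C^0}<1$ is exactly what is needed so that the combined weight $(1+u_i-v_i)\to(1+u-v)$ stays strictly positive and lower semicontinuity applies to the full left-hand side; this is the same device as the assumption ``$v<u+1$'' used in the paper's proof of Lemma~\ref{lem existence of weak solution} (where an actual PDE permits integration by parts instead), and your final paragraph correctly identifies why the naive cut-off interpolation fails. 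The level-set convergence argument via uniform almost-minimality, $C^{1,\alpha'}$ bounds, Arzel\`a--Ascoli and identification of the limit through $L^1$-convergence of the sublevel sets is likewise the standard one.
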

As we will see in Section \ref{geroch monotonicity section}, our monotonicity calculation relies heavily on the fact that the leaves $\Sigma_t$ are connected free boundary surfaces if the same holds for $\Sigma$. Although this implication is generally not true, we can prove it in the special situation where $M\setminus E_0$ is an exterior region.
\begin{lem}
	Suppose that $M$ is a simply connected asymptotically flat half-space with one end and that $\partial M$ is connected. Suppose that $u$ is a proper weak solution with initial data $E_0$. Then $\Sigma_t$ is a connected free boundary surface if the same holds for $\Sigma=\tilde\partial E_0$.
	\label{connected free boundary}
\end{lem}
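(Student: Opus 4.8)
The plan is to adapt the connectedness argument of Huisken and Ilmanen (Section 4 of \cite{huisken2001inverse}) to the free boundary setting, the essential new inputs being that $M$, being an exterior region, carries neither closed nor free boundary minimal surfaces, and that $\partial M$ is non-compact and connected. As a simply connected asymptotically flat half-space with one end and connected boundary, $M$ also satisfies $H_2(M;\mathbb{Z})=0$, so that every compact embedded hypersurface in $M$ bounds a compact region, and every free boundary surface together with a bounded portion of $\partial M$ bounds a compact region. I will establish, in order: (i) $E_t$ is connected for every $t\ge 0$; (ii) no component of $\Sigma_t$ or of $\Sigma_t^+$ is closed; (iii) $\Sigma_t$ has exactly one component. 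Together with the regularity, area and convergence statements of Lemma \ref{lemma properties of weak solutions}, this gives that $\Sigma_t$ is a connected free boundary surface. Throughout we may assume $E_0$ is a connected minimizing hull, replacing it by $E_0'$ otherwise.

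For (i), the set of $t\ge 0$ for which $E_t$ is connected contains $0$ and is closed, because $E_\tau=\bigcup_{s<\tau}E_s$ is an increasing union of connected sets all containing $E_0$. It is also open: between jump times $E_t$ varies continuously in $C^{1,\alpha}$ by Lemma \ref{lemma properties of weak solutions}, so a connected $E_{t_0}$ stays connected nearby; and at a jump time $\tau$ the set $E_\tau^+=E_\tau'$ is the \emph{smallest} minimizing hull containing the connected set $E_\tau$, and is therefore connected, since discarding a component of $E_\tau'$ disjoint from $E_\tau$ (there are only finitely many, its boundary being a compact $C^{1,\alpha}$ manifold) would produce a strictly smaller minimizing hull still containing $E_\tau$.

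For (ii), let $\Sigma'$ be a closed component of $\tilde\partial E_t$; the case of $\Sigma_t^+$ is identical. Since $\Sigma'$ is disjoint from $\partial M$, it bounds a compact region $W$ with $W\cap\partial M=\emptyset$. If $\operatorname{int}W\cap E_t=\emptyset$, then $F:=E_t\cup W$ satisfies $F\setminus E_t\subset\subset M$ and $|\tilde\partial^*F|=|\tilde\partial^*E_t|-|\Sigma'|<|\tilde\partial^*E_t|$, contradicting that $E_t$ is a minimizing hull. If instead, choosing $\Sigma'$ innermost, $\operatorname{int}W\subseteq E_t$, then $\operatorname{int}W$ is a component of $E_t$ disjoint from $\partial M$, contradicting (i) since $E_t\supseteq E_0$ meets $\partial M$. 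Applied to $E_t^+$ together with (\ref{mc minimizing hull}), the same dichotomy rules out that a jump turns a free boundary leaf into a closed one, as this would produce a closed minimal surface, which $M$ does not contain. For (iii), by (ii) every component of $\Sigma_t$ is now a free boundary surface, and $\Sigma_t\ne\emptyset$ because $|\Sigma_t|=e^t|\Sigma|>0$. If $\Sigma_t$ had two or more components, an innermost one $\Sigma'$ not enclosing the end would bound, together with a bounded portion $D\subseteq\partial M$, a compact region $V$ with $\operatorname{int}V\cap E_t=\emptyset$; filling it in, $F:=E_t\cup V$ satisfies $|\tilde\partial^*F|=|\tilde\partial^*E_t|-|\Sigma'|<|\tilde\partial^*E_t|$ (the portion $D$ enters only the exterior boundary, which does not appear in the minimizing hull inequality), again a contradiction. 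Hence exactly one component survives; finally, the $C^{1,\alpha}$-convergence $\Sigma_{t'}\to\Sigma_t$ as $t'\nearrow t$ and $\Sigma_{t'}\to\Sigma_t^+$ as $t'\searrow t$ from Lemma \ref{lemma properties of weak solutions} shows connectedness and the free boundary property cannot be lost across the (countably many) jump times, and since $\Sigma=\tilde\partial E_0$ is a connected free boundary surface and the flow starts smoothly, the conclusion propagates to all $t>0$.

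The hard part is controlling the flow at jump times in (ii)--(iii): one has to be sure that passing from $E_t$ to its strictly minimizing hull $E_t^+$ neither creates a closed component nor detaches the leaf from $\partial M$, and this is precisely where the absence of closed and free boundary minimal surfaces in $M$ is indispensable --- without it a dumbbell-shaped leaf could pinch onto an interior minimal sphere, or a collar of $\partial M$ could be engulfed, leaving a closed minimal piece behind. A secondary technical point is to verify that the $C^{1,\alpha}$ estimates of Lemma \ref{lemma properties of weak solutions}, which are uniform up to $\partial M$, forbid the boundary circle $\partial\Sigma_{t'}$ from collapsing to a point along an increasing sequence of continuous times.
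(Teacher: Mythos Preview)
Your route is quite different from the paper's and considerably longer. The paper's proof is essentially two lines: connectedness of $\Sigma_t$ follows from Lemma~4.2 of \cite{huisken2001inverse}, which uses the simple connectedness of $M$ directly via an intersection-number argument (join two putative components of $\Sigma_t$ by an arc through the connected set $E_t$ and another through its one-ended complement; the resulting loop is null-homotopic yet has odd intersection with one component). The free boundary property then follows because $u|_{\partial M}$ is continuous on the connected, non-compact $\partial M$, takes values below $t$ on $E_0\cap\partial M\neq\emptyset$ and tends to infinity by properness, so $\Sigma_t$ must meet $\partial M$. Neither step uses the minimizing-hull property, and neither uses any absence of minimal surfaces.

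Your minimizing-hull approach can be made to work, but two points need correction. First, you list ``$M$, being an exterior region, carries neither closed nor free boundary minimal surfaces'' as an essential input, but this is \emph{not} among the lemma's hypotheses; fortunately you invoke it only in the jump-time discussion, which is itself unnecessary since the statement concerns $\Sigma_t$ for each fixed $t$ and your steps (i)--(iii) already apply to that directly. Second, step (iii) has a genuine gap: the assertion that there exists an ``innermost $\Sigma'$ not enclosing the end'' bounding a region $V$ with $\operatorname{int}V\cap E_t=\emptyset$ is exactly what must be proved --- nothing you have established rules out that \emph{every} component has $E_t$ on its compact side. One can close this by first noting (again via the minimizing-hull property) that $M\setminus\overline{E_t}$ has no bounded components, and then deriving a contradiction from the forced mutual containment of the bounded regions $V_j$; but the clean fix is precisely the intersection-number argument the paper invokes, which uses the hypothesis of simple connectedness in a way your argument never really does.
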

\begin{proof}
	Using the fact that $M$ is simply connected, we can argue as in the proof of Lemma 4.2 in \cite{huisken2001inverse} to show that $\Sigma_t$ remains connected. If $\Sigma_t$ is not a free boundary surface, then it follows that $\Sigma_t\cap \partial M=\emptyset$. Since $u$ is continuous, this  contradicts the fact that $\partial M$ is connected.
\end{proof}
Finally, we give an example which indicates that we cannot hope for any kind of reasonable monotonicity if the flow starts at a closed surface. Suppose that $M=\mathbb{R}^3_+$ and let $\Sigma_0$ be a small round sphere. One can check that $\Sigma_0$ is a strictly minimizing hull if its radius is sufficiently small and it follows that $\Sigma_t$ expands homothetically until a time $t_0>0$ when it gets close enough to the boundary such that it ceases to be a minimizing hull. At this point, a catenoidal bridge forms connecting $\Sigma_{t_0}$ to the boundary and the flow subsequentially continues as a free boundary inverse mean curvature flow. The usual Hawking mass $\hat m_H$ may be monotone but approaches $\infty$ as $t\to\infty$ because $\Sigma_t$ is asymptotic to a large hemisphere as $t\to\infty$, see Section \ref{asymptotic behavior section}. The modified Hawking mass on the other hand is decreasing, at least up to the time $t_0$. It is also not possible to allow the definition of the Hawking mass to account for the presence of a boundary since $\hat m_H(\Sigma_t)=0$ for any $t<t_0$ but $m_H(\Sigma^+_{t_0})<0$ as $m_H$ is negative unless $\Sigma^+_{t_0}$ is a hemisphere and it is easy to see that this cannot be the case. It is for this reason that we require $\Sigma$ to be a free boundary surface in Theorem \ref{main thm RPI}. 

\section{A new approximation scheme and the existence of weak solutions}
\label{new approximation scheme}
In this section, we will prove the existence of a proper weak solution $u$ provided $(M,g)$ is an $n-$dimensional asymptotically flat half-space\footnote{The definition is the same as the one for $n=3$, see also \cite{almaraz2014positive}.} with one end and $E_0\subset M$ a precompact set whose interior boundary consists of closed and free boundary surfaces of class $C^2$. There is no restriction on the dimension of $M$ and $\Sigma=\tilde\partial E_0$ is allowed to have several components. However, as the application of the flow in this article requires $\Sigma$ to be a connected free boundary surface and since the existence proof does not change when allowing several components, possibly some of them closed, we will only consider the case where $\Sigma$ is a connected free boundary surface of class $C^2$. \\
In order to prove the existence of a weak solution when $\partial M=\emptyset$, Huisken and Ilmanen used an elliptic regularization scheme where the approximate solutions correspond to exact translating smooth solutions of the inverse mean curvature flow in one dimension higher. The advantage of this technique is that it easily provides estimates for the approximate solutions and allows to perform  monotonicity calculations in the smooth setting before passing them to the limit. However, as we have discussed in the introduction, this approach does not seem to work well for manifolds with boundary due to the analytic structure of the approximate partial differential equation, compare \cite{marquardt2017weak}. For this reason, we use a new approximation scheme which still allows for enough geometric interpretability despite being designed for analytic convenience. We will discuss this in more depth in Subsection \ref{geometric interpretability}. We also remark that the approximation scheme used by Moser in \cite{moser2007inverse,moser2015geroch} and further developed by Kotschwar and Ni in \cite{kotschwar2009local} involving the $p$-Laplacian works well for manifolds with boundary from an analytic point of view, too. However, one may check that the approximate solutions correspond to exact solutions with respect to a conformal metric of $g$ whose conformal factor depends on the solution itself and is in general not even continuous. Consequently, there seems to be no hope in using this strategy for our purposes. 
\subsection{The approximate equation and gradient estimates}
Let $(M,g)$ be an $n-$dimensional asymptotically flat half-space, $n\geq 3$, with one end and connected boundary $\partial M$ and suppose that $E_0\subset M$ is compact and bounded by a free boundary surface $\Sigma$ of class $C^2$. Let $M'$ be the non-compact component of $M- \Sigma$. It follows that the boundary of $M'$ is given by $\hat\partial M'\cup \Sigma$ where we recall $\hat \partial M'$ to be the exterior boundary of $M'$.
Next, we choose a compact set $\Omega\subset M$ containing $E_0$ and a chart at infinity from $M\setminus \Omega$ to $\mathbb{R}^n_+\setminus B^n_{R_0}(0)$ for some number $R_0>0$. Slightly abusing notation, we call the corresponding pull-back metric $g$. Finally, we denote the normal of $\Sigma$ pointing outside of $E_0$ by $\nu$ and the normal of $\partial M$ pointing outside of $M$ by $\mu$. \\ In order to find a weak solution, we use a regularization scheme which is chosen in a way such that sub- and supersolutions of the approximate equation can be constructed easily and such that solutions of the regularization scheme are sufficiently smooth.  Given $\Gamma>0$ to be chosen, we define $x_0:=-\Gamma e_3$.
 Let $\epsilon>0$ and $R_\epsilon\geq R_0$ be a large constant to be chosen. Let us define $U_\epsilon := (B^n_{R_\epsilon}(x_0)\cap (\mathbb{R}^n_+\setminus B^n_{R_0}(0)))\cup \Omega $ where we identify the Euclidean domains with subsets of $M$ using the asymptotic chart. After possibly increasing $\Gamma$, we may arrange that $\tilde\partial U_\epsilon$ meets $\partial M$ at an acute angle for any $R_\epsilon\geq R_0$. We choose a smooth perturbation $\Sigma_\epsilon$ of $\Sigma$ in direction $-\nu$ such that $\Sigma_\epsilon\to \Sigma$ in $C^2$ as $\epsilon\to 0$ and such that the contact angle between $\Sigma_\epsilon$ and $\partial M$ is positive but strictly less than $\pi/2$. This procedure, which was also used by Marquardt, see \cite{marquardt2017weak}, is done to ensure the existence of a solution which is $C^1$ up to the corner $\partial \Sigma_\epsilon$. As before, $\Sigma_\epsilon$ divides $M$ into two components and we denote the one intersecting $M'$ by $\tilde  M'_\epsilon$. Likewise, $\tilde \partial U_\epsilon$ divides $ \tilde M_\epsilon'$ in two components and we denote the interior of the precompact one by $M'_\epsilon$. Finally,  let $S_\epsilon=\partial M\cap \overline{M'_\epsilon}$ such that $\partial M'_\epsilon=\tilde \partial U_\epsilon\cup S_\epsilon\cup\Sigma_\epsilon$. 
Given $R_0>0$ and $\epsilon>0$, let $\gamma>1,\tau>0, R_\epsilon>R_0$ and consider the following mixed boundary value problem
\begin{align}
\overline{\operatorname{div}}\bigg(\frac{\overline{\nabla} u_{\epsilon,\gamma,\tau}}{\sqrt{\epsilon^2+|\overline{\nabla} u_{\epsilon,\gamma,\tau}|^2}}\bigg)&=|\overline{\nabla} u_{\epsilon,\gamma,\tau}|^\gamma \text{ in } M'_\epsilon, \label{mbvp 1}\\
u_{\epsilon,\gamma,\tau}&=0 \text{ on } \Sigma_{\epsilon}, \\
u_{\epsilon,\gamma,\tau}&=\tau \text{ on } \tilde\partial  U_\epsilon, \\
\partial_\mu u_{\epsilon,\gamma,\tau}&=0 \text{ on } \label{mbvp 4}S_\epsilon.
\end{align}  
For ease of notation, we omit the subscripts ${\epsilon,\gamma,\tau}$ when there is no risk of confusion. The reader is encouraged to compare (\ref{mbvp 1}) with (\ref{IMCF level set equation}). The analytically inconvenient source term $\epsilon^2$ on the right-hand side has been removed. In order to re-regularize the equation, we have introduced the exponent $\gamma>1$.  We now prove gradient estimates for the function $u$. Some of the following ideas were inspired by the arguments in \cite{marquardt2017weak,kotschwar2009local}. \\
A straightforward computation using the asymptotic behaviour (\ref{asymptotic behavior 0}) reveals that, after choosing $\Gamma$ sufficiently large, there holds 
$$g(\mu,x-x_0)<0, \quad   
g(\mu,\overline\nabla |x-x_0|_e)<0,$$
for all $x\in\{x\in\mathbb{R}^n|x_n=0\}\setminus B^n_{R_0}(0)$.  We then extend the Euclidean distance function $r(x):=|x-x_0|_e$ to all of $M$ in any way such that $r\leq R_0$ in $\Omega$.\\ 
\begin{lem}
	Let $u\in C^2(M'_\epsilon)\cap C^1(\overline{M'_\epsilon})$ be a solution of (\ref{mbvp 1}-\ref{mbvp 4}). Then $0\leq u\leq \tau$. Moreover, there is a constant $R_0$ which only depends on the asymptotic behaviour (\ref{asymptotic behavior 0}-\ref{asymptotic behavior 1}) such that if $\epsilon<(8 R_0)^{-1}$ and $R_\epsilon =(4 \epsilon)^{-1}$ there holds
	\begin{align}
	u\geq \max\{0,\frac{1}{4}(\log(|x-x_0|_e)-\log(R_\epsilon)+4\tau)\},
	\end{align}
	provided $\tau<\frac{1}{4}(\log(R_\epsilon)-\log(R_0))$.	
	\label{subsolution}
\end{lem}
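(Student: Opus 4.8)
The plan is to run a comparison principle twice: once against constant barriers to obtain $0\le u\le\tau$, and once against an explicit logarithmic barrier built from the asymptotic chart to obtain the lower bound. First I would record that, for fixed $\epsilon>0$, the operator $u\mapsto\overline{\operatorname{div}}(\overline{\nabla} u/\sqrt{\epsilon^2+|\overline{\nabla} u|^2})-|\overline{\nabla} u|^\gamma$ is uniformly elliptic on the region where $|\overline{\nabla} u|$ is bounded and has no zeroth-order dependence on $u$, so the classical comparison principle for mixed Dirichlet--Neumann data applies (strong maximum principle in the interior; Hopf's boundary lemma along the Neumann part $S_\epsilon$, which is admissible since $S_\epsilon$ is a $C^2$ hypersurface). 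Since the constants $0$ and $\tau$ are exact solutions of (\ref{mbvp 1}) and satisfy $\partial_\mu(\mathrm{const})=0$ on $S_\epsilon$, comparing $u$ with them along $\Sigma_\epsilon\cup\tilde\partial U_\epsilon$ yields $0\le u\le\tau$.

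For the lower bound I would take the barrier $\underline u:=\max\{0,\varphi\}$ with $\varphi:=\tfrac14(\log r-\log R_\epsilon+4\tau)$ and $r=|x-x_0|_e$. The hypothesis $\tau<\tfrac14(\log R_\epsilon-\log R_0)$ ensures $\{\varphi>0\}=\{r>R_\epsilon e^{-4\tau}\}\subset\{r>R_0\}$, so the set where the barrier is nontrivial lies in the asymptotic region and the decay estimates (\ref{asymptotic behavior 0})--(\ref{asymptotic behavior 1}) are available there. On $\{\varphi>0\}$ a direct computation in the chart gives $|\overline{\nabla}\varphi|=\tfrac1{4r}(1+O(r^{-1}))$ and
\begin{equation*}
\overline{\operatorname{div}}\Big(\frac{\overline{\nabla}\varphi}{\sqrt{\epsilon^2+|\overline{\nabla}\varphi|^2}}\Big)=\frac{1}{\sqrt{16\epsilon^2r^2+1}}\Big(\frac{n-1}{r}-\frac{16\epsilon^2r}{16\epsilon^2r^2+1}\Big)+O(r^{-2}),
\end{equation*}
with the $O(r^{-2})$ uniform in $\epsilon,\gamma,\tau$ and depending only on the decay constant $c$. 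The choice $R_\epsilon=(4\epsilon)^{-1}$ forces $16\epsilon^2r^2\le1$ throughout $M'_\epsilon$, hence $\sqrt{16\epsilon^2r^2+1}\in[1,\sqrt2]$ and $16\epsilon^2r\le r^{-1}$, so the leading term is at least $(n-2)/(\sqrt2\,r)\ge1/(\sqrt2\,r)$ for $n\ge3$, whereas $|\overline{\nabla}\varphi|^\gamma\le|\overline{\nabla}\varphi|\le1/(2r)$ for $r$ large because $\gamma>1$ and $|\overline{\nabla}\varphi|<1$. Enlarging $R_0$ in terms of $c$ alone and using $\epsilon<(8R_0)^{-1}$ to absorb the $O(r^{-2})$ error, I would conclude $\overline{\operatorname{div}}(\overline{\nabla}\varphi/\sqrt{\epsilon^2+|\overline{\nabla}\varphi|^2})\ge|\overline{\nabla}\varphi|^\gamma$ on $\{\varphi>0\}$; thus $\underline u$ is a classical subsolution of (\ref{mbvp 1}) where positive and a trivial one where it vanishes.

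It then remains to match the boundary data and conclude. On $\Sigma_\epsilon\subset\Omega$ one has $r\le R_0$, hence $\varphi\le\varphi(R_0)=\tfrac14(\log R_0-\log R_\epsilon+4\tau)\le0$ by hypothesis, so $\underline u=0=u$ there; on $\tilde\partial U_\epsilon$ one has $r=R_\epsilon$, hence $\underline u\equiv\tau=u$; and on $S_\epsilon$ either $\underline u\equiv0$ near $\Sigma_\epsilon$ (so $\partial_\mu\underline u=0$) or $r$ is large and $\partial_\mu\underline u=\tfrac1{4r}\,g(\mu,\overline{\nabla}|x-x_0|_e)<0$ by the inequality recorded just before the statement, so $\partial_\mu\underline u\le0=\partial_\mu u$ in every case. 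Finally, using $u\ge0$ from the first step, a negative minimum of $u-\underline u$ over $\overline{M'_\epsilon}$ would necessarily be attained in the open set $\{\underline u>0\}$, where $\underline u=\varphi$ is smooth; the strong maximum principle (for interior minima) and Hopf's lemma (for minima on $S_\epsilon$, using $\partial_\mu\underline u<0$), together with $u-\underline u=0$ on $\tilde\partial U_\epsilon$, rule this out. Hence $u\ge\underline u$, which is the asserted bound.

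The step I expect to be the main obstacle is the subsolution inequality in the second paragraph: one has to control the metric error terms in the $\overline{\operatorname{div}}$-computation uniformly in $\epsilon$, $\gamma$ and $\tau$. It is precisely the calibration $R_\epsilon=(4\epsilon)^{-1}$ (which keeps $16\epsilon^2r^2\le1$, so the regularization parameter never disturbs the leading order), the coefficient $\tfrac14$ in $\varphi$ (which leaves a fixed multiplicative gap against $|\overline{\nabla}\varphi|^\gamma$ valid for all $\gamma>1$ and all $n\ge3$), the smallness $\epsilon<(8R_0)^{-1}$, and the freedom to enlarge $R_0$ depending on $c$ only, that are tuned to make this inequality hold. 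Note that the value of $\varphi$ itself never enters (\ref{mbvp 1}), so no pointwise bound on $\varphi$ is needed here --- only $|\overline{\nabla}\varphi|<1$.
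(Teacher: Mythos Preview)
Your proposal is correct and follows essentially the same approach as the paper: the same logarithmic barrier $\varphi=\tfrac14(\log r-\log R_\epsilon+4\tau)$, the same verification of boundary behaviour (including $\partial_\mu\varphi<0$ on $S_\epsilon$ from $g(\mu,\overline\nabla|x-x_0|_e)<0$), and the same divergence computation exploiting $16\epsilon^2r^2\le1$ from $R_\epsilon=(4\epsilon)^{-1}$. The only cosmetic difference is that you package the barrier as $\underline u=\max\{0,\varphi\}$ and argue via a minimum of $u-\underline u$, whereas the paper works directly with $\rho-u$ and excludes a positive maximum region by region; the content is the same.
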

\begin{proof}
	The first statement is an immediate consequence of the maximum principle. In order to prove the more precise estimate, we define $\rho=\frac{1}{4}(\log(r)-\log(R_\epsilon)+4\tau)$ and note that $\rho$ is non-positive inside of $B^n_{R_0}(x_0)$. Moreover, $\rho(x)=\tau$ for $x\in \tilde\partial U_\epsilon$ and  $g(\mu,\overline{\nabla}|x-x_0|)<0$ on $S_\epsilon\setminus B^n_{R_0}(x_0)$ implies $g(\mu,{\overline{\nabla}} \rho)=\partial_\mu \rho(x) <0$ on $S_\epsilon\setminus B^n_{R_0}(x_0)$. It follows that $\rho - u$ does not attain a positive maximum on $S_\epsilon\setminus B^n_{R_0}(x_0)\cup B^n_{R_0}(x_0)\cup\tilde\partial U_\epsilon$. In order to complete the proof, we now exclude that $\rho - u$ attains a positive maximum in the interior. As usual, we indicate geometric expressions with respect to the Euclidean metric by the subscript $e$. Then, using $\overline{\operatorname{div}}(\cdot)=|\det{g}|^{-1/2}\overline {\operatorname{div}_e}(|\det{g}|^{1/2}\cdot)$ we find
	\begin{align*}
	\overline{\operatorname{div}}\bigg(\frac{\overline{\nabla} \rho}{\sqrt{\epsilon^2+|\overline{\nabla}\rho|_e^2}}\bigg)=\partial_i\bigg(&\frac{\partial_i\rho}{\sqrt{\epsilon^2+|\overline{\nabla}_e \rho|_e^2}}-(g_e^{ij}-g^{ij})\frac{\partial_j\rho}{\sqrt{\epsilon^2+|\overline{\nabla} \rho|^2}}\\&-\partial_i\rho\frac{(g_e^{jl}-g^{jl})\partial_j\rho\partial_l\rho}{\sqrt{\epsilon^2+|\overline{\nabla} \rho|^2}\sqrt{\epsilon^2+|\overline{\nabla}_e \rho|_e^2}}\bigg)+\mathcal{O}(|x-x_0|_e^{-2})
	\\=\partial_i\bigg(&\frac{\partial_i\rho}{\sqrt{\epsilon^2+|\overline{\nabla}_e \rho|_e^2}}\bigg)+\mathcal{O}\bigg(\frac{|\overline{\nabla}^2_e\rho|_e}{|\overline{\nabla}_e \rho|_e|x-x_0|_e}\bigg)+\mathcal{O}(|x-x_0|_e^{-2})
	\\=\partial_i\bigg(&\frac{\partial_i\rho}{\sqrt{\epsilon^2+|\overline{\nabla}_e \rho|_e^2}}\bigg)+\mathcal{O}(|x-x_0|_e^{-2}).
	\end{align*}
	Here, we used $g^{ij}-g_e^{ij}=\mathcal{O}(|x-x_0|_e^{-1})$ and $\partial_lg^{ij}=\mathcal{O}(|x-x_0|_e^{-2})$. Likewise, it is easy to see that $$|\overline{\nabla} \rho|^\gamma=|\overline{\nabla}_e \rho|_e^\gamma+\mathcal{O}(|x-x_0|_e^{-2}).$$
	We continue to compute
	\begin{align*}
	&4\overline{\operatorname{div}}_e\bigg(\frac{\overline{\nabla}_e \rho}{\sqrt{\epsilon^2+|\overline{\nabla}_e \rho|_e^2}}\bigg)-4|\overline{\nabla}_e \rho|^\gamma \\\geq& \overline{\operatorname{div}}_e\bigg(\frac{x-x_0}{|x-x_0|_e}\frac{4}{\sqrt{1+16\epsilon^2|x-x_0|_e^2}}\bigg)-\frac{1}{|x-x_0|_e} 
	\\=&\frac{8}{|x-x_0|_e\sqrt{1+16\epsilon^2|x-x_0|_e^2}}- \frac{64\epsilon^2}{({1+16\epsilon^2|x-x_0|_e^2)}^\frac{3}{2}}-\frac{1}{|x-x_0|_e}
	\\\geq &\frac{4\sqrt{2}}{|x-x_0|_e}-\frac{4}{|x-x_0|_e}-\frac{1}{|x-x_0|_e}
	\\\geq &\frac{1}{|x-x_0|_e}. 
	\end{align*} 
	In the first equality we used $\gamma>1$ and assumed $R_0\geq1$. In the second inequality we used $1+16\epsilon^2|x-x_0|_e^2\leq 2$ and $\epsilon^2\leq 1/16 |x-x_0|_e^{-2}$, which are both consequences of $|x-x_0|_e\leq R_\epsilon$.
	Choosing $R_0$ large enough we deduce
	$$
	\overline{\operatorname{div}}\bigg(\frac{\overline{\nabla} \rho}{\sqrt{\epsilon^2+|\overline{\nabla} \rho|^2}}\bigg)-4| \overline{\nabla} \rho|^\gamma>0.
	$$
	It follows that the function $\rho -u$ cannot attain an interior maximum outside of $B^n_{x_0}(R_0)$ and consequently $\rho-u\leq 0$.
\end{proof}
\begin{coro}
	Under the assumptions of the previous lemma there holds 
	$$
	|\overline{\nabla} u|(p)\leq \frac{c}{4R_\epsilon}
	$$
	for any $p\in \tilde \partial U_\epsilon$. Here, $c$ depends on the $C^0-$data of the metric $g$ and if $g=g_e$, one may take $c=1$.
	\label{outer dirichlet est}
\end{coro}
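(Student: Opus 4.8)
The plan is to derive the bound by a pure barrier comparison on $\tilde\partial U_\epsilon$, exploiting that the explicit subsolution $\rho$ constructed in the proof of Lemma~\ref{subsolution} agrees with $u$ there. First I would observe that $u\equiv\tau$ on $\tilde\partial U_\epsilon$, so all derivatives of $u$ tangential to $\tilde\partial U_\epsilon$ vanish; hence $\overline\nabla u$ is normal to $\tilde\partial U_\epsilon$ and $|\overline\nabla u|(p)=|\partial_\nu u|(p)$, where $\nu$ is the outward $g$-unit normal of $M'_\epsilon$ along $\tilde\partial U_\epsilon$. Since $u\in C^1(\overline{M'_\epsilon})$ the one-sided normal derivatives are well defined. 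By the first part of Lemma~\ref{subsolution} we have $u\le\tau$ on all of $\overline{M'_\epsilon}$ while $u=\tau$ on $\tilde\partial U_\epsilon$, so differentiating $u$ in the inward direction $-\nu$ at a point $p\in\tilde\partial U_\epsilon$ gives $\partial_\nu u(p)\ge 0$; thus $|\overline\nabla u|(p)=\partial_\nu u(p)$.

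Next I would use the lower barrier from below. Set $\rho:=\tfrac14(\log(|x-x_0|_e)-\log R_\epsilon+4\tau)$ as in the proof of Lemma~\ref{subsolution}. On $\tilde\partial U_\epsilon$ one has $|x-x_0|_e=R_\epsilon$, so $\rho=\tau=u$ there, while $\rho\le u$ on all of $\overline{M'_\epsilon}$ by Lemma~\ref{subsolution}. Consequently $w:=u-\rho$ is nonnegative on $\overline{M'_\epsilon}$ and vanishes on $\tilde\partial U_\epsilon$, so differentiating $w$ in the inward direction at $p\in\tilde\partial U_\epsilon$ yields $\partial_\nu w(p)\le 0$, i.e. $\partial_\nu u(p)\le\partial_\nu\rho(p)\le|\overline\nabla\rho|(p)$, the last step by Cauchy--Schwarz.

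It then only remains to compute $|\overline\nabla\rho|$ on $\tilde\partial U_\epsilon$. Since $\overline\nabla_e\rho=\tfrac{1}{4|x-x_0|_e}\,\tfrac{x-x_0}{|x-x_0|_e}$, we get $|\overline\nabla_e\rho|_e=\tfrac{1}{4R_\epsilon}$ on $\tilde\partial U_\epsilon$, and passing from the Euclidean metric to $g$ costs only a multiplicative factor controlled by the $C^0$-data of $g$ (and equal to $1$ when $g=g_e$). Chaining the inequalities gives $|\overline\nabla u|(p)=\partial_\nu u(p)\le\partial_\nu\rho(p)\le|\overline\nabla\rho|(p)\le\tfrac{c}{4R_\epsilon}$, as claimed.

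There is essentially no hard step here: the argument is a one-sided boundary comparison that bypasses any interior gradient estimate or boundary Hopf lemma with curvature terms, precisely because $u$ and $\rho$ already coincide on $\tilde\partial U_\epsilon$. The only points that need care are the bookkeeping of the two one-sided normal derivatives --- one inequality coming from the a priori bound $u\le\tau$ and the other from the sharp subsolution $\rho\le u$ with matching boundary values --- together with the elementary metric comparison at the very end.
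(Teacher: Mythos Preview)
Your proof is correct and is precisely the argument the paper has in mind: the paper's own proof just says that the tangential derivatives vanish and that the normal derivative is controlled by ``the previous lemma,'' which amounts exactly to the barrier comparison with $\rho$ that you have written out in full. Your expansion is a faithful and accurate elaboration of that sketch.
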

\begin{proof}
	All tangential derivatives of $u$ vanish at $p$. In order to estimate the normal derivative, we can use the previous lemma.
\end{proof}
Next, we prove an interior estimate for $|\overline{\nabla} u|$. To this end, we need two preliminary lemmas. Let $\psi_\epsilon=\sqrt{|\overline{\nabla} u|^2+\epsilon^2}$ and $\zeta$ be a smooth function. We define
$$
L_\epsilon(\zeta):=\frac{\overline{\Delta} \zeta}{\psi_\epsilon}-\frac{\overline{\nabla}^2\zeta(\overline{\nabla} u,\overline{\nabla} u)}{\psi_\epsilon^3}
$$
to be the leading order part of the linearisation of (\ref{mbvp 1}). In the following, we will again omit the subscript $\epsilon$. Let $f=|\overline{\nabla} u|^2$ which means that $\psi=\sqrt{f+\epsilon^2}$.
\begin{lem}
	Let $u\in C^2(M'_\epsilon)$. At any point $p\in M'_\epsilon$ where $|\overline{\nabla} u|>0$ there holds
	$$
	L(f)=\frac{2|\overline{\nabla}^2 u|^2}{\psi}+2\frac{\operatorname{Rc}(\overline{\nabla} u,\overline{\nabla} u)}{\psi}+\frac{|\overline{\nabla} f|^2}{2\psi^3}-\frac{g(\overline{\nabla} f,\overline{\nabla} u)^2}{\psi^5}+\frac{f^{\gamma/2}g(\overline{\nabla} f,\overline{\nabla} u)}{\psi^2}+\gamma f^{\frac{\gamma}{2}-1}g(\overline{\nabla} f,\overline{\nabla} u).
	$$
\end{lem}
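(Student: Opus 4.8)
The plan is to read the identity as a weighted Bochner formula and to produce it by differentiating the equation. Throughout, fix a point $p$ with $|\overline{\nabla} u|(p)>0$; there the principal part of (\ref{mbvp 1}) is uniformly elliptic and, since $|\overline{\nabla} u|^\gamma$ is a smooth function of $\overline{\nabla} u$ away from the origin, elliptic regularity makes $u$ smooth near $p$, so the third covariant derivatives that appear below are meaningful.

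First I would rewrite (\ref{mbvp 1}). Expanding the divergence, using $\overline{\nabla}\psi=\overline{\nabla} f/(2\psi)$, $\overline{\nabla} f=2\,\overline{\nabla}^2 u(\overline{\nabla} u,\cdot)$ and $\overline{\nabla}^2 u(\overline{\nabla} u,\overline{\nabla} u)=\tfrac12\,g(\overline{\nabla} f,\overline{\nabla} u)$, one checks that the left-hand side of (\ref{mbvp 1}) is precisely $L(u)$, so the equation reads $L(u)=f^{\gamma/2}$, equivalently
\[
\overline{\Delta} u=\psi\,f^{\gamma/2}+\frac{g(\overline{\nabla} f,\overline{\nabla} u)}{2\psi^2}.
\]
Next I would assemble the two ingredients of $L(f)=\overline{\Delta} f/\psi-\overline{\nabla}^2 f(\overline{\nabla} u,\overline{\nabla} u)/\psi^3$. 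For the first I use the standard Bochner identity
\[
\tfrac12\,\overline{\Delta} f=|\overline{\nabla}^2 u|^2+g(\overline{\nabla} u,\overline{\nabla}\,\overline{\Delta} u)+\operatorname{Rc}(\overline{\nabla} u,\overline{\nabla} u),
\]
which is the only source of the Ricci term. For the second, differentiating $\overline{\nabla} f=2\,\overline{\nabla}^2 u(\overline{\nabla} u,\cdot)$ and contracting twice with $\overline{\nabla} u$ gives $\overline{\nabla}^2 f(\overline{\nabla} u,\overline{\nabla} u)=2\,\overline{\nabla}^3 u(\overline{\nabla} u,\overline{\nabla} u,\overline{\nabla} u)+\tfrac12|\overline{\nabla} f|^2$, the last term using $|\overline{\nabla}^2 u(\overline{\nabla} u,\cdot)|^2=\tfrac14|\overline{\nabla} f|^2$.

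The crux is to remove the unknown term $g(\overline{\nabla} u,\overline{\nabla}\,\overline{\Delta} u)$, which I would do by differentiating the rewritten equation and contracting with $\overline{\nabla} u$. Using $g(\overline{\nabla} u,\overline{\nabla}\psi)=g(\overline{\nabla} f,\overline{\nabla} u)/(2\psi)$, the Leibniz rule on the factors $\psi$, $f^{\gamma/2}$ and $\psi^{-2}$, and the identity $\overline{\nabla}_{\overline{\nabla} u}\bigl(g(\overline{\nabla} f,\overline{\nabla} u)\bigr)=\overline{\nabla}^2 f(\overline{\nabla} u,\overline{\nabla} u)+\tfrac12|\overline{\nabla} f|^2=2\,\overline{\nabla}^3 u(\overline{\nabla} u,\overline{\nabla} u,\overline{\nabla} u)+|\overline{\nabla} f|^2$, this expresses $g(\overline{\nabla} u,\overline{\nabla}\,\overline{\Delta} u)$ purely through $\overline{\nabla}^3 u(\overline{\nabla} u,\overline{\nabla} u,\overline{\nabla} u)$, $|\overline{\nabla} f|^2$, $g(\overline{\nabla} f,\overline{\nabla} u)$ and $f$. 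Substituting the two ingredients into $L(f)$, the two occurrences of $\overline{\nabla}^3 u(\overline{\nabla} u,\overline{\nabla} u,\overline{\nabla} u)$ — one inherited from $g(\overline{\nabla} u,\overline{\nabla}\,\overline{\Delta} u)$, carried by the weight $2/\psi$ onto a factor $\psi^{-2}$, the other from $\overline{\nabla}^2 f(\overline{\nabla} u,\overline{\nabla} u)/\psi^3$ — cancel identically, and collecting what is left gives the claimed formula. The only real obstacle is bookkeeping: several $|\overline{\nabla} f|^2$ contributions must combine to the single coefficient $1/(2\psi^3)$, and the powers of $\psi$ generated by differentiating $\psi^{-2}$ and $f^{\gamma/2}$ must be tracked carefully — this is exactly what produces the terms $f^{\gamma/2}g(\overline{\nabla} f,\overline{\nabla} u)/\psi^2$ and $\gamma f^{\gamma/2-1}g(\overline{\nabla} f,\overline{\nabla} u)$.
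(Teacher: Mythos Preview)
Your approach is correct and essentially the same as the paper's: both combine the Bochner formula for $\overline{\Delta} f$ with the identity obtained by differentiating the PDE and contracting with $\overline{\nabla} u$, so that the unknown third-order term drops out. The only cosmetic difference is that the paper differentiates the equation in the form $\overline{\Delta} u/\psi - g(\overline{\nabla} f,\overline{\nabla} u)/(2\psi^3)=f^{\gamma/2}$ and keeps $\overline{\nabla}^2 f(\overline{\nabla} u,\overline{\nabla} u)$ as the quantity to be eliminated (it matches directly against the one in $L(f)$), whereas you solve for $\overline{\Delta} u$ first and track $\overline{\nabla}^3 u(\overline{\nabla} u,\overline{\nabla} u,\overline{\nabla} u)$ explicitly; via your identity $\overline{\nabla}^2 f(\overline{\nabla} u,\overline{\nabla} u)=2\,\overline{\nabla}^3 u(\overline{\nabla} u,\overline{\nabla} u,\overline{\nabla} u)+\tfrac12|\overline{\nabla} f|^2$ the two organizations are equivalent.
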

\begin{proof}
	We first note that the Schauder theory for elliptic equations implies that $u$ is smooth near $p$.
	The Bochner formula states that
	\begin{align*}
	\overline{\Delta} f=2|\overline{\nabla} ^2u|^2+2g(\overline{\nabla} \overline{\Delta} u, \overline{\nabla} u)+2\operatorname{Rc}(\overline{\nabla} u,\overline{\nabla} u).
	\end{align*}
	On the other hand, since $2\overline{\nabla}^2 u(\overline{\nabla} u,\overline{\nabla} u)=g(\overline{\nabla} f,\overline{\nabla} u)$ we find
	$$
	\frac{\overline{\Delta} u}{\psi}-\frac{g(\overline{\nabla} f,\overline{\nabla} u)}{2\psi^3}=|\overline{\nabla} u|^\gamma.
	$$
	Taking gradients on both sides and multiplying by $\overline{\nabla} u$ it is easy to see, for instance using normal coordinates, that
	$$
	\frac{g(\overline{\nabla} \overline{\Delta} u,\overline{\nabla} u)}{\psi}-\frac{\overline{\Delta} u g(\overline{\nabla} f,\overline{\nabla} u)}{2\psi^3}+\frac34\frac{g(\overline{\nabla} f,\overline{\nabla} u)^2}{\psi^5}-\frac{g(\overline{\nabla} f,\overline{\nabla} f)}{4\psi^3}-\frac{\overline{\nabla}^2 f(\overline{\nabla} u,\overline{\nabla} u)}{2\psi^3}=\frac{\gamma}{2} f^{\frac{\gamma}{2}-1}g(\overline{\nabla} f,\overline{\nabla} u).
	$$
	It follows that
	$$
	\frac{g(\overline{\nabla} \overline{\Delta} u,\overline{\nabla} u)}{\psi}-\frac{f^{\gamma/2} g(\overline{\nabla} f,\overline{\nabla} u)}{2\psi^2}+\frac{g(\overline{\nabla} f,\overline{\nabla} u)^2}{2\psi^5}-\frac{g(\overline{\nabla} f,\overline{\nabla} f)}{4\psi^3}-\frac{\overline{\nabla}^2 f(\overline{\nabla} u,\overline{\nabla} u)}{2\psi^3}=\frac{\gamma}{2} f^{\frac{\gamma}{2}-1}g(\overline{\nabla} f,\overline{\nabla} u).
	$$
	The claim follows easily from these identities.
\end{proof}
In order to capitalize on this identity, we rewrite the Hessian term.
\begin{lem}
	Let $u\in C^2(M'_\epsilon)$ be a solution of (\ref{mbvp 1}) and $p\in M'_\epsilon$ be a point where $|\overline{\nabla} u|\geq 2\epsilon$. Then
	$$
	|\overline{\nabla}^2 u|^2\geq \frac{1}{16}\frac{|\overline{\nabla} f|^2}{f}+\frac{f^\gamma\psi^2}{2(n-1)}+\frac{g(\overline{\nabla} f, \overline{\nabla} u)^2}{8(n-1)\psi^4}.
	$$
\end{lem}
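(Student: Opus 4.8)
The plan is to argue pointwise at $p$. Since $|\overline{\nabla} u|>0$ at $p$, Schauder theory (exactly as in the previous lemma) makes $u$ smooth near $p$, and I would choose a $g$-orthonormal frame $\{e_1,\dots,e_n\}$ at $p$ with $e_n=\overline{\nabla} u/|\overline{\nabla} u|$, so that $u_n=|\overline{\nabla} u|=\sqrt f$ and $u_i=0$ for $i<n$. In this frame the components of $\overline{\nabla} f$ are $f_k=2u_nu_{nk}$, hence $|\overline{\nabla} f|^2=4f\sum_k u_{nk}^2$ and $g(\overline{\nabla} f,\overline{\nabla} u)=2fu_{nn}$, so the right-hand side of the asserted estimate becomes an explicit combination of $\sum_k u_{nk}^2$, $\psi^2f^\gamma$ and $u_{nn}^2$. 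Each of these will be matched against a piece of the orthogonal decomposition $|\overline{\nabla}^2u|^2=\sum_{i,j<n}u_{ij}^2+2\sum_{i<n}u_{in}^2+u_{nn}^2$.

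Next I would feed in the equation. Writing (\ref{mbvp 1}) as $\overline{\Delta}u/\psi-\overline{\nabla}^2u(\overline{\nabla} u,\overline{\nabla} u)/\psi^3=f^{\gamma/2}$ and using $\psi^2=f+\epsilon^2$ gives, in the chosen frame, the identity $\sum_{i<n}u_{ii}=\psi f^{\gamma/2}-(\epsilon^2/\psi^2)u_{nn}$. Applying the Cauchy--Schwarz inequality to the diagonal of the $(n-1)\times(n-1)$ block $(u_{ij})_{i,j<n}$ then yields
$$
|\overline{\nabla}^2u|^2\geq\frac{1}{n-1}\Big(\psi f^{\gamma/2}-\frac{\epsilon^2}{\psi^2}u_{nn}\Big)^2+2\sum_{i<n}u_{in}^2+u_{nn}^2 .
$$

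From here the argument is bookkeeping. The off-diagonal sum $2\sum_{i<n}u_{in}^2$ comfortably dominates the target term $\tfrac14\sum_{i<n}u_{in}^2$; the hypothesis $|\overline{\nabla} u|\geq2\epsilon$ gives $\epsilon^2\leq f/4$, hence $f\leq\psi^2\leq\tfrac54 f$, $\epsilon^4/\psi^4\leq\tfrac1{16}$ and $f^2/\psi^4\leq1$, which lets me absorb the extra $u_{nn}^2$-contributions appearing in the target into the $u_{nn}^2$ on the left. Finally, expanding the square and estimating the cross term $-\tfrac{2\epsilon^2f^{\gamma/2}}{(n-1)\psi}u_{nn}$ by a weighted Young inequality --- balanced against $\psi^2f^\gamma$ and $u_{nn}^2$ --- leaves, for $n\geq3$, a strictly positive multiple of $\psi^2f^\gamma$, which proves the claim with room to spare. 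The only step requiring a little care is selecting the Young weight so that the coefficient it contributes in front of $\psi^2f^\gamma$ stays below $1/(2(n-1))$ uniformly for $n\geq3$; everything else is routine.
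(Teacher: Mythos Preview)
Your proposal is correct and follows essentially the same route as the paper. Both proofs fix an orthonormal frame adapted to $\overline{\nabla} u$, rewrite (\ref{mbvp 1}) as a relation for the trace of the tangential $(n-1)\times(n-1)$ block of $\overline{\nabla}^2u$, apply the Cauchy--Schwarz (trace) inequality to that block, and then absorb the resulting cross term via Young's inequality using $\epsilon^2\le f/4$. The only cosmetic difference is that you first simplify the trace identity to $\sum_{i<n}u_{ii}=\psi f^{\gamma/2}-(\epsilon^2/\psi^2)u_{nn}$ and then square, whereas the paper squares $-u_{11}+\tfrac{\partial_1 f\,\partial_1 u}{2\psi^2}+f^{\gamma/2}\psi$ directly and simplifies afterwards; this leads to slightly different intermediate bookkeeping but the same estimate.
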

\begin{proof}
	We choose normal coordinates $\{\partial_i\}$ centred at $p$ such that $\partial_1=\overline{\nabla} u/|\overline{\nabla} u|$. Then, $\partial_jf=2\partial_{j}\partial_1u\partial_1u$. In particular, $\partial_j\partial_1u=2^{-1}\partial_jff^{-\frac{1}{2}}$ and $\partial_1u=f^{1/2}$. The equation (\ref{mbvp 1}) satisfied by $u$ can now be written as
	$$
	\sum_{j=2}^n\partial_j\partial_ju=-\partial_1\partial_1u+\frac{\partial_1f\partial_1u}{2\psi^2}+f^{\gamma/2}\psi.
	$$
	A standard trace inequality implies that
	\begin{align*}
	|\overline{\nabla}^2 u|^2\geq& |\partial_1\partial_1u|^2+2\sum_{j=2}^n |\partial_j\partial_1u|^2+\frac{1}{n-1}\bigg(\sum_{j=2}^n|\partial_j\partial_ju|^2\bigg)\\
	=&\frac{n}{n-1}|\partial_1\partial_1 u|^2+2\sum_{j=2}^n |\partial_j\partial_1u|^2+\frac{f^\gamma\psi^2}{n-1}+\frac{g(\overline{\nabla} f, \overline{\nabla} u)^2}{4(n-1)\psi^4}+\frac{f^{\frac{\gamma}{2}}g(\overline{\nabla} f, \overline{\nabla} u)}{(n-1)\psi}\\&-\frac{2f^{\frac{\gamma}{2}}\psi \partial_1\partial_1u}{n-1}-\frac{\partial_1f\partial_1u\partial_1\partial_1u}{(n-1)\psi^2}
	\\=&\frac{1}{n-1}\bigg(\frac{n}{4f}-\frac{1}{2\psi^2}\bigg)|\partial_1f|^2+\frac12\sum_{j= 2}^n \frac{|\partial_jf|^2}{f}+\frac{f^\gamma\psi^2}{n-1}+\frac{g(\overline{\nabla} f, \overline{\nabla} u)^2}{4(n-1)\psi^4}\\&+\frac{f^{\frac{\gamma}{2}}g(\overline{\nabla} f, \overline{\nabla} u)}{(n-1)\psi}-\frac{f^{\frac{\gamma}{2}-1}\psi g(\overline{\nabla} f, \overline{\nabla} u)}{n-1}
	\\\geq& \frac{1}{8}\frac{|\overline{\nabla} f|^2}{f}+\frac{f^\gamma\psi^2}{n-1}+\frac{g(\overline{\nabla} f, \overline{\nabla} u)^2}{4(n-1)\psi^4}-\frac{\epsilon^2f^{\gamma/2-1}}{(n-1)\psi}g(\overline{\nabla} f, \overline{\nabla} u)
	\\\geq& \frac{1}{16}\frac{|\overline{\nabla} f|^2}{f}+\frac{f^\gamma\psi^2}{2(n-1)}+\frac{g(\overline{\nabla} f, \overline{\nabla} u)^2}{4(n-1)\psi^4}. 	\end{align*}
	In the last step we used Young's inequality and the fact that $f\geq 4\epsilon^2$..
\end{proof}
\begin{coro}
	Let $u\in C^2(M'_\epsilon)$ be a solution of (\ref{mbvp 1}), $\gamma\leq 2$ and $\epsilon\leq 1/4$. Furthermore, let us assume that $\operatorname{Rc}\geq -\Gamma/2g$ for some constant $\Gamma\geq 0$. Let $p\in M'_\epsilon$ be a point where $|\overline{\nabla} u|\geq 2\epsilon$.  Then there exists a universal constant $c=c(n)$ such that
	$$
	L(f)\geq \frac{1}{2(n-1)}f^{\gamma+\frac{1}{2}}-\Gamma f^{\frac12}-c(n)\frac{|\overline{\nabla} f|^2}{f^{\frac32}}.
	$$
	\label{coro L est}
\end{coro}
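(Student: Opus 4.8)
The plan is to substitute the Hessian estimate from the preceding lemma into the Bochner-type identity for $L(f)$ proved two lemmas above, and then to absorb the remaining sign-indefinite terms by Cauchy--Schwarz and Young's inequality. Throughout, the hypothesis $|\overline{\nabla} u|\geq 2\epsilon$ makes $u$ smooth near $p$ (by the Schauder argument already used above) and, together with $\psi^2=f+\epsilon^2$, gives the elementary comparisons $f^{1/2}\leq\psi$ and $f\leq\psi^2$, which convert every power of $\psi$ into a power of $f$.

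First I would estimate the Hessian term: inserting $|\overline{\nabla}^2u|^2\geq\frac{1}{16}\frac{|\overline{\nabla} f|^2}{f}+\frac{f^{\gamma}\psi^2}{2(n-1)}+\frac{g(\overline{\nabla} f,\overline{\nabla} u)^2}{8(n-1)\psi^4}$, discarding the first and third (nonnegative) summands, and using $\psi\geq f^{1/2}$ yields $\frac{2|\overline{\nabla}^2u|^2}{\psi}\geq\frac{f^{\gamma}\psi}{n-1}\geq\frac{1}{n-1}f^{\gamma+\frac12}$. Next, the curvature hypothesis $\operatorname{Rc}\geq-\frac{\Gamma}{2}g$ together with $\frac{f}{\psi}\leq f^{1/2}$ gives $\frac{2\operatorname{Rc}(\overline{\nabla} u,\overline{\nabla} u)}{\psi}\geq-\Gamma\frac{f}{\psi}\geq-\Gamma f^{1/2}$. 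For the two pure-gradient terms, the bound $g(\overline{\nabla} f,\overline{\nabla} u)^2\leq|\overline{\nabla} f|^2 f\leq|\overline{\nabla} f|^2\psi^2$ shows that $\frac{|\overline{\nabla} f|^2}{2\psi^3}-\frac{g(\overline{\nabla} f,\overline{\nabla} u)^2}{\psi^5}\geq-\frac{|\overline{\nabla} f|^2}{2\psi^3}\geq-\frac12\frac{|\overline{\nabla} f|^2}{f^{3/2}}$.

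It then remains to handle the two terms involving $f^{\gamma/2}$. Using $|g(\overline{\nabla} f,\overline{\nabla} u)|\leq|\overline{\nabla} f|f^{1/2}$, $\psi^2\geq f$ and $\gamma\leq 2$, both are bounded by $3f^{\frac{\gamma}{2}-\frac12}|\overline{\nabla} f|$, and Young's inequality in the form $3f^{\frac{\gamma}{2}-\frac12}|\overline{\nabla} f|=3\bigl(f^{\frac{\gamma}{2}+\frac14}\bigr)\bigl(f^{-\frac34}|\overline{\nabla} f|\bigr)\leq\delta f^{\gamma+\frac12}+\frac{9}{4\delta}\frac{|\overline{\nabla} f|^2}{f^{3/2}}$ disposes of them for any $\delta>0$. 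Choosing $\delta=\frac{1}{2(n-1)}$ leaves $\frac{1}{n-1}-\delta=\frac{1}{2(n-1)}$ in front of $f^{\gamma+\frac12}$ and collects the remaining gradient terms into a constant $c(n)=\frac12+\frac{9(n-1)}{2}$, which is exactly the asserted inequality. I expect no serious obstacle here beyond bookkeeping of constants; the only slightly delicate point is the interplay of exponents in the final Young inequality, where $\gamma\leq 2$ is what keeps the coefficient $\gamma$ replaceable by an explicit constant and the smallness assumptions on $\epsilon$ ensure $\psi$ is genuinely comparable to $f^{1/2}$, so that all the negative powers of $f$ appearing above are legitimate.
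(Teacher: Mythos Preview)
Your proof is correct and follows essentially the same approach as the paper: substitute the Hessian lower bound into the Bochner-type identity for $L(f)$, use $\psi\geq f^{1/2}$ to convert powers of $\psi$ into powers of $f$, and absorb the sign-indefinite $f^{\gamma/2}$-terms via Young's inequality with a dimensional constant. The paper's proof is a one-line reference to these ingredients; you have simply carried out the bookkeeping explicitly.
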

\begin{proof}
	This follows from the previous two lemmas, Young's inequality and from the fact that $\psi^2$ and $f$ can be compared under the assumptions.
\end{proof}
We are now in the position to prove the following interior estimate.
\begin{lem}
	Let $u\in C^2(M'_\epsilon)$ be a solution of (\ref{mbvp 1}) and $p\in M'_\epsilon$. Let $\gamma<2,\epsilon<1/4,$, $R\leq 2\operatorname{dist}_g(p,\partial M'_\epsilon)$ and suppose that the sectional curvature of $(M,g)$ is bounded from below by $-\Gamma$.
	Then
	$$
	|\overline{\nabla} u|^{2\gamma}(p)  \leq c(n)(R^{-2}+\Gamma R^{-1}+\Gamma )+4^\gamma\epsilon^{2\gamma}.
	$$
	\label{interior estimate}
\end{lem}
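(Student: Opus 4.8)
The plan is to prove this by a Bernstein-type argument, applying the maximum principle to the localized quantity $G:=\phi f$, where $f:=|\overline{\nabla} u|^2$ and $\phi$ is a cut-off function supported in the geodesic ball $B:=B_{R/2}(p)$, which is precompactly contained in $M'_\epsilon$ because of the hypothesis $R\leq 2\operatorname{dist}_g(p,\partial M'_\epsilon)$. It is convenient to work with the rescaling $\tilde L(\zeta):=\psi\, L(\zeta)=b^{ij}\overline{\nabla}^2_{ij}\zeta$, where $b^{ij}=g^{ij}-\psi^{-2}\overline{\nabla}^iu\,\overline{\nabla}^ju$ has eigenvalues $\psi^{-2}\epsilon^2$ and $1$; in particular $0<b^{ij}\leq g^{ij}$ as quadratic forms, so $\tilde L$ is elliptic and $\tilde L(G)\leq 0$ at any interior maximum of $G$. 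Multiplying the inequality of Corollary \ref{coro L est} by $\psi$ and using $\sqrt f\leq\psi\leq\sqrt{2f}$ on the set $\{f\geq 4\epsilon^2\}$ (where $\epsilon^2\leq f/4$), together with the fact that the lower bound $-\Gamma$ on the sectional curvature forces $\operatorname{Rc}\geq-c(n)\Gamma g$, we record the clean differential inequality
\begin{align}
\tilde L(f)\geq\frac{1}{2(n-1)}f^{\gamma+1}-c(n)\Gamma f-c(n)\frac{|\overline{\nabla} f|^2}{f}\qquad\text{wherever } f\geq 4\epsilon^2.
\label{clean diff ineq}
\end{align}

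Next I would fix the cut-off $\phi=\chi(r/R)$, where $r=\operatorname{dist}_g(\cdot,p)$ and $\chi\colon[0,\infty)\to[0,1]$ is a fixed smooth profile with $\chi\equiv 1$ on $[0,1/4]$, $\chi\equiv 0$ on $[1/2,\infty)$ and $|\chi'|^2\leq c\chi$; then $0\leq\phi\leq 1$, $\phi\equiv 1$ on $B_{R/4}(p)$, $|\overline{\nabla}\phi|^2\leq c\,\phi R^{-2}$, and the Laplacian and Hessian comparison theorems for the curvature bound, together with $0\leq b^{ij}\leq g^{ij}$ and $\operatorname{tr}_g b\leq n$, yield $|\tilde L(\phi)|\leq c(n)(R^{-2}+\sqrt\Gamma R^{-1})$. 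Let $q\in\overline B$ be a maximum point of $G$ over $\overline B$. If $q\in\partial B$ then $\phi(q)=0$ and $f(p)=G(p)\leq G(q)=0$, so the claim is trivial; if $f(q)\leq 4\epsilon^2$ then $f(p)=G(p)\leq\phi(q)f(q)\leq 4\epsilon^2$ and hence $|\overline{\nabla} u|^{2\gamma}(p)\leq 4^\gamma\epsilon^{2\gamma}$, which is admissible. Thus we may assume $q$ is an interior point with $f(q)>4\epsilon^2$; then $|\overline{\nabla} u|>0$ near $q$, so $u$ and hence $f$ are smooth there and Corollary \ref{coro L est} applies.

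At such a $q$ we have $\overline{\nabla} G(q)=0$ and $\tilde L(G)(q)\leq 0$. Since $\phi(q)>0$, from $\phi\overline{\nabla} f=-f\overline{\nabla}\phi$ we get $|\overline{\nabla} f|^2=f^2\phi^{-2}|\overline{\nabla}\phi|^2$, hence $\phi|\overline{\nabla} f|^2/f=f\phi^{-1}|\overline{\nabla}\phi|^2\leq c\,fR^{-2}$, while the cross term obeys $2b^{ij}\overline{\nabla}_i\phi\,\overline{\nabla}_j f=-2f\phi^{-1}b^{ij}\overline{\nabla}_i\phi\,\overline{\nabla}_j\phi\geq -c\,fR^{-2}$. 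Expanding $\tilde L(\phi f)=\phi\tilde L(f)+f\tilde L(\phi)+2b^{ij}\overline{\nabla}_i\phi\,\overline{\nabla}_j f$, inserting (\ref{clean diff ineq}), the bound on $\tilde L(\phi)$ and the two estimates above, and using $\phi\leq 1$ to absorb the $\Gamma\phi f$ term, one arrives at
\begin{align}
0\geq\frac{\phi f^{\gamma+1}}{2(n-1)}-c(n)f\big(\Gamma+R^{-2}+\sqrt\Gamma R^{-1}\big)\qquad\text{at } q.
\label{at q}
\end{align}
Dividing by $f>0$ gives $\phi f^\gamma\leq c(n)(\Gamma+R^{-2}+\sqrt\Gamma R^{-1})$ at $q$; since $\gamma\geq 1$ and $0\leq\phi\leq 1$ we have $(\phi f)^\gamma\leq\phi f^\gamma$, and since $q$ is a maximum of $G$ and $\phi(p)=1$ we conclude $f(p)^\gamma=G(p)^\gamma\leq G(q)^\gamma\leq c(n)(\Gamma+R^{-2}+\sqrt\Gamma R^{-1})$. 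Combining the three cases and using $\sqrt\Gamma R^{-1}\leq\tfrac12(\Gamma+R^{-2})$ yields $|\overline{\nabla} u|^{2\gamma}(p)\leq c(n)(R^{-2}+\Gamma)+4^\gamma\epsilon^{2\gamma}$, which implies the asserted bound since $\Gamma R^{-1}\geq 0$.

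The main technical point is the construction and estimation of the cut-off: one needs a profile $\chi$ whose derivative is controlled by $\sqrt\chi$, so that $\phi|\overline{\nabla} f|^2/f$ remains bounded once $\overline{\nabla} G(q)=0$ is used, and one must bound $\tilde L(\phi)$ via comparison geometry for the lower curvature bound while handling the cut locus of $p$, which is done in the usual way by Calabi's trick since the comparison is only needed as an upper barrier at the single point $q$. The remaining care is bookkeeping: applying Corollary \ref{coro L est} only where $|\overline{\nabla} u|\geq 2\epsilon$ (the degenerate cases above cover $f(q)\leq 4\epsilon^2$), comparing $\psi$ with $\sqrt f$ on that set, and converting the sectional curvature lower bound into the Ricci lower bound required by the corollary.
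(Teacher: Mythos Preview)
Your proposal is correct and follows essentially the same Bernstein-type argument as the paper: localize $f=|\overline\nabla u|^2$ by a radial cut-off, apply the elliptic operator to the product at an interior maximum, and feed in Corollary~\ref{coro L est}. The only cosmetic difference is that you work with the rescaled operator $\tilde L=\psi L$ rather than $L$ itself, which slightly changes the bookkeeping of powers of $f$ but not the substance. Your treatment is in fact a bit more careful in two places: you note that only the one-sided Hessian comparison $\overline\nabla^2 r\leq\sqrt\Gamma\coth(\sqrt\Gamma r)\,g$ (hence only the lower sectional curvature bound) is needed to bound $\tilde L(\phi)$ from below, and you invoke Calabi's trick for the cut locus; and you make explicit the step $(\phi f)^\gamma\leq\phi f^\gamma$ (using $\gamma\geq1$, $0\leq\phi\leq1$) that passes from the estimate at $q$ back to $p$. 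One small inaccuracy: you write $|\tilde L(\phi)|\leq c(n)(R^{-2}+\sqrt\Gamma R^{-1})$, but with only a lower sectional curvature bound you genuinely only get the one-sided inequality $\tilde L(\phi)\geq -c(n)(R^{-2}+\sqrt\Gamma R^{-1})$; fortunately that is all the argument uses, since $f\geq0$.
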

\begin{proof}
	Let $d(y):=\operatorname{dist}_g(y,p)$ and pick a smooth function $\zeta\in C^\infty(\mathbb{R})$ such that $\zeta\leq 1$, $\zeta\equiv 1$ on $(-\infty, 1/2]$ and $\zeta \equiv 0$ on $[1,\infty)$. We may choose $\zeta$ in a way such that $\zeta'^2\zeta^{-1}\leq 10$ and $0\geq \zeta''\geq -10$. Next, we define $\rho:M'_\epsilon\to\mathbb{R}$ via
	$$
	\rho(y):=\zeta\bigg(\frac{d(y)}{R}\bigg).
	$$
	Evidently, there holds 
	$$
	\frac{|\overline{\nabla} \rho|^2}{\rho} \leq \frac{10}{R^2}.
	$$
	On the other hand, the Hessian Comparison Theorem implies that $|\overline{\nabla} ^2 d| \leq (1+\Gamma d)d^{-1}g$ and consequently
	$$
	|\overline{\nabla}^2 \rho|\leq \frac{30}{R^2}+\frac{10\Gamma}{R}.
	$$
	We now define the quantity $\upsilon:=\rho f$. $\upsilon$ attains an interior maximum at a point $y_0$ and we may assume that $f(y_0)\geq 4\epsilon^2$. In particular, $\upsilon$ is smooth near $y_0$ by standard elliptic regularity theory. Moreover, there holds $\rho\overline{\nabla} f=-f\overline{\nabla} \rho$. We write $L$ in normal coordinates centred at $y_0$ and denote the leading order coefficients by $L_{ij}$. Clearly, $|L_{ij}|\leq 2\psi^{-1}$. It follows by the ellipticity of $L$ that at $y_0$ there holds
	\begin{align*}
	0\geq L(\upsilon)&=\rho L(f)+L(\rho)f+2L_{ij}\partial_if\partial_j\eta \\
	&\geq \rho L(f)-f^{\frac12}\bigg(\frac{80}{R^2}+\frac{20\Gamma}{R}\bigg) \\
	&\geq \frac{1}{2(n-1)}\rho f^{\gamma+\frac12}-c(n)f^{\frac12}\bigg(R^{-2}+\Gamma R^{-1}+\Gamma\bigg).
	\end{align*}
	In the last step, we used the previous corollary. Since $\rho(y_0)\leq1$, the claim follows.
\end{proof}
Before we can prove a global gradient estimate, we need to estimate $|\overline{\nabla} u|$ on $\Sigma_\epsilon$. To this end, we construct suitable barriers.
\begin{lem}
	Let $u\in C^2(M'_\epsilon)\cap C^1({\overline{M'_\epsilon}})$ be a solution of (\ref{mbvp 1}-\ref{mbvp 4}) and $\epsilon>0$ be sufficiently small. Then there exists a constant $c$ depending only on $|A^\Sigma|_{C^0(\Sigma)}, |A^{\partial M}|_{C^0(\partial M)}, \operatorname{inj}(\Sigma),\operatorname{inj}(\partial M)$ and $|\operatorname{Rm}|_{C^0(B_{\operatorname{inj}(\Sigma),g}(\Sigma))}$ such that for any $p\in \Sigma_\epsilon$ there holds
	$$
	|\overline{\nabla} u|(p)\leq c.
	$$
	Here, the norms are taken with respect to the metric $g$.
\end{lem}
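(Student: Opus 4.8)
The plan is to establish the gradient bound on $\Sigma_\epsilon$ by constructing upper and lower barriers in a one-sided tubular neighbourhood of $\Sigma_\epsilon$ inside $M'_\epsilon$, following the standard boundary-gradient-estimate technique adapted to the free boundary setting. First I would introduce the signed distance function $d_\epsilon(y):=\operatorname{dist}_g(y,\Sigma_\epsilon)$, which is smooth on a collar $N_\delta:=\{0\le d_\epsilon<\delta\}$ for $\delta$ controlled by $\operatorname{inj}(\Sigma)$ and $|A^\Sigma|_{C^0}$; the Hessian of $d_\epsilon$ is bounded there in terms of $|A^\Sigma|_{C^0(\Sigma)}$, $|\operatorname{Rm}|_{C^0}$ and $\delta$. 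The barrier I would use is $w^\pm(y):=\pm\,K\,\phi(d_\epsilon(y))$, where $\phi(s)=\tfrac1\lambda(1-e^{-\lambda s})$ for constants $K,\lambda$ to be fixed; this function vanishes on $\Sigma_\epsilon$, matches the Dirichlet value $u=0$ there, and $|\overline\nabla w^\pm|=K$ at $\Sigma_\epsilon$.

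The first key step is the differential inequality: I would feed $w^\pm$ into the operator on the left of (\ref{mbvp 1}). Writing $\overline{\operatorname{div}}(\overline\nabla w/\sqrt{\epsilon^2+|\overline\nabla w|^2})$ in terms of $\phi',\phi''$ and $\overline\nabla^2 d_\epsilon$, the dominant term is $\phi''/\sqrt{\epsilon^2+(\phi')^2}$ times a factor comparable to $1$ (since the "bad" direction of the degenerate operator is exactly $\overline\nabla d_\epsilon$, the normal direction, and $\phi''<0$ works in our favour for the supersolution). Choosing $\lambda$ large relative to the Hessian bound for $d_\epsilon$ and the $C^0$-data of $g$, and then $K$ large relative to $\sup_{M'_\epsilon}|\overline\nabla u|^\gamma$ — which is already controlled away from $\Sigma_\epsilon$ by the interior estimate Lemma \ref{interior estimate} and on $\tilde\partial U_\epsilon$ by Corollary \ref{outer dirichlet est} — one arranges that $w^+$ is a supersolution and $w^-$ a subsolution of (\ref{mbvp 1}) in $N_\delta$, with the source term $|\overline\nabla w^\pm|^\gamma=K^\gamma\,|\phi'|^\gamma$ absorbed because $\gamma<2$ keeps this term of lower order than the gradient-of-divergence term near where $|\overline\nabla w^\pm|$ is large.

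The second key step is to handle the portion of $\partial N_\delta$ lying on $\partial M$, i.e.\ on $S_\epsilon$, where we only know $\partial_\mu u=0$. Here I would exploit that $\Sigma_\epsilon$ meets $\partial M$ at a contact angle strictly less than $\pi/2$: this angle condition, together with the bound on $|A^{\partial M}|_{C^0}$, guarantees that near the corner $\partial\Sigma_\epsilon$ the normal $\mu$ to $\partial M$ satisfies $g(\mu,\overline\nabla d_\epsilon)\le -c_0<0$, so $\partial_\mu w^\pm$ has a favourable sign and the comparison function respects the Neumann condition on $S_\epsilon$ (one may need a small tilt of $\phi$ or an additional lower-order term $\eta\,\xi(y)$ with $\xi$ a boundary-defining function to make the Neumann inequality strict — this is the technical adjustment Marquardt also performs in \cite{marquardt2017weak}). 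With $w^-\le u\le w^+$ on all of $\partial N_\delta$ and the differential/Neumann inequalities in place, the comparison principle for the quasilinear operator gives $w^-\le u\le w^+$ throughout $N_\delta$; since $w^\pm$ and $u$ all vanish on $\Sigma_\epsilon$, differentiating along $\overline\nabla d_\epsilon$ at $p\in\Sigma_\epsilon$ yields $|\overline\nabla u|(p)\le |\partial_{d_\epsilon}w^+|(p)=K$, which is the claimed constant $c$.

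I expect the main obstacle to be the interaction at the corner $\partial\Sigma_\epsilon$ between the Dirichlet data on $\Sigma_\epsilon$ and the Neumann data on $S_\epsilon$: the distance function $d_\epsilon$ is only Lipschitz up to $S_\epsilon$ unless one is careful, and the comparison principle must be applied on a domain with a genuine corner. The resolution is precisely why $\Sigma_\epsilon$ was perturbed to meet $\partial M$ at an acute (strictly sub-$\pi/2$) angle: this makes $d_\epsilon$ smooth in a genuine one-sided neighbourhood that stays away from the singular locus of the distance function, and makes $g(\mu,\overline\nabla d_\epsilon)$ strictly negative up to the corner, so the Neumann inequality for $w^\pm$ holds with room to spare. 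The only remaining care is to check that the constants $\lambda,K$ can be chosen uniformly in $\epsilon$ for $\epsilon$ small, which follows because all the geometric data entering the barriers ($|A^\Sigma|_{C^0}$, $|A^{\partial M}|_{C^0}$, the injectivity radii, $|\operatorname{Rm}|_{C^0}$, the contact angle) are fixed or converge as $\epsilon\to 0$, and the only $\epsilon$-dependent input, $\sup|\overline\nabla u|$ off a collar, is bounded independently of $\epsilon$ by Lemma \ref{interior estimate}.
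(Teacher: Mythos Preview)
Your overall barrier strategy coincides with the paper's, but two of your specific choices fail, and the fixes are not minor.

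\textbf{The Neumann boundary.} Your claim that the acute contact angle gives $g(\mu,\overline\nabla d_\epsilon)\le -c_0<0$ with $c_0$ independent of $\epsilon$ is wrong on two counts. First, since $\Sigma_\epsilon\to\Sigma$ in $C^2$ and $\Sigma$ meets $\partial M$ orthogonally, the angle defect tends to zero, so at the corner $g(\mu,\nu_\epsilon)\to 0$; along $S_\epsilon$ the correct uniform statement is only $\partial_\mu d_\epsilon\ge -c_1\,d_\epsilon$ (Taylor expansion from the orthogonal limit). Second, even granting your sign you obtain $\partial_\mu w^+=K\phi'\,\partial_\mu d_\epsilon<0$, which is the \emph{wrong} sign for excluding a boundary maximum of $u-w^+$ under $\partial_\mu u=0$. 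The paper therefore builds the barrier from \emph{two} distance functions, $\rho=\Gamma\,d^{\Sigma_\epsilon}\,\zeta(d^{\partial M})$ with $\zeta'(0)$ large and negative; the term $-\zeta'(0)\,d^{\Sigma_\epsilon}$ in $\partial_\mu\rho$ then dominates the uncontrolled $2\,\partial_\mu d^{\Sigma_\epsilon}$ and produces the correct sign uniformly in $\epsilon$. Your parenthetical about ``an additional term $\eta\,\xi(y)$ with $\xi$ a boundary-defining function'' points in this direction but understates its role: it is the mechanism, not a tweak.

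\textbf{The outer edge of the collar.} Your bounded barrier requires $u\le K\phi(\delta)$ on $\{d_\epsilon=\delta\}$ with $K$ independent of $\epsilon$, hence an $\epsilon$-uniform upper bound for $u$ there. None is available at this stage: globally one only has $u\le\tau=Z_\epsilon\to\infty$, and Lemma~\ref{interior estimate} controls $|\overline\nabla u|$ only at positive distance from \emph{all} of $\partial M'_\epsilon=\Sigma_\epsilon\cup S_\epsilon\cup\tilde\partial U_\epsilon$, so it says nothing near $S_\epsilon\cap\{d_\epsilon=\delta\}$ and cannot be integrated to bound $u$ there. The paper avoids this entirely by replacing $\rho$ with $\upsilon=\rho(1-\rho)^{-1}$, which blows up where $\rho=1$; then $u\le\upsilon$ is automatic on the outer edge, and a direct computation shows $\upsilon$ inherits the strict supersolution property from $\rho$. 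This blowing-up device, not any appeal to the interior estimate, is what makes the final constant independent of $\epsilon$. (The lower barrier $w^-$ is unnecessary: $u\ge 0$ from Lemma~\ref{subsolution} already gives $\partial_\nu u\ge 0$ on $\Sigma_\epsilon$.)
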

\begin{proof}
	This is a variation of Lemma 3.8 in \cite{marquardt2017weak}. According to Lemma \ref{subsolution} there holds $u\geq 0$. We proceed to construct a supersolution. Let $d^{\Sigma_\epsilon}$ be the distance function to $\Sigma_\epsilon$ and $d^{\partial M}$ be the distance function to $\partial M$. There exists a constant $\delta>0$ depending on the injectivity radii of $\Sigma$ and $\partial M$ such that $d^{\Sigma_\epsilon}$ is smooth in $M'_{\epsilon,\delta}:=\{d^{\Sigma_\epsilon}<\delta\}\cap M'_{\epsilon}$ and such that $d^{\partial M}$ is smooth in $\tilde M'_{\epsilon,\delta}:=\{d^{\partial M}<8\delta\}\cap M'_{\epsilon,\delta}$\footnote{It follows from asymptotical flatness that the injectivity radius of $\partial M$ is uniformly bounded from below and that such a $\delta$ exists.}. The Hessian of $d^{\Sigma_\epsilon}$ at a point $p$ is given by the second fundamental form of the level set of $d^{\Sigma_\epsilon}$ containing $p$. Evolving $\Sigma_\epsilon$ in normal direction with unit speed and using for instance Theorem 3.2 in \cite{gerhard1999geometric},  we can compute the evolution equation for $|A^{\Sigma_\epsilon}|^2$ and conclude that $|\overline{\nabla} ^2 d^{\Sigma_\epsilon}|\leq \Theta$ in $M'_{\epsilon,\delta}$ where $\Theta$ only depends on $|A^\Sigma|_{C^0(\Sigma)}$ and the curvature of $g$. Likewise, we may arrange that $|\overline{\nabla} ^2 d^{\partial M}|\leq \Theta$ in $\tilde M'_{\epsilon,\delta}$ where $\Theta$ now also depends on $|A^{\partial M}|_{C^0({\partial M})}$. Let $\Gamma>0$ to be chosen and $\rho(y):=\Gamma d^{\Sigma_{\epsilon}}(y)\zeta(d^{\partial M}(y))$ for some smooth function $\zeta\in C^\infty(\mathbb{R})$ satisfying $1\leq \zeta\leq 2$ with $\zeta(0)=2$. We compute at $S_\epsilon$
	$$
	\partial_\mu \rho=\Gamma(2\partial_\mu d^{\Sigma_{\epsilon}}-\zeta'(0)d^{\Sigma_\epsilon}).
	$$
	$\Sigma_\epsilon$ meets $S_{\epsilon}$ at an acute angle and $\Sigma$ meets $\partial M$ orthogonally. Moreover, $\Sigma_\epsilon$ converges in $C^2$ to $\Sigma$. It then follows by Taylor's theorem that after possibly shrinking $\delta$ there exists a constant $c_1>0$ depending on $|A^\Sigma|_{C^0(\Sigma)}$ and the curvature of of $g$ but not on $\epsilon$ such that $\partial_\mu d^{\Sigma_\epsilon}(y) \geq- c_1d^{\Sigma_\epsilon}(y)$ for any $y\in S_\epsilon\cap \overline{M'_{\epsilon,\delta}}$. Consequently, if $\zeta'(0)<-4c_1$, there holds $\partial_\mu \rho<0$ at $\in S_\epsilon\cap \overline{M'_{\epsilon,\delta}}$. Next, if we assume that $\zeta$ is constant on $[8\delta,\infty)$ then it follows that $\rho$ is smooth in $M'_{\epsilon,\delta}$. We may then compute that 
	$$
	|\overline{\nabla} \rho|^2=\Gamma^2(\zeta^2+2\zeta'\zeta d^{\Sigma_\epsilon}g(\overline{\nabla} d^{\Sigma_{\epsilon}},\overline{\nabla} d^{\partial M})+(d^{\Sigma_{\epsilon}}\zeta')^2)
	$$ 
	and if we also assume that $|\zeta'|\leq ({2\delta})^{-1}$, it follows that 
	\begin{align}
	\frac{\Gamma^2}{2}\leq |\overline{\nabla} \rho|^2\leq 3 \Gamma^2.
	\label{zeta 1}
	\end{align}
	in $M'_{\epsilon,\delta}$. 	Moreover, a straightforward computation reveals that
	\begin{align}
	\bigg|\operatorname{div}\bigg(\frac{\overline{\nabla} \rho}{\sqrt{\epsilon^2+|\overline{\nabla} \rho|^2}}\bigg)\bigg|\leq c(\Theta+1)(1+\delta^{-1})+\Theta|\zeta''|d^{\Sigma_\epsilon}\leq c(\Theta+1)(1+\delta^{-1})
	\label{zeta 2}
	\end{align}
	in $M'_{\epsilon,\delta}$ for some universal constant $c$ (which is in particular independent of $\epsilon$) provided $|\zeta''|\leq 10\delta^{-2}$. Choosing $\Gamma\geq 2+2c(\Theta+1)(1+\delta^{-1})$ and noting that $\gamma > 1$ we conclude from (\ref{zeta 1}) and (\ref{zeta 2}) that
	$$
	\overline{\operatorname{div}}\bigg(\frac{\overline{\nabla} \rho}{\sqrt{\epsilon^2+|\overline{\nabla} \rho|^2}}\bigg) -|\overline{\nabla} \rho|^{\gamma}<0.
	$$
	Summarizing, we have assumed that $\zeta(0)=2$, $1\leq \zeta\leq 2$, $\zeta'(0)<-4c_1$,  $|\zeta'|\leq(2\delta)^{-1}$, $|\zeta''|\leq 10\delta^{-2}$ and that $\zeta=1$ on $[8\delta,\infty)$. After arranging that $\delta^{-1}>8c_1$, one possibility is given by 
	$$
	\zeta(s):=\begin{cases} &1+\exp\bigg(1-\frac{\delta^2}{(\delta-\frac{s}{8})^2}\bigg) \qquad  \text{ if } s<8\delta, \\ &1 \qquad\qquad\qquad\qquad\qquad\quad \text{ if } s\geq 8\delta.
	\end{cases}
	$$
	Furthermore, we may  assume that $\Gamma\geq \delta^{-1}$ and consequently $\rho\geq 1 $ on $\{d^{\Sigma_{\epsilon}}=\delta\}$. We then define the function $\upsilon$ to be $\rho(1-\rho)^{-1}$ where $\rho<1$ and to be $\infty$ elsewhere. Clearly, $\upsilon=0$ on $\Sigma_\epsilon$ and it is easy to see that  $\partial_\mu\upsilon>0$ on $\{\rho<1\}\cap S_\epsilon$. A straightforward calculation reveals that
	$$
	\overline{\operatorname{div}}\bigg(\frac{\overline{\nabla} \upsilon}{\sqrt{\epsilon^2+|\overline{\nabla} \upsilon|^2}}\bigg)-|\overline{\nabla} \upsilon|^{\gamma}<0
	$$ 
	and we conclude that $\upsilon$ is a supersolution of (\ref{mbvp 1}-\ref{mbvp 4}). Thus,
	$$
	0\leq \partial_\nu u\leq \partial_\nu \upsilon= \partial_\nu \rho
	$$
	on $\Sigma_\epsilon$ and the claim follows.
\end{proof}
We are finally in the position to prove a global gradient bound.
\begin{lem}
	Let $u$ be a $C^2(M'_\epsilon)\cap C^1({\overline{M'_\epsilon}})$ solution of (\ref{mbvp 1}-\ref{mbvp 4}), $1<\gamma<2$ and $\epsilon$ sufficiently small. Let $\Sigma$ be of class $C^2$. Then there exists a constant $c$ which depends on $(M,g)$, $|A^\Sigma|_{C^0(\Sigma)}$ and $\operatorname{inj}(\Sigma)$ such that
	\begin{align}
	|\overline{\nabla} u|\leq c.
	\label{global grad estimate}
	\end{align}
	\label{lem global grad estimate}
	\begin{proof}
		Let $\Theta:=\sup_{\partial M}{|A^{\partial M}|}+1$ which is finite by (\ref{asymptotic behavior 0}). We pick a smooth function $\rho$ such that $\partial_\mu \rho=-\Theta \rho$ on $S_\epsilon$, $1\leq \rho\leq 2$ and such that the first and second derivatives of $\rho$ are uniformly bounded by a constant $c$ depending on the asymptotic behaviour of $(M,g)$ as well as a bound for $|A^{\partial M}|$ and $|\operatorname{Rm}|$ in a suitably large compact set. We define $\upsilon=f\rho$ and note that we can equivalently estimate $\upsilon$ to prove the lemma. An easy calculation using the boundary condition (\ref{mbvp 4}) reveals that $\partial_\mu f=-A^\Sigma (\overline{\nabla} u,\overline{\nabla} u)$ on $S_\epsilon$. Consequently, $\upsilon$ cannot attain its maximum on $S_\epsilon$. If $\upsilon$ attains its maximum at $\tilde \partial U_\epsilon$, we can use Corollary \ref{outer dirichlet est}. If $\upsilon$ attains its maximum on $\Sigma_\epsilon$, we can use the previous lemma. Finally, if $\upsilon$ attains an interior maximum at a point $p$ we conclude that $f(p)> 0$ and consequently that $u$ is smooth near $p$. Moreover, $\rho\overline{\nabla} f=-f\overline{\nabla} \rho$ and $L(\upsilon)\leq 0$ at $p$ since ${L}$ is elliptic. As in the proof of Lemma \ref{interior estimate} we conclude that
		\begin{align*}
		0\geq \frac{1}{2(n-1)}\rho f^{\gamma+\frac12}-cf^{\frac12},
		\end{align*}
		where $c$ depends on the Ricci curvature of $g$, $\Theta$ and $n$. The claim follows.
	\end{proof}
\end{lem}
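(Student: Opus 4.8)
The plan is to prove the bound by a maximum principle argument for a weighted version of $f:=|\overline{\nabla} u|^2$. Lemma \ref{subsolution} and Corollary \ref{outer dirichlet est} already control $u$ and its gradient near the outer boundary $\tilde\partial U_\epsilon$, so the real issue is an interior or inner-boundary bound that is uniform in $\epsilon$ (and in $\gamma,\tau$). I would choose a strictly positive smooth weight $\rho$ on $\overline{M'_\epsilon}$ with $1\leq\rho\leq 2$ whose first and second derivatives are bounded by a constant depending only on the asymptotic geometry of $(M,g)$ together with bounds for $|A^{\partial M}|$ and $|\operatorname{Rm}|$ on a fixed large compact set, all of which are uniform by asymptotic flatness, and, crucially, satisfying the oblique condition $\partial_\mu\rho=-\Theta\rho$ along the Neumann part $S_\epsilon$, where $\Theta$ is a suitable multiple of $\sup_{\partial M}|A^{\partial M}|$ plus one. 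The point of this condition is that, differentiating $\partial_\mu u=0$ and using the boundary equation, one sees that $\partial_\mu f$ is controlled by $|A^{\partial M}|\,f$ on $S_\epsilon$, so that $\partial_\mu(\rho f)<0$ there whenever $f>0$. It then suffices to bound $\upsilon:=\rho f$ on the compact set $\overline{M'_\epsilon}$.

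Next I would examine where $\upsilon$ attains its maximum. If this happens on $S_\epsilon$, then $\partial_\mu\upsilon\geq 0$ at that point since $\upsilon$ has a boundary maximum there, contradicting $\partial_\mu\upsilon<0$ unless $f$ vanishes (in which case the estimate is trivial). If it happens on $\tilde\partial U_\epsilon$, Corollary \ref{outer dirichlet est} gives $f\leq (c/4R_\epsilon)^2$ directly. If it happens on $\Sigma_\epsilon$, the barrier estimate of the preceding lemma bounds $f$ in terms of $|A^\Sigma|_{C^0(\Sigma)}$, $\operatorname{inj}(\Sigma)$ and the curvature of $g$. The substantive case is an interior maximum at a point $p$: I may assume $f(p)>4\epsilon^2$, otherwise $\upsilon(p)\leq 2f(p)$ is trivially bounded, so $u$ is smooth near $p$ by interior Schauder theory and Corollary \ref{coro L est} is applicable. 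There $\overline{\nabla}\upsilon=0$, hence $\rho\,\overline{\nabla} f=-f\,\overline{\nabla}\rho$ and $|\overline{\nabla} f|\leq c\,f$, and $L(\upsilon)\leq 0$ by ellipticity of $L$. Expanding $L(\upsilon)=\rho L(f)+fL(\rho)+2L^{ij}\partial_i f\,\partial_j\rho$, inserting the lower bound for $L(f)$ from Corollary \ref{coro L est} (whose error term $|\overline{\nabla} f|^2/f^{3/2}$ is then $\leq c\,f^{1/2}$), and controlling $fL(\rho)$ and the cross term by $c\,f^{1/2}$ via $|L^{ij}|\leq 2\psi^{-1}\leq 2f^{-1/2}$ together with the derivative bounds on $\rho$, I would arrive at
\[
0\geq\frac{1}{2(n-1)}\,\rho\,f^{\gamma+\frac12}-c\,f^{\frac12}
\]
at $p$, with $c$ depending only on $n$, the Ricci lower bound $\Gamma$ and $\Theta$. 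Since $\rho\geq 1$ this yields $f(p)^\gamma\leq c$, hence $f(p)\leq\max\{c,1\}$, independently of $\epsilon$ and of $\gamma\in(1,2)$. Combining the cases bounds $\max_{\overline{M'_\epsilon}}\upsilon$, and therefore $\sup|\overline{\nabla} u|^2=\sup f$, by a constant of the asserted type.

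The main obstacle is the construction of the weight $\rho$: it must realise the oblique condition $\partial_\mu\rho=-\Theta\rho$ along $S_\epsilon$ right up to the corner $\partial\Sigma_\epsilon=\Sigma_\epsilon\cap S_\epsilon$ while keeping $|\overline{\nabla}\rho|$ and $|\overline{\nabla}^2\rho|$ bounded uniformly in $\epsilon$, and this is precisely where asymptotic flatness enters, through the uniform bounds on $|A^{\partial M}|$ and on the curvature along a compact exhaustion. The remaining subtlety is merely to check that the hypotheses of Corollary \ref{coro L est} — namely $\gamma\leq 2$, $\epsilon\leq 1/4$ and $f\geq 4\epsilon^2$ — are automatically satisfied at a large interior maximum, after which everything reduces to routine bookkeeping of the constant dependencies.
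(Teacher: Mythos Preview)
Your proposal is correct and follows essentially the same approach as the paper: introduce the weighted quantity $\upsilon=\rho f$ with $\partial_\mu\rho=-\Theta\rho$ on $S_\epsilon$, rule out a maximum on $S_\epsilon$ via the Neumann condition, handle $\tilde\partial U_\epsilon$ and $\Sigma_\epsilon$ by the earlier lemmas, and at an interior maximum apply Corollary~\ref{coro L est} together with $\rho\,\overline\nabla f=-f\,\overline\nabla\rho$ to obtain the polynomial inequality $0\ge \frac{1}{2(n-1)}\rho f^{\gamma+1/2}-cf^{1/2}$. You are in fact slightly more careful than the paper in explicitly checking the hypothesis $f(p)\ge 4\epsilon^2$ of Corollary~\ref{coro L est} and in allowing $\Theta$ to be a suitable multiple of $\sup|A^{\partial M}|$ rather than exactly $\sup|A^{\partial M}|+1$.
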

\subsection{Existence of approximate solutions and passing to the limit}
In this subsection, we first show that solutions of the approximate problem (\ref{mbvp 1}-\ref{mbvp 4}) exist. Afterwards, we will prove that a proper weak solution of the free boundary inverse mean curvature flow starting at $E_0$ can be obtained as the pointwise limit of a suitable sequence of functions $u_{\epsilon,\gamma,\tau}$.
\begin{lem}
	Suppose that $\epsilon<1$ is sufficiently small and that $\gamma>1$ is sufficiently close to $1$ (depending on $\epsilon$). Let $ Z_\epsilon:=(\log(R_\epsilon)-\log(R_0))/4$, where $R_\epsilon$ is the constant from Lemma \ref{subsolution}, and assume that $0\leq \tau\leq Z_\epsilon$. Then there exists a unique solution $u_{\epsilon,\gamma,\tau}\in C^{3,\gamma-1}({M'_\epsilon}))\cap C^{1,\beta} (\overline{M'_\epsilon})$ of (\ref{mbvp 1}-\ref{mbvp 4}) where $\beta$ depends on $\gamma,\epsilon$. Moreover, for any $\delta>0$ and $0<\alpha<1$ there exists a constant $c$ such that
	\begin{align}
	|u_{\epsilon,\gamma,\tau}|_{C^{2,\alpha}(M'_{\epsilon,\delta})}\leq c
	\label{approximate c2 estimate}
	\end{align}
	where $M'_{\epsilon,\delta}:=\{p\in M'_\epsilon|\operatorname{dist}_g(p,\partial M\cap(\Sigma_\epsilon \cup \tilde \partial U_\epsilon))<\delta\}$. Here, $c$ depends on $\alpha,\delta,\epsilon$ but not on $\gamma$.
	\label{approximate existence}
\end{lem}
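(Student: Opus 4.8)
My plan is to reduce the statement to the standard theory of uniformly elliptic quasilinear equations, exploiting the gradient bound already established. Equation (\ref{mbvp 1}) has the divergence form $\overline{\operatorname{div}}(A(\overline{\nabla}u))=B(\overline{\nabla}u)$ with $A(p)=p(\epsilon^2+|p|^2)^{-1/2}$ and $B(p)=|p|^\gamma$; its natural conormal operator is $A(\overline{\nabla}u)\cdot\mu$, which vanishes precisely when $\partial_\mu u=0$, so (\ref{mbvp 1})--(\ref{mbvp 4}) is a mixed Dirichlet/conormal boundary value problem. The linearisation has coefficient matrix $a^{ij}(p)=(\epsilon^2+|p|^2)^{-1/2}\delta^{ij}-(\epsilon^2+|p|^2)^{-3/2}p^ip^j$, whose eigenvalues lie between $\epsilon^2(\epsilon^2+|p|^2)^{-3/2}$ and $(\epsilon^2+|p|^2)^{-1/2}$. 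Hence, once the global gradient bound $|\overline{\nabla}u|\le c$ of Lemma \ref{lem global grad estimate} is in force --- and inspection of Lemmata \ref{interior estimate} and \ref{lem global grad estimate} shows $c$ can be taken independent of $\gamma\in(1,2)$ and of $\tau$ --- the operator is uniformly elliptic with constants depending only on $\epsilon$, and $B(\overline{\nabla}u)$ is bounded. Everything then becomes routine except for the mixed boundary conditions and the two edges where the Dirichlet faces $\Sigma_\epsilon$, $\tilde\partial U_\epsilon$ meet the conormal face $S_\epsilon$.

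\textbf{A priori estimates.} First I would assemble the estimates: $0\le u\le\tau$ is the comparison principle (Lemma \ref{subsolution}), and Lemma \ref{lem global grad estimate} gives the global $C^1$ bound. With $|\overline{\nabla}u|$ bounded, De Giorgi--Nash--Moser together with the interior gradient regularity of Ladyzhenskaya--Uraltseva and the boundary gradient regularity of Lieberman along the smooth Dirichlet and conormal parts of $\partial M'_\epsilon$ give $C^{1,\alpha}$ estimates, for every $\alpha<1$, away from the edges. Along each edge the opening angle is by construction \emph{strictly} less than $\pi/2$ --- this is exactly why $\Sigma$ was replaced by the perturbation $\Sigma_\epsilon$ and why $\tilde\partial U_\epsilon$ was arranged to meet $\partial M$ at an acute angle --- so the regularity theory for mixed boundary value problems in domains with edges (Lieberman; Azzam--Kreyszig; compare the analogous step in \cite{marquardt2017weak}) yields a $C^{1,\beta}(\overline{M'_\epsilon})$ estimate with $\beta=\beta(\gamma,\epsilon)>0$ governed by that angle. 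I expect this up-to-the-edge $C^{1,\beta}$ bound to be the main obstacle; all the other ingredients are standard once it is available.

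\textbf{Existence and uniqueness.} Granted the a priori $C^{1,\beta}(\overline{M'_\epsilon})$ bound, I would obtain existence from the Leray--Schauder fixed point theorem (Gilbarg--Trudinger, \S11) or the method of continuity, exactly as for the (harder) equation treated in \cite{marquardt2017weak}: freeze the gradient in the coefficients and the right-hand side, solve the resulting linear mixed problem, and verify that the induced map on $C^{1,\beta}(\overline{M'_\epsilon})$ is compact (linear Schauder/$C^{1,\beta'}$ theory with $\beta'>\beta$ and the compact embedding $C^{1,\beta'}\hookrightarrow C^{1,\beta}$) with a priori bounded fixed points along the homotopy. For uniqueness, if $u_1,u_2$ both solve (\ref{mbvp 1})--(\ref{mbvp 4}), then $w=u_1-u_2$ satisfies weakly $\overline{\operatorname{div}}(\tilde a^{ij}\partial_j w)-\tilde b^i\partial_i w=0$, where $\tilde a^{ij}=\int_0^1 a^{ij}(\overline{\nabla}u_s)\,ds$ is uniformly elliptic (the convex combination $\overline{\nabla}u_s$ is still bounded by $c$) and $\tilde b^i=\int_0^1\gamma|\overline{\nabla}u_s|^{\gamma-2}(\overline{\nabla}u_s)^i\,ds$ is bounded; moreover $w=0$ on $\Sigma_\epsilon\cup\tilde\partial U_\epsilon$, and the $S_\epsilon$-boundary term drops out of the weak formulation since both $u_i$ meet the conormal condition there. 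As this operator carries no zeroth order term and the Dirichlet part of $\partial M'_\epsilon$ has positive measure, the weak maximum principle (test with $(w-k)^\pm$ at $k=0$ and use the Poincar\'e inequality on $M'_\epsilon$) forces $w\equiv 0$.

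\textbf{Higher regularity and (\ref{approximate c2 estimate}).} For interior regularity I would bootstrap: $u\in C^{1,\alpha}_{loc}$ for all $\alpha<1$ and Schauder give $u\in C^{2,\alpha}_{loc}$; since $p\mapsto|p|^\gamma$ is of class $C^{1,\gamma-1}$ for $1<\gamma<2$, both $B(\overline{\nabla}u)$ and $a^{ij}(\overline{\nabla}u)$ then lie in $C^{1,\gamma-1}_{loc}$, and one further Schauder step yields $u\in C^{3,\gamma-1}(M'_\epsilon)$. On the region $M'_{\epsilon,\delta}$, which keeps a fixed distance from the edges but may touch the smooth Dirichlet and conormal faces, the same Schauder estimates hold up to the boundary; the constant depends on $\alpha,\delta,\epsilon$ but not on $\gamma$, because the ellipticity constants, the sup-norm of $B(\overline{\nabla}u)$, and the $C^{0,\alpha}$-norm of $B(\overline{\nabla}u)$ are all bounded uniformly for $\gamma\in(1,2)$ (the function $|p|^\gamma$ being Lipschitz on $\{|p|\le c\}$ with a constant independent of $\gamma$). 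This gives (\ref{approximate c2 estimate}), and the exponent $\beta$ in the regularity statement is the edge exponent produced in the second step, which depends on $\gamma$ and $\epsilon$.
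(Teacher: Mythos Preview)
Your proposal is correct and follows essentially the same route as the paper: a priori $C^1$ bounds from Lemma \ref{lem global grad estimate}, Lieberman's mixed boundary value theory for the edge behaviour (this is exactly why the acute contact angles were engineered), a continuity argument for existence, the maximum principle for uniqueness, and Schauder bootstrapping for (\ref{approximate c2 estimate}). The only cosmetic difference is that the paper works explicitly in Lieberman's weighted H\"older spaces $H^b_{k,\alpha}$ and runs the continuity method in the parameter $\tau\in[0,Z_\epsilon]$, whereas you phrase things in unweighted spaces and leave open whether to use Leray--Schauder or continuity; the underlying machinery and the logical structure are the same.
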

\begin{proof}
	The proof is based on a continuity method and very similar to the proof of Proposition 3.13 in \cite{marquardt2017weak}. We therefore only sketch the argument. Marquardt uses the theory of mixed boundary value problems for domains whose boundaries have edges which was developed by Lieberman in \cite{lieberman1986mixed,lieberman1989optimal}. Given a positive integer $k$, $\alpha\in(0,1)$ and a real number $b$, we define the weighted Hölder norms $|\psi|_{H^b_{k,\alpha}(M'_\epsilon)}:=\sup_{\delta>0}\delta^{b+k+\alpha}|\psi|_{C^{k,\alpha}(M'_{\epsilon,\delta})}$ and the weighted Hoelder spaces   $H^b_{k,\alpha}:=\{\psi\in C_{loc}^{k,\alpha}(M'_\epsilon)||\psi|_{H^b_{k,\alpha}(M'_\epsilon)}<\infty\}$. Recall that both $\Sigma_\epsilon,\hat\partial U_\epsilon$ meet $S$ at an acute angle. Consequently, the theory developed by Lieberman applies. 	Let $I:=\{\tau\in [0,Z_\epsilon] \text{ }| \text{ There exists a unique solution } u\in H^{-1-\beta}_{2,\alpha}(\Omega_\epsilon) \text{ of (\ref{mbvp 1}-\ref{mbvp 4})}\}$ for some $0<\alpha,\beta<1$ to be chosen. Next, we define the spaces
	$$
	A:=\{u\in H^{-1-\beta}_{2,\alpha}(M'_\epsilon) | u=0 \text{ on } \Sigma_\epsilon, \partial_\mu u=0 \text{ on } S_\epsilon \},
	\quad B:=B_1\times B_2:= H^{1-\beta}_{0,\alpha}(M'_\epsilon)\times H^{-1-\beta}_{2,\alpha}(\tilde \partial U_\epsilon)
	$$
	and the differential operator  $$L_{\epsilon,\gamma}(\psi)=\overline{\operatorname{div}}\bigg(\frac{\overline{\nabla} \psi}{\sqrt{\epsilon^2+|\overline{\nabla} \psi|^2}}\bigg)-|\overline{\nabla} \psi|^\gamma.$$ We then define $\mathcal{L}_{\epsilon,\gamma}:A\to B, \psi\mapsto (L_{\epsilon,\gamma}(\psi),\pi(\psi))$, where $\pi:A\to B_2$ is the projection of $A$ to $B_2$. As $\gamma>1$, it is easy to see that $L_{\epsilon,\gamma}$ is $C^1$ provided $\alpha\leq \operatorname{min}\{\gamma-1,\beta\}$.  We now show that $I=[0,Z_\epsilon]$ if $\beta$ is sufficiently small. According to the maximum principle, solutions to (\ref{mbvp 1}-\ref{mbvp 4}) are unique, hence $0\in I$. We proceed to show that $I$ is open. Let $\tau\in I$ and $u$ be the corresponding solution. The linearization of $L_{\epsilon,\gamma}$ at $u$, denoted by $L^u_{\epsilon,\gamma}$, is a uniformly elliptic operator as $u$ enjoys uniform gradient bounds by the previous lemma. One may check easily that its second order coefficients are in $H^{0}_{0,\alpha}(M'_\epsilon)$ and that its first order coefficients are in $H^{1-\beta}_{0,\alpha}(M'_\epsilon)$ provided $\alpha\leq \gamma-1$\footnote{ This restriction is due to the term  $\gamma \nabla u|\nabla u|^{\gamma-2}\in H^{1-\beta}_{0,\gamma-1}(M'_\epsilon)$.}. Choosing $\beta$ sufficiently small (depending on $\epsilon$ or more precisely the contact angle between $\Sigma_\epsilon, \hat \partial U_\epsilon$ and $S$), it then follows that the linear theory for mixed boundary value problems can be applied, see Theorem A.14 in \cite{marquardt2012inverse}. It follows that $L^u_{\epsilon,\gamma}$ is invertible and the inverse function theorem consequently implies that $\tau'\in I$ if $\tau'\in[0,Z_\epsilon]$ and if $|\tau'-\tau|$ is sufficiently small. A compactness argument similar to the one used in the proof of Proposition 3.13 of \cite{marquardt2017weak} also shows that $I$ is closed. Finally, according to Lemma \ref{lem global grad estimate}, solutions of (\ref{mbvp 1}-\ref{mbvp 4}) enjoy uniform gradient bounds and the operator $L_{\epsilon,\gamma}$ is consequently uniformly elliptic. An estimate of de Giorgi-Moser-Nash type, see \cite{gilbarg2015elliptic} Theorem 13.2, then implies uniform $C^{1,\alpha}(M'_{\epsilon,\delta})$ estimates which only depend on $(M,g)$, $\epsilon$ and $\delta$. (\ref{approximate c2 estimate}) now follows from the Schauder estimates for elliptic equations.
\end{proof}
\begin{coro}
	Let  $\epsilon$ be sufficiently small. Then there exists a unique solution $u_\epsilon:=u_{\epsilon,1,Z_\epsilon}$ of (\ref{mbvp 1}-\ref{mbvp 4}) which is in  $C^{2,\alpha}(\overline{M'_\epsilon}\setminus(\partial M\cap(\Sigma_{\epsilon}\cup \tilde\partial U_\epsilon)))\cap C^{0,1}(\overline{M'_\epsilon})$ for any $0<\alpha<1$ and  satisfies the estimate (\ref{global grad estimate}).
\end{coro}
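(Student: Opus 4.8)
The plan is to obtain $u_\epsilon$ as a limit, along a sequence $\gamma_i\searrow 1$, of the solutions $u_{\epsilon,\gamma,Z_\epsilon}$ produced by Lemma~\ref{approximate existence}, keeping $\epsilon$ fixed. First I would fix $\epsilon$ small enough that Lemma~\ref{approximate existence} applies for all $\gamma$ in some interval $(1,\gamma_0)$, and choose a sequence $\gamma_i\searrow 1$ with $\gamma_i<\gamma_0$; this produces solutions $u_i:=u_{\epsilon,\gamma_i,Z_\epsilon}$ of (\ref{mbvp 1}--\ref{mbvp 4}) with $\tau=Z_\epsilon$ and $\gamma=\gamma_i$. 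The decisive point is that all the a-priori bounds obtained so far are uniform in $\gamma$: Lemma~\ref{subsolution} gives $0\leq u_i\leq Z_\epsilon$, Lemma~\ref{lem global grad estimate} gives $|\overline{\nabla}u_i|\leq c$ with $c$ independent of $i$, and the Schauder-type estimate (\ref{approximate c2 estimate}) provides, for every $\alpha\in(0,1)$ and every compact $K\subset\overline{M'_\epsilon}\setminus(\partial M\cap(\Sigma_\epsilon\cup\tilde\partial U_\epsilon))$, a bound $|u_i|_{C^{2,\alpha}(K)}\leq c(\epsilon,K,\alpha)$ with $c$ again independent of $i$.

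Next I would apply the Arzel\`a--Ascoli theorem: after passing to a subsequence, $u_i\to u_\epsilon$ uniformly on $\overline{M'_\epsilon}$ and in $C^{2,\alpha}_{loc}$ on $\overline{M'_\epsilon}\setminus(\partial M\cap(\Sigma_\epsilon\cup\tilde\partial U_\epsilon))$ for every $\alpha\in(0,1)$. Passing the gradient bound to the limit shows that $u_\epsilon$ satisfies (\ref{global grad estimate}) and lies in $C^{2,\alpha}(\overline{M'_\epsilon}\setminus(\partial M\cap(\Sigma_\epsilon\cup\tilde\partial U_\epsilon)))\cap C^{0,1}(\overline{M'_\epsilon})$. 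That $u_\epsilon$ solves (\ref{mbvp 1}--\ref{mbvp 4}) with $\gamma=1$ then follows by letting $i\to\infty$ in the equation on compact subsets of $M'_\epsilon$: the $C^2$-convergence handles the divergence operator, while
\[
\bigl|\,|\overline{\nabla}u_i|^{\gamma_i}-|\overline{\nabla}u_\epsilon|\,\bigr|\leq\bigl|\,|\overline{\nabla}u_i|^{\gamma_i}-|\overline{\nabla}u_i|\,\bigr|+\bigl|\,|\overline{\nabla}u_i|-|\overline{\nabla}u_\epsilon|\,\bigr|\longrightarrow 0
\]
because $|\overline{\nabla}u_i|$ is uniformly bounded and $\gamma_i\to1$; hence $u_\epsilon$ solves $\overline{\operatorname{div}}\bigl(\overline{\nabla}u_\epsilon/\sqrt{\epsilon^2+|\overline{\nabla}u_\epsilon|^2}\bigr)=|\overline{\nabla}u_\epsilon|$. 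The Dirichlet conditions on $\Sigma_\epsilon$ and $\tilde\partial U_\epsilon$ are preserved by the uniform convergence, and the Neumann condition on $S_\epsilon$ is preserved by the $C^1_{loc}$-convergence up to $S_\epsilon$ away from the edge set, which is all of $S_\epsilon$ except for the lower-dimensional edge.

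Finally, uniqueness of $u_\epsilon$ among solutions of (\ref{mbvp 1}--\ref{mbvp 4}) with $\gamma=1$ follows from a comparison argument. If two such solutions differed, their difference would have to attain a positive interior maximum: the Dirichlet data coincide, a positive maximum on $S_\epsilon$ away from the edge is excluded by the Hopf lemma together with $\partial_\mu=0$, and the edge set $\partial\Sigma_\epsilon\cup\partial(\tilde\partial U_\epsilon)$ is handled using the $C^{1,\beta}$-regularity up to the edge afforded by Lieberman's theory exactly as in the proof of Lemma~\ref{approximate existence}. At such an interior maximum the two gradients agree, so subtracting the equations and integrating the coefficients along the segment joining the two gradients yields a homogeneous uniformly elliptic equation for the difference (uniform ellipticity being guaranteed by the gradient bound (\ref{global grad estimate}) and the presence of $\epsilon$), and the strong maximum principle forces the difference to be locally constant, hence constant, contradicting the Dirichlet condition on $\Sigma_\epsilon$.

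I expect the only genuinely delicate point to be the bookkeeping that the $C^{2,\alpha}$-estimates away from the edge set are uniform in $\gamma$ as $\gamma\searrow1$ — which is precisely the content of the final assertion of Lemma~\ref{approximate existence} — together with checking that the limiting Neumann condition and the behaviour at the edge are inherited by $u_\epsilon$; everything else is the routine quasilinear-elliptic limiting argument, and the $\epsilon$-regularisation ensures the operator stays (non-degenerate and) uniformly elliptic throughout, so no new compactness issue arises in this step as opposed to the subsequent passage $\epsilon\to0$.
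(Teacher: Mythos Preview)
Your proposal is correct and follows essentially the same approach as the paper: obtain $u_\epsilon$ as a subsequential limit of the $u_{\epsilon,\gamma_i,Z_\epsilon}$ as $\gamma_i\searrow 1$, using the $\gamma$-independent $C^{2,\alpha}$-estimate (\ref{approximate c2 estimate}) from Lemma~\ref{approximate existence} for compactness. The paper's two-line proof is terser and justifies the global gradient bound (\ref{global grad estimate}) via lower semi-continuity of the Lipschitz constant under uniform $C^0$-convergence rather than via the local $C^{2,\alpha}$-convergence you invoke; your more detailed uniqueness argument is fine but the paper simply inherits uniqueness from the maximum principle already noted in the proof of Lemma~\ref{approximate existence}.
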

\begin{proof}
	The existence of $u_\epsilon$ follows from compactness and the previous lemma, letting $\gamma\to 1$. The gradient estimate follows from the fact that the Lipschitz constant is lower semi-continuous with respect to uniform $C^0-$convergence.
\end{proof}
\begin{lem}
	The functions $u_\epsilon$  converge locally uniformly in $M'$ and weakly in $W^{1,\infty}(M')$ to a proper weak solution $u\in C_{loc}^{0,1}(M')$ of the free boundary inverse mean curvature flow with initial data $E_0$ in the sense of Definition \ref{weak solution}.
	\label{lem existence of weak solution}
\end{lem}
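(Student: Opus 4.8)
The plan is to adapt the convergence argument of the elliptic regularization of Huisken--Ilmanen \cite[Section 3]{huisken2001inverse} and Marquardt \cite[Section 3]{marquardt2017weak} to the present, source-free approximation. First I would extract a limit: by the uniform gradient bound of Lemma \ref{lem global grad estimate} the family $u_\epsilon=u_{\epsilon,1,Z_\epsilon}$ is locally equi-Lipschitz, and since $R_\epsilon=(4\epsilon)^{-1}\to\infty$ and $\Sigma_\epsilon\to\Sigma$ in $C^2$ the domains $M'_\epsilon$ exhaust $M'$. Extending each $u_\epsilon$ by $Z_\epsilon$ outside $M'_\epsilon$, an Arzel\`a--Ascoli/diagonal argument yields a subsequence converging locally uniformly on $\overline{M'}$ and weakly-$*$ in $W^{1,\infty}_{loc}(M')$ to some $u\in C^{0,1}_{loc}(\overline{M'})$ with $|\overline{\nabla}u|\le c$ and $u=0$ on $\Sigma$; the lower barrier of Lemma \ref{subsolution} with $\tau=Z_\epsilon$ gives $u_\epsilon\ge\tfrac14(\log|x-x_0|_e-\log R_0)$ in the asymptotic region, so $u$ is proper. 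Extending $u$ into $E_0$ by any Lipschitz function that is negative in the interior and vanishes on $\Sigma$ produces a proper function with $\{u<0\}=E_0$, so it remains to check the variational inequality of Definition \ref{weak solution} in $M'\setminus\overline{E_0}$.

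The next step is to record that each $u_\epsilon$ is a minimizer of the convex functional $v\mapsto\int_{M'_\epsilon}\bigl(\sqrt{\epsilon^2+|\overline{\nabla}v|^2}+v\,|\overline{\nabla}u_\epsilon|\bigr)$ subject to its Dirichlet data on $\Sigma_\epsilon\cup\tilde\partial U_\epsilon$, the Neumann condition (\ref{mbvp 4}) on $S_\epsilon$ being natural. Indeed, multiplying (\ref{mbvp 1}) by a Lipschitz competitor $v-u_\epsilon$ with $\{v\neq u_\epsilon\}\subset\subset M'_\epsilon$, integrating by parts (the boundary term vanishing on $\Sigma_\epsilon\cup\tilde\partial U_\epsilon$ because $v=u_\epsilon$ there and on $S_\epsilon$ by (\ref{mbvp 4})) and using the convexity inequality $\sqrt{\epsilon^2+|p|^2}\ge\sqrt{\epsilon^2+|q|^2}+\langle q,p-q\rangle/\sqrt{\epsilon^2+|q|^2}$ with $q=\overline{\nabla}u_\epsilon$, $p=\overline{\nabla}v$, one obtains the comparison inequality $\int_{\{v\neq u_\epsilon\}}(\sqrt{\epsilon^2+|\overline{\nabla}u_\epsilon|^2}+u_\epsilon|\overline{\nabla}u_\epsilon|)\le\int_{\{v\neq u_\epsilon\}}(\sqrt{\epsilon^2+|\overline{\nabla}v|^2}+v|\overline{\nabla}u_\epsilon|)$. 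For a competitor $v$ of the limit $u$ with $\{v\neq u\}\subset\subset M'\setminus\overline{E_0}$ contained in a compact $K$, I would build an admissible competitor $v_\epsilon$ for $u_\epsilon$ (valid once $K\subset\subset M'_\epsilon$) by interpolating between $v$ and $u_\epsilon$ in a thin collar of $K$, so that $\{v_\epsilon\neq u_\epsilon\}\subset\subset K$, $v_\epsilon\to v$ uniformly and in $W^{1,1}(K)$.

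The crux — and the step I expect to be the main obstacle — is passing the \emph{frozen source term} $|\overline{\nabla}u_\epsilon|$ to the limit, since weak-$*$ convergence of $\overline{\nabla}u_\epsilon$ does not by itself control $|\overline{\nabla}u_\epsilon|$. I would resolve this first by applying the comparison inequality with the competitor $v_\epsilon\to u$ (take $v=u$): the two source terms $u_\epsilon|\overline{\nabla}u_\epsilon|$ and $v_\epsilon|\overline{\nabla}u_\epsilon|$ carry the \emph{same} factor, so their difference is $O(\|v_\epsilon-u_\epsilon\|_{L^\infty(K)})\to0$; combined with $\int_K\sqrt{\epsilon^2+|\overline{\nabla}v_\epsilon|^2}\to\int_K|\overline{\nabla}u|$ and $\sqrt{\epsilon^2+t^2}\ge t$ this forces $\limsup_\epsilon\int_K|\overline{\nabla}u_\epsilon|\le\int_K|\overline{\nabla}u|$. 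The weak lower semicontinuity of $w\mapsto\int_B|\overline{\nabla}w|$ under $\overline{\nabla}u_\epsilon\rightharpoonup\overline{\nabla}u$, applied to $B=K$ and to $B=K\setminus K'$ for any Borel $K'\subset K$, then upgrades this to $\int_{K'}|\overline{\nabla}u_\epsilon|\to\int_{K'}|\overline{\nabla}u|$ for every such $K'$, i.e. to weak-$*$ convergence of the measures $|\overline{\nabla}u_\epsilon|\,\text{d}vol\to|\overline{\nabla}u|\,\text{d}vol$ on $K$. With this in hand, $\int_K u_\epsilon|\overline{\nabla}u_\epsilon|\to\int_K u|\overline{\nabla}u|$ and $\int_K v_\epsilon|\overline{\nabla}u_\epsilon|\to\int_K v|\overline{\nabla}u|$ by uniform convergence of $u_\epsilon,v_\epsilon$.

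Finally I would pass to the limit in the comparison inequality: taking $\liminf$ on the left (using lower semicontinuity, $\liminf_\epsilon\int_K\sqrt{\epsilon^2+|\overline{\nabla}u_\epsilon|^2}\ge\int_K|\overline{\nabla}u|$) and $\lim$ on the right yields $\int_{\{v\neq u\}}(|\overline{\nabla}u|+u|\overline{\nabla}u|)\le\int_{\{v\neq u\}}(|\overline{\nabla}v|+v|\overline{\nabla}u|)$, which is exactly the defining inequality of Definition \ref{weak solution}; hence $u$ is a proper weak solution of the free boundary inverse mean curvature flow with initial data $E_0$. By the uniqueness of proper weak solutions (Lemma \ref{lemma properties of weak solutions}) the limit does not depend on the subsequence, so the full sequence $u_\epsilon$ converges. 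The mixed/moving boundary features (the Neumann piece on $S_\epsilon$, the perturbed $\Sigma_\epsilon$, and $\tilde\partial U_\epsilon$ escaping to infinity) cause no trouble here because one only ever tests against competitors supported in compacta of $M'\setminus\overline{E_0}$, which lie well inside $M'_\epsilon$ for small $\epsilon$; the remaining verifications ($u\in C^{0,1}_{loc}$, properness, $\{u<0\}=E_0$) are immediate from Step 1.
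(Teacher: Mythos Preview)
Your argument is correct, but it takes a genuinely different route from the paper's. The paper follows the Huisken--Ilmanen device verbatim: one first reduces to competitors with $v<u+1$, sets $v_i=\rho v+(1-\rho)u_{\epsilon_i}$ with a \emph{fixed} cutoff $\rho$, multiplies (\ref{mbvp 1}) by $v_i-u_{\epsilon_i}$, and after integrating by parts arrives at
\[
\limsup_{i\to\infty}\int_\Omega \rho\,|\overline{\nabla}u_{\epsilon_i}|\,(1+u_{\epsilon_i}-v)\,\text{d}vol\ \le\ \int_\Omega \rho\,|\overline{\nabla}v|\,\text{d}vol.
\]
Because $1+u_{\epsilon_i}-v>0$ eventually, the left integrand is a nonnegative convex function of $\overline{\nabla}u_{\epsilon_i}$ with a uniformly convergent weight, so weak lower semicontinuity immediately yields $J_u(u)\le J_u(v)$; the restriction $v<u+1$ is then removed as in \cite[Theorem~2.1]{huisken2001inverse}. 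No convergence of $|\overline{\nabla}u_{\epsilon_i}|$ beyond weak-$*$ of the gradients is ever needed.

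Your approach instead \emph{earns} the convergence of the source term: testing the comparison inequality against the limit $u$ itself gives $\limsup\int_K|\overline{\nabla}u_\epsilon|\le\int_K|\overline{\nabla}u|$, which together with lower semicontinuity upgrades to $|\overline{\nabla}u_\epsilon|\rightharpoonup^*|\overline{\nabla}u|$ in $L^\infty(K)$; with this in hand both sides of the comparison inequality have honest limits. This is longer but yields a stronger intermediate statement (setwise convergence of the gradient measures), which can be useful in its own right. One small caveat: your claim that $v_\epsilon\to v$ in $W^{1,1}(K)$ fails for a fixed cutoff $\rho$ (since $(1-\rho)\overline{\nabla}u_\epsilon$ only converges weakly); you need the collar to shrink, say with width $\delta_\epsilon\to0$, so that $\int_K(1-\rho_\epsilon)|\overline{\nabla}u_\epsilon-\overline{\nabla}u|\le 2c\,|K\setminus K'_\epsilon|\to0$ while $\int_K|u-u_\epsilon||\overline{\nabla}\rho_\epsilon|\lesssim\|u-u_\epsilon\|_{L^\infty}\cdot|\partial K|\to0$. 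With that adjustment your proof goes through.
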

\begin{proof}
	As $\epsilon$ tends to 0,  $M'_\epsilon$ converges to $M'$ and local uniform convergence along a subsequence $\epsilon_i$ follows from the uniform gradient bound (\ref{global grad estimate}). Likewise, we deduce weak local $W^{1,\infty}-$convergence. The existence of subsolutions, c.f. Lemma \ref{subsolution}, guarantees that $u$ is proper and we may extend $u$ in any way to $E_0$ such that $E_0=\{u<0\}$. Let us verify that $u$ is a weak solution. Let $\Omega$ be a compact set in $M'$ that does not touch $\Sigma$ and let $v$ be a locally Lipschitz function such that $\{v\neq u\} \subset\subset \Omega$. We assume for now that $v<u+1$. Let $\rho$ be a smooth function  equal to $1$ in  $\{v\neq u\}$ and supported in $\Omega$. We define $v_i:=\rho v+(1-\rho)u_i,$ where $u_i:=u_{\epsilon_i}$, multiply (\ref{mbvp 1}) by $v_i-u_i$ and compute, using the divergence theorem while keeping in mind the Neumann condition (\ref{mbvp 4}),
	\begin{align*}
	&-\int_\Omega \rho \frac{\overline{\nabla} u_i \cdot \overline{\nabla} v}{\sqrt{|\overline{\nabla} u_i|^2+\epsilon_i^2}}\text{d}vol+\int_\Omega \rho \frac{|\overline{\nabla} u_i|^2}{\sqrt{|\overline{\nabla} u_i|^2+\epsilon_i^2}}\text{d}vol-\int_\omega (v-u_i) \frac{\overline{\nabla} u_i \cdot \overline{\nabla} \rho}{\sqrt{|\overline{\nabla} u_i|^2+\epsilon_i^2}}\text{d}vol\\=&\int_\omega |\overline{\nabla} u_i|(\rho v-\rho u_i)\text{d}vol.
	\end{align*}
	The third term on the left converges to $0$ by dominated convergence. Moreover, we can estimate the first term from below by
	$
	-\int_\Omega \rho |\overline{\nabla} v|\text{d}vol.
	$
	On the other hand,
	\begin{align}
	\int_\Omega \rho \frac{|\overline{\nabla} u_i|^2}{\sqrt{|\overline{\nabla} u_i|^2+\epsilon_i^2}}\text{d}vol=\int_\Omega \rho \sqrt{|\overline{\nabla} u_i|^2+\epsilon_i^2} \text{d}vol-\int_\Omega \rho \frac{\epsilon_i^2}{\sqrt{|\overline{\nabla} u_i|^2+\epsilon_i^2}}\text{d}vol.
	\end{align}
	The second term on the right converges to $0$, the first term can be estimated using $\sqrt{|\overline{\nabla} u_i|^2+\epsilon_i^2}\geq |\overline{\nabla} u_i|$. This leaves us at
	$$
	\limsup_{i\to\infty}\int_\Omega \rho |\overline{\nabla} u_i| (1+u_i-v)\text{d}vol\leq \int_\Omega |\overline{\nabla} v|\text{d}vol.
	$$
	The integrand on the left hand side eventually becomes positive so the claim follows by lower semi continuity (notice that if $\rho\neq 1$, then $\overline{\nabla} v=\overline{\nabla} u$). The condition $v<u+1$ can be removed as in Theorem 2.1 in \cite{huisken2001inverse}. According to Lemma \ref{lemma properties of weak solutions}, proper weak solutions are unique and full convergence of the sequence follows.
\end{proof}
\subsection{Geometric interpretation of the approximate solutions}
\label{geometric interpretability}
Despite the analytic modification, equation (\ref{mbvp 1}) can still be interpreted as a geometric flow on a cylinder over $M$. In order to see this, we consider the space $M'_\epsilon\times\mathbb{R}$ equipped with the metric $\tilde g=g+dz^2$, choose $\epsilon$ and $\gamma$ sufficiently close to $0$ and $1$, respectively, such that  (\ref{mbvp 1}-\ref{mbvp 4}) can be solved and define the function $\tilde u_{\epsilon,\gamma}:=u_{\epsilon,\gamma,Z_\epsilon}+\epsilon z$.  Keeping in mind that the speed of the level set flow associated with $\tilde u_{\epsilon,\gamma}$ is given by $|\overline{\nabla} \tilde u_{\epsilon,\gamma}|^{-1}$, a straightforward computation shows that (\ref{mbvp 1}) is the level set formulation of the flow equation
$$
\frac{dx}{dt}=\frac{\nu}{\sqrt{\epsilon^2+H^{2/\gamma}}}.
$$
In order to be more precise, we let $t_0>0$ and define   $I_{t_0,\epsilon}:=(-Z_\epsilon/(4\epsilon),t_0/(2\epsilon))$. Moreover, given $t>0$ we define $\tilde \Sigma^{\epsilon,\gamma}_t:=\{\tilde u_{\epsilon,\gamma}=t\}$ and $\tilde \Sigma^{\epsilon}_t:=\tilde \Sigma^{\epsilon,1}_t$. The following holds.
\begin{lem}
	Let $n\leq7,$ $\epsilon>0$ be sufficiently close to $0$, $\gamma>1$ be sufficiently close to $1$ (depending on $\epsilon>0$), $t_0>0$ and $t_0\leq  t \leq Z_\epsilon/2$. Then $\Sigma_{ t}^{\epsilon,\gamma}\cap (M'_{\epsilon}\times I_{t_0,\epsilon})$ is a hypersurface of class $C^{3}$ and $\partial \Sigma_{t}^{\epsilon,\gamma}\cap (M'_{\epsilon}\times I_{t_0,\epsilon})$ is either empty or meets $\partial M\times \mathbb{R}$ orthogonally. 
	\begin{itemize}
		\item \textbf{Geometric Flow.} 
		The outward normal of $\Sigma_{t}^{\epsilon,\gamma}\cap (M'_{\epsilon}\times I_{t_0,\epsilon})$ is given by $\nu=\overline{\nabla }\tilde u_{\epsilon,\gamma}/|\overline{\nabla}\tilde u_{\epsilon,\gamma}|$ and the surfaces $\Sigma_{t}^{\epsilon,\gamma}\cap (M'_{\epsilon}\times I_{t_0,\epsilon})$ flow according to the (degenerate) parabolic equation
		\begin{align}
		\frac{dx}{dt}=\frac{\nu}{\sqrt{\epsilon^2+H^{2/\gamma}}}. \label{approximate flow equation}
		\end{align}
		\item 
		\textbf{Estimates.}
		For any $0<\alpha<1$, the local $C^{2,\alpha}-$estimates of $\Sigma_{t}^{\epsilon,\gamma}\cap (M'_{\epsilon}\times I_{t_0,\epsilon})$ depend on $t_0,\epsilon$, but not on $\gamma$. For any $0<\alpha<1/2$, the surfaces $\Sigma_{t}^{\epsilon,\gamma}\cap (M'_{\epsilon}\times I_{t_0,\epsilon})$ enjoy local $C^{1,\alpha}-$estimates which depend on $t_0$ but are independent of $\Sigma,\epsilon,\gamma$. 
		\item \textbf{Area estimate.}
		Given $0<t_1<Z_\epsilon$ and $t_0< t<t_1$, there is a constant $c$ depending on $t_0$ and $t_1$ but not on $\epsilon$ such that $|\tilde \Sigma^{\epsilon,\gamma}_{t}\cap(M'_\epsilon \times[-10,10])|\leq c$. 
		\item \textbf{Mean curvature estimate.} The mean curvature satisfies the uniform estimate
		\begin{align}
		|H^{\tilde \Sigma^{\epsilon,\gamma}_t}|_{C^0(\tilde \Sigma^{\epsilon,\gamma}_t\cap (M'_\epsilon\times I_{t_0,\epsilon}))}\leq c, \label{uni mc est}
		\end{align}
		where $c$ depends on $(M,g)$, $|A^\Sigma|_{C^0(\Sigma)}$ and $  \operatorname{inj}(\Sigma)$.
	\end{itemize}
	\label{regularity of approximate level sets}
\end{lem}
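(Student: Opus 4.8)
The plan is to reduce all four bullet points to the a priori estimates of the previous subsections together with one elementary computation on the cylinder. Write $v:=u_{\epsilon,\gamma,Z_\epsilon}$, so that $\tilde u_{\epsilon,\gamma}(x,z)=v(x)+\epsilon z$. Since $v$ is independent of $z$, one has $|\overline{\nabla}\tilde u_{\epsilon,\gamma}|^2=|\overline{\nabla}v|^2+\epsilon^2\geq\epsilon^2>0$ everywhere on $M'_\epsilon\times\mathbb{R}$; hence $\tilde u_{\epsilon,\gamma}$ has no critical points and each level set $\tilde\Sigma^{\epsilon,\gamma}_t$ is a genuine hypersurface of the same regularity as $v$. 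By Lemma \ref{approximate existence} this regularity is $C^{3,\gamma-1}$ in the interior and $C^{2,\alpha}$ up to the smooth parts of $\partial M'_\epsilon$, away from the two corners $\partial\Sigma_\epsilon$ and $\partial(\tilde\partial U_\epsilon)$. Restricting to $M'_\epsilon\times I_{t_0,\epsilon}$ is exactly what keeps us away from those corners: on $\tilde\Sigma^{\epsilon,\gamma}_t\cap(M'_\epsilon\times I_{t_0,\epsilon})$ one has $v=t-\epsilon z$, so by the choice $I_{t_0,\epsilon}=(-Z_\epsilon/(4\epsilon),t_0/(2\epsilon))$ and $t_0\le t\le Z_\epsilon/2$ the value of $v$ lies between roughly $t_0/2$ and $3Z_\epsilon/4$, hence, by the global gradient bound of Lemma \ref{lem global grad estimate}, $\tilde\Sigma^{\epsilon,\gamma}_t$ stays at a definite $g$-distance from both $\Sigma_\epsilon$ ($v=0$) and $\tilde\partial U_\epsilon$ ($v=Z_\epsilon$). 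This gives the asserted $C^3$ regularity. For the orthogonality, the Neumann condition (\ref{mbvp 4}) gives $\overline{\nabla}v\perp\mu$ on $S_\epsilon$, so $\overline{\nabla}\tilde u_{\epsilon,\gamma}=\overline{\nabla}v+\epsilon\partial_z$ is tangent to $\partial M\times\mathbb{R}$; as $\nu=\overline{\nabla}\tilde u_{\epsilon,\gamma}/|\overline{\nabla}\tilde u_{\epsilon,\gamma}|$ is a positive multiple of $\overline{\nabla}\tilde u_{\epsilon,\gamma}$, the surface meets $\partial M\times\mathbb{R}$ orthogonally wherever it touches it. Finally, since $v$ does not depend on $z$, the mean curvature of $\tilde\Sigma^{\epsilon,\gamma}_t$ in $(M'_\epsilon\times\mathbb{R},\tilde g)$ equals $\overline{\operatorname{div}}_{\tilde g}(\overline{\nabla}\tilde u_{\epsilon,\gamma}/|\overline{\nabla}\tilde u_{\epsilon,\gamma}|)=\overline{\operatorname{div}}_g(\overline{\nabla}v/\sqrt{\epsilon^2+|\overline{\nabla}v|^2})=|\overline{\nabla}v|^\gamma$ by (\ref{mbvp 1}); thus $H\geq 0$, $|\overline{\nabla}\tilde u_{\epsilon,\gamma}|=\sqrt{\epsilon^2+H^{2/\gamma}}$, and the normal speed $|\overline{\nabla}\tilde u_{\epsilon,\gamma}|^{-1}$ of the level-set flow is precisely $(\epsilon^2+H^{2/\gamma})^{-1/2}$, which is (\ref{approximate flow equation}).

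The mean curvature estimate (\ref{uni mc est}) is now immediate: $H=|\overline{\nabla}v|^\gamma\leq\max\{1,|\overline{\nabla}v|^2\}$ and $|\overline{\nabla}v|\leq c$ by Lemma \ref{lem global grad estimate}, with the stated dependence of $c$. For the $C^{2,\alpha}$-estimates depending on $t_0$ and $\epsilon$ but not on $\gamma$: the de Giorgi--Nash--Moser and Schauder estimates used in the proof of Lemma \ref{approximate existence}, applied on the region $\{v\geq t_0/3\}$ away from $\tilde\partial U_\epsilon$ (interior estimates there, and the linear theory for the Neumann problem near the smooth part of $S_\epsilon$), furnish $\gamma$-uniform $C^{2,\alpha}$ bounds for $v$; since $\tilde\Sigma^{\epsilon,\gamma}_t\cap(M'_\epsilon\times I_{t_0,\epsilon})$ is the graph $z=\epsilon^{-1}(t-v)$ over a subset of that region, it inherits $\gamma$-uniform local $C^{2,\alpha}$-estimates. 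The $C^{1,\alpha}$-estimates for $\alpha<1/2$ that are independent of $\Sigma,\epsilon,\gamma$ are the substantive regularity statement; here the argument parallels the one behind Lemma \ref{lemma properties of weak solutions}. For fixed $\epsilon>0$, $\gamma>1$ the smooth hypersurfaces $\tilde\Sigma^{\epsilon,\gamma}_t$ bound the sublevel sets $\{\tilde u_{\epsilon,\gamma}\leq t\}$, which minimize a geometric functional of the type (\ref{variational principle}) (as in the elliptic regularization of \cite{huisken2001inverse} and \cite{marquardt2017weak}) and carry generalized mean curvature in $L^\infty$ bounded uniformly in $\epsilon,\gamma$ by $H=|\overline{\nabla}v|^\gamma$; crucially, away from $\Sigma$ this bound is $\Sigma$-independent, since the local gradient estimate of Lemma \ref{interior estimate} and the accompanying boundary barrier construction only involve the distance to $\partial M'_\epsilon$. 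The interior regularity theory \cite{massari1974esistenza} and the free-boundary regularity theory \cite{gruter1986allard,gruter1987optimal} up to $\partial M$ then deliver the claimed $C^{1,\alpha}$-bounds with the stated dependence, exactly as in Lemma \ref{lemma properties of weak solutions}; the exponent $1/2$ is the one produced by these results.

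For the area estimate, $\tilde\Sigma^{\epsilon,\gamma}_t\cap(M'_\epsilon\times[-10,10])$ is the graph $z=\epsilon^{-1}(t-v)$ over $\{|v-t|\leq 10\epsilon\}$, whose area equals $\int_{\{|v-t|\leq10\epsilon\}}\epsilon^{-1}\sqrt{\epsilon^2+|\overline{\nabla}v|^2}\,\text{d}vol_g$. Splitting the domain along $\{|\overline{\nabla}v|=\epsilon\}$: on $\{|\overline{\nabla}v|\geq\epsilon\}$ the integrand is at most $\sqrt2\,\epsilon^{-1}|\overline{\nabla}v|$, and the coarea formula bounds this contribution by $\sqrt2\,\epsilon^{-1}\int_{t-10\epsilon}^{t+10\epsilon}|\{v=s\}|\,ds\leq 20\sqrt2\sup_{|s-t|\leq10\epsilon}|\{v=s\}|$; on $\{|\overline{\nabla}v|<\epsilon\}$ the integrand is at most $\sqrt2$, so that contribution is at most $\sqrt2\,\mathrm{vol}_g(\{v\leq t+10\epsilon\})$. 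Since $u_{\epsilon,\gamma,Z_\epsilon}\to u_\epsilon\to u$ locally uniformly and the sublevel sets $\{u\leq s\}$ are precompact, for $\epsilon$ small and $\gamma$ close to $1$ the set $\{v\leq t_1+1\}$ lies in a fixed compact $K=K(t_1)\subset M$, which bounds the second contribution; and a uniform bound on $\sup_{s\leq t_1+1}|\{v=s\}|$ follows from the same confinement together with the Ahlfors-regularity/density estimate for the minimizing hulls $\{\tilde u_{\epsilon,\gamma}\leq s\}$ from the previous step. Combining both contributions yields $|\tilde\Sigma^{\epsilon,\gamma}_t\cap(M'_\epsilon\times[-10,10])|\leq c(t_0,t_1)$.

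The main obstacle lies in the $\Sigma$-independent $C^{1,\alpha}$-estimate and the $\epsilon$-independent area bound: both hinge on making the minimizing-hull / Allard-type regularity theory for the \emph{approximate} level sets genuinely uniform as $\epsilon\to0$ and $\gamma\to1$ — in particular on verifying that the sublevel sets of $\tilde u_{\epsilon,\gamma}$ satisfy the minimizing property and the attendant density bounds with constants controlled only by the ambient geometry on a fixed compact region and by the uniform mean curvature bound, even though the domains $M'_\epsilon$ and the cylinder themselves expand. The geometric-flow identity, the orthogonality, and the $C^{2,\alpha}$-estimate, by contrast, are direct consequences of the a priori estimates already established.
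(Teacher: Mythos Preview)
Your treatment of the regularity, the orthogonality, the flow equation~\eqref{approximate flow equation}, the mean curvature bound, and the $\gamma$-uniform $C^{2,\alpha}$-estimates is correct and matches the paper. The substantive difference lies in how the $\epsilon,\gamma,\Sigma$-independent $C^{1,\alpha}$-estimate and the area bound are obtained.

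The paper does not merely invoke the variational principle~\eqref{variational principle} by analogy; it \emph{derives} the relevant almost-minimality directly. One multiplies the level-set equation $\overline{\operatorname{div}}(\overline\nabla\tilde u_{\epsilon,\gamma}/|\overline\nabla\tilde u_{\epsilon,\gamma}|)=(|\overline\nabla\tilde u_{\epsilon,\gamma}|^2-\epsilon^2)^{\gamma/2}$ in the cylinder by $\tilde v-\tilde u_{\epsilon,\gamma}$, integrates by parts using the Neumann condition, and applies Cauchy--Schwarz. As in Lemma~2.1 of \cite{huisken2001inverse} this yields, for every finite-perimeter competitor $F$ with $F\Delta E^{\epsilon,\gamma}_t\subset\subset\tilde U_{t_0,\epsilon}$,
\[
|\tilde\Sigma^{\epsilon,\gamma}_t\cap\tilde U_{t_0,\epsilon}|-\int_{E^{\epsilon,\gamma}_t\cap\tilde U_{t_0,\epsilon}}(|\overline\nabla\tilde u_{\epsilon,\gamma}|^2-\epsilon^2)^{\gamma/2}\,\text{d}vol
\ \leq\ |\tilde\partial^*F\cap\tilde U_{t_0,\epsilon}|-\int_{F\cap\tilde U_{t_0,\epsilon}}(|\overline\nabla\tilde u_{\epsilon,\gamma}|^2-\epsilon^2)^{\gamma/2}\,\text{d}vol.
\]
This single inequality does double duty. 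First, together with the locally uniform convergence $u_{\epsilon,\gamma,Z_\epsilon}\to u$ to the proper weak solution (which supplies a $\delta=\delta(t_0)>0$, independent of $\epsilon,\gamma$, such that the $\delta$-collar of $\tilde\Sigma^{\epsilon,\gamma}_t\cap(M'_\epsilon\times I_{t_0,\epsilon})$ stays inside $\tilde U_{t_0,\epsilon}$), it gives $(c_0,\delta)$-almost minimality with a uniform constant, and then \cite{tamanini1981boundaries,gruter1986allard} deliver the $C^{1,\alpha}$-estimate. Second, the area bound follows \emph{directly} from the same inequality by plugging in a fixed large compact competitor $F$ and using the uniform gradient bound to control the bulk term---no coarea argument is needed.

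Your graph/coarea route for the area is not wrong in spirit, but it is circuitous and has a gap: you reduce to a uniform bound on $|\{v=s\}|$ in the base, and then appeal to ``Ahlfors-regularity/density'' for the cylinder level sets. Density bounds for the $n$-dimensional almost-minimizer $\tilde\Sigma^{\epsilon,\gamma}_s$ do not directly control the $(n-1)$-dimensional slice $\{v=s\}\times\{0\}$; closing this would require either deriving almost-minimality for $\{v\leq s\}$ in the base (a separate computation) or a slicing argument that you have not carried out. The paper sidesteps all of this by working entirely with the comparison inequality above.
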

\begin{proof}
	Clearly, $\tilde u_{\epsilon,\gamma}$ has no critical points. On the other hand, the hypothesis implies that  $\{\tilde u= t\}\cap \{M\times I_{t_0,\epsilon}\}\subset\{t_0/2<u<(3/4)Z_\epsilon\}\times I_{t_0,\epsilon}=:U_{t_0,\epsilon}$. Consequently, the regular value theorem, Lemma \ref{lem global grad estimate}, Lemma \ref{approximate existence}, (\ref{approximate c2 estimate}), the equations (\ref{mbvp 1}-\ref{mbvp 4})  and the identity
	$$
	H^{\Sigma^{\epsilon,\gamma}_t}=|\overline{\nabla }u_{\epsilon,\gamma,Z_\epsilon}|^{\gamma},
	$$
	which also follows from (\ref{mbvp 1}),
	imply all statements apart from the $C^{1,\alpha}-$estimates and the area estimate. In order to prove these, let $\tilde v$ be a smooth $C^2-$function defined on $M'_{\epsilon}\times \mathbb{R}$ such that $\{\tilde v\neq \tilde u_{\epsilon,\gamma}\}\subset \tilde \Omega \subset \subset (M'_{\epsilon}-\tilde\partial M'_{\epsilon})\times \mathbb{R}$ for some compact set $\tilde \Omega$. Multiplying the equation
	$$
	\overline{\operatorname{div}}\bigg(\frac{\overline{\nabla}\tilde u_{\epsilon,\gamma}}{|\overline{\nabla}\tilde u_{\epsilon,\gamma}|}\bigg)=(|\overline{\nabla} \tilde u_{\epsilon,\gamma}|^2-\epsilon^2)^{\frac\gamma2}
	$$
	by $\tilde v-\tilde u_{\epsilon,\gamma}$, integrating over $\tilde \Omega$, keeping in mind the Neumann condition (\ref{mbvp 4}) and using the Cauchy-Schwarz inequality we  find
	$$
	\int_{\tilde \Omega} (|\overline{\nabla} \tilde u_{\epsilon,\gamma}|+\tilde u_{\epsilon,\gamma}({|\overline{\nabla} \tilde u_{\epsilon,\gamma}|^{2}-\epsilon^2})^\frac{\gamma}{2})\text{d}vol \leq \int_{\tilde \Omega}
	(|\overline{\nabla} \tilde v|+\tilde v({|\overline{\nabla} \tilde u_{\epsilon,\gamma}|^{2}-\epsilon^2})^\frac{\gamma}{2})\text{d}vol.
	$$
	Then, we may argue as in Lemma 2.1 in \cite{huisken2001inverse} to see that
	\begin{equation}\begin{aligned}
	&|\tilde\Sigma^{\epsilon,\gamma}_{ t}\cap \tilde U_{t_0,\epsilon}|-\int_{E_{ t}^{\epsilon,\gamma}\cap \tilde U_{t_0,\epsilon}} ({|\overline{\nabla} \tilde u_{\epsilon,\gamma}|^{2}-\epsilon^2})^\frac{\gamma}{2}\text{d}vol\\ \leq &|\tilde \partial^*F\cap \tilde U_{t_0,\epsilon} |-\int_{F\cap \tilde U_{t_0,\epsilon}} ({|\overline{\nabla} \tilde u_{\epsilon,\gamma}|^{2}-\epsilon^2})^\frac{\gamma}{2}\text{d}vol,
	\end{aligned} \label{equivalent approximate}
	\end{equation}
	for all finite perimter sets $F$ such that $F\Delta E_{ t}^{\epsilon,\gamma}\subset \subset \tilde U_{t_0,\epsilon}$.
	Here, $\tilde E_{ t}^{\epsilon,\gamma}:=\{\tilde u_{\epsilon,\gamma}< t\}$ and  $\tilde U_{t_0,\epsilon}:=\{t_0/4<u_{\epsilon,\gamma}<Z_\epsilon/2+1\}\times(-Z_\epsilon/(4\epsilon)-1,t_0/(2\epsilon)+1)$.  On the other hand, it follows from the local uniform convergence to a proper solution, c.f. Lemma \ref{lem existence of weak solution}, that after possibly shrinking $\epsilon$ and choosing $\gamma$ sufficiently close to $1$ (depending on $\epsilon$), there is a small constant $\delta>0$ which depends on $t_0$ but not on $\epsilon$ such that the collar $B^{n+1}_{\delta,\tilde g}(\tilde\Sigma^{\epsilon,\gamma}_{ t}\cap (M'_{\epsilon,\gamma}\times I_{t_0,\epsilon})):=\{p'\in M\times \mathbb{R}|\operatorname{dist}_{\tilde g}(p',\tilde\Sigma^{\epsilon,\gamma}_{ t}\cap (M'_{\epsilon}\times I_{t_0,\epsilon}))<\delta\}$ is compactly contained in $\tilde U_{t_0,\epsilon}$. Thus, if $B^{n+1}_{\delta,\tilde g}(p):=\{p'\in M\times\mathbb{R}|\operatorname{dist}_{\tilde g}(p',p)<\delta\}$ we find that $\tilde \Sigma^{\epsilon,\gamma}_{ t}$ is $c_0(|\overline{\nabla }u_{\epsilon,\gamma,Z_\epsilon}|_{L^\infty(M)},\delta,n),\delta-$almost minimal in the sense that
	$$
	|\tilde \Sigma^{\epsilon,\gamma}_{ t}\cap B^{n+1}_{\delta',\tilde g}(p)|\leq |\tilde \partial^*F\cap B^{n+1}_{\delta',\tilde g}(p) |+c_0\delta'^{n+1}
	$$
	for all $p\in \tilde \Sigma_{t}^{\epsilon,\gamma}\cap U_{t_0,\epsilon}$, $\delta'<\delta$ and $F$ as above. On the other hand, we have seen that the mean curvature $H^{\Sigma_{t}^{\epsilon,\gamma}}$ is uniformly bounded. One may now argue as in \cite{tamanini1981boundaries} and \cite{gruter1986allard} to prove the regularity statement, noting that the assumption $n\leq 7$ rules out the existence of a small singular set.
	Finally, the area estimate follows from (\ref{equivalent approximate}) and a straightforward comparison argument using a large, compact set to compare with and the uniform bound for $|\overline{\nabla} u_{\epsilon,\gamma,Z_\epsilon}|_{L^\infty(M)}$.
\end{proof}
\begin{rema}
	The stated dependency on $t_0$ is not optimal. However, we prefer to use the presented version of the lemma to avoid regularity issues near the corner $\partial \Sigma$.
\end{rema}
Using a modification of a lemma proved by Volkmann in \cite{volkmann2015free}, we also obtain the following length estimate for the free boundary of the level sets.
\begin{lem} Let $(\hat M,\hat g)$ be a Riemannian manifold with non-empty boundary $\partial \hat M$ and $\hat \Sigma\subset\hat M$ be a compact free boundary surface. Then there is a constant $c$ depending on $(M,g)$ such that
	$$
	|\partial \hat \Sigma|\leq c \int_{\hat \Sigma} (1+|H|)\text{d}vol.
	$$
	Moreover, let
	$t_0>0$ and $t_0\leq  t \leq Z_\epsilon/2$. Let $\epsilon>0$ be sufficiently small and $\gamma>1$ be sufficiently close to $1$ (depending on $\epsilon$). There is a constant $c$ depending on $(M,g)$ such that
	$$
	|\partial \tilde \Sigma^{\epsilon,\gamma}_{ t}\cap(M'_{\epsilon}\times[-8,8])|\leq c|\tilde \Sigma^{\epsilon,\gamma}_{t}\cap(M'_{\epsilon}\times[-10,10])|.
	$$
	\label{boundary length estimate}
\end{lem}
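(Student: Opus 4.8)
The plan is to deduce both inequalities from the first variation formula for the area, applied to $\hat\Sigma$ with a carefully chosen ambient vector field, in the spirit of the corresponding lemma in \cite{volkmann2015free}. For the first inequality I would fix a smooth vector field $X$ on $\hat M$ that coincides with the outward unit normal $\mu$ of $\partial\hat M$ along $\partial\hat M$ and satisfies a uniform bound $|X|+|\overline{\nabla} X|\le c$; such an $X$ is obtained by extending $\mu$ through a collar of $\partial\hat M$ and cutting off, and in the situations where the lemma is used (where the ambient manifold is, up to a product with an interval, modelled on an asymptotically flat half-space) the $C^1$-norm of the extension is controlled by $(M,g)$. Since $\hat\Sigma$ is a free boundary surface, its outward conormal along $\partial\hat\Sigma$ agrees with $\mu$, hence with $X$, so the first variation formula gives
$$
|\partial\hat\Sigma|=\int_{\partial\hat\Sigma}\hat g(X,\mu)\,\text{d}vol=\int_{\hat\Sigma}\big(\operatorname{div}_{\hat\Sigma}X-H\,\hat g(X,\nu)\big)\,\text{d}vol.
$$
Estimating $|\operatorname{div}_{\hat\Sigma}X|\le|\overline{\nabla} X|\le c$ and $|\hat g(X,\nu)|\le|X|\le c$ then yields the claim.

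For the second inequality I would apply the same idea in the cylinder $M'_\epsilon\times\mathbb{R}$ with the product metric, taking $\hat\Sigma:=\tilde\Sigma^{\epsilon,\gamma}_t\cap(M'_\epsilon\times[-10,10])$. The key observation is that $\tilde u_{\epsilon,\gamma}=u_{\epsilon,\gamma,Z_\epsilon}+\epsilon z$, so on $\hat\Sigma$ one has $u_{\epsilon,\gamma,Z_\epsilon}=t-\epsilon z$ with $t_0\le t\le Z_\epsilon/2$; for $\epsilon$ small this forces $0<u_{\epsilon,\gamma,Z_\epsilon}<Z_\epsilon$ on all of $\hat\Sigma$, so $\hat\Sigma$ stays a definite distance away from $\Sigma_\epsilon$ and $\tilde\partial U_\epsilon$. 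By Lemma \ref{regularity of approximate level sets}, $\hat\Sigma$ is then a compact $C^3$ surface whose only boundary inside $M'_\epsilon\times[-10,10]$ consists of a free boundary part lying in $\partial M\times[-10,10]$ together with the two caps contained in $M'_\epsilon\times\{\pm10\}$, and $[-10,10]\subset I_{t_0,\epsilon}$ for $\epsilon$ small so that the estimate (\ref{uni mc est}) applies on $\hat\Sigma$. I would then repeat the first computation with $\chi X$, where $X$ is the $z$-independent extension of the outward normal $\mu$ of $\partial M$ and $\chi=\chi(z)$ is a cutoff with $\chi\equiv1$ on $[-8,8]$, $\chi\equiv0$ outside $[-10,10]$ and $|\chi'|\le1$. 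Since $\chi X$ vanishes near the caps and equals $X$ (hence the conormal) on the free boundary part of $\partial\hat\Sigma$, the first variation formula becomes
$$
\int_{\partial\tilde\Sigma^{\epsilon,\gamma}_t\cap(M'_\epsilon\times[-10,10])}\chi\,\text{d}vol=\int_{\hat\Sigma}\big(\operatorname{div}_{\hat\Sigma}(\chi X)-H\,g(\chi X,\nu)\big)\,\text{d}vol.
$$
The left-hand side is at least $|\partial\tilde\Sigma^{\epsilon,\gamma}_t\cap(M'_\epsilon\times[-8,8])|$, while on the right one has $|\operatorname{div}_{\hat\Sigma}(\chi X)|\le|\chi|\,|\overline{\nabla} X|+|\chi'|\,|X|\le c$, $|g(\chi X,\nu)|\le|X|\le c$ and $|H|\le c$ by (\ref{uni mc est}); hence the right-hand side is bounded by $c\,|\hat\Sigma|=c\,|\tilde\Sigma^{\epsilon,\gamma}_t\cap(M'_\epsilon\times[-10,10])|$, which is exactly the assertion.

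The analysis is elementary; the main point to be careful about is the bookkeeping of the boundary pieces of $\hat\Sigma$, namely that the cutoff $\chi$ genuinely eliminates the two caps at $z=\pm10$ and that, for $t$ in the stated range and $\epsilon$ small, the remaining boundary of $\hat\Sigma$ is precisely the portion of $\partial\tilde\Sigma^{\epsilon,\gamma}_t$ lying in $\partial M\times[-10,10]$ — this is where the confinement $0<u_{\epsilon,\gamma,Z_\epsilon}<Z_\epsilon$ on $\hat\Sigma$ is used, together with the orthogonality statement of Lemma \ref{regularity of approximate level sets}. The uniform control of $|X|$, $|\overline{\nabla} X|$ by the ambient geometry and of $|H|$ via (\ref{uni mc est}) is what keeps the resulting constant independent of $\epsilon$ and $\gamma$, and the buffer between $[-8,8]$ and $[-10,10]$ is what provides the room for the cutoff.
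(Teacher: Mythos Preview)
Your proposal is correct and follows essentially the same approach as the paper: the paper also invokes the first variation formula with a calibration built from a regularized distance function to $\partial\hat M$ (whose gradient is precisely an extension of $\mu$, i.e.\ your vector field $X$), and for the second claim multiplies this calibration by a cutoff in the $z$-variable and uses the uniform mean curvature bound (\ref{uni mc est}). Your bookkeeping of the boundary pieces and the confinement argument are a bit more explicit than the paper's sketch, but the strategy is identical.
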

\begin{proof}The first claim can be proven in exactly the same way as Lemma 2.18 in \cite{volkmann2015free}. The idea is to estimate $| \partial \hat\Sigma|$ with the help of the first variational formula for the area using a regularized distance function to $\partial\hat M$, similar to $\rho$ in Lemma \ref{lem global grad estimate}, as a calibration.
	For the second claim, we multiply the calibration by a suitable cut-off function with respect to the $z-$variable and use the mean curvature estimate from the previous lemma.
\end{proof}
\section{Monotonicity of the modified Hawking mass}
In this section, we prove the monotonicity of the modified Hawking mass. As we are primarily interested in exterior regions of  asymptotically flat half-spaces, we assume that $(M,g)$ is an asymptotically flat half-space with one end. The Gauss-Bonnet theorem will play a crucial part in the argument and we therefore assume that $n=\dim(M)=3$ and that $\Sigma_t$ as well as $\Sigma_t^+$ are connected free boundary surfaces for all $t>0$. We will first prove an approximate growth formula for an approximate Willmore energy in the more regular case $\epsilon>0,\gamma>1$ and then pass to the limits $\gamma\to1$, $\epsilon\to 0$. The exponential area growth then implies the monotonicity of the modified Hawking mass.
\label{geroch monotonicity section}
\subsection{The approximate growth formula}
In this subsection, we aim to derive an approximate growth formula for the approximate Willmore energy. In order to motivate what follows, we first assume that $\tilde \Sigma_t$ is a smooth family of strictly mean convex, connected free boundary surfaces in $(M,g)$ evolving by (\ref{approximate flow equation}) with $\gamma=1$. We observe that lower order quantities may temporarily evolve in a very different way compared to the exact free boundary inverse mean curvature flow. For instance, if $H\equiv\delta\epsilon$ for some small constant $\delta>0$, then $\partial_t|\tilde \Sigma_t|=\delta(1+\delta^2)^{-\frac12}|\tilde \Sigma_t|$, which is in sharp contrast to the exponential area growth $\partial_t|\Sigma_t|=|\Sigma_t|$ valid for the free boundary inverse mean curvature flow. On the other hand, it turns out that there exists an approximate second order quantity satisfying a growth formula along (\ref{approximate flow equation}) which is very similar to the one of the usual Willmore energy along the smooth free boundary inverse mean curvature flow. More precisely, we notice that the integrand of the Willmore energy is given by $H^2$ and that the derivative of the function $s\mapsto s^2/2$ evaluated at $H$ is exactly the inverse of the speed of the inverse mean curvature flow. Likewise, if we define 
\begin{align}
\psi_\epsilon(s):=s\sqrt{\epsilon^2+s^2}+\epsilon^2\log(\sqrt{\epsilon^2+s^2}+s)-\epsilon^2\log(\epsilon), \label{psi def}
\end{align}
then $f_\epsilon(s):=\psi'_\epsilon(s)/2=\sqrt{\epsilon^2+s^2}$ evaluated at $H$ is exactly the inverse of the speed of the flow (\ref{approximate flow equation}). We thus define the approximate Willmore energy to be
$$\mathcal{W}_{\epsilon}(\tilde \Sigma):=\frac14\int_{\tilde \Sigma} \psi_\epsilon (H)\text{d}vol.$$
It is well-known that the mean curvature of a geometric flow with normal speed $f^{-1}_\epsilon(H)$ evolves according to the evolution equation
\begin{align}
\partial_t H=-\Delta\bigg(\frac{1}{f_\epsilon(H)}\bigg) -\frac{|A|^2}{f_\epsilon(H)}-\frac{\operatorname{Rc}(\nu,\nu)}{f_\epsilon(H)}.
\end{align}
On the other hand, recalling that $\mu$ is the outward normal of $\partial M$, it follows from differentiating the relation $g(\nu,\mu)=0$ on $\partial \tilde \Sigma_t$ that
\begin{align}
A^{\partial M}(\nu,\nu)=-\partial_\mu H\frac{f'_\epsilon(H)}{f_{\epsilon}(H)}. \label{approx bc condition}
\end{align}
Keeping in mind that $\psi'_\epsilon=2f_\epsilon$, $\partial_t(\text{d}vol)=Hf^{-1}_\epsilon\text{d}vol$ and integrating by parts we thus find
\begin{equation}
\begin{aligned}
\partial_t\int_{\tilde \Sigma_t}\psi_\epsilon(H)\text{d}vol=&-\int_{\tilde \Sigma_t}\bigg(2\frac{|\nabla f_\epsilon(H)|^2}{f^2_\epsilon(H)}+2|A|^2+2\operatorname{Rc}(\nu,\nu)-\frac{\psi_\epsilon(H)H}{f_\epsilon(H)}\bigg)\text{d}vol \\&-2\int_{\partial\tilde \Sigma_t}A^{\Sigma}(\nu,\nu)\text{d}vol. 
\end{aligned}
\label{lead to}
\end{equation}
The Gauss equation can be written as \begin{align}\operatorname{Rc}(\nu,\nu)=-K+\frac{\operatorname{Sc}}{2}+\frac{H^2}{2}-\frac{|A|^2}{2}\label{Gauss equation},
\end{align} 
while the free boundary condition implies that $A^{\partial M}(\nu,\nu)=H^{\partial M}-k_g$, where $k_g$ is the geodesic curvature of $\partial \Sigma$. Now, if we assume that the dominant energy condition $\operatorname{Sc},H^{\partial M}\geq 0$ holds, then it follows from the identity $|A|^2=H^2/2+|\Acirc|^2$ and the Gauss-Bonnet theorem that
\begin{align}
\partial_t\int_{\tilde \Sigma_t}\psi_\epsilon(H)\text{d}vol\leq 4\pi\chi(\tilde \Sigma_t)-\int_{\tilde \Sigma_t}\bigg( \frac{3}{2}H^2-\frac{\psi_\epsilon(H)H}{f_\epsilon(H)}\bigg)\text{d}vol.  \label{approximate hm 1}
\end{align}
The uniform area and mean curvature estimates from Lemma \ref{regularity of approximate level sets} together with the fact that each $\tilde \Sigma_t$ is a connected free boundary surface then imply that 
$$
\partial_t\int_{\tilde \Sigma_t}\psi_\epsilon(H)\text{d}vol\leq 4\pi- \frac12\int_{\tilde \Sigma} H^2+c\epsilon,
$$
for some constant $c$ independent of $\epsilon$. On the other hand, if $\Sigma_t$ is an exact solution of the free boundary inverse mean curvature flow, the exponential area growth implies
$$
\partial_t m_H(\Sigma_t)= \frac{(2|\Sigma_t|)^{\frac12}}{(16\pi)^{\frac32}}\bigg(4\pi-\frac12\int_{\Sigma_t} H^2\text{d}vol-\partial_t\int_{\Sigma_t} H^2\text{d}vol\bigg). 
$$
In light of the strong analytic control from Lemma \ref{regularity of approximate level sets} we might therefore hope to obtain the desired monotonicity of the modified Hawking mass in the limit. Of course, we face several obstacles trying to make this strategy rigorous. The level sets $\tilde \Sigma^{\epsilon,\gamma}_t$ are non-compact, in general not smooth and three rather than two-dimensional.  On the other hand, the definition of $\tilde u_{\epsilon,\gamma}$ implies that the level sets $\tilde \Sigma^{\epsilon,\gamma}_t$ converge to a cylinder over $\Sigma_t$ as $\gamma\to 1$ and $\epsilon\to0$. This suggests that  the problem can be localised with respect to the $z-$variable, c.f. \cite{huisken2001inverse}. \\ We now make this idea precise and fix two positive times $0<t_0<t_1$.  Let $\epsilon>0$ and $\gamma=\gamma(\epsilon)>1$ be chosen such that Lemma \ref{approximate existence} and Lemma \ref{regularity of approximate level sets} can be applied. As before, we define $\tilde u_{\epsilon,\gamma}:=u_{\epsilon,\gamma,Z_\epsilon}+\epsilon z$ and after possibly decreasing $\epsilon>0$, we may arrange that $[-10,10]\subset I_{t_0,\epsilon}$. We then pick  a smooth, non-negative and non-zero function $\zeta\in C^\infty(\mathbb{R})$ which is supported in $[-10,10]$. On $[0,\infty)$, we define $\psi_{\epsilon,\gamma}$ to be the anti-derivative of $2f_{\epsilon,\gamma}$ with the initial condition $\psi_{\epsilon,\gamma}(0)=0$ where $f_{\epsilon,\gamma}(s):=\sqrt{s^{\frac{2}{\gamma}}+\epsilon^2}$ is the inverse speed function of the flow (\ref{approximate flow equation}). It follows that $\psi_{\epsilon,\gamma}$ is of class $C^3$ and standard stability results for ordinary differential equations imply that $\psi_{\epsilon,\gamma}\to \psi_{\epsilon}$ locally uniformly in $C^3([0,\infty))$. We first consider the more regular case $\gamma>1$. Since the level sets $\tilde \Sigma^{\epsilon,\gamma}_t$ are in general not of class $C^4$, additional care is required. Let $t\in[t_0,t_1]$. According to Lemma \ref{regularity of approximate level sets}, $\tilde \Sigma^{\epsilon,\gamma}_{t}\cap\operatorname{spt}(\zeta)$ is of class $C^3$. On the other hand, it follows from (\ref{mbvp 1}) that the mean curvature $H$ of $\tilde \Sigma^{\epsilon,\gamma}_{t}$ is given by $H=|\overline{\nabla} u_{\epsilon,\gamma,Z_\epsilon}|^{\gamma}\geq 0$ and that $\tilde \Sigma^{\epsilon,\gamma}_{t}$ is smooth outside of the closed set $ C_{\epsilon,\gamma}:=\{|\overline{\nabla}u_{\epsilon,\gamma,Z_\epsilon}|=0\}\times \mathbb{R}$. Clearly, $H$ vanishes on $C_{\epsilon,\gamma}$. In fact, we can essentially ignore the singular set $C_{\epsilon,\gamma}$ as we shall now see.
\begin{lem} Let $0<t_0<t_1$, $\zeta\in C_c^{\infty}((-10,10))$ and $\epsilon>0,\gamma(\epsilon)>1$ be chosen sufficiently close to $0$ and $1$, respectively. Then the function $t\mapsto \int_{\Sigma^{\epsilon,\gamma}_t} \zeta \psi_{\epsilon,\gamma}(H)\text{d}vol$ is continuously differentiable in $[t_0,t_1]$ and for any $t\in[t_0,t_1]$ there holds
	\begin{align*}
	\partial_t\int_{\Sigma^{\epsilon,\gamma}_t} \zeta \psi_{\epsilon,\gamma}(H)\text{d}vol=&\int_{\tilde \Sigma^{\epsilon,\gamma}_t}\zeta\bigg(-2\frac{|\nabla f_{\epsilon,\gamma}(H)|^2}{f^2_{\epsilon,\gamma}(H)}-2|A|^2-2\operatorname{Rc}(\nu,\nu)+\frac{\psi_{\epsilon,\gamma}(H)H}{f_{\epsilon,\gamma}(H)}\bigg)\text{d}vol\\
	&+\int_{\tilde \Sigma^{\epsilon,\gamma}_t}\bigg(\frac{\psi_{\epsilon,\gamma}(H)}{f_{\epsilon,\gamma}(H)}g(\overline{\nabla}\zeta,\nu)-\frac{1}{f_{\epsilon,\gamma}(H)}g({\nabla}f_{\epsilon,\gamma}(H),\nabla \zeta)\bigg)\text{d}vol
	\\&-\int_{\partial \Sigma^{\epsilon,\gamma}_t} 2\zeta A^{\partial M}(\nu,\nu)\text{d}vol 
	\end{align*}
\end{lem}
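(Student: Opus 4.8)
The plan is to differentiate $\int_{\tilde\Sigma^{\epsilon,\gamma}_t}\zeta\psi_{\epsilon,\gamma}(H)\,\text{d}vol$ under the integral sign and then identify the result by means of the standard evolution equations for a geometric flow with normal speed $F:=1/f_{\epsilon,\gamma}(H)$, exactly in the spirit of the smooth computation leading to (\ref{lead to}), but carrying the cut-off $\zeta$ and paying attention to regularity. First I would record the structural facts furnished by Lemma~\ref{approximate existence} and Lemma~\ref{regularity of approximate level sets}: for $t\in[t_0,t_1]$ the part of $\tilde\Sigma^{\epsilon,\gamma}_t$ lying in $M'_\epsilon\times[-10,10]\subset M'_\epsilon\times I_{t_0,\epsilon}$ is a compact $C^3$ hypersurface whose only boundary lies on $\partial M\times\mathbb{R}$ --- in this range one has $t_0/2<u_{\epsilon,\gamma,Z_\epsilon}<(3/4)Z_\epsilon$, so the surface stays away from $\Sigma_\epsilon$ and $\tilde\partial U_\epsilon$ --- and it meets $\partial M\times\mathbb{R}$ orthogonally. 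Since $\tilde u_{\epsilon,\gamma}=u_{\epsilon,\gamma,Z_\epsilon}+\epsilon z$ satisfies $|\overline{\nabla}\tilde u_{\epsilon,\gamma}|\geq\epsilon$ everywhere and $\overline{\nabla}\tilde u_{\epsilon,\gamma}/|\overline{\nabla}\tilde u_{\epsilon,\gamma}|^2$ is of class $C^{2,\gamma-1}$, the family $(\tilde\Sigma^{\epsilon,\gamma}_t)$ is generated by a genuine $C^2$ flow, which moreover preserves $\partial M\times\mathbb{R}$ near $S_\epsilon\times\mathbb{R}$ because $\overline{\nabla}u_{\epsilon,\gamma,Z_\epsilon}$ is tangent to $\partial M$ there by the Neumann condition (\ref{mbvp 4}). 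The observation that will carry the entire argument is that $f_{\epsilon,\gamma}(H)=\sqrt{\epsilon^2+H^{2/\gamma}}=|\overline{\nabla}\tilde u_{\epsilon,\gamma}|$ is a \emph{positive function of class $C^2$} on $\tilde\Sigma^{\epsilon,\gamma}_t$, even though $H$ itself is only $C^1$; consequently $F=1/f_{\epsilon,\gamma}(H)$ is $C^2$ and $\Delta F$ is a genuine continuous function on all of $\tilde\Sigma^{\epsilon,\gamma}_t$.

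Second, I would check that $\zeta\psi_{\epsilon,\gamma}(H)$ is of class $C^1$ on $\tilde\Sigma^{\epsilon,\gamma}_t$; this suffices to differentiate the compactly supported integral and shows that $\partial_t\int_{\tilde\Sigma^{\epsilon,\gamma}_t}\zeta\psi_{\epsilon,\gamma}(H)\,\text{d}vol$ equals the integral over $\tilde\Sigma^{\epsilon,\gamma}_t$ of $\partial_t^{\bullet}\big(\zeta\psi_{\epsilon,\gamma}(H)\big)+\zeta\psi_{\epsilon,\gamma}(H)H/f_{\epsilon,\gamma}(H)$, where $\partial_t^{\bullet}$ is the material derivative along the flow and $Hf_{\epsilon,\gamma}(H)\,\text{d}vol$ is the rate of change of the area element (recall $\partial_t(\text{d}vol)=Hf_{\epsilon,\gamma}(H)^{-1}\text{d}vol$). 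Away from the singular set $C_{\epsilon,\gamma}=\{|\overline{\nabla}u_{\epsilon,\gamma,Z_\epsilon}|=0\}\times\mathbb{R}$ the $C^1$ claim is clear, as $u_{\epsilon,\gamma,Z_\epsilon}$ is smooth there by elliptic regularity. Near $C_{\epsilon,\gamma}$ I would write $H=\varrho^{\gamma/2}$ for the nonnegative $C^{1,1}$ restriction $\varrho:=|\overline{\nabla}u_{\epsilon,\gamma,Z_\epsilon}|^2$ and use the Kato-type inequality $|\nabla\varrho|^2\leq c\,\varrho$; since $\gamma>1$ this yields $|\nabla H|\leq c\,\varrho^{(\gamma-1)/2}\to 0$, hence $H$, and therefore $\zeta\psi_{\epsilon,\gamma}(H)$ (using $\psi_{\epsilon,\gamma}\in C^3$ with $\psi_{\epsilon,\gamma}'=2f_{\epsilon,\gamma}$), are of class $C^1$ on $\tilde\Sigma^{\epsilon,\gamma}_t$ and vanish to order $>1$ along $C_{\epsilon,\gamma}$. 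This, together with the $C^3$-regularity of $\psi_{\epsilon,\gamma}$, is precisely where the hypothesis $\gamma>1$ enters.

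Third --- and this is the decisive step --- I would establish the pointwise identity
\[\partial_t^{\bullet}\big(\zeta\psi_{\epsilon,\gamma}(H)\big)=\frac{\psi_{\epsilon,\gamma}(H)}{f_{\epsilon,\gamma}(H)}\,g(\overline{\nabla}\zeta,\nu)-2\zeta f_{\epsilon,\gamma}(H)\,\Delta F-2\zeta\big(|A|^2+\operatorname{Rc}(\nu,\nu)\big)\]
on all of $\tilde\Sigma^{\epsilon,\gamma}_t$. On the open dense set $\tilde\Sigma^{\epsilon,\gamma}_t\setminus C_{\epsilon,\gamma}$ everything is smooth, so $\partial_t^{\bullet}\zeta=g(\overline{\nabla}\zeta,\nu)/f_{\epsilon,\gamma}(H)$ and the mean curvature evolution equation $\partial_t^{\bullet}H=-\Delta F-(|A|^2+\operatorname{Rc}(\nu,\nu))F$ hold, and the identity follows by inserting $\psi_{\epsilon,\gamma}'=2f_{\epsilon,\gamma}$ and $f_{\epsilon,\gamma}(H)F=1$. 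On $\operatorname{int}\{H=0\}$ the surface is a product slice $O\times\{z_0\}$ over an open set $O\subset M$ on which $u_{\epsilon,\gamma,Z_\epsilon}$ is constant; there $\nu=\partial_z$, $A\equiv 0$, $H\equiv 0$, $F\equiv 1/\epsilon$ is locally constant and $\operatorname{Rc}(\nu,\nu)=\operatorname{Rc}_{M\times\mathbb{R}}(\partial_z,\partial_z)=0$, so both sides vanish. Since both sides are continuous on $\tilde\Sigma^{\epsilon,\gamma}_t$ (using $F\in C^2$, $A\in C^1$, $\psi_{\epsilon,\gamma}(H)\in C^1$) and $\tilde\Sigma^{\epsilon,\gamma}_t=\overline{\tilde\Sigma^{\epsilon,\gamma}_t\setminus C_{\epsilon,\gamma}}\cup\operatorname{int}\{H=0\}$, the identity holds everywhere. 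I expect this extension across $C_{\epsilon,\gamma}$ --- making the low-regularity manipulation rigorous --- to be the main obstacle, and the device that defeats it is the systematic replacement of $H$ by the $C^2$ quantity $f_{\epsilon,\gamma}(H)=|\overline{\nabla}\tilde u_{\epsilon,\gamma}|$ wherever second derivatives are taken.

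Finally, I would integrate the term $-2\zeta f_{\epsilon,\gamma}(H)\Delta F$ by parts over the compact surface-with-boundary $\tilde\Sigma^{\epsilon,\gamma}_t\cap(M'_\epsilon\times[-10,10])$. The part of its boundary lying in $M'_\epsilon\times\{\pm 10\}$ is annihilated by $\zeta$, while the part on $\partial M\times\mathbb{R}$ produces $-2\int_{\partial\tilde\Sigma^{\epsilon,\gamma}_t}\zeta f_{\epsilon,\gamma}(H)\,\partial_\mu F\,\text{d}vol$ with $\mu$ the outward co-normal, which coincides with the outward normal of $\partial M$ by orthogonality. Differentiating $g(\nu,\mu)=0$ along $\partial\tilde\Sigma^{\epsilon,\gamma}_t$ gives (\ref{approx bc condition}) in the form $A^{\partial M}(\nu,\nu)=-\partial_\mu H\,f_{\epsilon,\gamma}'(H)/f_{\epsilon,\gamma}(H)$, hence $f_{\epsilon,\gamma}(H)\,\partial_\mu F=A^{\partial M}(\nu,\nu)$, so the boundary term becomes $-2\int_{\partial\tilde\Sigma^{\epsilon,\gamma}_t}\zeta A^{\partial M}(\nu,\nu)\,\text{d}vol$; the interior contribution of the integration by parts supplies the remaining terms of the asserted right-hand side, namely those involving $|\nabla f_{\epsilon,\gamma}(H)|^2/f_{\epsilon,\gamma}(H)^2$ and $g(\nabla f_{\epsilon,\gamma}(H),\nabla\zeta)/f_{\epsilon,\gamma}(H)$, and assembling everything gives the claimed formula. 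Continuity of $t\mapsto\partial_t\int_{\tilde\Sigma^{\epsilon,\gamma}_t}\zeta\psi_{\epsilon,\gamma}(H)\,\text{d}vol$ then follows because the right-hand integrand depends continuously on $t$ --- which uses the $\gamma$-uniform interior $C^{2,\alpha}$-estimates of Lemma~\ref{regularity of approximate level sets} together with the $C^1$-control of $\psi_{\epsilon,\gamma}(H)$ near $C_{\epsilon,\gamma}$ --- while the domain of integration varies continuously inside a fixed compact set.
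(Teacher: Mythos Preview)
Your argument is correct, but the route is genuinely different from the paper's. The paper does not attempt to establish the pointwise identity across the singular set $C_{\epsilon,\gamma}$ at all; instead it introduces a second cut-off $\rho_\delta$ which vanishes in a $\delta$-neighbourhood of $C_{\epsilon,\gamma}$, computes $\partial_t\int\rho_\delta\zeta\psi_{\epsilon,\gamma}(H)\,\text{d}vol$ on the smooth region where all manipulations are classical, and then lets $\delta\to 0$. The extra terms generated by $\nabla\rho_\delta$ are shown to vanish in the limit using the uniform bound $\int|\nabla\rho_\delta|\zeta\,\text{d}vol\leq c$ together with $\psi_{\epsilon,\gamma}(0)=f'_{\epsilon,\gamma}(0)=0$ and the locally uniform $C^3$-bounds on $\tilde u_{\epsilon,\gamma}$; finally the paper checks that $|A|^2=\operatorname{Rc}(\nu,\nu)=0$ almost everywhere on $\tilde\Sigma^{\epsilon,\gamma}_t\cap C_{\epsilon,\gamma}$ because $\nu=\partial_z$ there, so that the remaining integrals over $\{\rho_\delta<1\}$ also vanish in the limit.

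Your approach trades this limiting procedure for a sharper regularity observation: the key quantity $f_{\epsilon,\gamma}(H)=|\overline{\nabla}\tilde u_{\epsilon,\gamma}|$ is $C^2$ even across $C_{\epsilon,\gamma}$ (since $\tilde u_{\epsilon,\gamma}\in C^{3,\gamma-1}$ and $|\overline{\nabla}\tilde u_{\epsilon,\gamma}|\geq\epsilon$), and $H=\varrho^{\gamma/2}$ is $C^1$ thanks to the Kato bound $|\nabla\varrho|^2\leq c\varrho$ and $\gamma>1$. This lets you write both sides of the evolution identity as continuous functions and extend from the dense set $\{H>0\}$ by continuity, after which a single integration by parts --- legitimate because $F\in C^2$ and $\zeta f_{\epsilon,\gamma}(H)\in C^2$ --- finishes the job. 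Your method is more direct and makes the role of $\gamma>1$ very transparent; the paper's cut-off argument is more routine and requires only smoothness away from $C_{\epsilon,\gamma}$ plus the zeroth-order vanishing $\psi_{\epsilon,\gamma}(0)=f'_{\epsilon,\gamma}(0)=0$, so it is somewhat less sensitive to the precise regularity of $u_{\epsilon,\gamma,Z_\epsilon}$ up to $S_\epsilon$.
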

\begin{proof}
	Let us choose a function $\tilde\rho\in C^{1}(\mathbb{R})$ such that $0\leq \tilde \rho \leq 1$,  $\tilde\rho'\geq 0$, $\tilde\rho_{|(-\infty,1/2] }\equiv 0$ and $\tilde\rho_{|[1,\infty) }\equiv 1$. Let $\delta>0$ and define $\rho_\delta(p):=\tilde\rho(\operatorname{dist}_{\tilde g}({\tilde C_{\epsilon,\gamma},p})/\delta)$. It follows that $\rho_{\delta}$ is Lipschitz and $|\overline{\nabla}\rho_\delta|\leq c_0/\delta$ for some constant $c_0$. Consequently, it follows that there is a constant $c_1$ independent of $t,\delta$ such that 
	\begin{align}
	\bigg|\int_{\tilde \Sigma^{\epsilon,\gamma}_t} |\overline{\nabla}\rho_\delta|\zeta\text{d}vol\bigg|\leq c_1.
	\label{cut off estimate}
	\end{align}
	Next, we define the functions $\upsilon_\delta,\upsilon$ on $[t_0,t_1]$ via
	\begin{align}
	\upsilon_\delta(t):=\int_{\tilde \Sigma_t^{\epsilon,\gamma}}\psi_{\epsilon,\gamma}(H)\zeta\rho_{\delta}\text{d}vol, \qquad \upsilon(t):=\int_{\tilde \Sigma_t^{\epsilon,\gamma}}\psi_{\epsilon,\gamma}(H)\zeta\text{d}vol.
	\end{align}
	Since $H$ vanishes on $C_{\epsilon,\gamma}$, it follows from bounded convergence that $\upsilon_\delta\to\upsilon$ pointwise. On the other hand, $\tilde \Sigma_t^{\epsilon,\gamma}\cap \operatorname{spt}(\zeta)\cap \operatorname{spt}(\rho_\delta)$ is smooth which implies that $\upsilon_\delta(t)$ is differentiable and we find, similarly to the computations that lead to (\ref{lead to}), that
	\begin{align*}
	\partial_t\upsilon_\delta(t)=&\int_{\tilde \Sigma^{\epsilon,\gamma}_t}\rho_\delta\zeta\bigg(-2\frac{|\nabla f_{\epsilon,\gamma}(H)|^2}{f^2_{\epsilon,\gamma}(H)}-2|A|^2-2\operatorname{Rc}(\nu,\nu)+\frac{\psi_{\epsilon,\gamma}(H)H}{f_{\epsilon,\gamma}(H)}\bigg)\text{d}vol\\&-2\int_{\partial \Sigma^{\epsilon,\gamma}_t} \rho_\delta\zeta A^{\partial M}(\mu,\mu)\text{d}vol \\
	&+\int_{\tilde \Sigma^{\epsilon,\gamma}_t}\rho_\delta\bigg(\frac{\psi_{\epsilon,\gamma}(H)}{f_{\epsilon,\gamma}(H)}g(\overline{\nabla}\zeta,\nu)-\frac{1}{f_{\epsilon,\gamma}(H)}g({\nabla}f_{\epsilon,\gamma}(H),\nabla \zeta)\bigg)\text{d}vol
	\\
	&+\int_{\tilde \Sigma^{\epsilon,\gamma}_t}\zeta\bigg(\frac{\psi_{\epsilon,\gamma}(H)}{f_{\epsilon,\gamma}(H)}g(\overline{\nabla}\rho_{\delta},\nu)-\frac{1}{f_{\epsilon,\gamma}(H)}g({\nabla}f_{\epsilon,\gamma}(H),\nabla \rho_\delta)\bigg)\text{d}vol.
	\end{align*}
	The last line converges to $0$ locally uniformly in $t$ as $\delta\to0$ which follows from (\ref{cut off estimate}), $\psi_{\epsilon,\gamma}(0)=f'_{\epsilon,\gamma}(0)=0$ and the fact that $\tilde u_{\epsilon,\gamma}$ enjoys locally uniform $C^3-$bounds. The remaining terms converge locally uniformly in $t$ to the corresponding terms with $\rho_\delta$ replaced by the indicator function of $\tilde \Sigma^{\epsilon,\gamma}_t\setminus C_{\epsilon,\gamma}$. In order to complete the proof, we first note that $A^{\partial M}({\mu,\mu})=-\partial_\mu Hf'_{\epsilon,\gamma}(H)f_{\epsilon,\gamma}^{-1}(H)=0$ on $C_{\epsilon,\gamma}$, where we used (\ref{approx bc condition}). Moreover, there holds $f'_{\epsilon,\gamma}(H)=\psi_{\epsilon,\gamma}(H)=0$ on $C_{\epsilon,\gamma}$ so it remains to check that $|A|^2=\operatorname{Rc}(\nu,\nu)=0$ almost everywhere on $\tilde \Sigma^{\epsilon,\gamma}_t\cap C_{\epsilon,\gamma}$. To this end, we simply observe that $\nu=\overline{\nabla} \tilde u/|\overline{\nabla} \tilde u|$ and consequently $\nu=\partial_z$ on $\tilde \Sigma^{\epsilon,\gamma}_t\cap C_{\epsilon,\gamma}$. The claim now follows since $\tilde g$ is a product metric.
\end{proof}
In the next lemma, we pass to the limit $\gamma\searrow 1$.
\begin{lem}
	Let $0<t_0<t_1$, $\zeta\in C^{\infty}_c((-10,10))$ and $\epsilon>0$ be sufficiently small. Then, there holds $\log(f^\gamma_\epsilon(H))\in W^{1,2}(\tilde \Sigma^\epsilon_t)$ for almost every $t\in[t_0,t_1]$ and 
	\begin{equation}
	\begin{aligned} 
	&\int_{\tilde \Sigma^\epsilon_{t_1}} \zeta \psi_\epsilon(H)\text{d}vol-\int_{\Sigma^\epsilon_{t_0}} \zeta \psi_\epsilon(H) \text{d}vol \\
	\leq &\int_{t_0}^{t_1} \bigg[\int_{\tilde \Sigma^{\epsilon}_t}\zeta\bigg(-2\frac{|\nabla f_{\epsilon}(H)|^2}{f^2_{\epsilon}(H)}-2|A|^2-2\operatorname{Rc}(\nu,\nu)+\frac{\psi_{\epsilon}(H)H}{f_{\epsilon}(H)}\bigg)\text{d}vol\\
	&+\int_{\tilde \Sigma^{\epsilon}_t}\bigg(\frac{\psi_{\epsilon}(H)}{f_{\epsilon}(H)}g(\overline{\nabla}\zeta,\nu)-\frac{1}{f_{\epsilon}(H)}g({\nabla}f_{\epsilon}(H),\nabla \zeta)\bigg)\text{d}vol
	-\int_{\partial \Sigma^{\epsilon}_t} 2\zeta A^{\partial M}(\mu,\mu)\text{d}vol \bigg]\text{d}t. 
	\end{aligned}
	\label{approximate growth}
	\end{equation}
\end{lem}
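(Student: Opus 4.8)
The plan is to start from the pointwise-in-time identity of the preceding lemma, which holds for every $\gamma>1$ sufficiently close to $1$, integrate it over $[t_0,t_1]$ by the fundamental theorem of calculus, and then pass to the limit $\gamma\searrow 1$. Thus I would first record
\[
\int_{\tilde\Sigma^{\epsilon,\gamma}_{t_1}}\zeta\,\psi_{\epsilon,\gamma}(H)\,\text{d}vol-\int_{\tilde\Sigma^{\epsilon,\gamma}_{t_0}}\zeta\,\psi_{\epsilon,\gamma}(H)\,\text{d}vol=\int_{t_0}^{t_1}\mathcal{R}_{\epsilon,\gamma}(t)\,\text{d}t,
\]
where $\mathcal{R}_{\epsilon,\gamma}(t)$ denotes the right-hand side of the previous lemma. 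The essential point is that the hypotheses of Lemma~\ref{approximate existence} and Lemma~\ref{regularity of approximate level sets} keep $\epsilon>0$ fixed and only force $\gamma$ to be close to $1$, so that $u_{\epsilon,\gamma,Z_\epsilon}\to u_\epsilon$ in $C^{2,\alpha}_{loc}$ away from the corner $\partial M\cap(\Sigma_\epsilon\cup\tilde\partial U_\epsilon)$, while $\psi_{\epsilon,\gamma}\to\psi_\epsilon$ and $f_{\epsilon,\gamma}\to f_\epsilon$ locally uniformly in $C^3([0,\infty))$. Together with the identity $H^{\tilde\Sigma^{\epsilon,\gamma}_t}=|\overline{\nabla} u_{\epsilon,\gamma,Z_\epsilon}|^\gamma$, this gives, off the singular set $C_{\epsilon,\gamma}$, convergence of $\tilde\Sigma^{\epsilon,\gamma}_t\cap\operatorname{spt}(\zeta)$, of its boundary curve, and of $H$, $\nu$, the second fundamental form and $\nabla\zeta$, to the corresponding objects on $\tilde\Sigma^\epsilon_t$.

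Next I would discard the singular set exactly as in the proof of the preceding lemma: on $\tilde\Sigma^{\epsilon,\gamma}_t\cap C_{\epsilon,\gamma}$ one has $H=0$, $\psi_{\epsilon,\gamma}(0)=f'_{\epsilon,\gamma}(0)=0$ and $\nu=\partial_z$, so that $|A|^2$, $\operatorname{Rc}(\nu,\nu)$ and the boundary integrand all vanish there and $C_{\epsilon,\gamma}$ contributes to none of the integrals. Feeding in the $\gamma$-independent bounds — the area estimate $|\tilde\Sigma^{\epsilon,\gamma}_t\cap(M'_\epsilon\times[-10,10])|\le c$, the mean curvature bound (\ref{uni mc est}), the uniform local $C^{2,\alpha}$-estimate on $\operatorname{spt}(\zeta)$, and the length bound of Lemma~\ref{boundary length estimate} — dominated convergence lets me pass $\gamma\searrow 1$ in every term of $\mathcal{R}_{\epsilon,\gamma}(t)$, and then in $\int_{t_0}^{t_1}(\cdot)\,\text{d}t$, except the Dirichlet-type term $-2\int\!\!\int\zeta\,|\nabla f_{\epsilon,\gamma}(H)|^2 f_{\epsilon,\gamma}(H)^{-2}=-2\int\!\!\int\zeta\,|\nabla\log f_{\epsilon,\gamma}(H)|^2$; simultaneously the left-hand side converges to $\int_{\tilde\Sigma^\epsilon_{t_1}}\zeta\,\psi_\epsilon(H)\,\text{d}vol-\int_{\tilde\Sigma^\epsilon_{t_0}}\zeta\,\psi_\epsilon(H)\,\text{d}vol$.

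The only genuine obstacle is therefore the passage to the limit in $-2\int_{t_0}^{t_1}\!\int_{\tilde\Sigma^{\epsilon,\gamma}_t}\zeta\,|\nabla\log f_{\epsilon,\gamma}(H)|^2\,\text{d}vol\,\text{d}t$, which I would treat by lower semicontinuity rather than by convergence. Rearranging the integrated identity and using the uniform bounds already obtained for all other terms yields $\int_{t_0}^{t_1}\!\int_{\tilde\Sigma^{\epsilon,\gamma}_t}\zeta\,|\nabla\log f_{\epsilon,\gamma}(H)|^2\,\text{d}vol\,\text{d}t\le C$ with $C$ independent of $\gamma$; Fatou's lemma then gives, for almost every $t\in[t_0,t_1]$, a finite $\liminf$ along some sequence $\gamma_j\to 1$, hence a sequence bounded in $W^{1,2}$ on $\tilde\Sigma^\epsilon_t\cap\operatorname{spt}(\zeta)$ (note $\epsilon\le f_{\epsilon,\gamma}(H)\le c$, so $\log f_{\epsilon,\gamma}(H)$ is uniformly bounded). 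Writing the nearby level sets as normal graphs over $\tilde\Sigma^\epsilon_t$ and invoking the strong $L^2$-convergence $\log f_{\epsilon,\gamma_j}(H)\to\log f_\epsilon(H)$ already established, I identify the weak $W^{1,2}$-limit with $\log f_\epsilon(H)$, so that $\log f_\epsilon(H)\in W^{1,2}$ on $\tilde\Sigma^\epsilon_t\cap\operatorname{spt}(\zeta)$ for a.e.\ $t$; weak lower semicontinuity of the $L^2$-norm with the nonnegative weight $\zeta$ then gives $\int_{\tilde\Sigma^\epsilon_t}\zeta\,|\nabla\log f_\epsilon(H)|^2\,\text{d}vol\le\liminf_j\int_{\tilde\Sigma^{\epsilon,\gamma_j}_t}\zeta\,|\nabla\log f_{\epsilon,\gamma_j}(H)|^2\,\text{d}vol$. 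Integrating in $t$ by Fatou and taking the $\limsup$ of the integrated identity — where all remaining terms converge and this one enters with the favourable sign — delivers exactly (\ref{approximate growth}); since the weak limit is uniquely characterized, no subsequence survives in the final statement. I expect this lower-semicontinuity step, that is, arranging the graph representation and the weak $W^{1,2}$-compactness so that the negatively-signed Dirichlet term passes to the limit with the right sign, to be the only nonroutine part of the argument.
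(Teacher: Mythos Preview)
Your strategy is the paper's: integrate the $\gamma>1$ identity over $[t_0,t_1]$, pass the bulk of the terms to the limit via the $\gamma$-uniform $C^{2,\alpha}$-estimates, extract a $\gamma$-uniform bound on the Dirichlet energy, and then use weak $W^{1,2}$-compactness plus lower semicontinuity of $\int\zeta|\nabla\log f_{\epsilon,\gamma}(H)|^2$ (with its favourable sign) to close the inequality.

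There is one genuine oversight. You assert that dominated convergence handles \emph{every} term of $\mathcal R_{\epsilon,\gamma}(t)$ except the quadratic Dirichlet term, but the cross term
\[
-\int_{\tilde\Sigma^{\epsilon,\gamma}_t}\frac{1}{f_{\epsilon,\gamma}(H)}\,g\bigl(\nabla f_{\epsilon,\gamma}(H),\nabla\zeta\bigr)\,\text{d}vol
\]
also contains $\nabla f_{\epsilon,\gamma}(H)$, and the $C^2$-convergence of the level sets that you invoke yields only $C^0$-convergence of $H$ and $A$, not of their surface gradients. The paper flags this explicitly (``all terms except the ones involving $\nabla f_{\epsilon,\gamma}(H)$'') and treats the cross term separately: integrate by parts on $\tilde\Sigma^{\epsilon,\gamma}_t$ so that only $\log f_{\epsilon,\gamma}(H)$ appears, pass to the limit by uniform $C^2$-convergence, and then integrate back by parts on $\tilde\Sigma^\epsilon_t$ using the just-established fact that $\log f_\epsilon(H)\in W^{1,2}$ for a.e.\ $t$. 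Within your own framework there is an equally clean fix: once you have weak $W^{1,2}$-convergence of $\log f_{\epsilon,\gamma_j}(H)$, the cross term is \emph{linear} in the gradient and pairs against the strongly convergent $\nabla\zeta$, so it passes to the limit by weak--strong duality; but you must then also use Young's inequality to absorb this same cross term when ``rearranging'' to obtain the uniform Dirichlet bound in the first place. The paper does the latter cleanly by applying the previous lemma with an auxiliary cutoff $\tilde\zeta\equiv 1$ on $\operatorname{spt}(\zeta)$, so that $\nabla\tilde\zeta$ lives away from $\operatorname{spt}(\zeta)$ and the absorption is immediate.
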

\begin{proof}
	If $\gamma>1$, the corresponding claim follows from the previous lemma and integration with respect to the $t-$variable. Let $\gamma_i>1$ be a sequence converging to $1$. We may apply the previous lemma with $\zeta$ replaced by $\tilde \zeta\in C^{\infty}_c((-10,10))$ such that $\tilde \zeta=1$ on $\operatorname{spt}(\zeta)$ and again integrate with respect to the $t-$variable. Using Young's inequality, the uniform (in terms of $\gamma$) area and length bounds, c.f. Lemma \ref{regularity of approximate level sets} and Lemma \ref{boundary length estimate}, as well as the uniform mean curvature estimate (\ref{uni mc est}) we find that there is a constant $c$ independent of $i\in\mathbb{N}$ such that
	\begin{align*}
	\int_{t_0}^{t_1}\int_{\tilde \Sigma^{\epsilon,\gamma_i}_t} \frac{|\nabla f_{\epsilon,\gamma_i}(H)|^2}{f^2_{\epsilon,\gamma_i}(H)}\text{d}vol\text{d}t\leq c. 
	\end{align*}
	In particular, Fatou's Lemma implies that
	\begin{align}
	\liminf_{\gamma_i\to 1}\int_{\tilde \Sigma^{\epsilon,\gamma_i}_t} \frac{|\nabla f_{\epsilon,\gamma_i}(H)|^2}{f^2_{\epsilon,\gamma_i}(H)}\text{d}vol<\infty
	\label{fegamma bound}
	\end{align}
	for almost every $t\in[t_0,t_1]$. On the other hand, it follows from Lemma \ref{regularity of approximate level sets} that $\tilde \Sigma^{\epsilon,\gamma}_t\cap\operatorname{spt}(\zeta)\to \tilde \Sigma^{\epsilon}_t\cap\operatorname{spt}( \zeta)$ in $C^{2}$ uniformly in  $t\in[t_0,t_1]$.
	Consequently, all terms except the ones involving $\nabla f_{\epsilon,\gamma}(H)$ pass to the limit. Moreover, for any $t\in[t_0,t_1]$, it follows that there is some nearby smooth surface $\tilde \Sigma$ such that $\tilde \Sigma^{\epsilon,\gamma_i}_t\cap\operatorname{spt}(\tilde \zeta)$, and $\tilde \Sigma^{\epsilon}_t\cap\operatorname{spt}(\tilde \zeta)$ can be written simultaneously as a normal graph over $\tilde \Sigma$ with $C^2-$convergence of the graph function. It now follows from (\ref{fegamma bound}) and the Rellich-Kochandrov theorem that a suitable subsequence, depending on $t$, of $f_{\epsilon,\gamma}(H^{\tilde \Sigma^{\epsilon,\gamma_i}_t})$ converges weakly to $f_\epsilon(H^{\tilde \Sigma^{\epsilon}_t})$ in $W^{1,2}(\tilde \Sigma)$ for almost every $t\in[t_0,t_1]$. Then, lower semi-continuity and Fatou's lemma imply that
	$$
	\int_{t_0}^{t_1}\int_{\tilde \Sigma^{\epsilon}_t} \frac{|\nabla f_{\epsilon}(H)|^2}{f^2_{\epsilon}(H)}\text{d}vol\text{d}t\leq\liminf_{i\to\infty}\int_{t_0}^{t_1}\int_{\tilde \Sigma^{\epsilon,\gamma_i}_t} \frac{|\nabla f_{\epsilon,\gamma_i}(H)|^2}{f^2_{\epsilon,\gamma_i}(H)}\text{d}vol\text{d}t.
	$$ 
	Finally, in order to deal with the term 
	$$
	\int_{\tilde \Sigma^{\epsilon}_t}f_{\epsilon,\gamma_i}^{-1}(H)g({\nabla}f_{\epsilon,\gamma_i}(H),\nabla \zeta)\text{d}vol
	$$
	we can integrate by parts, use uniform $C^2-$convergence and then perform another integration by parts  on the level sets $\tilde \Sigma^\epsilon_t$ where there holds $\log f_{\epsilon}(H)\in W^{1,2}(\tilde \Sigma^\epsilon_t\cap\operatorname{spt}(\zeta))$. As this is true for almost every $t\in[t_0,t_1]$ by (\ref{fegamma bound}), the claim follows.
\end{proof}

\subsection{Passing the approximate growth formula to the limit} 
Using the ideas developed in \cite{huisken2001inverse}, we now pass (\ref{approximate growth}) to the limit $\epsilon\to 0$. As in the previous section, let $\zeta,\tilde\zeta$ be non-negative, non-zero cut-off functions with respect to the $z-$variable such that $$\operatorname{spt}(\zeta)\subset[-5,5]\subset\{\tilde\zeta=1\}\subset\operatorname{spt}(\tilde \zeta)\subset (-10,10)$$ and $0<t_0<t_1$. In the following estimates, the constants may depend on $\zeta,\tilde \zeta, t_0^{-1}$ as well as $t_1$ but not on $\epsilon$. As before,  Lemma \ref{regularity of approximate level sets} and Lemma \ref{boundary length estimate} imply that
\begin{align}
|\tilde \Sigma^\epsilon_t\cap\operatorname{spt}(\tilde\zeta)|+|\partial \tilde \Sigma^\epsilon_t\cap\operatorname{spt}(\tilde \zeta)|\leq c. \label{area bounds}
\end{align}
Similarly, we recall the uniform mean curvature estimate (\ref{uni mc est})
\begin{align}
0\leq H^{\tilde \Sigma^\epsilon_t}\leq c. \label{mean curvature bound}
\end{align}
It follows that
\begin{align}
|H\epsilon^2\log(\sqrt{\epsilon^2+H^2}+H)(\epsilon^2+H^2)^{-1/2}|\leq c\epsilon 
\end{align}
and consequently, recalling (\ref{psi def}), that
\begin{align}
\psi_\epsilon(H)\frac{H}{f_\epsilon}= H^2+\mathcal{O}(\epsilon). \label{pass ident 0}
\end{align}
Likewise, there holds
\begin{align}
\frac{\psi_\epsilon(H)}{f_\epsilon(H)}=H+\mathcal{O}(\epsilon). \label{pass ident 1}
\end{align}
Returning to (\ref{approximate growth}) with $\tilde\zeta$ instead of $\zeta$, we can use Young's inequality, the area and length estimates (\ref{area bounds}), the mean curvature bound (\ref{mean curvature bound}) and the fact that $|A^{\partial M}|$ is uniformly bounded to deduce that
$$
\int_{t_0}^{t_1}\int_{\tilde \Sigma^\epsilon_t\cap(M\times[-5,5])}\bigg( \frac{|\nabla f_\epsilon(H)|^2}{f_\epsilon(H)^2}
+|\nabla f_\epsilon(H)|^2 +|A|^2\bigg)\text{d}vol\leq c.$$
We pick a subsequence $\epsilon^i\to0$ and obtain using Fatou's lemma that
\begin{align}
\liminf_{i\to\infty} \int_{\tilde \Sigma^{\epsilon_i}_t\cap(M\times[-5,5])}\bigg( \frac{|\nabla f_{\epsilon_i}(H)|^2}{f_{\epsilon_i}(H)^2}
+|\nabla f_{\epsilon_i}(H)|^2 +|A|^2\bigg)\text{d}vol<\infty \label{pointwise bounds}
\end{align}
for almost every $t\in(t_0,t_1)$.  If we make the same considerations with a larger time interval containing $[t_0,t_1]$, we conclude that (\ref{pointwise bounds}) holds at $t_0$ and $t_1$, too, for almost every choice of $0<t_0<t_1$. Let us assume from now on that we have chosen such $t_0,t_1$. Combining (\ref{approximate growth}), (\ref{area bounds}), (\ref{mean curvature bound}) and  (\ref{pass ident 0}-\ref{pass ident 1})  we obtain 
\begin{equation}
\begin{aligned}
&\int_{\tilde \Sigma^{\epsilon_i}_{t_1}} \zeta H^2\text{d}vol-\int_{\tilde \Sigma^{\epsilon_i}_{t_0}} \zeta H^2 \text{d}vol -c{\epsilon_i} \\
\leq &\int_{t_0}^{t_1} \bigg[\int_{\tilde \Sigma^{{\epsilon_i}}_t}\zeta\bigg(-2\frac{|\nabla f_{{\epsilon_i}}(H)|^2}{f^2_{{\epsilon_i}}(H)}-2|A|^2-2\operatorname{Rc}(\nu,\nu)+H^2\bigg)\text{d}vol\\
&+\int_{\tilde \Sigma^{{\epsilon_i}}_t}\bigg(Hg(\overline{\nabla}\zeta,\nu)-\frac{1}{f_{{\epsilon_i}}(H)}g({\nabla}f_{{\epsilon_i}}(H),\nabla \zeta)\bigg)\text{d}vol
-\int_{\partial \Sigma^{{\epsilon_i}}_t} 2\zeta A^{\partial M}(\mu,\mu)\text{d}vol \bigg]\text{d}t. 
\end{aligned}
\label{first growth formula}
\end{equation}
We now pass this inequality to the limit term by term. We may slightly abbreviate the arguments whenever they are very similar to the ones presented in \cite{huisken2001inverse}.
\begin{lem}
	For almost every $0<t_0<t_1$ there holds
	\begin{align*}
	&	\lim_{i\to\infty} \bigg(\int_{\tilde \Sigma^{\epsilon_i}_{t_1}} \zeta H^2\text{d}vol- \int_{\tilde \Sigma^{\epsilon_i}_{t_0}} \tilde \zeta H^2\text{d}vol +\int_{t_0}^{t_1} \int_{\tilde \Sigma^{\epsilon_i}_t}\zeta H^2\text{d}vol\text{d}t\bigg)\\&= \int_{\Sigma_{t_1}\times\mathbb{R}} \zeta H^2\text{d}vol- \int_{\Sigma_{t_0}\times\mathbb{R}} \zeta H^2\text{d}vol + \int_{t_0}^{t_1}\int_{\Sigma_t\times\mathbb{R}}\zeta H^2\text{d}vol\text{d}t.
	\end{align*}
	\label{h2 limit}
\end{lem}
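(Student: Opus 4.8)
The plan is to prove the stated identity by showing that each of the three mean‑curvature integrals on the left converges to its cylindrical counterpart on the right. I first recall the underlying geometric convergence: by Lemma~\ref{regularity of approximate level sets} the truncated level sets $\tilde\Sigma^{\epsilon_i}_t\cap(M'_{\epsilon_i}\times[-10,10])$ satisfy $C^{1,\alpha}$‑estimates independent of $\epsilon_i$, so Arzel\`a--Ascoli produces, along a subsequence, a $C^{1,\alpha'}$‑limit; since $u_{\epsilon_i,\gamma_i,Z_{\epsilon_i}}\to u$ locally uniformly (Lemma~\ref{lem existence of weak solution}) and $\tilde u_{\epsilon_i,\gamma_i}=u_{\epsilon_i,\gamma_i,Z_{\epsilon_i}}+\epsilon_i z$, the limit is forced to be the cylinder $\Sigma_t\times[-10,10]$ over $\Sigma_t=\tilde\partial E_t$ for every $t$ outside the at most countable set of jump times, and $\Sigma_t$ is a connected free boundary surface by Lemma~\ref{connected free boundary}. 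Since $\tilde g=g+dz^2$ is a product, the mean curvature on the cylinder equals that of $\Sigma_t$ and is independent of $z$, so $\int_{\Sigma_t\times\mathbb{R}}\phi\, H^2=\big(\int_\mathbb{R}\phi\,\text{d}z\big)\int_{\Sigma_t}H^2$ for any $\phi=\phi(z)$. I also note that $0\le\int_{\tilde\Sigma^{\epsilon_i}_t}\zeta\, H^2\le c$ uniformly in $i$ and $t$, by the area bound (\ref{area bounds}) and the mean curvature bound (\ref{mean curvature bound}).

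The easy direction is lower semicontinuity: since $\tilde\Sigma^{\epsilon_i}_t\to\Sigma_t\times\mathbb{R}$ in $C^1$ near $M\times\operatorname{spt}\tilde\zeta$ with uniformly bounded mean curvature (\ref{mean curvature bound}), Remark~\ref{weak mean curvature rema} gives $H^{\tilde\Sigma^{\epsilon_i}_t}\rightharpoonup H^{\Sigma_t\times\mathbb{R}}$ weakly in $L^\infty$ and $\int_{\Sigma_t\times\mathbb{R}}\phi\, H^2\le\liminf_i\int_{\tilde\Sigma^{\epsilon_i}_t}\phi\, H^2$ for $\phi\in\{\zeta,\tilde\zeta\}$; with Fatou's lemma for the $t$‑integral this bounds $\liminf_i$ of the left‑hand combination from below by the right‑hand one. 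The content of the lemma is the matching upper bound, i.e.\ that no $H^2$‑mass is lost. For a good $t$ I write $\tilde\Sigma^{\epsilon_i}_t$ near $M\times\operatorname{spt}\tilde\zeta$ as a normal graph over the cylinder with graph function tending to $0$ in $C^{1,\alpha}$ and pull the mean curvature back to a function $h_i$ on the fixed domain $\Sigma_t\times\operatorname{spt}\tilde\zeta$, with pulled‑back metrics converging in $C^0$. By the middle term of (\ref{pointwise bounds}) the gradient of $f_{\epsilon_i}(h_i)=\sqrt{\epsilon_i^2+h_i^{2/\gamma_i}}$ is bounded in $L^2$, and by (\ref{mean curvature bound}) $f_{\epsilon_i}(h_i)$ is bounded in $L^\infty$, hence in $W^{1,2}$; the Rellich--Kondrachov theorem gives $f_{\epsilon_i}(h_i)\to\phi_\infty$ strongly in $L^2$ along a subsequence, and since $\gamma_i\to1$, $\epsilon_i\to0$ and $0\le h_i\le c$ (so that $\sup_{[0,c]}|s^{2/\gamma_i}-s^2|\to0$) this forces $h_i\to\phi_\infty$ in $L^2$. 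On the other hand $h_i\rightharpoonup H^{\Sigma_t}$ weakly by Remark~\ref{weak mean curvature rema}, so $\phi_\infty=H^{\Sigma_t}$; as this limit is independent of the subsequence, $h_i\to H^{\Sigma_t}$ in $L^2$ and therefore $\int_{\tilde\Sigma^{\epsilon_i}_t}\phi\, H^2\to\int_{\Sigma_t\times\mathbb{R}}\phi\, H^2$ for $\phi\in\{\zeta,\tilde\zeta\}$. Applying this at $t_1$ with $\zeta$, at $t_0$ with $\tilde\zeta$, and at almost every $t\in[t_0,t_1]$ followed by dominated convergence (using the uniform bound above), all three terms converge and the identity follows.

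The genuine obstacle is making the compactness in the previous step uniformly available: (\ref{pointwise bounds}) supplies the $L^2$‑gradient bound only as a $\liminf$ over $i$ and only at almost every fixed $t$, which by itself need not suffice to run the argument at the endpoints $t_0,t_1$ and on almost every slice of $[t_0,t_1]$ at once. To secure it one feeds in the approximate growth inequality (\ref{first growth formula}), whose right‑hand side bounds $\int_{t_0}^{t_1}\int_{\tilde\Sigma^{\epsilon_i}_t}\zeta\,(|\nabla f_{\epsilon_i}(H)|^2+|A|^2)\,\text{d}vol\,\text{d}t$ uniformly in $i$, together with the weak‑$L^\infty$ convergence of mean curvatures from Remark~\ref{weak mean curvature rema}; here it is essential that $\operatorname{spt}\zeta$ is strictly interior to $\operatorname{spt}\tilde\zeta$, on whose support (\ref{pointwise bounds}) was derived, so that the $\nabla\zeta$‑error terms in (\ref{first growth formula}) drop out in the limit on the cylinder, and one must also control $\log f_{\epsilon_i}(H)$ near the critical set $\{H=0\}$. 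This selection of a generic pair $0<t_0<t_1$ (and, if needed, of a further subsequence of the $\epsilon_i$) is precisely the bookkeeping carried out in Section~5 of \cite{huisken2001inverse}, which the present argument follows.
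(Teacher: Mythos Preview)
Your approach is essentially the same as the paper's: reduce to showing $\int_{\tilde\Sigma^{\epsilon_i}_t}\zeta H^2\to\int_{\Sigma_t\times\mathbb R}\zeta H^2$ for almost every $t$, obtain strong $L^2$-convergence of $H$ via a $W^{1,2}$-bound on $f_{\epsilon_i}(H)$ and Rellich--Kondrachov, and identify the limit with the weak mean curvature of the cylinder through Remark~\ref{weak mean curvature rema}. Two points of detail are worth sharpening.

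First, you write the approximating surfaces as normal graphs over the cylinder $\Sigma_t\times\operatorname{spt}\tilde\zeta$; but $\Sigma_t$ is only $C^{1,\alpha}$, so one cannot speak of $W^{1,2}$-functions on it without care. The paper sidesteps this by writing all surfaces as graphs over a common nearby \emph{smooth} reference surface $\tilde\Sigma_t$, which is harmless once $C^{1,\alpha}$-convergence is in hand.

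Second, your last paragraph correctly names the real issue---(\ref{pointwise bounds}) is only a $\liminf$ in $i$ and only for almost every $t$, so the $t$-dependent subsequence along which Rellich applies need not match the one on which you want full convergence---but you defer its resolution to ``bookkeeping in \cite{huisken2001inverse}''. The paper makes this step concrete: from (\ref{first growth formula}) together with (\ref{area bounds}) and (\ref{mean curvature bound}) one deduces that $t\mapsto\int_{\tilde\Sigma^{\epsilon_i}_t}\zeta H^2\,\text{d}vol - c_0 t$ is non-increasing on $[t_0,t_1]$ for some $c_0$ independent of $i$; BV-compactness then produces a \emph{single} subsequence along which $\int_{\tilde\Sigma^{\epsilon_i}_t}\zeta H^2$ converges for all but countably many $t$ (including, for almost every choice, $t_0$ and $t_1$). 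Once the limit is known to exist for these $t$, your Rellich argument---run along a further, $t$-dependent subsequence---suffices to identify it. This is the missing link between your penultimate and final paragraphs.
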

\begin{proof} This is very similar to the argument in \cite{huisken2001inverse}.
	First, it follows from (\ref{area bounds}), (\ref{mean curvature bound}) and the bounded convergence theorem that it suffices to show that
	\begin{align}
	\lim\limits_{i\to\infty}\int_{\Sigma^{\epsilon_i}_t}\zeta H^2\text{d}vol= \int_{\Sigma_t\times\mathbb{R}}\zeta H^2\text{d}vol
	\label{to show}
	\end{align}
	for almost every $t\in[t_0,t_1]$ including $t_0,t_1$. Using Young's inequality, (\ref{area bounds}) and (\ref{mean curvature bound}) we deduce from (\ref{first growth formula}) that the function 
	$$
	t\mapsto \int_{\tilde \Sigma^{\epsilon_i}_t}\zeta H^2\text{d}vol-c_0t
	$$
	is decreasing in $[t_0,t_1]$ if $c_0$ is chosen sufficiently large. It then follows from the compactness theorem for BV-functions, see for instance \cite{pallara2000functions}, that we may choose a subsequence such that  the limit of $\int_{\Sigma^{\epsilon_i}_t}\zeta H^2\text{d}vol$ exists for all but 
	countably many $t\in[t_0,t_1]$ and we may assume that $t,t_0,t_1$ are outside of this exceptional set. It is thus sufficient to prove (\ref{to show}) along a suitable subsequence. \\
	According to Lemma \ref{regularity of approximate level sets} and the locally uniform convergence of $u_{\epsilon,1,Z_\epsilon}$ to the weak solution $u$, c.f. Lemma \ref{lem existence of weak solution}, it follows that  $\tilde \Sigma^{\epsilon_i}_t\cap\operatorname{spt} \zeta$ converges to $(\Sigma_t\times\mathbb{R})\cap \operatorname{spt}\zeta$ in $C^{1,\alpha}$ where $0<\alpha<1/2$. Thus, we may simultaneously write all of these surfaces as the normal graph over a nearby smooth surface $\tilde \Sigma_t$ where the graph functions are denoted by $\upsilon_i$ and $\upsilon$, respectively. In particular, $\upsilon_i\to \upsilon$ in $C^{1,\alpha}$. After choosing another subsequence, we may then conclude using (\ref{pointwise bounds}) and the Rellich-Kochandrov theorem that $f_{\epsilon_i}(H^{\tilde \Sigma^{\epsilon_i}_t})\to \rho$ in $L^q(\tilde \Sigma_t)$ for every $q<\infty$, where $\rho\in L^q(\tilde \Sigma_t)$ is a yet unknown function. Choosing another subsequence, we may assume that this convergence holds pointwise almost everywhere. Taking $q=2$, we deduce by Cauchy-Schwarz that $f^2(H^{\tilde \Sigma^{\epsilon_i}_t})\to \rho^2$ in $L^1(\tilde \Sigma_t)$, which together with the pointwise bound (\ref{mean curvature bound}) implies that $(H^{\tilde \Sigma^{\epsilon_i}_t})^2\to \rho^2$ in $L^1(\tilde \Sigma_t)$ and in particular, after choosing yet another subsequence, $H^{\tilde \Sigma^{\epsilon_i}_t}\to \rho$ pointwise almost everywhere in $\tilde \Sigma_t$. Using the bounded convergence theorem, we conclude that $H^{\tilde \Sigma^{\epsilon_i}_t}\rightharpoonup \rho$ in $L^q(\tilde \Sigma_t)$. Now we can use Remark \ref{weak mean curvature rema} to identify $\rho$ to be the generalized mean curvature of $\Sigma_t\times{\mathbb{R}}$, which is of course equal to $H^{\Sigma_t}$. (\ref{to show}) now follows from the bounded convergence theorem. 
\end{proof}
\begin{lem}
	There holds
	$$
	\int_{t_0}^{t_1}\int_{\partial\tilde \Sigma^{\epsilon_i}_t}\zeta A^{\partial M}(\nu,\nu)\text{d}vol\to \int_{t_0}^{t_1}\int_{\partial \Sigma_t\times{\mathbb{R}}} \zeta A^{\partial M}(\nu,\nu)\text{d}vol$$ as well as 
	$$ 
	\int_{t_0}^{t_1}\int_{\tilde \Sigma^{\epsilon_i}_t} \zeta\operatorname{Rc}(\nu,\nu)\text{d}vol\to \int_{t_0}^{t_1}\int_{ \Sigma_t\times{\mathbb{R}}}\zeta \operatorname{Rc}(\nu,\nu)\text{d}vol.
	$$
	as $i\to\infty$.
	\label{limit 2}
\end{lem}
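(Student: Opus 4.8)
The plan is to deduce both convergences directly from the $C^{1,\alpha}$ convergence of the approximate level sets established in the proof of Lemma \ref{h2 limit}, combined with the uniform area and length bounds (\ref{area bounds}) and the dominated convergence theorem in the time variable. The point that makes this lemma comparatively painless is that, unlike the term $\int\zeta H^2$ handled in Lemma \ref{h2 limit}, the integrands $\operatorname{Rc}(\nu,\nu)$ and $A^{\partial M}(\nu,\nu)$ are continuous functions of the base point and of the unit (co-)normal direction alone, so no control on second-order quantities of the level sets is required and no passage through weak convergence of the mean curvature is involved.

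I would first establish the pointwise-in-$t$ convergence of the Ricci term. For a.e.\ $t\in[t_0,t_1]$ the level sets $\tilde\Sigma^{\epsilon_i}_t\cap\operatorname{spt}(\zeta)$ converge to $(\Sigma_t\times\mathbb{R})\cap\operatorname{spt}(\zeta)$ in $C^{1,\alpha}$ for every $0<\alpha<1/2$ --- this is exactly the convergence already used in the proof of Lemma \ref{h2 limit}, coming from Lemma \ref{regularity of approximate level sets} together with the local uniform convergence $u_{\epsilon_i,1,Z_{\epsilon_i}}\to u$ of Lemma \ref{lem existence of weak solution}. Writing all of these surfaces simultaneously as normal graphs over a fixed nearby smooth surface, the induced Riemannian measures converge and the outward unit normals $\nu$ converge uniformly, so $\zeta\operatorname{Rc}(\nu,\nu)$ is a uniformly convergent sequence of continuous functions supported in a fixed compact set and $\int_{\tilde\Sigma^{\epsilon_i}_t}\zeta\operatorname{Rc}(\nu,\nu)\,\text{d}vol\to\int_{\Sigma_t\times\mathbb{R}}\zeta\operatorname{Rc}(\nu,\nu)\,\text{d}vol$ for a.e.\ $t$. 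By (\ref{area bounds}) and the boundedness of $|\operatorname{Rc}|$ on the fixed compact set $\operatorname{spt}(\tilde\zeta)$, the functions $t\mapsto\int_{\tilde\Sigma^{\epsilon_i}_t}\zeta\operatorname{Rc}(\nu,\nu)\,\text{d}vol$ are uniformly bounded on $[t_0,t_1]$, so dominated convergence yields the second claimed limit.

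For the boundary term I would argue in the same way, additionally invoking the fact from Lemma \ref{regularity of approximate level sets} that $\partial\tilde\Sigma^{\epsilon_i}_t\cap(M'_{\epsilon_i}\times I_{t_0,\epsilon_i})$ meets $\partial M\times\mathbb{R}$ orthogonally; this uniform transversality, combined with the $C^{1,\alpha}$ convergence of the surfaces, upgrades to $C^{1,\alpha}$ convergence of the boundary curves $\partial\tilde\Sigma^{\epsilon_i}_t\to\partial\Sigma_t\times\mathbb{R}$ on $\operatorname{spt}(\zeta)$. Hence the one-dimensional length measures converge and the restrictions of $\nu$ to the boundary converge uniformly; since $A^{\partial M}(\nu,\nu)$ is a smooth function of the point of $\partial M$ and of the tangential unit direction $\nu$ (recall $\tilde g$ is a product metric, so the second fundamental form of $\partial M\times\mathbb{R}$ restricts to that of $\partial M$), the inner integrals converge for a.e.\ $t$, and the uniform length bound (\ref{area bounds}) together with the bound on $|A^{\partial M}|$ provided by asymptotic flatness furnishes the domination needed to pass the $t$-integral to the limit.

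I do not anticipate a genuine obstacle here: the only point requiring care is that the pointwise-in-$t$ convergence of the level sets can fail on the at most countably many jump times (and a further null set), but since this is a null set it does not affect the time integrals; everything else is supplied by the uniform estimates of Lemma \ref{regularity of approximate level sets} and Lemma \ref{boundary length estimate}. In short, this lemma is the ``easy'' step in passing (\ref{first growth formula}) to the limit, precisely because its integrands are of zeroth order in the extrinsic curvature, in contrast to the $H^2$-term treated in Lemma \ref{h2 limit}.
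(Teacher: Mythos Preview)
Your proposal is correct and follows exactly the paper's own approach: the paper's proof is a single sentence invoking the $C^{1,\alpha}$-convergence of the level sets, the area bounds (\ref{area bounds}), and the bounded convergence theorem. You have simply spelled out the details of that sentence, including the correct observation that these integrands depend only on the base point and unit normal, which is precisely why this step is elementary compared to Lemma~\ref{h2 limit}.
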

\begin{proof}
	This follows from the $C^{1,\alpha}-$convergence of the level sets, (\ref{area bounds}) and the bounded convergence theorem.
\end{proof}
Before we proceed, we note that according to Lemma \ref{lemma properties of weak solutions} there holds $H^{\Sigma_t}=|\overline{\nabla }u|$. On the other hand, the co-area formula implies that $|\overline{\nabla }u|^{-1}(p)<\infty$ for almost every $t>0$ and almost every $p\in{\Sigma_t}$. Consequently, the function $H^{-1}$ is well-defined on almost every level set $\Sigma_t$.
\begin{lem}
	There holds
	$$
	\int_{ \Sigma_t\times\mathbb{R}} \frac{|\nabla H|^2}{H^2}\zeta\text{d}vol\leq \liminf_{i\to\infty} \int_{\tilde \Sigma^{\epsilon_i}_t} \frac{|\nabla  f_{\epsilon_i}(H)|^2}{f_{\epsilon_i}(H)^2}\zeta\text{d}vol
	$$
	for almost every $t\in[t_0,t_1]$. \label{limit 3}
\end{lem}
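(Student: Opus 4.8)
The plan is to fix a time $t\in[t_0,t_1]$ for which (\ref{pointwise bounds}) holds and for which, in addition, $H^{\Sigma_t}=|\overline{\nabla}u|>0$ almost everywhere on $\Sigma_t$; by the coarea formula together with (\ref{pointwise bounds}), only a null set of times is excluded. Along a subsequence realising the liminf in question (not relabelled) we then have a uniform bound $\int_{\tilde \Sigma^{\epsilon_i}_t} f_{\epsilon_i}(H)^{-2}|\nabla f_{\epsilon_i}(H)|^2\zeta\,\text{d}vol\leq c$. Writing $w_i:=f_{\epsilon_i}(H^{\tilde \Sigma^{\epsilon_i}_t})=\sqrt{\epsilon_i^2+(H^{\tilde \Sigma^{\epsilon_i}_t})^2}\geq\epsilon_i>0$ and $w:=H^{\Sigma_t}$, these integrands equal $|\nabla\log w_i|^2$, while the claimed integrand equals $|\nabla\log w|^2$ on the full measure set $\{w>0\}$, so the statement becomes a lower semicontinuity assertion for the Dirichlet energy of $\log w_i$.

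Next I would perform the standard reduction to a fixed surface. By Lemma \ref{regularity of approximate level sets} together with the locally uniform convergence of $u_{\epsilon_i,1,Z_{\epsilon_i}}$ to the weak solution $u$ (Lemma \ref{lem existence of weak solution}), the pieces $\tilde \Sigma^{\epsilon_i}_t\cap\operatorname{spt}(\zeta)$ converge in $C^{1,\alpha}$ to $(\Sigma_t\times\mathbb{R})\cap\operatorname{spt}(\zeta)$; hence all of them, and the limit, may be written simultaneously as normal graphs over one fixed smooth surface $\tilde \Sigma_t$ with graph functions converging in $C^{1,\alpha}$. Pulling back, the induced metrics converge in $C^{0,\alpha}$ while the volume elements and the pullback of $\zeta$ converge uniformly, and, arguing exactly as in the proof of Lemma \ref{h2 limit} using (\ref{pointwise bounds}) and the Rellich--Kondrachov theorem, $w_i$ converges to $w$ in $L^2(\tilde \Sigma_t)$ and, after a further subsequence, pointwise almost everywhere.

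The heart of the matter, and the step I expect to be the main obstacle, is that $\log w_i$ may tend to $-\infty$ on the set where $H$ degenerates, so that neither $\log w_i$ nor $\log w$ need lie in $L^2$ and weak lower semicontinuity cannot be applied directly. I would circumvent this by truncation: for each $\delta>0$ the functions $\log\max(w_i,\delta)$ are Lipschitz images of $w_i$, hence converge to $\log\max(w,\delta)$ in $L^2(\tilde \Sigma_t)$, and since $|\nabla\log\max(w_i,\delta)|\leq|\nabla\log w_i|$ they are uniformly bounded in $W^{1,2}(\tilde \Sigma_t)$, hence converge weakly there. Weak lower semicontinuity of the Dirichlet energy on $\tilde \Sigma_t$ --- the converging metrics and cut-off contributing only factors that tend to $1$ --- then yields $\int|\nabla\log\max(w,\delta)|^2\zeta\,\text{d}vol\leq\liminf_{i\to\infty}\int|\nabla\log w_i|^2\zeta\,\text{d}vol$. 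Finally, $|\nabla\log\max(w,\delta)|^2$ equals $|\nabla\log w|^2$ on $\{w>\delta\}$ and vanishes elsewhere, so it increases monotonically to $|\nabla\log w|^2$ as $\delta\searrow0$; since $\{w>0\}$ has full measure in $\Sigma_t$, monotone convergence gives $\int_{\Sigma_t\times\mathbb{R}}H^{-2}|\nabla H|^2\zeta\,\text{d}vol\leq\liminf_{i\to\infty}\int_{\tilde \Sigma^{\epsilon_i}_t}f_{\epsilon_i}(H)^{-2}|\nabla f_{\epsilon_i}(H)|^2\zeta\,\text{d}vol$, which is the claim (and shows in passing that the left-hand side is finite).
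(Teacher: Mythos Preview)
Your proof is correct and follows a genuinely different route from the paper's. The paper centres $\log w_i$ by subtracting its mean $y_i$ and applies a Poincar\'e--Rellich argument to obtain $L^2$ convergence of $\log w_i-y_i$; it then splits into cases according to whether $y_i\to-\infty$ (which forces $H^{\Sigma_t}\equiv0$ and is excluded for almost every $t$) or $y_i$ stays bounded (in which case $\log w_i$ itself converges in $L^2$, hence weakly in $W^{1,2}$, and lower semicontinuity applies directly). Your truncation device avoids this dichotomy entirely: since $s\mapsto\log\max(s,\delta)$ is globally Lipschitz, the $L^2$ convergence $w_i\to w$ imported from the proof of Lemma~\ref{h2 limit} gives $L^2$ convergence of the truncated logarithms at once, and the pointwise bound $|\nabla\log\max(w_i,\delta)|\leq|\nabla\log w_i|$ together with monotone convergence as $\delta\searrow0$ finishes the argument. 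Your approach is slightly more elementary in that it needs neither a Poincar\'e inequality nor a case analysis; the paper's approach, on the other hand, yields as a by-product that $\log H^{\Sigma_t}\in W^{1,2}$ (rather than merely the finiteness of $\int|\nabla H|^2/H^2$), though this stronger conclusion is not used downstream.
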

\begin{proof}
	This is a variation of Lemma 5.2 in \cite{huisken2001inverse}. As in the proof of Lemma 
	\ref{h2 limit} we may simultaneously write  $\tilde \Sigma^{\epsilon_i}_t\cap\operatorname{spt}(\zeta)$ and $ (\Sigma_t\times\mathbb{R)}\cap \operatorname{spt}(\zeta)$ as the normal graph over some nearby surface $\tilde \Sigma_t$ with $C^1-$convergence of the graph functions and we may assume that all surfaces involved consist of one connected component. Let 
	$$y_i:=\int_{\tilde \Sigma^{\epsilon_i}_t\cap\operatorname{spt}(\zeta)}\log f_{\epsilon_i}(H)\text{d}vol.
	$$
	According to (\ref{mean curvature bound}),(\ref{area bounds}) and (\ref{pointwise bounds}) we may assume that, after choosing another subsequence,  for almost every $t$ there holds $\log f_{\epsilon_i}(H^{\tilde \Sigma^{\epsilon_i}_t})-y_i\to \rho$ in $L^2{(\tilde \Sigma_t)}$ for some function $\rho \in L^2{(\tilde \Sigma_t)}$ and that $y_i\to y\in[-\infty,\infty)$. $y=-\infty$ implies after choosing another subsequence that $H^{\tilde \Sigma^{\epsilon_i}_t}\to0$ almost everywhere and it follows from Remark \ref{weak mean curvature rema} that $H^{\Sigma_t}\equiv 0$. According to the discussion preceding the lemma, this cannot happen for almost every $t>0$. In the remaining case, that is $y>-\infty$, it follows that $\operatorname{log}f_{\epsilon_i}(H^{\tilde \Sigma^{\epsilon_i}_t})$ converges in $L^2(\tilde \Sigma_t)$. The bounded convergence theorem then implies that, after choosing another pointwise converging subsequence, $H^{\tilde \Sigma^{\epsilon_i}_t}$ converges in $L^2(\tilde \Sigma_t)$, too. Using Remark \ref{weak mean curvature rema}, we may then identify the limit as in the proof of Lemma \ref{h2 limit} and the claim follows from lower semi-continuity.
\end{proof}
\begin{lem}
	There holds
	$$
	\int_{t_0}^{t_1}\int_{\tilde \Sigma^{\epsilon_i}_t}g(\overline{\nabla}\zeta,\nu)H\text{d}vol\text{d}t\to 
	0, \qquad \int_{t_0}^{t_1}\int_{\tilde \Sigma^{\epsilon_i}_t} \frac{g(\nabla \zeta,\nabla f_{\epsilon_i}(H))}{f_{\epsilon_i}(H)}\text{d}vol\text{d}t\to 0
	$$
	as $i\to\infty$. \label{limit 4}
\end{lem}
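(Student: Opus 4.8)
The plan is to treat both integrals as error terms produced by the cut-off $\zeta=\zeta(z)$ in the $z$-direction, each of which vanishes in the limit because $\nu$ and $\nabla\log f_{\epsilon_i}(H)$ become orthogonal to $\partial_z$ once the level sets converge to the cylinder $\Sigma_t\times\mathbb{R}$. Throughout I write $\tilde u_\epsilon=u_{\epsilon,1,Z_\epsilon}+\epsilon z$ on $M'_\epsilon\times\mathbb{R}$ with the product metric $\tilde g=g+\text{d}z^2$, so that $\nu=\overline{\nabla}\tilde u_\epsilon/|\overline{\nabla}\tilde u_\epsilon|$, $H=|\overline{\nabla} u_{\epsilon,1,Z_\epsilon}|$ and $f_\epsilon(H)=\sqrt{H^2+\epsilon^2}=|\overline{\nabla}\tilde u_\epsilon|$; the last identity shows that $f_\epsilon(H)$, viewed as a function on $M'_\epsilon\times\mathbb{R}$, does not depend on $z$, and it gives $g(\partial_z,\nu)=\epsilon/f_\epsilon(H)$, $\overline{\nabla}\zeta=\zeta'(z)\partial_z$ and $\nabla\zeta=\zeta'(z)\nabla z$, where $\nabla z=\partial_z-g(\partial_z,\nu)\nu$ is the tangential part of $\partial_z$ along $\tilde\Sigma^\epsilon_t$.

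For the first integral I would observe that $g(\overline{\nabla}\zeta,\nu)H=\zeta'(z)\,\epsilon H/f_\epsilon(H)$, so $|g(\overline{\nabla}\zeta,\nu)H|\le\epsilon\,|\zeta'(z)|$ because $H\le f_\epsilon(H)$; combined with the uniform area bound (\ref{area bounds}) for $\tilde\Sigma^{\epsilon_i}_t\cap\operatorname{spt}(\zeta)$ and integration over $t\in[t_0,t_1]$, this bounds the first integral by $c\,\epsilon_i\to0$.

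For the second integral I would rewrite, using $\nabla\zeta=\zeta'(z)\nabla z$,
\[
\frac{g(\nabla\zeta,\nabla f_{\epsilon_i}(H))}{f_{\epsilon_i}(H)}=\zeta'(z)\,g\bigl(\nabla z,\nabla\log f_{\epsilon_i}(H)\bigr),
\]
and set $I_i(t):=\int_{\tilde\Sigma^{\epsilon_i}_t}\zeta'(z)\,g(\nabla z,\nabla\log f_{\epsilon_i}(H))\,\text{d}vol$. From the bound $\int_{t_0}^{t_1}\int_{\tilde\Sigma^{\epsilon_i}_t\cap(M\times[-5,5])}|\nabla f_{\epsilon_i}(H)|^2 f_{\epsilon_i}(H)^{-2}\,\text{d}vol\,\text{d}t\le c$ obtained before the lemma, Fatou's lemma and $|\nabla z|\le1$ one gets $|I_i(t)|\le c\,\bigl(\int_{\tilde\Sigma^{\epsilon_i}_t\cap(M\times[-5,5])}|\nabla\log f_{\epsilon_i}(H)|^2\,\text{d}vol\bigr)^{1/2}$ with the right-hand side bounded in $L^2(\text{d}t)$; hence $\{I_i\}$ is uniformly integrable on $[t_0,t_1]$, and by Vitali's convergence theorem it suffices to prove $I_i(t)\to0$ for a.e.\ $t$. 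Fixing such a $t$ and an arbitrary subsequence, I would argue exactly as in the proof of Lemma \ref{limit 3}: write $\tilde\Sigma^{\epsilon_i}_t\cap\operatorname{spt}(\tilde\zeta)$ and $(\Sigma_t\times\mathbb{R})\cap\operatorname{spt}(\tilde\zeta)$ as normal graphs over a fixed nearby surface with $C^{1,\alpha}$-convergence of the graph functions (Lemma \ref{regularity of approximate level sets}, Lemma \ref{lem existence of weak solution}), pass to a further subsequence and use the above $W^{1,2}$-bound and the Rellich--Kondrachov theorem to obtain $\log f_{\epsilon_i}(H)\rightharpoonup\log H^{\Sigma_t}$ weakly in $W^{1,2}$ near $\operatorname{spt}(\zeta)$, the limit being identified through Remark \ref{weak mean curvature rema} as in Lemma \ref{limit 3}. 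Since the unit normals converge and $\partial_z$ is tangent to the limiting cylinder, $\zeta'(z)\nabla z\to\zeta'(z)\partial_z$ in $C^0$ on $\operatorname{spt}(\zeta)$; pairing this strong convergence against the weak $L^2$-convergence of $\nabla\log f_{\epsilon_i}(H)$ gives $I_i(t)\to\int_{\Sigma_t\times\mathbb{R}}\zeta'(z)\,g(\partial_z,\nabla\log H^{\Sigma_t})\,\text{d}vol=0$, the integral vanishing because $H^{\Sigma_t}$ is independent of $z$. As the subsequence was arbitrary, $I_i(t)\to0$ for a.e.\ $t$, which completes the argument.

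I expect the main obstacle to be the second integral: unlike the first, its integrand is only bounded --- not small --- pointwise, so one cannot conclude by a direct estimate and genuinely needs the Huisken--Ilmanen-type compactness argument, i.e.\ the weak $W^{1,2}$-convergence of $\log f_{\epsilon_i}(H)$, the identification of its weak limit via the generalized mean curvature of the limiting cylinder, and the uniform integrability in $t$ needed to handle the fact that the relevant subsequences depend on $t$.
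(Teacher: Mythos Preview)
Your proposal is correct and follows essentially the same approach as the paper, which simply refers to Lemma 5.3 and p.~399 of \cite{huisken2001inverse} and notes that the key observation is that both $\nu$ and $\nabla f_{\epsilon_i}(H)$ become perpendicular to $\partial_z$ as the level sets converge to the cylinder $\Sigma_t\times\mathbb{R}$. Your explicit computation $g(\partial_z,\nu)=\epsilon_i/f_{\epsilon_i}(H)$ for the first integral and your weak-$W^{1,2}$ argument (mirroring Lemma~\ref{limit 3}) for the second are precisely the details one would fill in when unpacking the Huisken--Ilmanen reference.
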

\begin{proof}
	The argument is completely verbatim to Lemma 5.3 and the reasoning on p.399 in  \cite{huisken2001inverse}. The central observation consists in the fact that  both $\nu$ and $\nabla f_{\epsilon_i}(H)$ become perpendicular to $\partial_z$ as   $\tilde \Sigma^{\epsilon_i}_t$ converges to the cylinder $\Sigma_t\times\mathbb{R}$.
\end{proof}
In order to deal with the $|A|^2$ term, we need to define a weak notion of the second fundamental form for submanifolds with boundary. To this end, we follow the idea in \cite{huisken2001inverse} which is based on work of Hutchinson, see \cite{hutchinson1986second}. Let us recall that the first variational formula of the area functional for a smooth surface with boundary ${\tilde\Sigma}$ states that
$$
\int_{\tilde\Sigma} Hg(\nu,X)\text{d}vol=\int_{\tilde\Sigma} {\operatorname{div}}(X)\text{d}vol-\int_{\partial {\tilde\Sigma}} g(X,\mu)\text{d}vol,
$$ 
where $\mu$ is the outward conormal of $\partial {\tilde\Sigma}$ and $X$ any $C^1-$vector field on $M$. 
Let us choose a local coordinate frame $\{\partial_i\}$ and assume that $q=(q_{ij})_{ij}$ is a 
compactly supported, symmetric two-tensor on $M$. Then, plugging $X=g^{jk}q_{lk}\nu^l\partial_j$
into the first variational formula we find
\begin{align}
\int_{\tilde\Sigma} Hq(\nu,\nu)\text{d}vol=\int_{\tilde\Sigma} ({\operatorname{div}}(q)(\nu)+g(q,A))\text{d}vol-\int_{\partial {\tilde\Sigma}} q(\nu,\mu)\text{d}vol.
\label{int by parts formula}
\end{align}
Hence, if ${\tilde\Sigma}$ is a free boundary $C^1-$submanifold that possesses a locally integrable generalized mean curvature, we say that ${\tilde\Sigma}$ has a weak second fundamental form $A$ if $A$ is a section of $\operatorname{Sym}(T{\tilde\Sigma})$ for which (\ref{int by parts formula}) holds for any compactly supported choice of $q$.\\
If ${\tilde\Sigma}$ is a $W^{2,2}\cap C^{1,\alpha}-$surface, we can locally write it as the normal graph over a nearby smooth surface $\hat \Sigma$ and call the graph function $\upsilon$. We may then mollify the graph function to obtain a family of smooth surfaces that converge to ${\tilde\Sigma}$ in $W^{2,2}\cap C^{1,\alpha}$. By slightly perturbing the approximating surfaces  near $\partial M$, we may assume that they meet $\partial M$ orthogonally. As the second fundamental form can be expressed by the Hessian of $\upsilon$ plus some lower order correction terms, see (7.10) in \cite{huisken2001inverse} for instance, it follows that ${\tilde\Sigma}$ has a weak second fundamental form $A\in L^2({\tilde\Sigma},\operatorname{Sym}(T{\tilde\Sigma}))$. Moreover, if $A$ exists, we find by inserting $q=\rho g$ for some cut-off function $\rho$ that indeed $H=\operatorname{tr}_{{\tilde\Sigma}}A$ almost everywhere. \\
On the other hand, it follows from Remark \ref{weak mean curvature rema}, the Riesz representation theorem and weak convergence that if there is a sequence of free boundary surfaces $\Sigma_i$ with weak second fundamental form converging to ${\tilde\Sigma}$ locally in $C^1$ and if $A^{\Sigma_i}$ is uniformly bounded in $L^2(\Sigma_i,\operatorname{Sym}(T\Sigma_i))$, then $A^{{\tilde\Sigma}}\in L^2({\tilde\Sigma})$ exists with weak convergence
\begin{align*}
\int_{\Sigma_i} g(A^{\Sigma_i},q)\text{d}vol\to \int_{{\tilde\Sigma}} g(A^{\tilde\Sigma},q)\text{d}vol
\end{align*}
as well as
\begin{align}
\int_{{\tilde\Sigma}} |A^{{\tilde\Sigma}}|^2\text{d}vol \leq \liminf_{i\to\infty} \int_{\Sigma_i} |A^{\Sigma_i}|^2\text{d}vol. \label{limit 5}
\end{align}
Moreover, as the Hessian of the graph function $\upsilon$ and $A$ agree up to lower order terms, it follows that ${\tilde\Sigma}$ is of class $W^{2,2}$. \\
In any case, if ${\tilde\Sigma}$ possesses a weak second fundamental form, then we may diagonalize $A$ almost everywhere in ${\tilde\Sigma}$ with orthonormal eigenvectors $e_1,e_2$ as well as eigenvalues $\kappa_1,\kappa_2$ and define the weak Gauss curvature $K$ to be
\begin{align}
K=\operatorname{Rm}(e_1,e_2,e_2,e_1)+\kappa_1\kappa_2. \label{gauss curvature}
\end{align}
It follows that $K\in L^1({\tilde\Sigma})$. Through approximation by smooth free boundary surfaces one then obtains the following Gauss-Bonnet type lemma.
\begin{lem}
	Let ${\tilde\Sigma}$ be a compact free boundary surface with weak second fundamental form in  $L^2$. Then the weak Gaussian curvature $K\in L^1({\tilde\Sigma})$ satisfies
	$$
	\int_{\tilde\Sigma} K\text{d}vol+\int_{\partial {\tilde\Sigma}}\operatorname{tr}_{\partial {\tilde\Sigma}} A^{\partial M}\text{d}vol=2\pi\chi({\tilde\Sigma}).
	$$
	\label{weak gauss bonnet}
\end{lem}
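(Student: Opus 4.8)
The plan is to reduce the assertion to the classical Gauss--Bonnet theorem with boundary by a smooth approximation argument. First I would use that, as noted just above, a free boundary surface ${\tilde\Sigma}$ possessing a weak second fundamental form $A\in L^2({\tilde\Sigma},\operatorname{Sym}(T{\tilde\Sigma}))$ is automatically of class $W^{2,2}\cap C^{1,\alpha}$. Writing ${\tilde\Sigma}$ locally as a normal graph with function $\upsilon$ over a nearby smooth surface $\hat\Sigma$, I would mollify $\upsilon$ to obtain smooth surfaces $\Sigma_j$ with $\Sigma_j\to{\tilde\Sigma}$ in $W^{2,2}\cap C^{1,\alpha}$, and then perturb the $\Sigma_j$ slightly near $\partial M$ so that each $\Sigma_j$ is a smooth free boundary surface; this perturbation can be taken small in $W^{2,2}$ and, for $j$ large, does not change the topology, so that $\chi(\Sigma_j)=\chi({\tilde\Sigma})$ and $\partial\Sigma_j\to\partial{\tilde\Sigma}$ in $C^1$. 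All subsequent integrals are compared after pulling the surfaces back to the common graph parametrisation over $\hat\Sigma$, whose volume forms then converge.

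For each smooth free boundary surface $\Sigma_j$ the classical Gauss--Bonnet theorem with boundary gives
$$\int_{\Sigma_j} K^{\Sigma_j}\,\text{d}vol+\int_{\partial\Sigma_j} k_g^{\Sigma_j}\,\text{d}vol=2\pi\chi(\Sigma_j),$$
where $k_g^{\Sigma_j}$ is the geodesic curvature of $\partial\Sigma_j$ in $\Sigma_j$. Since $\Sigma_j$ meets $\partial M$ orthogonally, the outward conormal of $\partial\Sigma_j$ in $\Sigma_j$ coincides with the outward normal $\mu$ of $\partial M$, and decomposing $\overline{\nabla}_T T$ (with $T$ the unit tangent of $\partial\Sigma_j$) into its component tangent to $\Sigma_j$ and its $\nu$-component one obtains, exactly as for the identity $A^{\partial M}(\nu,\nu)=H^{\partial M}-k_g$ used in the monotonicity computation, that $k_g^{\Sigma_j}=A^{\partial M}(T,T)=\operatorname{tr}_{\partial\Sigma_j}A^{\partial M}$. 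Hence
$$\int_{\Sigma_j} K^{\Sigma_j}\,\text{d}vol+\int_{\partial\Sigma_j}\operatorname{tr}_{\partial\Sigma_j}A^{\partial M}\,\text{d}vol=2\pi\chi({\tilde\Sigma}).$$

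It remains to let $j\to\infty$. The boundary integral converges to $\int_{\partial{\tilde\Sigma}}\operatorname{tr}_{\partial{\tilde\Sigma}}A^{\partial M}\,\text{d}vol$ because $A^{\partial M}$ is a fixed smooth tensor on $\partial M$ and $\partial\Sigma_j\to\partial{\tilde\Sigma}$ in $C^1$. For the interior term I would invoke the Gauss equation $K^{\Sigma_j}=\operatorname{Rm}(e_1^j,e_2^j,e_2^j,e_1^j)+\frac12\big((H^{\Sigma_j})^2-|A^{\Sigma_j}|^2\big)$ for an orthonormal frame $e_1^j,e_2^j$ of $T\Sigma_j$: the ambient term is uniformly bounded and converges pointwise to $\operatorname{Rm}(e_1,e_2,e_2,e_1)$ by the $C^0$-convergence of the tangent planes, so dominated convergence applies, while $A^{\Sigma_j}\to A^{{\tilde\Sigma}}$ in $L^2$ forces $|A^{\Sigma_j}|^2$ and $(H^{\Sigma_j})^2=(\operatorname{tr}A^{\Sigma_j})^2$ to converge in $L^1$, so that $\frac12\big((H^{\Sigma_j})^2-|A^{\Sigma_j}|^2\big)\to\kappa_1\kappa_2$ in $L^1$. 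Therefore $\int_{\Sigma_j}K^{\Sigma_j}\,\text{d}vol\to\int_{{\tilde\Sigma}}\big(\operatorname{Rm}(e_1,e_2,e_2,e_1)+\kappa_1\kappa_2\big)\,\text{d}vol=\int_{{\tilde\Sigma}}K\,\text{d}vol$, and passing to the limit in the displayed identity proves the lemma.

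The step I expect to require the most care is the approximation: one must produce smooth surfaces that are at once $W^{2,2}$-close to ${\tilde\Sigma}$ and genuinely orthogonal to $\partial M$, which forces the boundary perturbation to be controlled in $W^{2,2}$, and one must check that this perturbation alters neither $\chi$ nor the limits of the boundary and curvature integrals. Everything else is a routine combination of Gauss--Bonnet with the $L^2$-to-$L^1$ convergence bookkeeping.
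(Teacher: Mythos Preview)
Your proposal is correct and follows exactly the approach the paper indicates: the paper does not give a detailed proof but merely states that the lemma is obtained ``through approximation by smooth free boundary surfaces'', referring to the mollification-plus-boundary-perturbation procedure described in the paragraph preceding the lemma, which is precisely what you carry out. Your identification $k_g=\operatorname{tr}_{\partial\Sigma}A^{\partial M}$ via the free boundary condition and your $L^2\to L^1$ convergence bookkeeping for the curvature terms are the natural way to fill in the details the paper omits.
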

Using the area bounds and the mean curvature estimate (\ref{uni mc est}) we also  conclude the following.
\begin{coro}
	For all $t\in[0,t_1]$ there holds
	\begin{align}
	\int_{\Sigma_t} |A|^2 \text{d}vol <c,
	\end{align}
	where $c$ is a constant depending on $t_1$.
	\label{weak a2 coro}
\end{coro}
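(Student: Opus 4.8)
The plan is to obtain the estimate from the weak Gauss--Bonnet formula of Lemma~\ref{weak gauss bonnet}, combined with the pointwise identity expressing $|A|^2$ through $H$ and the weak Gauss curvature, and then to insert the uniform geometric bounds already at our disposal. Fix $t_1>0$. Since the weak solution is proper, $\overline{E_{t_1}}$ is a compact set containing $\Sigma_t$ for every $t\in[0,t_1]$, so all ambient quantities — in particular $\operatorname{Rm}$ and, along $\partial M\cap\overline{E_{t_1}}$, $|A^{\partial M}|$ — are bounded on the region where we work, with bounds depending only on $t_1$ and the fixed geometry. For almost every $t$ the surface $\Sigma_t$ carries an $L^2$ weak second fundamental form: by Lemma~\ref{regularity of approximate level sets} the approximate level sets $\tilde\Sigma^{\epsilon_i}_t$ converge to $\Sigma_t\times\mathbb{R}$ in $C^{1,\alpha}$, by (\ref{pointwise bounds}) a subsequence of $\int|A^{\tilde\Sigma^{\epsilon_i}_t}|^2$ over a compact range of the $z$--variable stays bounded for almost every $t$, and (\ref{limit 5}) then produces a weak second fundamental form in $L^2$ on the limit $\Sigma_t\times\mathbb{R}$, hence on $\Sigma_t$. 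Moreover, since $\Sigma$ is a free boundary disc, $M'$ is simply connected with planar exterior boundary (Lemma~\ref{topological structure}) and $\Sigma_t$ stays a connected free boundary surface (Lemma~\ref{connected free boundary}), each $\Sigma_t$ is a topological disc, so that $\chi(\Sigma_t)=1$.

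For such $t$ I would use the definition (\ref{gauss curvature}) together with $\kappa_1\kappa_2=\frac12(H^2-|A|^2)$ to write $|A|^2=H^2-2K+2\operatorname{Rm}(e_1,e_2,e_2,e_1)$ almost everywhere on $\Sigma_t$, integrate, and substitute Lemma~\ref{weak gauss bonnet}, obtaining
\begin{align*}
\int_{\Sigma_t}|A|^2\,\text{d}vol=\int_{\Sigma_t}H^2\,\text{d}vol-4\pi+2\int_{\partial\Sigma_t}\operatorname{tr}_{\partial\Sigma_t}A^{\partial M}\,\text{d}vol+2\int_{\Sigma_t}\operatorname{Rm}(e_1,e_2,e_2,e_1)\,\text{d}vol.
\end{align*}
Now the right-hand side is controlled: by Lemma~\ref{lemma properties of weak solutions} there holds $H=|\overline{\nabla}u|$ together with the exponential area growth $|\Sigma_t|=e^t|\Sigma|\le e^{t_1}|\Sigma|$, so the global gradient bound (\ref{global grad estimate}) gives $\int_{\Sigma_t}H^2\,\text{d}vol\le c(t_1)$; the first part of Lemma~\ref{boundary length estimate} gives $|\partial\Sigma_t|\le c\int_{\Sigma_t}(1+|H|)\,\text{d}vol\le c(t_1)$, and as $|A^{\partial M}|$ is bounded on $\partial M\cap\overline{E_{t_1}}$ the boundary term is at most $c(t_1)$; finally $|\operatorname{Rm}|$ is bounded on $\overline{E_{t_1}}$, so the last term is at most $c(t_1)|\Sigma_t|\le c(t_1)$. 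This proves the bound for almost every $t\in[0,t_1]$; for the remaining (countably many) $t$ one picks good times $t_i\nearrow t$, uses the $C^{1,\alpha}$--convergence $\Sigma_{t_i}\to\Sigma_t$ from Lemma~\ref{lemma properties of weak solutions} and (\ref{limit 5}) once more to obtain both the existence of a weak $A\in L^2(\Sigma_t)$ and the bound $\int_{\Sigma_t}|A|^2\,\text{d}vol\le\liminf_i\int_{\Sigma_{t_i}}|A|^2\,\text{d}vol\le c(t_1)$ (and analogously for $\Sigma_t^+$, using $t_i\searrow t$ and $|\Sigma_t|=|\Sigma_t^+|$).

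The area, length and gradient estimates and the boundedness of $\operatorname{Rm}$ on the precompact set $\overline{E_{t_1}}$ are all routine given the earlier lemmas, so no separate curvature hypothesis is needed; if one preferred to work without precompactness of the region one could instead substitute the Gauss equation (\ref{Gauss equation}), so that the ambient term appears as $-2\operatorname{Rc}(\nu,\nu)$ and is bounded using only the one-sided Ricci bound (\ref{asymptotic behavior 1}). The genuinely delicate points are the two topological/regularity inputs used above: that $\Sigma_t$ (and $\Sigma_t^+$) carries an $L^2$ weak second fundamental form for \emph{every} $t$, which I would handle as in the extension of the regularity statement in Lemma~\ref{lemma properties of weak solutions} via $C^{1,\alpha}$--compactness and lower semicontinuity of $A$; and the identification $\chi(\Sigma_t)=1$, which rests on Lemma~\ref{connected free boundary} and the description of the exterior region in Lemma~\ref{topological structure}. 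I expect the latter to be the main thing to get right, since the sign of the $-4\pi\chi(\Sigma_t)$ term is precisely the one that requires the Euler characteristic to be bounded from below, and hence cannot be dispensed with by connectedness alone.
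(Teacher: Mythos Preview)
Your overall strategy coincides with the paper's: express $|A|^2$ via $H^2$ and $\kappa_1\kappa_2$, integrate the weak Gauss curvature using Lemma~\ref{weak gauss bonnet}, and control the resulting terms with the area, boundary--length, and mean curvature bounds already available. The existence of a weak second fundamental form for almost every $t$ and the passage to all $t$ by lower semicontinuity are also handled as in the paper.

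The one genuine gap is your treatment of the Euler characteristic. You assert that each $\Sigma_t$ is a topological disc, hence $\chi(\Sigma_t)=1$, invoking Lemmas~\ref{topological structure} and~\ref{connected free boundary}. But those lemmas only give that $\Sigma_t$ is a \emph{connected free boundary surface} in a simply connected ambient region; they do not rule out positive genus or multiple boundary circles. A connected, separating free boundary surface in a simply connected half-space can perfectly well have $\chi<1$, and (as you yourself note) it is precisely a \emph{lower} bound on $\chi$ that is needed here. Moreover, the corollary is stated under the standing hypotheses of the section (connected free boundary leaves), not under the additional assumption that $\Sigma$ is a disc in an exterior region, so importing Lemma~\ref{topological structure} narrows the scope.

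The paper avoids this issue altogether. Instead of identifying $\chi(\Sigma_t)$, it observes from the $C^{1,\alpha}$--convergence in Lemma~\ref{lemma properties of weak solutions} that for every $t$ there is $\epsilon_t>0$ with $|\chi(\Sigma_{t'})|\le\max\{|\chi(\Sigma_t)|,|\chi(\Sigma_t^+)|\}$ for $t'\in(t-\epsilon_t,t+\epsilon_t)$, and then extracts a uniform bound $|\chi(\Sigma_t)|\le c_1(t_1)$ on $[0,t_1]$ by a finite covering argument. This is both more general (no topological identification needed) and sufficient, since only $|\int_{\Sigma_t}\kappa_1\kappa_2\,\text{d}vol|\le c$ is required. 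Replacing your claim $\chi=1$ by this compactness--covering step would close the gap.
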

\begin{proof}
	This is very similar to Lemma 5.5 in \cite{huisken2001inverse}.
	First, it follows  from (\ref{pointwise bounds}), the convergence of the level sets and the  discussion preceding Lemma \ref{weak gauss bonnet} that $\Sigma_t$ possesses a weak second fundamental form $A\in L^2(\Sigma_t,\operatorname{Sym}(T\Sigma_t))$ for almost every $t\in[0,t_1]$. Moreover, the pointwise almost everywhere decomposition 
	$|A|^2=\kappa_1^2+\kappa_2^2=H^2-2\kappa_1\kappa_2$ holds, where, as before, $\kappa_i$ denote the principal curvatures. On the other hand, Lemma \ref{lemma properties of weak solutions} implies that for every $t\in[0,t_1]$ there is a number $\epsilon_t>0$ such that $|\chi(\Sigma_{t'})|\leq \operatorname{max}\{|\chi(\Sigma_t)|,|\chi(\Sigma^+_t|)\}$ for all $t'\in(t-\epsilon,t+\epsilon)$. A standard covering argument then yields that $|\chi(\Sigma_t)|\leq c_1$ for all $t\in[0,t_1]$, where $c_1$ only depends on $t_1$. Since $(M,g)$ has locally bounded geometry, it now follows from (\ref{gauss curvature}), Lemma \ref{weak gauss bonnet} and (\ref{area bounds}) that
	$$
	\bigg|\int_{\Sigma_t} \kappa_1\kappa_2\text{d}vol\bigg|<c_2
	$$
	where $c_2$ depends on $t_1$.
	The claim now follows for almost every $t$ since $H$ is uniformly bounded, see (\ref{mean curvature bound}). Finally, we conclude the claim for all $t$ by lower semi-continuity, see (\ref{limit 5}).
\end{proof}
We are now in the position to pass all quantities to the limit to obtain the following growth formula for the Willmore energy.
\begin{lem}
	Let $(M,g)$ be an asymptotically flat half-space, $E_0\subset M$ be a precompact subset and $\Sigma=\tilde \partial E_0$ be a free boundary surface of class $C^1$ with weak second fundamental form in $L^2$. Let $E_t$ be the precompact weak solution of the free boundary inverse mean curvature flow with initial data $E_0$. Then for every $0\leq t_0< t_1$ there holds
	\begin{equation}
	\begin{aligned}
	\int_{\Sigma_{t_0}} H^2\text{d}vol\geq& \int_{\Sigma_{t_1}} H^2\text{d}vol +\int_{t_0}^{t_1}\int_{\Sigma_t}\bigg(\frac12 H^2-4\pi\chi(\Sigma_t)+2\frac{|\nabla H|^2}{H^2}+\frac12 |\Acirc|^2+\operatorname{Sc}\bigg)\text{d}vol\text{d}t
	\\& +\int_{t_0}^{t_1}\int_{\partial \Sigma_t} H^{\partial M}\text{d}vol\text{d}t.
	\end{aligned}
	\label{h2 growth formula}
	\end{equation}
	\label{h2 growth lemma}
\end{lem}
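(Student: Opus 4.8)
\medskip
\noindent\emph{Proof strategy.} The starting point is the approximate growth inequality (\ref{first growth formula}) for the level sets $\tilde\Sigma^{\epsilon_i}_t$ inside the cylinder $(M\times\mathbb{R},\,g+dz^2)$, which holds for almost every pair $0<t_0<t_1$ and every non-negative $\zeta\in C^\infty_c((-5,5))$. The plan is to pass this inequality to the limit $\epsilon_i\to 0$ term by term using the convergence results already established, then to rewrite the resulting curvature terms on the leaves $\Sigma_t$ themselves by means of the Gauss equation (\ref{Gauss equation}) and the weak Gauss--Bonnet identity Lemma \ref{weak gauss bonnet}, and finally to remove the restriction to almost every $t_0,t_1$.

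\medskip
\noindent\emph{Passing to the limit.} First I would pass (\ref{first growth formula}) to the limit. The $H^2$ terms converge by Lemma \ref{h2 limit}, the terms $\operatorname{Rc}(\nu,\nu)$ and $A^{\partial M}(\nu,\nu)$ converge by Lemma \ref{limit 2}, and the two cut-off contributions vanish by Lemma \ref{limit 4}, since both $\nu$ and $\nabla f_{\epsilon_i}(H)$ become orthogonal to $\partial_z$ as $\tilde\Sigma^{\epsilon_i}_t$ collapses onto the cylinder $\Sigma_t\times\mathbb{R}$. The non-negative quantities $2|\nabla f_{\epsilon_i}(H)|^2/f_{\epsilon_i}(H)^2$ and $2|A|^2$ occur with a favourable sign once (\ref{first growth formula}) is rearranged, so applying $\liminf(a_i+b_i)\ge\liminf a_i+\liminf b_i$ together with the lower-semicontinuity statements Lemma \ref{limit 3} and (\ref{limit 5}), Corollary \ref{weak a2 coro} (which guarantees $A\in L^2(\Sigma_t)$ for a.e.\ $t$) and Fatou's lemma in the $t$-variable — exactly the scheme of Section 5 of \cite{huisken2001inverse} — lets them survive, while the error $c\epsilon_i$ tends to $0$. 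Since every surviving integral is over a cylinder and the metric is a product, each integrand depends only on the $\Sigma_t$-factor; normalising $\int_\mathbb{R}\zeta\,dz=1$ and using Fubini yields, for a.e.\ $0<t_0<t_1$,
\begin{align*}
\int_{\Sigma_{t_0}} H^2\,\text{d}vol\ge{}&\int_{\Sigma_{t_1}} H^2\,\text{d}vol+\int_{t_0}^{t_1}\!\int_{\Sigma_t}\Big(2\tfrac{|\nabla H|^2}{H^2}+2|A|^2+2\operatorname{Rc}(\nu,\nu)-H^2\Big)\text{d}vol\,\text{d}t\\
&+\int_{t_0}^{t_1}\!\int_{\partial\Sigma_t} 2\,A^{\partial M}(\nu,\nu)\,\text{d}vol\,\text{d}t.
\end{align*}

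\medskip
\noindent\emph{Intrinsic rewriting.} Next I would substitute the Gauss equation (\ref{Gauss equation}) for $\operatorname{Rc}(\nu,\nu)$, which turns $2|A|^2+2\operatorname{Rc}(\nu,\nu)-H^2$ into $|A|^2-2K+\operatorname{Sc}$, and then use the splitting $|A|^2=\tfrac12 H^2+|\Acirc|^2$ to write this as $\tfrac12 H^2+|\Acirc|^2-2K+\operatorname{Sc}$. Because $\Sigma_t$ is by the standing assumption of this section a connected free boundary surface carrying a weak second fundamental form in $L^2$, the weak Gauss--Bonnet identity Lemma \ref{weak gauss bonnet}, $\int_{\Sigma_t}K+\int_{\partial\Sigma_t}\operatorname{tr}_{\partial\Sigma_t}A^{\partial M}=2\pi\chi(\Sigma_t)$, applies; combined with the free boundary relation $A^{\partial M}(\nu,\nu)+\operatorname{tr}_{\partial\Sigma_t}A^{\partial M}=H^{\partial M}$ it converts the $-2K$ term and the boundary integral into $-4\pi\chi(\Sigma_t)$ in the bulk plus a boundary integral of $H^{\partial M}$, which is precisely (\ref{h2 growth formula}) for a.e.\ $0<t_0<t_1$.

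\medskip
\noindent\emph{Extension to all times, and the main difficulty.} Finally I would remove the almost-everywhere restriction: the $C^{1,\alpha}$-convergence $\Sigma_{t'}\to\Sigma_t$ ($t'\nearrow t$) and $\Sigma_{t'}\to\Sigma_t^+$ ($t'\searrow t$) from Lemma \ref{lemma properties of weak solutions}, together with (\ref{limit 5}) and the jump inequality $\int_{\Sigma_t^+}H^2\le\int_{\Sigma_t}H^2$ (a consequence of (\ref{mc minimizing hull})), let one pass to limits $t_0'\searrow t_0$, $t_1'\nearrow t_1$ through admissible times and extend (\ref{h2 growth formula}) to all $0\le t_0<t_1$, as in \cite{huisken2001inverse}; at $t_0=0$ one uses that $\Sigma=\tilde\partial E_0$ has weak second fundamental form in $L^2$ (hence is in fact $C^{1,\alpha}$, since $W^{2,2}\subset C^{1,\alpha}$ in two dimensions) and the $C^{1,\alpha}$-convergence at $t=0$ granted by Lemma \ref{lemma properties of weak solutions}. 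I expect the principal obstacle to be the first step: the approximate level sets are only $C^{1,\alpha}\cap W^{2,2}$, three-dimensional and non-compact inside the cylinder, and must be controlled uniformly in both $\epsilon_i$ and $\gamma$, so making the passage of the second-order quantities $|A|^2$ and $|\nabla f_{\epsilon_i}(H)|^2/f_{\epsilon_i}(H)^2$ rigorous requires the weak second fundamental form formalism of \cite{hutchinson1986second,huisken2001inverse} together with the uniform area, length and mean-curvature estimates of Lemma \ref{regularity of approximate level sets} and Lemma \ref{boundary length estimate}, and careful bookkeeping with the $z$-cut-off along almost every $t$.
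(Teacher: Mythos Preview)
Your first two steps—passing (\ref{first growth formula}) to the limit via Lemmas \ref{h2 limit}--\ref{limit 4}, (\ref{limit 5}) and Fatou, then rewriting with the Gauss equation and the weak Gauss--Bonnet identity—match the paper's argument essentially line for line and are fine.

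The genuine gap is in your third step, specifically the case $t_0=0$. You propose to let $t_0'\searrow 0$ and invoke the $C^{1,\alpha}$-convergence $\Sigma_{t_0'}\to\Sigma_0^+$ together with lower semi-continuity. But lower semi-continuity of the Willmore integral (Remark \ref{weak mean curvature rema}) gives
\[
\int_{\Sigma_0^+} H^2\,\text{d}vol \;\le\; \liminf_{t_0'\searrow 0}\int_{\Sigma_{t_0'}} H^2\,\text{d}vol,
\]
which is the \emph{wrong direction}: the term $\int_{\Sigma_{t_0}} H^2$ sits on the left of (\ref{h2 growth formula}), so you need an \emph{upper} bound on $\limsup_{t_0'\searrow 0}\int_{\Sigma_{t_0'}} H^2$ by $\int_\Sigma H^2$. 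The jump inequality $\int_{\Sigma_0^+}H^2\le\int_{\Sigma}H^2$ does not help here, and the strong $L^2$-convergence of $H$ used in Lemma \ref{h2 limit} was itself extracted from the growth formula, so it is not available a priori at $t=0$. The same issue prevents your proposed extension to arbitrary $t_0>0$ by taking $t_0'\searrow t_0$.

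The paper closes this gap by a considerably more elaborate argument: first replace $E_0$ by its strictly minimizing hull $E_0'$ (which only decreases $\int H^2$ via (\ref{mc minimizing hull})); then, depending on whether $\Sigma'=\tilde\partial E_0'$ is a smooth minimal surface or merely $C^{1,1}$ with $H>0$ somewhere (Lemma \ref{approximation lemma}), either perturb the metric conformally to make $\Sigma'$ strictly mean convex or approximate $\Sigma'$ from inside by smooth strictly mean convex free boundary surfaces; in either case run the \emph{smooth} free boundary inverse mean curvature flow for a short time, where the growth formula holds classically, and splice this onto the weak inequality valid for a.e.\ later $t_0$. This yields an explicit bound $\int_{\Sigma_{t_0}}H^2\le ct_0+o(1)$, from which (\ref{h2 growth formula}) at $t_0=0$ follows by monotone convergence. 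Finally, arbitrary $t_0>0$ is handled not by a limit $t_0'\searrow t_0$ but by \emph{restarting} the flow at $\Sigma_{t_0}$ (which is $C^1$ with $A\in L^2$ by Corollary \ref{weak a2 coro}) and applying the $t_0=0$ case to the restarted flow. So the obstacle you flagged in step one is largely routine; the real work you are missing is this initial-time argument.
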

\begin{proof}
	We first prove the assertion for almost every $0<t_0<t_1$. In this case, we first pass (\ref{first growth formula}) to the limit using Lemma \ref{h2 limit}, \ref{limit 2} and \ref{limit 3} as well as Lemma \ref{limit 4} and the lower semi-continuity (\ref{limit 5}) together with Fatou's lemma. We may then integrate the $z-$variable using Fubini's theorem. Finally, we use the Gauss equation (\ref{Gauss equation}), $A^{\partial M}(\nu,\nu)= H^{\partial M}-\operatorname{tr}_{\partial \Sigma}A^{\partial M}$, Lemma \ref{weak gauss bonnet} as well as the identity $|A|^2=H^2/2+|\Acirc|^2$. In order to get the claim for almost every $t_0>0$ and every $t_1>t_0$, we use lower semi-continuity and the fact that $\Sigma_s\to\Sigma_t$ in $C^1$ as $s\nearrow t$, compare Lemma \ref{regularity of approximate level sets}.  \\
	In order to prove the claim for $t_0=0$, we first assume that $\Sigma$ is smooth. It follows from (\ref{mc minimizing hull}) and the fact that forming the strictly minimizing hull does not increase the interior area of the boundary that
	$$
	\int_{\tilde \partial E_0'} H^2\text{d}vol\leq \int_{\Sigma} H^2\text{d}vol.
	$$
	Since the precompact weak flow starting at $E_0'$ coincides with the precompact weak flow starting at $E_0$ we may thus replace $E_0$ by $E_0'$ to prove the lemma. Let $\Sigma'=\tilde \partial E_0'$. According to Lemma \ref{approximation lemma} below there are two possibilities, the first one being that $\Sigma'$ is a smooth minimial surface which holds for instance if $\Sigma$ is a smooth minimal surface, too. In this case, we choose a sequence of functions $\rho_i$ defined on $M$ such that $\rho_i\to1$ in $C^{\infty}(M)$, $\rho_i=1$ on $\overline{E_0'}$, $\rho_i>1$ on $M-E_0'$, $\partial_\nu \rho_i>1$ on $\Sigma'$ and $\partial_\mu \rho_i=0$ on $\partial M$. We define $g_i:=\rho_ig$ and it follows that $E_0'$ is also a strictly minimizing hull in $(M,g_i)$ and that $\Sigma'$ is strictly mean convex with respect to $g_i$. According to Lemma \ref{regularity of approximate level sets}, the precompact weak flow starting at $\Sigma'$, denoted by $\Sigma^i_t$, remains smooth for a short-time and a similar calculation as the one that lead to (\ref{approximate hm 1}) shows that (\ref{h2 growth formula}) holds for the flow $\Sigma^i_t$ with $\tilde t_0=0$ and $\tilde t_1>0$ as long as the flow remains smooth until $\tilde t_1$. Combining this with the  fact that (\ref{h2 growth formula}) holds for almost every $\hat t_0>0$ and every $\hat t_1>\hat t_0$ yields for every $0<t_0$
	\begin{align*}
	o(1)\geq& \int_{\Sigma^i_{t_0}} H^2 \text{d}vol+\int_{0}^{t_0}\int_{\Sigma^i_t}\bigg(\frac12 H^2-4\pi\chi(\Sigma^i_t)+2\frac{|\nabla H|^2}{H^2}+\frac12 |\Acirc|^2+\operatorname{Sc}\bigg)\text{d}vol\text{d}t
	\\&+\int_{0}^{t_0}\int_{\partial \Sigma^i_t} H^{\partial M}\text{d}vol\text{d}t
	\\\geq& \int_{\Sigma^i_{t_0}} H^2 
	-ct_0,
	\end{align*}
	where the last inequality follows from discarding the non-negative terms, the exponential area growth, the length estimate Lemma \ref{boundary length estimate}, the uniform mean curvature bound and the fact that $-\chi(\Sigma^i_t)$ is uniformly bounded from below. To see the last claim, we may use Lemma \ref{weak gauss bonnet}, the exponential area growth, the length estimate Lemma \ref{boundary length estimate}, the mean curvature bound Lemma \ref{lem global grad estimate} and the inequality $-2\kappa_1\kappa_2\geq-H^2$. Now, choosing $t_0$ outside of the exceptional set in the compactness result Lemma \ref{compactness lemma} it follows from Lemma \ref{compactness lemma} and lower semi-continuity, c.f. Remark \ref{weak mean curvature rema}, that
	$$
	\int_{\Sigma_{t_0}} H^2\text{d}vol\leq ct_0.
	$$ 
	The claim now follows for $t_0=0$ by choosing an appropriate sequence $t_0\searrow 0$ in (\ref{h2 growth formula}) and monotone convergence, using the exponential area growth, the boundary length estimate and the fact that $-4\pi \chi(\Sigma_t), \operatorname{Sc}, H^{\partial M}$ are all uniformly bounded from below.	\\
	The second possibility is that $\Sigma'$ is a weakly mean convex free boundary surface of class $C^{1,1}$ and that $H^{\Sigma'}>0$ on a positive measure set. Using Lemma \ref{approximation lemma} below we may then choose a sequence $\Sigma^i$ of smooth, outward minimizing, strictly mean convex free boundary surfaces approximating $\Sigma'$ in $C^1$ from the inside such that
	$$
	\int_{\Sigma^i} H^2\text{d}vol\to \int_{\Sigma'} H^2\text{d}vol.
	$$ 
	The $C^1-$convergence also implies that there is a $C^1-$diffeomorphism of $M$ mapping $\Sigma^i$ to $\Sigma'$ such that the induced metric $g_i$ converges to $g$ in $C^1$. We may then argue as in the first case. \\
	If $\Sigma$ is only $C^1$ with weak second fundamental form in $L^2$, then we can use an approximation argument as in \cite{huisken2001inverse} to conclude the claim. Finally, in order to prove the claim for all $t_0$, we restart the flow at $t_0$ using uniqueness and that $\Sigma_{t_0}$ is of class $C^1$ according to Lemma \ref{regularity of approximate level sets} and has weak second fundamental form in $L^2$ according to Corollary \ref{weak a2 coro}.
\end{proof}
We now prove the missing regularity lemma.
\begin{lem}
	1. Suppose that $E$ is a precompact strictly minimizing hull such that $\tilde \partial E$ is a free boundary surface of class $C^{1,1}$. Then either $\tilde \partial E$ is a smooth free boundary minimal surface or there is a sequence of  strictly minimizing hulls $E^i\subset E$ such that $\tilde \partial E^i\to\tilde \partial E$ in $C^1$, $\tilde \partial E^i$ is a strictly mean convex free boundary surface, $|A^{\tilde \partial E^i}|_{L^\infty(\partial E^i)}$ is uniformly bounded and
	$$
	\int_{\tilde \partial E^i} H^2\text{d}vol\to \int_{\tilde \partial E} H^2 \text{d}vol.
	$$
	2. Suppose that $E$ is precompact and that $\tilde \partial E$ is a free boundary surface of class $C^2$. Then $\tilde \partial E'$ is of class $C^{1,1}$ with estimates only depending on the $C^2-$data of $\tilde \partial E$, the $C^1-$data of $g$ and $|A^{\partial M}|_{L^\infty(\partial M)}$.
	\label{approximation lemma}
\end{lem}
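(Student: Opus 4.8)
The plan is to prove the two assertions separately, the second from the regularity theory for the obstacle problem and the first from a conformal perturbation of the metric combined with a mollification.

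For the second assertion I would use that the strictly minimizing hull $E'$ minimizes the interior perimeter $F\mapsto|\tilde\partial^*F\cap\Omega|$ among finite perimeter sets $F\supset E$ meeting $\partial M$ orthogonally, so that $\tilde\partial E'$ solves a one-sided obstacle problem for the area functional with the $C^2$ obstacle $\tilde\partial E$; by (\ref{mc minimizing hull}) its generalized mean curvature vanishes on the non-contact part $\tilde\partial E'\setminus\tilde\partial E$ and equals $H^{\tilde\partial E}$ on the contact part. Near a point of $\tilde\partial E'$ that is not on $\partial M$ I would argue as follows: since $\tilde\partial E'$ is of class $C^{1,\alpha}$ by the regularity theory for almost area-minimizing sets of bounded generalized mean curvature (cf.\ \cite{massari1974esistenza}), I would write it as a graph $w\geq\psi$ over a hyperplane, $\psi$ a graph of $\tilde\partial E$, and the classical regularity for the area-type obstacle problem --- the minimal surface equation being uniformly elliptic along the Lipschitz graph $w$ --- gives $w\in C^{1,1}$ with a bound on $|D^2w|$ in terms of $|D^2\psi|_{L^\infty}$ on the contact set $\{w=\psi\}$ and $w$ smooth on $\{w>\psi\}$; the resulting $C^{1,1}$ estimate depends only on the $C^2$ data of $\tilde\partial E$ and the $C^1$ data of $g$. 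To reach $\partial M$ I would pass to Fermi coordinates of $\partial M$, flatten it, and extend $g$, $\tilde\partial E$ and $\tilde\partial E'$ by even reflection; orthogonality is exactly the condition compatible with even reflection, so the reflected data solve an interior obstacle problem to which the previous estimate applies, the reflected metric now contributing the dependence on $|A^{\partial M}|_{L^\infty(\partial M)}$. The non-contact part meeting $\partial M$ is an area-minimizing hypersurface orthogonal to $\partial M$ and is handled by the Gr\"uter--Jost boundary regularity (cf.\ \cite{gruter1986allard,gruter1987optimal}), and the corner where the contact set meets $\partial M$ is absorbed in the same reflection.

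For the first assertion, (\ref{mc minimizing hull}) and $\tilde\partial E\in C^{1,1}$ give $H^{\tilde\partial E}\geq0$ a.e.\ with $H^{\tilde\partial E}\in L^\infty$. If $H^{\tilde\partial E}=0$ a.e., then $\tilde\partial E$ is a weak free boundary minimal surface of class $C^{1,1}$, and bootstrapping the minimal surface system (interior elliptic regularity and the free boundary regularity at $\partial M$) shows it is smooth --- the first alternative. Otherwise $H^{\tilde\partial E}>0$ on a set of positive measure, and the obstruction is that bending the minimal part of $\tilde\partial E$ inward, as one must to make it strictly mean convex, tends to destroy the outward-minimizing property; I would counteract this with a conformal change. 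Choose metrics $g_i=\rho_i^2\,g$ with $\rho_i\to1$ in $C^1(M)$, $\rho_i\equiv1$ outside a thin collar of $\tilde\partial E$, $\rho_i\geq1$ on $M\setminus E$, $\rho_i=1$ on $\tilde\partial E$ and $\partial_\nu\rho_i>0$ on $\tilde\partial E$. Then $\tilde\partial E$ still meets $\partial M$ orthogonally, its $g_i$-mean curvature is $\geq c_i>0$ a.e., and comparing weighted perimeters --- using that $E$ is a strictly minimizing hull for $g$ together with $\rho_i\geq1$ on $M\setminus E$ and $\rho_i=1$ on $\tilde\partial E$ --- shows $E$ remains a strictly minimizing hull for $g_i$. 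Now write $\tilde\partial E$ as $C^{1,1}$ graphs, mollify the graph functions and push the smooth surfaces a distance $\delta_i\ll c_i$ inward, using the definite gain $c_i$ to dominate the mean-curvature error of the mollification and the push: this produces smooth $\Sigma^i=\tilde\partial E^i\subset E$ with $\Sigma^i\to\tilde\partial E$ in $C^{1,\alpha}$, $|A^{\Sigma^i}|_{L^\infty}$ uniformly bounded, $\Sigma^i$ strictly mean convex, $E^i$ a strictly minimizing hull, and $\int_{\Sigma^i}H^2\to\int_{\tilde\partial E}H^2$ --- with the strict mean convexity and the minimizing-hull property holding relative to metrics converging to $g$ in $C^1$, which is the form in which the lemma is used in the first case of the proof of Lemma \ref{h2 growth lemma}.

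The main obstacle throughout is the interaction of $\partial M$ with the two degenerate problems. For the second part this is making the obstacle-problem regularity valid up to and across $\partial M$ --- the even-reflection argument works only because orthogonality is the reflection-compatible condition --- and tracking the dependence on $|A^{\partial M}|_{L^\infty}$ in the $C^{1,1}$ constant. For the first part it is that strict mean convexity and the outward-minimizing property pull against each other wherever $\tilde\partial E$ is minimal, so the smoothing must be arranged so that the conformal gain in mean curvature strictly dominates every error introduced by the mollification, the inward push and the metric adjustment, while simultaneously preserving strict mean convexity, the strict minimizing-hull property, and the convergence $\int_{\Sigma^i}H^2\to\int_{\tilde\partial E}H^2$.
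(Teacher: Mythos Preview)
For Part 2, your approach via the obstacle problem and even reflection across $\partial M$ is exactly what the paper does: first $C^{1,\alpha}$ from almost-minimality, then interior $C^{1,1}$ from the classical area-obstacle regularity, then reflection in Fermi coordinates to reduce the boundary to the interior case, with the dependence on $|A^{\partial M}|$ entering through the reflected metric.

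For Part 1, your route is genuinely different from the paper's. The paper follows Huisken--Ilmanen's Lemma~5.6: it first mollifies $\tilde\partial E$ to smooth free boundary surfaces $\Sigma^i$ with uniformly bounded $|A|$ and $\int_{\Sigma^i}H_-^2\to 0$, then runs \emph{free boundary mean curvature flow} (Stahl) from each $\Sigma^i$ for a uniform short time using a maximum-principle bound for $e^{c_1 d}|A|^2$, passes to the limit to obtain a mean curvature flow starting from $\tilde\partial E$ itself, and takes the approximating surfaces as time-slices of that flow. Strict mean convexity of the time-slices then comes directly from the strong parabolic maximum principle for $H$, and the remaining conclusions from growth estimates for $\int H^2$ and $\int H_-^2$ along the flow. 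Your conformal-boost-plus-mollification scheme avoids the flow entirely.

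Your scheme can be made to work, but as written it has two soft spots. First, the claim that the conformal gain $c_i$ dominates ``the mean-curvature error of the mollification'' needs the quantitative estimate $H_\varepsilon\ge (H*\phi_\varepsilon)-C\varepsilon\ge -C\varepsilon$ for the mollified $C^{1,1}$ graph; this holds because the mean-curvature operator is linear in $D^2u$ with coefficients Lipschitz in $Du$, and $Du$ is Lipschitz, but without it one only has $H_\varepsilon\to H$ in $L^p$, which the vanishing $c_i$ cannot beat. Second, and more seriously, you assert that $E^i$ is a strictly minimizing hull but your argument only shows that $E$ is one for $g_i$; after you mollify and push inward, nothing you have written forces the perturbed $E^i$ to be outward-minimizing for either metric. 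You correctly note that for the application in Lemma~\ref{h2 growth lemma} a version relative to $g_i\to g$ would suffice, but you still owe an argument that $E^i$ is outward-minimizing for $g_i$. The paper's MCF mechanism buys both strict mean convexity and the minimizing-hull property simultaneously through parabolic maximum principles, which is why it is the cleaner smoothing device here.
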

\begin{proof}
	The first statement is very similar to Lemma 5.6 in \cite{huisken2001inverse} and we only sketch the argument. Since $\tilde \partial E$ is in $C^{1,1}$ we can use mollification to find a sequence of free boundary surfaces $\Sigma^i$ approximating $\tilde \partial E$ from the inside in $C^1\cap W^{2,2}$ such that $|A^{ \Sigma^i}|_{L^\infty(\Sigma^i)}\leq c_0$ and
	$$
	\int_{\Sigma^i} H^2\text{d}vol \to \int_{\tilde \partial E} H^2\text{d}vol, \qquad 
	\int_{\Sigma^i} H_-^2\text{d}vol \to \int_{\tilde \partial E} H_-^2\text{d}vol.
	$$
	The free boundary mean curvature flow was introduced by Stahl in \cite{stahl1996convergence,stahl1996regularity}, see also \cite{buckland2005mean}. It is a smooth flow of free boundary surfaces evolving with outward normal speed equal to $-H$. Adapting the techniques developed by Stahl, it is easy to see that there is a smooth free boundary mean curvature flow $\Sigma^i_s$, $s\geq 0$, starting at $\Sigma^i$ and existing for a short time $\epsilon_i>0$. Corollary 3.5 in \cite{huisken1986contracting} implies that
	\begin{align}
	\partial_s |A|^2=\Delta|A|^2-2|\nabla A|^2+2|A|^4+A*A*\operatorname{Rm}+A*\nabla \operatorname{Rm}
	\end{align}
	while one may easily adapt Proposition 2.3 in \cite{stahl1996convergence} to show that
	\begin{align}
	\partial_\mu |A|^2= \nabla^{\partial M} A^{\partial M}*A+A^{\partial M}*A*A+\operatorname{Rm}*A*A
	\end{align}
	as long as the flow exists. We define the function $\rho:=e^{c_1d}|A|^2$, where $d$ is some regularized distance function to $\partial M$. Choosing $c_1$ large enough (depending on the ambient curvature and $\partial M$), it follows that $|A|$ must be uniformly bounded if $\rho$ attains its maximum on the boundary and that otherwise
	$$
	\partial _s \operatorname{max}_{\Sigma^i_s} |A|^2\leq c(|A|^2+|A|^4).	
	$$
	It follows from standard comparison arguments for ordinary differential equations that $|A|^2$ remains uniformly bounded for a short time $\tilde \epsilon>0$ and one may then adapt the higher order estimates developed in \cite{huisken2001inverse} to conclude that $\epsilon_i>\epsilon>0$ for all $i\in\mathbb{N}$ and some short time $\epsilon>0$. Moreover, it follows that higher order estimates, which are uniform in $i\in\mathbb{N}$, hold  on every time interval $(\hat\epsilon,\epsilon]$, where $\hat\epsilon>0$. As in \cite{huisken2001inverse} one may then pass to the limit to obtain a free boundary mean curvature flow starting at $\tilde\partial E$ and the remaining claims follow from a growth estimate for the integrals of $H^2$ and $H_-^2$ as well as the strong maximum principle. \\
	We also only give a sketch of the proof of the second statement as the arguments are very similar to the case without boundary. First, using the results in \cite{gruter1986allard} and \cite{tamanini1981boundaries} it follows that $\Sigma':=\tilde \partial E'$ is of class $C^{1,\alpha}$ for all $\alpha<1/2$ with corresponding estimates. Moreover, we may argue as in Lemma 6.2 in \cite{huisken2001inverse} that $\Sigma'$ can be approximated in $C^1$ by smooth free boundary surfaces with uniformly bounded mean curvature and it then follows from Lemma \ref{weak gauss bonnet} and the discussion preceding it that $\Sigma'$ is in fact of class $W^{2,2}$. Now,  \cite{gerhardt1973regularity} implies that $\Sigma'$ is in $C^{1,1}$ away from the boundary with corresponding interior estimates. In order to show regularity at the boundary, let us pick $p\in \partial \Sigma'$. We may then choose coordinates  $\Psi:B^3_{\delta,+}:=B^3_\delta(0)\cap \{x_1\geq 0\}\to M$ by first choosing orthonormal unit length directions $e_2,e_3$ at $p$ tangential to $\partial M$ such that $e_2$ is tangential to $\partial\Sigma'$, then picking a local parametrization $\Phi$ of $\partial M$ such that $\partial_2\Phi=e_2$ and $\partial_3\Phi=e_3$ at $p$ and finally defining
	$$\Psi(x_1,x_2,x_3)=\exp_{\Phi(x_2,x_3)}(-x_1\mu).$$
	Given $\epsilon>0$, we may shrink $\delta$ to arrange that
	$$
	(1-\epsilon)\operatorname{Id}\leq g \leq (1+\epsilon)\operatorname{Id} 
	$$
	on $B^3_{+,\delta}$, $g_{13}=g_{23}=0$, $g_{33}=1$ and $\partial_1g_{i1}=0$ on $\partial M$ for $i\in\{1,2,3\}$  as well as $g=\operatorname{Id}$ at $p$. Next, we can assume that $\Sigma'\cap \Psi(B^3_{\delta,+})$ can be written as the graph of a function $\rho\in C^{1,\alpha}\cap W^{2,2}( D^{2}_{ \delta}(0)\cap \{x_1\geq 0\})$ with $\rho(0,0)=0$ and $\nabla_e\rho(0,0)=0$. The free boundary condition says that $\partial_1 \rho(0,x_2)=0$ for all $x_2$. We may also assume that $\tilde\partial E\cap \Psi(B^3_{\delta,+})$ is given as the graph of a function $\psi$ over the same disk such that $\rho\geq \psi$. On the other hand, if $\upsilon\in C^{1,\alpha}\cap W^{2,2}( D^{2}_{ \delta}(0)\cap \{x_1\geq 0\})$ is another function satisfying $\upsilon\geq \psi$, it follows that $|\operatorname{graph}(\upsilon)|_g\geq |\operatorname{graph}(\rho)|_g$ where we used that $E'$ is the strictly minimizing hull of $E$. Moreover, we may reflect the metric $g$ across $\{x_1=0\}$ to obtain a $C^{0,1}-$metric $\tilde g$ on $B^3_\delta(0)$ and similarly reflect $\rho$ to obtain a function $\tilde \rho\in W^{2,2}\cap C^{1,\alpha}(D^2_\delta(0))$. Again, it follows that $|\operatorname{graph}\tilde \upsilon|_{\tilde g}\geq |\operatorname{graph}\tilde \rho|_{\tilde g}$ for all $\tilde \upsilon \in W^{2,2}\cap C^{1,\alpha}(D^2_\delta(0))$ such that $\tilde \upsilon \geq \tilde \psi$, where $\tilde \psi$ is the reflection of $\psi$. Computing the derivative of the area of the graph of $\tilde \rho+s(\tilde \upsilon-\tilde \rho)$ at $s=0$ one concludes that $\tilde \rho$ satisfies an elliptic, non-linear variational inequality. Now one may argue as in Proposition 3.2 in \cite{focardi2017classical} to conclude that $\tilde \rho$ actually solves a non-linear elliptic equality of the form 
	\begin{align}
	a_{\alpha\beta}\partial_\alpha\partial_\beta \tilde \rho+b_\alpha \partial_\alpha\tilde\rho =\zeta, \label{nl el equation}
	\end{align} 
	where $\zeta\in L^\infty(D^2_\delta(0))$ with estimates depending on $|\tilde \psi|_{C^{1,1}(D^2_\delta(0))}$ (note that $\tilde \psi\in C^{1,1}(D^2_\delta(0))$ since $\tilde \partial E$ is a free boundary surface). Using the estimates for $|\rho|_{C^{1,\alpha}( D^{2}_{ \delta}(0)\cap \{x_1\geq 0\})}, |g|_{C^1( B^{3}_{ \delta}(0)\cap \{x_1\geq 0\})}$ one may then check that although $\tilde{g}$ is not quite smooth, the coefficients are still regular enough so that one can apply the $L^p-$theory, see for instance Lemma 9.17 in \cite{gilbarg2015elliptic}, to conclude that $\rho \in W^{2,p}(( D^{2}_{ \delta}(0)\cap \{x_1\geq 0\}))$ for all $p<\infty$. It remains to improve this to $C^{1,1}-$regularity. In order to estimate $\partial_2\partial_2 \rho$, we can now argue exactly as in \cite{gerhardt1973regularity}. The argument is completely verbatim with the only difference that Theorem 4.1 in \cite{stampacchia1965probleme} is now applied to a function $\upsilon$ on a half disk and that the right-hand side of the estimate stated in the theorem  now contains the maximum of $\upsilon$ on the non-Neumann part of the boundary. The proof is essentially the same with the only difference that we now use the Sobolev inequality on the domain $H_0^{1,2}((D^2_+\setminus I)\cup  \{x_1=0\})$, where $I=\{\rho=\psi\}$ is the coincidence set, and verify by reflection that the Sobolev constant does not depend on $I$. In order to control the other components of $\nabla_e^2  \rho$, we may appeal to the equation (\ref{nl el equation}) and the uniform ellipticity.
\end{proof}
Combining the growth formula (\ref{h2 growth formula}) with the exponential area growth we are now able to prove the monotonicity of the modified Hawking mass in the special situation where $\tilde \partial M$ consists of one free boundary component.
\begin{coro}
	Let $(M',g)$ be an exterior region whose interior boundary consists of a connected free boundary surface $\Sigma$ bounding the set $E_0$. Suppose that $(M,g)$ satisfies the dominant energy condition $\operatorname{Sc},H^{\partial M}\geq 0$. Let $E_t$ be the precompact solution of the weak inverse mean curvature flow starting at $E_0$. Then the quantity $m_H(\Sigma_t)$ is non-decreasing. More precisely, there holds
	\begin{equation}
	\begin{aligned}
	m_H&(\Sigma_{t_1})\geq m_H(\Sigma_{t_0})\\&+\int_{t_0}^{t_1}\frac{(2|\Sigma_t|)^{\frac12}}{(16\pi)^{\frac32}}\bigg[4\pi(1-\chi(\Sigma_t))+\int_{\Sigma_t}\bigg(2\frac{|\nabla H|^2}{H^2}+\frac12 |\Acirc|^2+\operatorname{Sc}\bigg)\text{d}vol+\int_{\partial \Sigma_t} H^{\partial M}\text{d}vol\bigg]\text{d}t
	\end{aligned}
	\label{geroch monotonicity formula}
	\end{equation}
	for every $0\leq t_0<t_1$.
	\label{geroch monotonicity}
\end{coro}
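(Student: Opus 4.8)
The approach is the one originating with Geroch: couple the $\int H^2$ growth formula (\ref{h2 growth formula}) of Lemma \ref{h2 growth lemma} to the exponential area growth $|\Sigma_t|=e^t|\Sigma_0|$ recorded in Lemma \ref{lemma properties of weak solutions}. Abbreviate $A_0:=|\Sigma_0|$ and $Q(t):=\int_{\Sigma_t}H^2\,\mathrm{d}vol$; by Corollary \ref{weak a2 coro} and Lemma \ref{lemma properties of weak solutions} the leaf $\Sigma_t$ has a weak second fundamental form in $L^2$ and generalized mean curvature in $L^\infty$ for every $t\ge 0$, so $Q$ is defined for all $t$ and is finite on compact intervals, and $|\Sigma_t|^{1/2}=e^{t/2}A_0^{1/2}$. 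Since $M'$ is an exterior region of an asymptotically flat half-space, Lemma \ref{topological structure} makes $M$ simply connected with connected boundary, so by Lemma \ref{connected free boundary} each $\Sigma_t$ is a connected free boundary surface and hence $\chi(\Sigma_t)\le 1$; together with the dominant energy condition this shows
\[
S(t):=4\pi\bigl(1-\chi(\Sigma_t)\bigr)+\int_{\Sigma_t}\Bigl(2\tfrac{|\nabla H|^2}{H^2}+\tfrac12|\Acirc|^2+\operatorname{Sc}\Bigr)\,\mathrm{d}vol+\int_{\partial\Sigma_t}H^{\partial M}\,\mathrm{d}vol\ \ge\ 0 .
\]
Because $\tfrac{(2|\Sigma_t|)^{1/2}}{(16\pi)^{3/2}}=\tfrac{|\Sigma_t|^{1/2}}{2(8\pi)^{3/2}}$, recalling (\ref{modified hm}) and dividing by $\tfrac{A_0^{1/2}}{2(8\pi)^{3/2}}$, the claimed inequality (\ref{geroch monotonicity formula}) becomes equivalent to the scalar statement $e^{t_1/2}(8\pi-Q(t_1))-e^{t_0/2}(8\pi-Q(t_0))\ge\int_{t_0}^{t_1}e^{t/2}S(t)\,\mathrm{d}t$ for all $0\le t_0<t_1$. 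The point that singles out the \emph{modified} Hawking mass here is that, after rewriting (\ref{h2 growth formula}) as $Q(t_0)-Q(t_1)\ge\int_{t_0}^{t_1}\bigl(\tfrac12 Q(t)+S(t)-4\pi\bigr)\,\mathrm{d}t$, the term $\tfrac12 H^2$ in (\ref{h2 growth formula}) contributes exactly the $\tfrac12 Q(t)$ that will cancel the $-\tfrac12 Q(t)$ produced when one differentiates the factor $8\pi-Q(t)$ against the exponential area.

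To prove the scalar inequality, first observe that dropping the non-negative $\tfrac12 Q$ and $S$ from the rewritten (\ref{h2 growth formula}) gives $Q(t_1)-Q(t_0)\le 4\pi(t_1-t_0)$, so $t\mapsto Q(t)-4\pi t$ is non-increasing; consequently $t\mapsto e^{t/2}(8\pi-Q(t))$ is the sum of a $C^1$ function and a non-decreasing function, and any such function $f$ satisfies $f(t_1)-f(t_0)\ge\int_{t_0}^{t_1}f'(t)\,\mathrm{d}t$ for its almost-everywhere derivative. On the other hand the rewritten (\ref{h2 growth formula}) also says that $t\mapsto Q(t)+\int_0^t\bigl(\tfrac12 Q(s)+S(s)-4\pi\bigr)\,\mathrm{d}s$ is non-increasing, whence $Q'(t)\le 4\pi-\tfrac12 Q(t)-S(t)$ for almost every $t$; combining the two facts, at almost every $t$,
\[
\tfrac{\mathrm d}{\mathrm dt}\Bigl[e^{t/2}\bigl(8\pi-Q(t)\bigr)\Bigr]=e^{t/2}\Bigl(4\pi-\tfrac12 Q(t)-Q'(t)\Bigr)\ \ge\ e^{t/2}S(t),
\]
and integrating over $[t_0,t_1]$ yields the scalar inequality, hence (\ref{geroch monotonicity formula}). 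Since $S\ge 0$, this in particular gives the bare monotonicity $m_H(\Sigma_{t_1})\ge m_H(\Sigma_{t_0})$.

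The substantive analysis is entirely contained in Lemma \ref{h2 growth lemma} (which applies because, for an exterior region, $\Sigma=\tilde\partial E_0$ is in particular of class $C^1$ with weak second fundamental form in $L^2$); within the corollary itself the only delicate point is measure-theoretic bookkeeping, since $Q$ is merely of bounded variation and jumps at the countably many jump times of the flow — which is exactly why I would run the argument through the monotone-function decomposition above rather than by naive differentiation. It is worth recording that the jumps of $Q$ are downward: at a jump time $\tau$ the mean curvature of $\Sigma_\tau^+$ equals that of $\Sigma_\tau$ on their common part and vanishes on $\Sigma_\tau^+\setminus\Sigma_\tau$ by (\ref{mc minimizing hull}), while $|\Sigma_\tau^+|=|\Sigma_\tau|$ by Lemma \ref{lemma properties of weak solutions}, so the modified Hawking mass can only increase across a jump; this is consistent with, and in fact already implied by, the integrated inequality (\ref{h2 growth formula}) used above. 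Finally, no separate restarting argument is needed to include $t_0=0$, since Lemma \ref{h2 growth lemma} already supplies (\ref{h2 growth formula}) at $t_0=0$.
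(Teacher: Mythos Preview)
Your proof is correct and follows the same approach as the paper: combine the growth formula (\ref{h2 growth formula}) from Lemma \ref{h2 growth lemma} with the exponential area growth from Lemma \ref{lemma properties of weak solutions}, and use Lemma \ref{connected free boundary} together with the dominant energy condition to ensure the integrand is non-negative. The paper's proof is a one-line ``combining these we obtain (\ref{geroch monotonicity formula})''; you have supplied the BV bookkeeping (the decomposition of $e^{t/2}(8\pi-Q(t))$ into a $C^1$ plus non-decreasing piece, and the a.e.\ differential inequality for $Q$) that makes this combination rigorous, which is a welcome elaboration rather than a different route.
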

\begin{proof}
	Since $M'$ is an exterior region, $E_0$ is a strictly minimizing hull, see Lemma \ref{topological structure}. It then follows from Lemma \ref{regularity of approximate level sets} that $|\Sigma_t|=e^t|\Sigma|$. Combining this with the growth formula (\ref{h2 growth formula})  we obtain (\ref{geroch monotonicity formula}). On the other hand, since $M'$ is an exterior region and since $\Sigma$ is a connected free boundary surface, it follows from Lemma \ref{connected free boundary} that $\chi(\Sigma_t)\leq 1$ for all $t>0$. The claim now follows from the dominant energy condition.
\end{proof}
\begin{rema}
	If $(M',g)$ is an exterior region with more than one interior boundary component, at least one of them being a free boundary surface, one may construct a weak flow starting at one of the free boundary components in a way such that the modified Hawking mass remains non-decreasing. To this end, one evolves the flow $E_t$ starting from the chosen component until the time $t_0>0$ when  it meets one of the sets $E$ enclosed by another interior boundary component. Then, one replaces $E_t$ by $F_t:=(E_t\cup E)'$. It is easy to see that this replacement can only increase the modified Hawking mass and it is possible to restart the flow from $F_t$ in a way such that the modified Hawking mass remains non-decreasing (note that $\tilde\partial F_t$ is again a connected free boundary surface). This procedure terminates after the finite number of sets enclosed by interior boundary components have been swallowed and the flow subsequently continues as a usual weak inverse mean curvature flow. We refer to Section 6 in \cite{huisken2001inverse} for details and note that the reasoning presented there can be applied verbatim to the  case treated in this article, that is, $\partial M\neq \emptyset$.
	\label{geroch mon rema}
\end{rema}
\section{Asymptotic Behaviour}\
\label{asymptotic behavior section}
In this section, we study the behaviour of the weak flow in the asymptotic region. Consequently, we assume that $(M,g)$ is an asymptotically flat half-space with one end and that $u$ is a proper weak solution of the free boundary inverse mean curvature flow with connected level sets. Using a weak blowdown argument similar to the one used in \cite{huisken2001inverse}, we show that the leaves $\Sigma_t=\{u=t\}$ become close to round hemispheres in $C^{1,\alpha}$. We then proceed to show that the free boundary Hawking mass is asymptotic to the ADM-mass. Combining this with the results from the previous section we are then in the position to prove the main result of this article.
\subsection{Asymptotic estimates}
We start with a more precise asymptotic decay estimate for the gradient of the weak solution $u$. This will then allow us to perform the weak blowdown.
\begin{lem}
	Let $(M,g)$ be an asymptotically flat half-space with one end and $u\in C_{loc}^{0,1}(M)$ a proper weak solution of the free boundary inverse mean curvature flow with connected level sets. Then there exists  a constant $c$ and a compact set $\Omega\subset\subset M$ depending on the asymptotic behaviour of $g$ as well as the constant from Lemma \ref{lem global grad estimate}  such that $M-\Omega$ is diffeomorphic to $\mathbb{R}^3_+\setminus B^3_{1}(0)$ and such that for all $x\in \mathbb{R}^3_+\setminus B^3_{1}(0)$ there holds
	$$
	|\overline{\nabla} u|(x)\leq \frac{c}{|x|_e}.  
	$$
	\label{asymptotic grad est}
\end{lem}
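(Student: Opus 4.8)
The plan is to pass the local interior gradient estimate of Lemma~\ref{interior estimate} to the weak limit and then apply it at the natural scale in the asymptotic region, supplementing it near the non-compact boundary $\partial M$ by the boundary barrier argument from the proof of Lemma~\ref{lem global grad estimate}.

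First I would recall from Lemma~\ref{lem existence of weak solution} that $u$ is the locally uniform limit (along a subsequence $\epsilon\to 0$, with the corresponding exponents tending to $1$) of the approximate solutions $u_{\epsilon,1,Z_\epsilon}$, which themselves are limits as $\gamma\to 1$ of the solutions $u_{\epsilon,\gamma,Z_\epsilon}$ of (\ref{mbvp 1}-\ref{mbvp 4}), and that the local Lipschitz constant is lower semicontinuous along this convergence. By Lemma~\ref{interior estimate}, for each such $u_{\epsilon,\gamma,Z_\epsilon}$ and every $p\in M'_\epsilon$ with $R\le 2\operatorname{dist}_g(p,\partial M'_\epsilon)$ one has
\[
|\overline{\nabla} u_{\epsilon,\gamma,Z_\epsilon}|^{2\gamma}(p)\le c(n)\big(R^{-2}+\Gamma R^{-1}+\Gamma\big)+4^{\gamma}\epsilon^{2\gamma},
\]
where $\Gamma\ge 0$ bounds the sectional curvature on $B_{R,g}(p)$ from below. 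Letting first $\gamma\to 1$ and then $\epsilon\to 0$, using that $M'_\epsilon\to M'$ while the Dirichlet boundary $\tilde\partial U_\epsilon$ recedes to infinity, so that $\operatorname{dist}_g(p,\partial M'_\epsilon)\to\operatorname{dist}_g(p,\Sigma\cup\partial M)$ for fixed $p$, the error term $4^{\gamma}\epsilon^{2\gamma}$ drops out and I obtain, for $p$ in the asymptotic region and any $R\le 2\operatorname{dist}_g(p,\Sigma\cup\partial M)$,
\[
|\overline{\nabla} u|^{2}(p)\le c(n)\big(R^{-2}+\Gamma R^{-1}+\Gamma\big).
\]

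This is not yet enough at points $p$ that are close to $\partial M$, where $\operatorname{dist}_g(p,\partial M)$ stays small no matter how large $|p|_e$ is. For such points I would rerun the argument in the proof of Lemma~\ref{lem global grad estimate}, but with the barrier localised to a ball of radius comparable to $|p|_e$: one uses the auxiliary function $\rho$ with $\partial_\mu\rho=-\Theta\rho$ on $\partial M$, where $\Theta$ now only has to dominate $|A^{\partial M}|$ on that ball, multiplies it by a cut-off as in Lemma~\ref{interior estimate}, and applies the maximum principle via Corollary~\ref{coro L est}; since (\ref{mbvp 4}) gives $\partial_\mu f=-A^\Sigma(\overline{\nabla}u,\overline{\nabla}u)$ on $S_\epsilon$, the maximum of the cut-off of $f$ is either controlled by this boundary term or is interior, leading again to an estimate of the form $|\overline{\nabla} u|^2(p)\le c\big(R^{-2}+\Gamma R^{-1}+\Gamma+\Theta^2\big)$ for $R\le 2\operatorname{dist}_g(p,\Sigma)$. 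Alternatively, since $\partial_\mu u=0$, one may even-reflect $u$ across $\partial M$ in the asymptotic chart and apply the interior estimate on the doubled region. I expect this treatment of the points near $\partial M$ to be the main obstacle, since it is precisely there that the interior estimate degenerates and one needs a boundary gradient estimate with the \emph{correct, decaying} scaling.

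Finally I would choose the scale. Fix a chart identifying a neighbourhood of infinity with $\mathbb{R}^3_+\setminus B^3_{R_0}(0)$ and let $x$ lie there with $|x|_e=r$ large. Since $\Sigma\subset B^3_{R_0}(0)$ and $g$ is asymptotically flat, $\operatorname{dist}_g(x,\Sigma)\ge r/2$ once $r$ is large, so I take $R=r/2$; and, by asymptotic flatness (cf.~(\ref{asymptotic behavior 0})-(\ref{asymptotic behavior 1})), the sectional curvature over $B_{R,g}(x)$ and $|A^{\partial M}|$ there are $\le c\,r^{-2}$, hence $\Gamma,\Theta\le c\,r^{-2}$. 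Substituting into the two estimates above gives
\[
|\overline{\nabla} u|^{2}(x)\le c\big(r^{-2}+r^{-3}+r^{-2}\big)\le c\,r^{-2},
\]
that is, $|\overline{\nabla} u|(x)\le c/|x|_e$. Enlarging the compact core to a set $\Omega$ for which $M-\Omega$ is diffeomorphic to $\mathbb{R}^3_+\setminus B^3_1(0)$ and is contained in the region where the above applies (the threshold depending on the asymptotics of $g$ and the constant of Lemma~\ref{lem global grad estimate}) then completes the proof.
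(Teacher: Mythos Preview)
Your approach is essentially the same as the paper's: prove the decay estimate for the approximate solutions $u_{\epsilon,\gamma,Z_\epsilon}$ by a localised maximum-principle argument at scale $|x_0|_e$, using the boundary identity $\partial_\mu f=-2A^{\partial M}(\overline{\nabla}u,\overline{\nabla}u)$ (note it is $A^{\partial M}$, not $A^\Sigma$) together with the decay $|A^{\partial M}|\le c|x|_e^{-1}$ to rule out a boundary maximum, and then pass to the limit. The paper simply packages your two cases into a single test function
\[
Q(x)=\rho^2\!\Big(\tfrac{4\operatorname{dist}_e(x,x_0)}{|x_0|_e}\Big)\,\zeta\!\Big(\tfrac{\Gamma x_3}{|x_0|_e}\Big)\,|\overline{\nabla}u|^2(x),
\]
where the factor $\zeta(\Gamma x_3/|x_0|_e)$ plays exactly the role of your weight $\rho$ with $\partial_\mu\rho=-\Theta\rho$, scaled so that its normal derivative $\sim\Gamma/|x_0|_e$ beats the $A^{\partial M}$-term of size $\le c/|x_0|_e$; there is no need for a separate interior case. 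Your estimate $|\overline{\nabla}u|^2\le c(R^{-2}+\Gamma R^{-1}+\Gamma+\Theta^2)$ with $R\sim r$, $\Gamma\sim r^{-2}$, $\Theta\sim r^{-2}$ then gives the claim.

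One caveat: your alternative ``even-reflect $u$ across $\partial M$ and apply the interior estimate'' does not work as stated, because the reflected metric is only $C^{0,1}$ across $\{x_3=0\}$, so the Ricci lower bound entering Corollary~\ref{coro L est} and the Hessian comparison used in Lemma~\ref{interior estimate} are not available on the doubled manifold. Stick with the weighted maximum-principle argument.
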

\begin{proof}
	Let $\gamma>1, \epsilon>0$. Without loss of generality, we may assume that $u=u_{\epsilon,\gamma,Z_\epsilon}$.  We choose $\Omega\subset\subset M$ such that $M\setminus \Omega$ is covered by the chart at infinity. We may assume that this chart maps $\tilde \partial \Omega$ to $ \partial B^3_{R_0}(0)\cap \mathbb{R}^3_+$ for some constant $R_0>0$ to be chosen. It follows from (\ref{asymptotic behavior 0}-\ref{asymptotic behavior 1})  that 
	\begin{align} |A^{\partial M}|\leq \frac{c}{|x|_e}, \qquad |\mu+e_3|\leq \frac{c}{|x|_e}, 
	\label{asymptotic normal} \end{align}
	for all $x\in(\mathbb{R}^2\times\{0\})\setminus B^3_{R_0}(0)$.
	Moreover, Lemma \ref{lem global grad estimate} implies that there is a constant $c_0>0$ such that $|\overline{\nabla} u|_{L^\infty(M)}\leq c_0$. We proceed to estimate $|\overline{\nabla} u|$ at a point $x_0\in\mathbb{R}^3_+\setminus B^3_{2R_0}(0)$. We choose a smooth, decreasing function $\rho:\mathbb{R}\to [0,\infty)$  satisfying $\rho(s)\equiv 1$ for $s\leq 1$, $\rho'\geq -2$ as well as $\rho(s)\leq 0$ for $s\geq 2$ and another smooth function $\zeta$ such that $\zeta\leq 2$, $\zeta'\geq 0$, $\zeta(0)=\zeta'(0)=1$ as well as $|\zeta'|,|\zeta''|\leq 2$. Furthermore, let $\Gamma>0$ be a positive constant to be chosen and define\footnote{For $Q$ to be well-defined and smooth one may have to decrease $\epsilon>0$ depending on $|x_0|_e$ such that $\operatorname{spt}(\tilde \rho )\subset M'_\epsilon$. We use the Euclidean distance function since the estimates for its Hessian are independent of the ambient curvature.}  the function $Q:\mathbb{R}^3_+ \setminus B^3_{R_0}(0)\to \mathbb{R}$ via $$Q(x):=\rho^2\bigg(\frac{4\operatorname{dist}_e(x,x_0)}{|x_0|_e}\bigg)\zeta\bigg(\frac{\Gamma x_3}{|x_0|_e}\bigg)|\overline{\nabla} u|^2(x)=:\tilde\rho^2(x)\tilde\zeta(x) |\overline{\nabla} u|^2(x).$$
	Since $Q$ is non-positive outside of a compact set, it follows that $Q$ attains a non-negative maximum at some point $x\in\mathbb{R}_+^3\setminus B^3_{R_0}(0)$ and we may assume that $Q(x)>0$. The case $|x|_e=R_0$ is excluded since this implies that $\operatorname{dist}_e(x,x_0)\geq  |x_0|_e/2$ and consequently $Q(x)\leq 0$. In fact, we may assume that
	\begin{align}
	\frac12|x_0|_e\leq |x|_e\leq 2|x_0|_e. \label{x x0 comp}
	\end{align}
	Furthermore, we may assume that $\tilde\rho(x)\geq |x_0|_e^{-1}$ because otherwise we may simply estimate
$$
	|\overline{\nabla} u|^2(x_0)\leq Q(x)\leq \frac{2 c_0^2}{|x_0|_e^2}.
	$$
	Next, let us assume that $x\in \mathbb{R}^2\times\{0\}$. Using (\ref{asymptotic behavior 0}-\ref{asymptotic behavior 1}), (\ref{asymptotic normal}), $\rho'\leq 0$ and $g_e(x-x_0,-e_3)>0$  it follows that   $\rho'\partial_\mu \operatorname{dist}_e(\cdot,x_0)|_x\leq c|x|_e^{-1}$. Similarly, we obtain $\partial_\mu x_3\leq -1/2$.  On the other hand, the free boundary condition implies $\partial_\mu |\overline{\nabla} u|^2=-2A^{\partial M} (\overline{\nabla} u,\overline{\nabla} u)$. Consequently, we estimate at $x$ 
	\begin{align*}
	\partial_\mu Q&\leq -2A^{\partial M}(\overline{\nabla}u,\overline{\nabla}u) +\frac{\Gamma}{2|x_0|_e} \tilde \rho^2|\overline{\nabla u}|^2+\frac{c}{|x|_e|x_0|_e}\tilde\rho |\overline{\nabla} u|^2  \\& 
	\leq |\overline{\nabla} u|^2\tilde \rho \bigg(\frac{c}{|x_0|_e}\tilde \rho -\frac{\Gamma}{|x_0|_e}\tilde \rho +\frac{c}{|x_0|_e^2} \bigg)
\\&	<0,
	\end{align*}
	provided $\Gamma$ is sufficiently large. In the second step, we used (\ref{asymptotic normal}) and (\ref{x x0 comp}) and in the last step we used $\tilde\rho(x)\geq|x_0|_e^{-1}$.  As this is a contradiction, it follows that $Q$ attains an interior maximum. We can then argue as in the proof of Lemma \ref{interior estimate}, using however an explicit estimate for the hessian of the Euclidean distance function and the estimate $\operatorname{Rc}(\overline{\nabla}u,\overline{\nabla u})\geq -c|\overline{\nabla u}|^2/|x|_e^2$ in Corollary \ref{coro L est}, to deduce that 
	$$ |\overline{\nabla} u|^2(x_0)|\leq Q(x)\leq \frac{c}{|x_0|_e^2}.$$
\end{proof}
We now proceed with the weak blowdown. To this end, we assume that $\Omega\subset\subset M$ is a compact set such that $M-\Omega$ is covered by the asymptotic chart and diffeomorphic to $\mathbb{R}^3_+\setminus B^3_{1}(0)$. We fix a constant $\delta>0$ and define the rescaled quantities $M^\delta:=\delta (M\setminus \Omega)$, $g^\delta(x):=\delta^2 g(\delta^{-1}x)$, $u^\delta:=u(\delta^{-1}x)$ and $E^\delta_t:=\delta E_t$. It is then easy to see that $u^\delta$ is a weak solution of the free boundary inverse mean curvature flow in $(M^\delta,g^\delta)$. The following two lemmas are very similar to Lemma 7.1 and Lemma 7.4 in \cite{huisken2001inverse} and we only sketch the arguments.
\begin{lem}
	Let $(M,g)$, $\Omega\subset M$ and $u$ be as in the previous lemma. There are constants $c_\delta\to\infty$ such that $u^\delta-c_\delta\to 2 \log(|x|)$ locally uniformly in $\mathbb{R}_+^3\setminus\{0\}$. Moreover, the expanding hemisphere solution $x\mapsto2\log(|x|)$ is the only precompact weak free boundary solution in $\mathbb{R}^3_+\setminus\{0\}$.
	\label{weak blow down}
	\begin{proof}
		Since $u$ is proper, it follows that there is a time $t_0>0$ such that for any $t\geq t_0$ $\Sigma_t$ is contained in $M\setminus \Omega$. A computation similar to Lemma \ref{subsolution} using the asymptotic behaviour (\ref{asymptotic behavior 0}) shows that we can choose numbers $\Lambda,\Theta>0$ and a time $t_1>0$ such that for all $t>t_1$ the family of expanding hemispheres $B^3_{e^{\Theta t}}(- \Lambda e_3)\cap\mathbb{R}_+^3$ is a subsolution. Consequently, we assume that $t>\operatorname{max}\{t_0,t_1\}$ and define the eccentricity of $\Sigma_t$ to be the quotient $R_t/r_t$ where $R_t$ is the smallest number such that $E_t\subset B^3_{R_t}(-\Lambda e_3)\cap\mathbb{R}_+^3$ and $r_t$ is the largest number such that $B^3_{r_t}(-\Lambda e_3)\cap\mathbb{R}_+^3\subset E_t$. Using the expanding hemisphere barriers, it follows that $R_{t+s}\leq e^{\Theta s}R_t$ for any $s\geq 0$. On the other hand, let $t_0$ be sufficiently large such that $r_t\geq R_0$. The definition of $r_t$ demands that there must be a point $x\in \partial B^3_{r_t}(-\Lambda e_3)\cap\mathbb{R}_+^3$ such that $u(x)=t$. Using the previous lemma and integration it follows that there exists a constant $c_0$ independent of $t$ such that $u>t-c_0$ everywhere on 
		$\partial B^3_{r_t}(-\Lambda e_3)\cap \mathbb{R}_+^3$. As $\Sigma_{t-c_0}$ is connected, we conclude that $\Sigma_{t-c_0}\subset B^3_{r_t}(-\Lambda e_3)\cap\mathbb{R}_+^3$. It follows that \begin{align}
		R_t\leq e^{\Theta c_0}R_{t-c_0}\leq e^{\Theta c_0}r_t.
		\label{eccentricity est}
		\end{align}
		Now let $\delta_i>0$ by any sequence converging to zero. The eccentricity estimate (\ref{eccentricity est}) and the gradient estimate from the previous lemma are scale invariant. The Arzela-Ascoli theorem and the compactness result Lemma \ref{compactness lemma} then imply that there is a subsequence, labelled the same, and a sequence $c_{\delta_i}\to\infty$ such that $u^{\delta_i}-c_{\delta_i}$ converges to a weak solution $v$ locally uniformly in $\mathbb{R}^3_+\setminus\{0\}$ with local $C^{1,\alpha}$ convergence of the level sets. Using the eccentricity estimate (\ref{eccentricity est}), we can now argue as in \cite{huisken2001inverse} to deduce that $v$ is non-constant and is in fact a precompact solution with $t$ ranging from $-\infty$ to $\infty$. In order to complete the proof, it remains to show that the hemisphere solution is the only precompact weak solution in $\mathbb{R}^3_+\setminus\{0\}$.  We briefly sketch the argument. \\In the situation of the exact Euclidean half-space, we may choose $\Lambda=0$ and $\Theta=1/2$. In fact, the expanding hemispheres are exact solutions and using them as barriers it follows that the eccentricity is non-increasing. It is easy to see that the eccentricity approaches $1$ as $t\to\infty$ and we would like to show that it is constant and equal to $1$. If not, then we can again perform a blowdown, this time using a sequence $\delta_i\to\infty$ to obtain another weak solution defined on $\mathbb{R}^3_+-\{0\}$ with constant eccentricity strictly larger than $1$. It follows that a level set $\Sigma_{t_0}$ lies between $\partial B^3_r(0)\cap\mathbb{R}^3_+$ and $\partial B^3_R(0)\cap\mathbb{R}^3_+$, has eccentricity $R/r>1$ and is not equal to either of these hemispheres. However, this implies that we may slightly perturb $\partial B^3_R(0)\cap\mathbb{R}^3_+$ inwards such that the perturbation is still  a strictly mean convex, star shaped free boundary surface lying on one side of $\Sigma_{t_0}$. We may also perturb $\partial B^3_r(0)\cap\mathbb{R}^3_+$ outwards in a similar way. It then follows from \cite{marquardt2013inverse} that the smooth inverse mean curvature flow starting at either of these perturbations exists for all times and consequently coincides with the weak solution starting at the respective perturbation, c.f. Lemma \ref{regularity of approximate level sets}. By the strong maximum principle, both solutions are disjoint from the corresponding expanding hemisphere solutions. However, the weak maximum principle implies that $\Sigma_t$ remains between the two weak solutions starting at the perturbations and consequently the eccentricity must decrease, a contradiction.
	\end{proof}
\end{lem}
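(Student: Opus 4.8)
The plan is to follow the weak blowdown strategy of Huisken and Ilmanen (Lemmas 7.1 and 7.4 of \cite{huisken2001inverse}), the two new ingredients in the free boundary setting being suitable expanding hemisphere barriers and the sharp gradient decay from Lemma \ref{asymptotic grad est}. First I would fix a compact $\Omega$ so that $M\setminus\Omega$ lies in the asymptotic chart; since $u$ is proper there is $t_0$ with $\Sigma_t\subset M\setminus\Omega$ for $t\ge t_0$. A barrier computation analogous to Lemma \ref{subsolution}, using only the decay (\ref{asymptotic behavior 0}), should produce constants $\Lambda,\Theta>0$ and a time $t_1$ such that the half-balls $B^3_{e^{\Theta t}}(-\Lambda e_3)\cap\mathbb{R}^3_+$ are subsolutions of the weak flow for $t>t_1$; one checks these are precompact and meet $\partial M$ at an acute angle, hence are admissible comparison domains. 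From this I would deduce the outer bound $R_{t+s}\le e^{\Theta s}R_t$, where $R_t$ is the smallest $R$ with $E_t\subset B^3_R(-\Lambda e_3)\cap\mathbb{R}^3_+$.

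The key geometric point is that the eccentricity $R_t/r_t$, with $r_t$ the corresponding inradius, stays bounded. At a point of $\partial B^3_{r_t}(-\Lambda e_3)\cap\mathbb{R}^3_+$ where $u=t$, integrating the bound $|\overline{\nabla} u|\le c\,|x|_e^{-1}$ of Lemma \ref{asymptotic grad est} along the hemisphere of radius $r_t$ gives $u>t-c_0$ on all of it, with $c_0$ independent of $t$; since $\Sigma_{t-c_0}$ is connected it is enclosed by that hemisphere, so $R_{t-c_0}\le r_t$, and together with the outer bound this gives $R_t\le e^{\Theta c_0}r_t$. Both this estimate and the gradient bound are scale invariant, so for the rescaled solutions $u^\delta(x)=u(\delta^{-1}x)$ and appropriate constants $c_\delta\to\infty$, Arzela--Ascoli together with the compactness result Lemma \ref{compactness lemma} yields, along any sequence $\delta_i\to0$, a locally uniform limit $v$ that is a weak solution on $\mathbb{R}^3_+\setminus\{0\}$ with $C^{1,\alpha}$ convergence of the level sets; the eccentricity bound forces $v$ to be non-constant and its leaves to exhaust $\mathbb{R}^3_+\setminus\{0\}$ with $t$ ranging over all of $\mathbb{R}$, i.e. $v$ is a precompact solution.

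It then remains to show that the only precompact weak free boundary solution on $\mathbb{R}^3_+\setminus\{0\}$ is $x\mapsto2\log|x|$, which simultaneously identifies the blowdown limit after fixing the additive constant. Working in the exact half-space one may take $\Lambda=0$, $\Theta=1/2$, so the expanding hemispheres are now \emph{exact} solutions, the eccentricity is monotone non-increasing and, by the barriers, tends to $1$. Were it not identically $1$, blowing down once more along $\delta_i\to\infty$ would produce a precompact solution of constant eccentricity $>1$, so some leaf $\Sigma_{t_0}$ lies strictly between two concentric half-spheres $\partial B^3_r(0)\cap\mathbb{R}^3_+$ and $\partial B^3_R(0)\cap\mathbb{R}^3_+$ with $R/r>1$, touching neither. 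Perturbing the outer half-sphere slightly inward and the inner one slightly outward into strictly mean convex, star-shaped free boundary surfaces still lying respectively outside and inside $\Sigma_{t_0}$, the long-time existence result of Marquardt \cite{marquardt2013inverse} gives that the smooth free boundary inverse mean curvature flow from each perturbation exists for all time, hence coincides with the corresponding weak solution by the smooth start property of Lemma \ref{lemma properties of weak solutions}; by the strong maximum principle each stays strictly to one side of the expanding hemisphere of the same radius. The weak comparison principle then traps $\Sigma_t$ between these two flows and forces the eccentricity to be strictly decreasing, a contradiction; hence it equals $1$, the leaves are concentric half-spheres and $v=2\log|x|$.

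The part I expect to require the most care is the barrier and eccentricity step in the asymptotically flat (rather than exactly Euclidean) case: one must verify that the hemispheres centred at $-\Lambda e_3$ really are admissible sub- and supersolutions of the \emph{weak} flow despite the error terms coming from (\ref{asymptotic behavior 0}) and (\ref{asymptotic behavior 1}), that they meet $\partial M$ at an acute angle, and that the constant $c_0$ from integrating the gradient bound is uniform in $t$; pinning down $\Lambda$ and $\Theta$ amounts to beating the curvature error $\mathcal{O}(|x|_e^{-2})$ against the mean curvature of the barrier. Once this is in place, both the blowdown and the uniqueness argument run essentially as in \cite{huisken2001inverse}, with \cite{marquardt2013inverse} supplying the free boundary long-time existence in place of the closed-boundary case.
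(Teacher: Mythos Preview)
Your proposal is correct and follows essentially the same route as the paper's own proof: hemisphere barriers from a subsolution computation, the eccentricity bound via the sharp gradient decay of Lemma \ref{asymptotic grad est} and connectedness of the leaves, Arzela--Ascoli plus Lemma \ref{compactness lemma} for the blowdown, and then the uniqueness argument in $\mathbb{R}^3_+\setminus\{0\}$ via a second blowdown and the perturbation/long-time existence trick with \cite{marquardt2013inverse}. Even the point you flag as most delicate---the barrier and eccentricity step in the genuinely asymptotically flat case---is exactly what the paper handles by the computation analogous to Lemma \ref{subsolution}.
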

The next lemma constitutes the final ingredient for the proof of the main theorem. As usual, the subscript $e$ indicates that the respective quantity is computed with respect to the Euclidean background metric.
\begin{lem}
	Let $(M,g)$ be an asymptotically flat half-space with one end and $E_t$ be a precompact weak solution of the free boundary inverse mean curvature flow such that $\tilde\partial E_t$ is connected. There holds
	$$
	\lim_{t\to\infty} m_H(\Sigma_t)\leq m_{ADM}.
	$$
	\label{hawking asymptotic}
\end{lem}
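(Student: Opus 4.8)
Our plan is to adapt the weak blowdown argument of \cite[Lemma 7.4]{huisken2001inverse} to the present setting; the only substantial new feature will be the contribution of the free boundary $\partial\Sigma_t\subset\partial M$, which will reproduce the additional boundary flux term appearing in the definition (\ref{ADM mass}) of $m_{ADM}$. Concretely, I would establish $\limsup_{t\to\infty}m_H(\Sigma_t)\le m_{ADM}$; together with the monotonicity of $m_H$ furnished by Corollary \ref{geroch monotonicity} under the dominant energy condition, this yields the assertion of the lemma.

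First I would extract from Lemma \ref{weak blow down} that, suitably rescaled, the leaves become round. Choosing the blowdown parameter $\delta=\delta_t$ so that $c_{\delta_t}=t$, Lemma \ref{weak blow down} and the local $C^{1,\alpha}$ convergence of the level sets established in its proof give that $\delta_t\Sigma_t$ converges in $C^{1,\alpha}$ to the round hemisphere $\mathbb{S}^2_1\cap\mathbb{R}^3_+$ as $t\to\infty$, with $\delta_t\to0$ and with $\Sigma_t$ a topological disc contained in the asymptotic region for $t$ large. Measuring the rescaling factor by the area, it follows that $\rho_t^{-1}\Sigma_t\to\mathbb{S}^2_1\cap\mathbb{R}^3_+$ in $C^{1,\alpha}$, where $\rho_t:=(|\Sigma_t|/2\pi)^{1/2}=(e^t|\Sigma|/2\pi)^{1/2}\to\infty$ by the exponential area growth of Lemma \ref{lemma properties of weak solutions}.

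The heart of the matter is a lower bound for $\int_{\Sigma_t}H^2\,\text{d}vol$. For almost every $t$ the leaf $\Sigma_t$ carries a weak second fundamental form in $L^2$, c.f.\ Corollary \ref{weak a2 coro} and the discussion preceding Lemma \ref{weak gauss bonnet}, so I would combine the Gauss equation (\ref{Gauss equation}), the identity $|A|^2=H^2/2+|\Acirc|^2$ and the weak Gauss--Bonnet theorem Lemma \ref{weak gauss bonnet} (using also the free boundary relation $A^{\partial M}(\nu,\nu)=H^{\partial M}-\operatorname{tr}_{\partial\Sigma_t}A^{\partial M}$) to rewrite
\[
\int_{\Sigma_t}H^2\,\text{d}vol=8\pi\,\chi(\Sigma_t)+2\int_{\Sigma_t}|\Acirc|^2\,\text{d}vol+\mathcal{F}(\Sigma_t),
\]
where $\mathcal{F}(\Sigma_t):=4\int_{\Sigma_t}\operatorname{Rc}(\nu,\nu)\,\text{d}vol-2\int_{\Sigma_t}\operatorname{Sc}\,\text{d}vol-4\int_{\partial\Sigma_t}\operatorname{tr}_{\partial\Sigma_t}A^{\partial M}\,\text{d}vol$ collects the ambient curvature and boundary contributions. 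Following the asymptotic computation of \cite[Section 7]{huisken2001inverse} — expanding $g=g_e+h$ with $|h|+|x|\,|\partial h|\le c|x|^{-1}$ in the asymptotic chart, exploiting the $C^{1,\alpha}$ roundness of $\Sigma_t$, and integrating by parts — the quantity $\mathcal{F}(\Sigma_t)$ should reassemble into the ADM surface flux over a large coordinate hemisphere together with the flux over the adjacent portion of $\partial M$, giving
\[
\mathcal{F}(\Sigma_t)=-\frac{2(8\pi)^{3/2}}{|\Sigma_t|^{1/2}}\bigl(m_{ADM}+o(1)\bigr)\qquad\text{as }t\to\infty.
\]
Since $\chi(\Sigma_t)=1$ and $|\Acirc|^2\ge0$, this yields $\int_{\Sigma_t}H^2\,\text{d}vol\ge 8\pi-\frac{2(8\pi)^{3/2}}{|\Sigma_t|^{1/2}}(m_{ADM}+o(1))$ for almost every $t$, and hence for all $t$ by the lower semicontinuity of Remark \ref{weak mean curvature rema} and the $C^{1,\alpha}$ convergence $\Sigma_s\to\Sigma_t$ as $s\nearrow t$. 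Inserting this into (\ref{modified hm}) gives $m_H(\Sigma_t)\le m_{ADM}+o(1)$, and letting $t\to\infty$ completes the argument.

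I expect the last display to be the main obstacle. One has to compute the asymptotics of $\mathcal{F}(\Sigma_t)$ and identify its leading order with $m_{ADM}$ having only $C^{1,\alpha}$ control of the leaves, so that $\int_{\Sigma_t}H^2$ must be treated through its weak divergence form as in \cite{huisken2001inverse}; and, more importantly, one must correctly track the boundary term $-4\int_{\partial\Sigma_t}\operatorname{tr}_{\partial\Sigma_t}A^{\partial M}\,\text{d}vol$, which upon splitting via $\operatorname{tr}_{\partial\Sigma_t}A^{\partial M}=H^{\partial M}-A^{\partial M}(\nu,\nu)$ and passing to the limit produces precisely the extra flux integral $\int_{\partial D^2_r(0)\times\{0\}}g_{i3}\tfrac{x_i}{|x|}\,\text{d}vol_e$ in (\ref{ADM mass}). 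Controlling the effect of the bounded shift $-\Lambda e_3$ of the centres of the approximating hemispheres on all of these flux integrals is the delicate technical point.
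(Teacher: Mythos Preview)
Your Gauss--Bonnet decomposition $\int_{\Sigma_t}H^2=8\pi\chi(\Sigma_t)+2\int_{\Sigma_t}|\Acirc|^2+\mathcal{F}(\Sigma_t)$ is correct, but it is \emph{not} the route taken in the paper (nor in \cite[Section~7]{huisken2001inverse}). The paper instead compares the $g$--Willmore energy to the Euclidean one: it uses the free boundary inequality $\int_{\Sigma_t}H_e^2\,\text{d}vol_e\ge 8\pi$ (obtained from Lemma~\ref{weak gauss bonnet} applied to the flat half-space) and expands $H^2\,\text{d}vol - H_e^2\,\text{d}vol_e$ in the perturbation $q=g-g_e$, arriving at an expression (\ref{wilm exp}) involving only $q$, $\overline{\nabla}q$ and $A$. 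This is the actual content of \cite[Section~7]{huisken2001inverse}; your citation of it to ``reassemble $\mathcal{F}$ into the ADM flux'' is misplaced.

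The difference matters analytically. Your $\mathcal{F}$ contains the surface integrals $\int_{\Sigma_t}\operatorname{Rc}(\nu,\nu)$ and $\int_{\Sigma_t}\operatorname{Sc}$, both of which involve second derivatives of $g$. Under (\ref{asymptotic behavior 0})--(\ref{asymptotic behavior 1}) you only have $|q|+|x||\partial q|\le c|x|^{-1}$ and a one-sided bound $\operatorname{Rc}\ge -c|x|^{-2}$; neither $\operatorname{Rc}(\nu,\nu)$ nor $\operatorname{Sc}$ is pointwise controlled from above, and the assumed $L^1(M)$--integrability of $\operatorname{Sc}$ says nothing about the surface integral $\int_{\Sigma_t}\operatorname{Sc}$. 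To extract $m_{ADM}$ from $\mathcal{F}$ you would have to integrate the second-order part of $\operatorname{Rc}(\nu,\nu)$ by parts along $\Sigma_t$, which reproduces exactly the $\overline{\nabla}q\ast A$ terms in (\ref{wilm exp}) and therefore still requires $L^2$ control on $A$ beyond $C^{1,\alpha}$. The paper's comparison approach sidesteps this detour by never writing down curvature.

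There is also a missing step you have not addressed. Even in the paper's formulation, evaluating (\ref{wilm exp}) to leading order needs $A^{\Sigma_t^{1/r}}\to h_{\mathbb{S}^2\cap\mathbb{R}^3_+}$ in $L^2$, which the $C^{1,\alpha}$ blowdown alone does not give. The paper obtains this in two stages: first, Gauss--Bonnet plus (\ref{h est}) yields $\int_s^{s+1}\int_{\Sigma_t}|A|^2\,\text{d}vol\,\text{d}t\le c$, hence a sequence $t_i$ with $\int_{\Sigma_{t_i}}|A|^2$ bounded and thus $\limsup m_H(\Sigma_{t_i})<\infty$; second, feeding this back into the monotonicity formula (\ref{geroch monotonicity formula}) gives $\int_{t_0}^\infty\int_{\Sigma_t}(|\nabla H|^2/H^2+|\Acirc|^2)\,\text{d}vol\,\text{d}t<\infty$, from which one extracts a further subsequence along which $H$ and $A$ converge in $L^2$ to their round limits. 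Only then can one pass from (\ref{wilm exp}) to the ADM integrand via the integration-by-parts identity (\ref{int by parts formula}), pick up the boundary flux $\int_{\partial\Sigma_{t_i}}q(\nu,\mu)$, and finally replace $\Sigma_{t_i}$ by coordinate hemispheres using the divergence theorem and the integrability of $\operatorname{Sc}$ and $H^{\partial M}$. Your proposal omits this subsequence extraction entirely; without it, neither your $\mathcal{F}$ nor the paper's (\ref{wilm exp}) can be evaluated to the required $o(r^{-1})$ precision.
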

\begin{proof}
	The proof is very similar to Lemma 7.4 in \cite{huisken2001inverse}. We therefore only sketch some details and point out the few differences. We define $r(t)$ by requiring that $|\Sigma_t|=2\pi r^2(t)$. In the notation of the previous lemma this means that $|\Sigma^{1/r(t)}_t|_{g^{1/r(t)}}=2\pi$. Moreover, the previous lemma together with Lemma \ref{compactness lemma} implies that $r(t)\to\infty$ as $t\to\infty$ and that $\Sigma_t^{1/r(t)}\to \mathbb{S}^2\cap \mathbb{R}^3_+$ in $C^{1,\alpha}$, where $0<\alpha<1/2$. In particular, $\partial \Sigma_t^{1/r(t)}\to \partial D^2_1(0)$ in $C^1$. Rescaling, we deduce that
	\begin{align}
	|\partial \Sigma_t|=2\pi r(t)+\mathcal{O}(1). \label{asymptotic boundary length}
	\end{align} 
	As in \cite{huisken2001inverse}, we can estimate the Willmore energy of $\Sigma_t$ by comparing it to the Euclidean Willmore energy and using the well-known estimate
	$$
	\int_{\tilde \Sigma} H_e^2\text{d}vol_e \geq 8\pi,
	$$
	valid for any free boundary disc $\tilde \Sigma$ in $\mathbb{R}^3_+$. This follows for instance from Lemma \ref{weak gauss bonnet} together with the fact that the plane is totally geodesic. We then find
	\begin{equation}
	\begin{aligned}
	\int_{\Sigma_t} H^2\text{d}vol \geq 8\pi +\int_{\Sigma_t}\bigg (&\frac12 H^2\operatorname{tr}_{\Sigma_t}q-2Hh(q,A) +H^2q(\nu,\nu)-2H\operatorname{tr}_{\Sigma_{t_i}}\overline{\nabla}q(\cdot,\cdot,\nu)\\&+H\operatorname{tr}_{\Sigma_{t_i}}\overline{\nabla}q(\nu,\cdot,\cdot)-c|q|^2|A|^2-c|\overline{\nabla }q|^2)\text{d}vol,
	\end{aligned}
	\label{wilm exp}
	\end{equation}
	where $h$ is the induced metric of $\Sigma_t$ and $q:=g-g_e$. Next, Lemma \ref{asymptotic grad est} and Lemma \ref{regularity of approximate level sets} imply
	\begin{align}
	H=|\overline{\nabla }u|\leq \frac{c}{|x|_e}\leq \frac{c}{r(t)} \label{h est}
	\end{align}
	on $\Sigma_t$. Using the Gauss equation (\ref{Gauss equation}), we may then deduce from Lemma \ref{weak gauss bonnet}, (\ref{asymptotic behavior 1}), (\ref{asymptotic normal})  and (\ref{asymptotic boundary length}) that
	$$
	\int_{s}^{s+1} \int_{\Sigma_t} |A|^2 \text{d}vol\text{d}t\leq c+c\int_s^{s+1}\int_{\partial \Sigma_t} |A^{\partial M}|\text{d}vol\text{d}t \leq c.
	$$
	It follows that there exists a sequence $t_i\to\infty$ such that 
	$$
	\limsup_{i\to\infty} \int_{\Sigma_{t_i}} |A|^2\text{d}vol <\infty.
	$$	
	In particular, (\ref{asymptotic behavior 0}), (\ref{wilm exp}) and (\ref{h est}) then imply
	$$
	\liminf_{i\to\infty}\int_{\Sigma_{t_i}} H^2\text{d}vol \geq 8\pi-\frac{c}{r}
	$$
	and consequently $\limsup_{i\to\infty}m_H(\Sigma_{t_i})<\infty$. We may then use the monotonicity formula Lemma \ref{geroch monotonicity} and the Rellich-Kochandrov theorem to select another subsequence such that $H^{\Sigma^{1/r(t_i)}_{t_i}}\to H^{\mathbb{S}^2\cap\mathbb{R}^3_+}=2$ in $L^2(\mathbb{S}^2\cap\mathbb{R}^3_+)$ and $A^{\Sigma^{1/r(t_i)}_{t_i}}\to h_{\mathbb{S}^2\cap\mathbb{R}^3_+}$ in $L^2(\mathbb{S}^2\cap\mathbb{R}^3_+)$.  Revisiting (\ref{wilm exp}) it follows that
	$$
	32\pi m_H(\Sigma_{t_i})\leq\int_{\Sigma_{t_i}}\bigg(\frac{2}{r(t_i)}\operatorname{tr}_{\Sigma_{t_i}}q-\frac{4}{r(t_i)}q(\nu,\nu)
	+4\operatorname{tr}_{\Sigma_{t_i}}\overline{\nabla}q(\cdot,\cdot,\nu)-2\operatorname{tr}_{\Sigma_{t_i}}\overline{\nabla}q(\nu,\cdot,\cdot)\bigg)\text{d}vol+o(1).	
	$$
	Using the integration by parts formula (\ref{int by parts formula}) and (\ref{asymptotic behavior 0})  we find 
	$$
	2\int_{\Sigma_{t_i}}\operatorname{tr}_{\Sigma_{t_i}}\overline{\nabla}q(\cdot,\cdot,\nu)\text{d}vol=\int_{\Sigma_{t_i}}\bigg(\frac{4}{r(t_i)}q(\nu,\nu)-\frac{2}{r(t_i)}\operatorname{tr}_{\Sigma_{t_i}}q\bigg)\text{d}vol +2\int_{\partial\Sigma_{t_i}} q(\nu,\mu)\text{d}vol+o(1).
	$$
	Moreover, the asymptotic decay implies $q(\nu,\mu)=-g_e(\nu,\mu)=g(\nu_e,-\mu_e)+{\mathcal{O}}(r(t_i)^{-2})$ and it follows that
	$$
	32\pi m_H(\Sigma_{t_i})\leq 2\int_{\Sigma_{t_i}}(\operatorname{tr}_{\Sigma_{t_i}}\overline{\nabla}g(\cdot,\cdot,\nu_e)-\operatorname{tr}_{\Sigma_{t_i}}\overline{\nabla}g(\nu_e,\cdot,\cdot))\text{d}vol_e+\int_{\partial \Sigma_{t_i}} g(\nu_e,-\mu_e)\text{d}vol+{o}(1).
	$$
	We may now apply the divergence theorem to both integrals and use the integrability of both $\operatorname{Sc}$ as well as  $H^{\partial M}$, the asymptotic decay of the metric (\ref{asymptotic behavior 0}) and the $C^{1,\alpha}-$convergence of $\Sigma^{r(t_i)}_{t_i}$ to $\mathbb{S}^2\cap\mathbb{R}^3_+$ to conclude that 
	\begin{align*}
	32\pi m_H(\Sigma_{t_i})\leq& 2\int_{\mathbb{S}^2_{r(t_i)}\cap\mathbb{R}^3_+}\bigg(\operatorname{tr}_{\mathbb{S}^2_{r(t_i)}\cap\mathbb{R}^3_+}\overline{\nabla}g(\cdot,\cdot,\nu_e)-\operatorname{tr}_{\mathbb{S}^2_{r(t_i)}\cap\mathbb{R}^3_+}\overline{\nabla}g(\nu_e,\cdot,\cdot)\bigg)\text{d}vol_e\\&+\int_{\partial(\mathbb{S}^2_{r(t_i)}\cap\mathbb{R}^3_+)} g(\nu_e,\partial_3)\text{d}vol+{o}(1),
	\end{align*}
	which should be compared with Lemma 7.3 in \cite{huisken2001inverse}. Recalling the definition of the ADM-mass (\ref{ADM mass}) we find
	$$
	\limsup_{i\to\infty} m_H(\Sigma_{t_i})\leq m_{ADM}.
	$$
	The claim now follows from Lemma \ref{geroch monotonicity}.
\end{proof}
\subsection{Proof of the main results}
\label{proof of the main result}
We are finally in the position to prove the main results.
\begin{proof}[Proof of Theorem \ref{main thm RPI}] Let $(M',g)$ be an exterior region and $\Sigma$ be one of its interior free boundary components. Using either Corollary \ref{geroch monotonicity} if $\Sigma$ is the only interior boundary component or otherwise arguing as in Remark \ref{geroch mon rema}, we find that there exists a weak free boundary inverse mean curvature flow starting at $\Sigma$ such that $m_H(\Sigma_t)$ is non-decreasing and such that $\Sigma_t$ is a connected free boundary surface for all times. Using the previous lemma we find
	$$
	\sqrt{\frac{|\Sigma|}{32\pi}}\leq \limsup_{t\to\infty}m_H(\Sigma_t)\leq m_{ADM}
	$$
	as claimed. The equality case is again very similar to \cite{huisken2001inverse} and we  only sketch the details. 	If equality holds, then it follows from Corollary \ref{geroch monotonicity}, lower semi-continuity and Lemma \ref{regularity of approximate level sets} that all leaves $\Sigma_t$ have constant mean curvature $H(t)>0$ almost everywhere and are topological discs. Moreover, it follows from elliptic regularity and Lemma \ref{regularity of approximate level sets} that all sets $\Sigma_t,\Sigma_t^+$ are smooth with locally uniform $C^k-$estimates. If there is a jump, then $\Sigma_t^+- \Sigma_t$ is non-empty for some $t$ and consequently $H^{\Sigma_t^+}=0$ by the constancy of the mean curvature and (\ref{mc minimizing hull}), contradicting the fact that $M'$ is free of free boundary minimal surfaces. Consequently, $H(t)>0$ for all $t>0$ and it follows from Lemma \ref{regularity of approximate level sets} that the entire flow is smooth. Moreover, it follows that all leaves have constant Gauss curvature and are totally umbilic while the scalar curvature of $M$ and the mean curvature of $\partial M$ vanish. One may now argue as in \cite{huisken2001inverse} to show that this can only hold if $(M',g)$ is the exterior region of the spatial Schwarzschild half-space.
\end{proof}
\begin{proof}[Proof of Corollary \ref{volkmann coro}.] This follows from Theorem \ref{main thm RPI} and Lemma \ref{mass vs extrinsic mass}.
\end{proof}


\begin{thebibliography}{ABLdL16}
	
	\bibitem[ABLdL16]{almaraz2014positive}
	S{\'e}rgio Almaraz, Ezequiel Barbosa, and Levi Lopes~de Lima.
	\newblock A positive mass theorem for asymptotically flat manifolds with a
	non-compact boundary.
	\newblock {\em Communications in Analysis and Geometry}, 24(4):673--715, 2016.
	
	\bibitem[ADM61]{arnowitt1961coordinate}
	Richard Arnowitt, Stanley Deser, and Charles~W Misner.
	\newblock Coordinate invariance and energy expressions in general relativity.
	\newblock {\em Physical Review}, 122(3):997, 1961.
	
	\bibitem[Bar86]{bartnik1986mass}
	Robert Bartnik.
	\newblock The mass of an asymptotically flat manifold.
	\newblock {\em Communications on pure and applied mathematics}, 39(5):661--693,
	1986.
	
	\bibitem[BM18]{barbosa2018positive}
	Ezequiel Barbosa and Adson Meira.
	\newblock A positive mass theorem and penrose inequality for graphs with
	noncompact boundary.
	\newblock {\em msp. org/pjm}, 294(2):257, 2018.
	
	\bibitem[Bra01]{bray2001proof}
	Hubert~L Bray.
	\newblock Proof of the riemannian penrose inequality using the positive mass
	theorem.
	\newblock {\em Journal of Differential Geometry}, 59(2):177--267, 2001.
	
	\bibitem[Buc05]{buckland2005mean}
	John~A Buckland.
	\newblock Mean curvature flow with free boundary on smooth hypersurfaces.
	\newblock {\em Journal f{\"u}r die reine und angewandte Mathematik},
	2005(586):71--90, 2005.
	
	\bibitem[Car16]{carlotto2016rigidity}
	Alessandro Carlotto.
	\newblock Rigidity of stable minimal hypersurfaces in asymptotically flat
	spaces.
	\newblock {\em Calculus of variations and partial differential equations},
	55(3):54, 2016.
	
	\bibitem[CGP10]{chrusciel2010mathematical}
	Piotr Chru{\'s}ciel, Gregory Galloway, and Daniel Pollack.
	\newblock Mathematical general relativity: a sampler.
	\newblock {\em Bulletin of the American Mathematical Society}, 47(4):567--638,
	2010.
	
	\bibitem[Cha18]{chai2018positive}
	Xiaoxiang Chai.
	\newblock Positive mass theorem and free boundary minimal surfaces.
	\newblock {\em arXiv preprint arXiv:1811.06254}, 2018.
	
	\bibitem[FGS17]{focardi2017classical}
	Matteo Focardi, Francesco Geraci, and Emanuele Spadaro.
	\newblock The classical obstacle problem for nonlinear variational energies.
	\newblock {\em Nonlinear Analysis: Theory, Methods \& Applications},
	154:71--87, 2017.
	
	\bibitem[Gal93]{galloway1993topology}
	Gregory~J Galloway.
	\newblock On the topology of black holes.
	\newblock {\em Communications in mathematical physics}, 151(1):53--66, 1993.
	
	\bibitem[Ger73a]{gerhardt1973regularity}
	Claus Gerhardt.
	\newblock Regularity of solutions of nonlinear variational inequalities.
	\newblock {\em Archive for Rational Mechanics and Analysis}, 52(4):389--393,
	1973.
	
	\bibitem[Ger73b]{geroch1973energy}
	Robert Geroch.
	\newblock Energy extraction.
	\newblock {\em Annals of the New York Academy of Sciences}, 224(1):108--117,
	1973.
	
	\bibitem[Ger90]{gerhardt1990flow}
	Claus Gerhardt.
	\newblock Flow of nonconvex hypersurfaces into spheres.
	\newblock {\em Journal of Differential Geometry}, 32(1):299--314, 1990.
	
	\bibitem[GJ86]{gruter1986allard}
	Michael Gr{\"u}ter and J{\"u}rgen Jost.
	\newblock Allard type regularity results for varifolds with free boundaries.
	\newblock {\em Annali della Scuola Normale Superiore di Pisa-Classe di
		Scienze}, 13(1):129--169, 1986.
	
	\bibitem[GLZ16]{guang2016curvature}
	Qiang Guang, Martin Man-chun Li, and Xin Zhou.
	\newblock Curvature estimates for stable free boundary minimal hypersurfaces.
	\newblock {\em Journal f{\"u}r die reine und angewandte Mathematik (Crelles
		Journal)}, 2016.
	
	\bibitem[Gr{\"u}87]{gruter1987optimal}
	Michael Gr{\"u}ter.
	\newblock Optimal regularity for codimension one minimal surfaces with a free
	boundary.
	\newblock {\em manuscripta mathematica}, 58(3):295--343, 1987.
	
	\bibitem[GT15]{gilbarg2015elliptic}
	David Gilbarg and Neil~S Trudinger.
	\newblock {\em Elliptic partial differential equations of second order}.
	\newblock Springer, 2015.
	
	\bibitem[HI01]{huisken2001inverse}
	Gerhard Huisken and Tom Ilmanen.
	\newblock The inverse mean curvature flow and the riemannian penrose
	inequality.
	\newblock {\em Journal of Differential Geometry}, 59(3):353--437, 2001.
	
	\bibitem[HP99]{gerhard1999geometric}
	Gerhard Huisken and Alexander Polden.
	\newblock Geometric evolution equations for hypersurfaces.
	\newblock In {\em Calculus of variations and geometric evolution problems},
	pages 45--84. Springer, 1999.
	
	\bibitem[Hui86]{huisken1986contracting}
	Gerhard Huisken.
	\newblock Contracting convex hypersurfaces in riemannian manifolds by their
	mean curvature.
	\newblock {\em Inventiones mathematicae}, 84(3):463--480, 1986.
	
	\bibitem[Hui88]{huisken1988expansion}
	G~Huisken.
	\newblock On the expansion of convex hypersurfaces by the inverse of symmetric
	curvature functions.
	\newblock {\em To appear}, 1988.
	
	\bibitem[Hut86]{hutchinson1986second}
	John~E Hutchinson.
	\newblock Second fundamental form for varifolds and the existence of surfaces
	minimising curvature.
	\newblock {\em Indiana Univ. Math. J.}, 35(1):45--71, 1986.
	
	\bibitem[JW77]{jang1977positive}
	Pong~Soo Jang and Robert~M Wald.
	\newblock The positive energy conjecture and the cosmic censor hypothesis.
	\newblock {\em Journal of Mathematical Physics}, 18(1):41--44, 1977.
	
	\bibitem[KN09]{kotschwar2009local}
	Brett Kotschwar and Lei Ni.
	\newblock Local gradient estimates of p-harmonic functions, 1/h-flow, and an
	entropy formula.
	\newblock {\em Ann. Sci. {\'E}c. Norm. Sup{\'e}r.(4)}, 42(1):1--36, 2009.
	
	\bibitem[Lie86]{lieberman1986mixed}
	Gary~M Lieberman.
	\newblock Mixed boundary value problems for elliptic and parabolic differential
	equations of second order.
	\newblock {\em Journal of Mathematical Analysis and Applications},
	113(2):422--440, 1986.
	
	\bibitem[Lie89]{lieberman1989optimal}
	Gary~M Lieberman.
	\newblock Optimal h{\"o}lder regularity for mixed boundary value problems.
	\newblock {\em Journal of Mathematical Analysis and Applications},
	143(2):572--586, 1989.
	
	\bibitem[LS16]{lambert2016inverse}
	Ben Lambert and Julian Scheuer.
	\newblock The inverse mean curvature flow perpendicular to the sphere.
	\newblock {\em Mathematische Annalen}, 364(3-4):1069--1093, 2016.
	
	\bibitem[LS17]{lambert2017geometric}
	Ben Lambert and Julian Scheuer.
	\newblock A geometric inequality for convex free boundary hypersurfaces in the
	unit ball.
	\newblock {\em Proceedings of the American Mathematical Society},
	145(9):4009--4020, 2017.
	
	\bibitem[LZ16]{li2016min}
	Martin Li and Xin Zhou.
	\newblock Min-max theory for free boundary minimal hypersurfaces i-regularity
	theory.
	\newblock {\em arXiv preprint arXiv:1611.02612}, 2016.
	
	\bibitem[Mar12]{marquardt2012inverse}
	Thomas Marquardt.
	\newblock {\em The inverse mean curvature flow for hypersurfaces with
		boundary}.
	\newblock PhD thesis, 2012.
	
	\bibitem[Mar13]{marquardt2013inverse}
	Thomas Marquardt.
	\newblock Inverse mean curvature flow for star-shaped hypersurfaces evolving in
	a cone.
	\newblock {\em Journal of Geometric Analysis}, 23(3):1303--1313, 2013.
	
	\bibitem[Mar17]{marquardt2017weak}
	Thomas Marquardt.
	\newblock Weak solutions of inverse mean curvature flow for hypersurfaces with
	boundary.
	\newblock {\em Journal f{\"u}r die reine und angewandte Mathematik (Crelles
		Journal)}, 2017(728):237--261, 2017.
	
	\bibitem[Mas74]{massari1974esistenza}
	Umberto Massari.
	\newblock Esistenza e regolarit{\`a} delle ipersuperfici di curvatura media
	assegnata in r n.
	\newblock {\em Archive for Rational Mechanics and Analysis}, 55(4):357--382,
	1974.
	
	\bibitem[MISY82]{meeks1982embedded}
	William Meeks~III, Leon Simon, and Shing-Tung Yau.
	\newblock Embedded minimal surfaces, exotic spheres, and manifolds with
	positive ricci curvature.
	\newblock {\em Annals of Mathematics}, pages 621--659, 1982.
	
	\bibitem[MIY82]{meeks1982classical}
	William~H Meeks~III and Shing-Tung Yau.
	\newblock The classical plateau problem and the topology of three-dimensional
	manifolds: the embedding of the solution given by douglas-morrey and an
	analytic proof of dehn's lemma.
	\newblock {\em Topology}, 21(4):409--442, 1982.
	
	\bibitem[Mos07]{moser2007inverse}
	Roger Moser.
	\newblock The inverse mean curvature flow and p-harmonic functions.
	\newblock {\em Journal of the European Mathematical Society}, 9(1):77--83,
	2007.
	
	\bibitem[Mos15]{moser2015geroch}
	Roger Moser.
	\newblock Geroch monotonicity and the construction of weak solutions of the
	inverse mean curvature flow.
	\newblock {\em Asian Journal of Mathematics}, 19(2):357--376, 2015.
	
	\bibitem[MY80]{meeks1980topology}
	William~H Meeks and Shing-Tung Yau.
	\newblock Topology of three dimensional manifolds and the embedding problems in
	minimal surface theory.
	\newblock {\em Annals of Mathematics}, 112(3):441--484, 1980.
	
	\bibitem[MY82]{meeks1982existence}
	William~W Meeks and Shing-Tung Yau.
	\newblock The existence of embedded minimal surfaces and the problem of
	uniqueness.
	\newblock {\em Mathematische Zeitschrift}, 179(2):151--168, 1982.
	
	\bibitem[PAF00]{pallara2000functions}
	L~Ambrosio-N Fusco-D Pallara, L~Ambrosio, and N~Fusco.
	\newblock {\em Functions of bounded variation and free discontinuity problems}.
	\newblock Oxford University Press, Oxford, 2000.
	
	\bibitem[Pen73]{penrose1973naked}
	Roger Penrose.
	\newblock Naked singularities.
	\newblock {\em Annals of the New York Academy of Sciences}, 224(1):125--134,
	1973.
	
	\bibitem[Pen82]{penrose1982some}
	Roger Penrose.
	\newblock Some unsolved problems in classical general-relativity.
	\newblock {\em Annals of Mathematics Studies}, pages 631--668, 1982.
	
	\bibitem[Per11]{perez2011nearly}
	Daniel~Raoul Perez.
	\newblock {\em On nearly umbilical hypersurfaces}.
	\newblock PhD thesis, University of Zurich, 2011.
	
	\bibitem[Ros08]{ros2008stability}
	Antonio Ros.
	\newblock Stability of minimal and constant mean curvature surfaces with free
	boundary.
	\newblock {\em Mat. Contemp}, 35:221--240, 2008.
	
	\bibitem[SSY75]{schoen1975curvature}
	Richard Schoen, Leon Simon, and Shing-Tung Yau.
	\newblock Curvature estimates for minimal hypersurfaces.
	\newblock {\em Acta Mathematica}, 134(1):275--288, 1975.
	
	\bibitem[Sta65]{stampacchia1965probleme}
	Guido Stampacchia.
	\newblock Le probl{\`e}me de dirichlet pour les {\'e}quations elliptiques du
	second ordre {\`a} coefficients discontinus.
	\newblock In {\em Annales de l'institut Fourier}, volume~15, pages 189--257,
	1965.
	
	\bibitem[Sta96a]{stahl1996convergence}
	Axel Stahl.
	\newblock Convergence of solutions to the mean curvature flow with a neumann
	boundary condition.
	\newblock {\em Calculus of Variations and Partial Differential Equations},
	4(5):421--441, 1996.
	
	\bibitem[Sta96b]{stahl1996regularity}
	Axel Stahl.
	\newblock Regularity estimates for solutions to the mean curvature flow with a
	neumann boundary condition.
	\newblock {\em Calculus of Variations and Partial Differential Equations},
	4(4):385--407, 1996.
	
	\bibitem[SY79]{schoen1979proof}
	Richard Schoen and Shing-Tung Yau.
	\newblock On the proof of the positive mass conjecture in general relativity.
	\newblock {\em Communications in Mathematical Physics}, 65(1):45--76, 1979.
	
	\bibitem[SZW91]{sternberg1991c1}
	Peter Sternberg, William~P Ziemer, and Graham Williams.
	\newblock C1, 1-regularity of constrained area minimizing hypersurfaces.
	\newblock {\em Journal of Differential Equations}, 94(1):83--94, 1991.
	
	\bibitem[Tam81]{tamanini1981boundaries}
	Italo Tamanini.
	\newblock {\em Boundaries of Caccioppoli sets with Holder-continuous normal
		vector}.
	\newblock Libera Universita di Trento. Dipartimento di Matematica, 1981.
	
	\bibitem[Urb90]{urbas1990expansion}
	John~IE Urbas.
	\newblock On the expansion of starshaped hypersurfaces by symmetric functions
	of their principal curvatures.
	\newblock {\em Mathematische Zeitschrift}, 205(1):355--372, 1990.
	
	\bibitem[Vol15]{volkmann2015free}
	Alexander Volkmann.
	\newblock {\em Free boundary problems governed by mean curvature}.
	\newblock PhD thesis, 2015.
	
	\bibitem[Wei18]{wei2018minkowski}
	Yong Wei.
	\newblock On the minkowski-type inequality for outward minimizing hypersurfaces
	in schwarzschild space.
	\newblock {\em Calculus of Variations and Partial Differential Equations},
	57(2):46, 2018.
	
\end{thebibliography}
\end{document}